\pgfplotsset{compat=1.18}
\DeclareRobustCommand{\gobblefour}[5]{}
\newcommand*{\SkipTocEntry}{\addtocontents{toc}{\gobblefour}}
\definecolor{luh-dark-blue}{rgb}{0.0, 0.313, 0.608}
\renewcommand\paragraph{\@startsection{paragraph}{4}{\z@}%
	{1ex \@plus1ex \@minus.2ex}%
	{-1em}%
	{\normalfont\normalsize\bfseries}}
\numberwithin{equation}{section}
\newtheoremstyle{thmlemcorr}{10pt}{10pt}{\itshape}{}{\bfseries}{.}{10pt}{{\thmname{#1}\thmnumber{ #2}\thmnote{ (#3)}}}
\newtheoremstyle{thmlemcorr*}{10pt}{10pt}{\itshape}{}{\bfseries}{.}\newline{{\thmname{#1}\thmnumber{ #2}\thmnote{ (#3)}}}
\newtheoremstyle{remexample}{10pt}{10pt}{}{}{\bfseries}{.}{10pt}{{\thmname{#1}\thmnumber{ #2}\thmnote{ (#3)}}}
\newtheoremstyle{ass}{10pt}{10pt}{}{}{\bfseries}{.}{10pt}{{\thmname{#1}\thmnumber{ A#2}\thmnote{ (#3)}}}
\theoremstyle{thmlemcorr}
\newtheorem{theorem}{Theorem}
\numberwithin{theorem}{section}
\newtheorem{lemma}[theorem]{Lemma}
\newtheorem{proposition}[theorem]{Proposition}
\theoremstyle{thmlemcorr*}
\newtheorem*{theorem*}{Theorem}
\newtheorem{lemma*}[theorem]{Lemma}
\newtheorem{corollary*}[theorem]{Corollary}
\newtheorem{proposition*}[theorem]{Proposition}
\newtheorem{problem*}[theorem]{Problem}
\newtheorem{conjecture*}[theorem]{Conjecture}
\newtheorem{definition*}[theorem]{Definition}
\newtheorem{assumption*}[theorem]{Assumption}
\theoremstyle{remexample}
\newtheorem{remark}[theorem]{Remark}
\theoremstyle{ass}
\newcommand{\Acal}{\mathcal{A}}
\newcommand{\Bcal}{\mathcal{B}}
\newcommand{\Ccal}{\mathcal{C}}
\newcommand{\Fcal}{\mathcal{F}}
\newcommand{\Ical}{\mathcal{I}}
\newcommand{\Lcal}{\mathcal{L}}
\newcommand{\Mcal}{\mathcal{M}}
\newcommand{\Ncal}{\mathcal{N}}
\newcommand{\Ocal}{\mathcal{O}}
\newcommand{\Rcal}{\mathcal{R}}
\newcommand{\Scal}{\mathcal{S}}
\newcommand{\Tcal}{\mathcal{T}}
\newcommand{\Wcal}{\mathcal{W}}
\newcommand{\Xcal}{\mathcal{X}}
\newcommand{\Ycal}{\mathcal{Y}}
\newcommand{\Zcal}{\mathcal{Z}}
\renewcommand{\Re}{\operatorname{Re}}
\renewcommand{\Im}{\operatorname{Im}}
\newcommand{\ee}{\mathrm{e}}
\newcommand{\ii}{\mathrm{i}}
\newcommand{\norm}[1]{\|#1\|}
\newcommand{\abs}[1]{|#1|}
\newcommand{\dd}{\;\mathrm{d}}
\newcommand{\N}{\mathbb{N}}
\newcommand{\R}{\mathbb{R}}
\newcommand{\C}{\mathbb{C}}
\newcommand{\Z}{\mathbb{Z}}
\newcommand{\eps}{\varepsilon}
\newcommand{\Res}{\operatorname{Res}}
\def\XXint#1#2#3{{\setbox0=\hbox{$#1{#2#3}{\int}$}
		\vcenter{\hbox{$#2#3$}}\kern-.5\wd0}}
\renewcommand{\eps}{\varepsilon}
\newcommand{\kt}{k_\mathrm{T}}
\newcommand{\kth}{k_\mathrm{TH}}
\newcommand{\lt}{\lambda_\mathrm{T}}
\newcommand{\lth}{\lambda_\mathrm{TH}}
\newcommand{\Aav}{A_\mathrm{av}}
\newcommand{\Bav}{B_\mathrm{av}}
\newcommand{\Acalav}{\Acal_\mathrm{av}}
\begin{document}
	
	
	\title[]{Pattern formation and nonlinear waves close to a 1:1 resonant Turing and Turing--Hopf instability}
	
	\author{Bastian Hilder}
	\address{\textit{Bastian Hilder:}  Department of Mathematics, Technische Universität München, Boltzmannstraße 3, 85748 Garching b.\ München, Germany}
	\email{bastian.hilder@tum.de}
	
	\author{Christian Kuehn}
	\address{\textit{Christian Kuehn:}  Department of Mathematics, Technische Universität München, Boltzmannstraße 3, 85748 Garching b.\ München, Germany}
	\email{ckuehn@ma.tum.de}
	
	\begin{abstract}
		In this paper, we analyse the dynamics of a pattern-forming system close to simultaneous Turing and Turing–Hopf instabilities, which have a 1:1 spatial resonance, that is, they have the same critical wave number. For this, we consider a system of coupled Swift–Hohenberg equations with dispersive terms and general, smooth nonlinearities. Close to the onset of instability, we derive a system of two coupled complex Ginzburg–Landau equations with a singular advection term as amplitude equations and justify the approximation by providing error estimates. We then construct space-time periodic solutions to the amplitude equations, as well as fast-travelling front solutions, which connect different space-time periodic states. This yields the existence of solutions to the pattern-forming system on a finite, but long time interval, which model the spatial transition between different patterns. The construction is based on geometric singular perturbation theory exploiting the fast travelling speed of the fronts. Finally, we construct global, spatially periodic solutions to the pattern-forming system by using centre manifold reduction, normal form theory and a variant of singular perturbation theory to handle fast oscillatory higher-order terms.
	\end{abstract}
	\vspace{4pt}
	
	\maketitle
	
	\noindent\textsc{MSC (2020): 35B32, 35B34, 35B36, 34E15, 34C37,  37L10}

	
	\noindent\textsc{Keywords: spatially resonant instabilities, amplitude equations, pattern formation, nonlinear waves, singular perturbation theory, center manifold theory}
	
	\setcounter{tocdepth}{1}
	\tableofcontents

	\section{Introduction}
	
	The dynamical behaviour of pattern-forming systems is often organised by linear instabilities. One of the most famous examples of which is the Turing, or diffusion-driven, instability \cite{turing1952}, which typically leads to the formation of stationary, spatially periodic patterns and was used to explain pattern-formation in different dynamical systems arising in science and engineering. A related instability is the Turing–Hopf instability, which typically leads to the formation of travelling, spatially periodic wavetrains and occurs usually in pattern-forming systems which include advective effects, see e.g.~\cite{rovinsky1992}.
	
	Beyond dynamics driven by a single instability, the simultaneous occurrence of multiple instabilities can also be found in many applications. Examples are single and multilayer thermal convections, where the interaction of multiple Turing instabilities can occur, see e.g.~\cite{jones1987,proctor1988,cox1996,echebarria1997,mercader2001,prat2002} as well as magneto-hydrodynamics \cite{fujimura1998}. Specifically, the simultaneous occurrence of a Turing and a Turing–Hopf instability has been found in the Brusselator and Oregonator models, which describe chemical reaction dynamics, see \cite{yang2002b}, as well as in the Taylor–Couette problem, see \cite{chossat1994}. These higher-codimension instabilities can lead to a large variety of complex patterns.
	
	In this paper, we rigorously discuss the dynamics in a pattern-forming system close to resonant Turing and Turing–Hopf instabilities. Specifically, we consider a phenomenological model of two coupled Swift-Hohenberg-type equations
	\begin{equation}
		\label{eq:toy-model}
		\begin{split}
			\partial_t u &= -(\kt^2 + \partial_x^2)^2 u + \varepsilon^2 \alpha_u u + f(u,v), \\
			\partial_t v &= -(\kth^2 + \partial_x^2)^2 v + c_d \partial_x^3 v + \varepsilon^2 \alpha_v v + g(u,v),
		\end{split}
	\end{equation}
	Here, $u(t,x), v(t,x) \in \R$, $\alpha_u, \alpha_v, c_d \in \R$, $0 < \varepsilon \ll 1$ and $x \in \R$ denotes the spatial coordinate, whereas $t \geq 0$ denotes time. In addition, $f$ and $g$ are smooth nonlinerities in $(u,v)$, in particular,
	\begin{equation*}
		f(0,0) = \mathrm{D}f(0,0) = g(0,0) = \mathrm{D}g(0,0) = 0.
	\end{equation*}
	Therefore, the system \eqref{eq:toy-model} has the trivial steady state $(u,v) = (0,0)$ for all $\varepsilon > 0$. Furthermore, since $f$ and $g$ are smooth, we can consider a Taylor expansion around $(u,v) = (0,0)$, which is given by
	\begin{equation}\label{eq:taylor-expansion-nonlinearities}
		\begin{split}
			f(u,v) &= f_{20} u^2 + f_{11} uv + f_{02} v^2 + f_{30} u^3 + f_{21} u^2 v + f_{12} u v^2 + f_{03} v^3 + \Ocal(\norm{(u,v)}^4), \\ 
			g(u,v) &= g_{20} u^2 + g_{11} uv + g_{02} v^2 + g_{30} u^3 + g_{21} u^2 v + g_{12} u v^2 + g_{03} v^3 + \Ocal(\norm{(u,v)}^4)
		\end{split}
	\end{equation}
	with coefficients $f_{ij}, g_{ij} \in \R$ and $\Ocal(\norm{(u,v)}^4)$ are fourth-order terms understood in the usual asymptotic order notation in the limit $\norm{(u,v)}\rightarrow 0$.
	
	\begin{remark}
		Although we restrict the discussion to the model \eqref{eq:toy-model}, we expect that the results apply also to other systems featuring the same type of resonant instability. In fact, it is very typical to find that the dynamics close to the onset of an instability are similar for a classes of system with the same instability since the dynamics close to onset is described by universal amplitude equations, see e.g.~\cite{schneider2017a}.
	\end{remark}
	
	Linearising the system \eqref{eq:toy-model} about the trivial state $(u,v) = (0,0)$ and solving the resulting linear system with the ansatz $(U,V) = \exp(\lambda(k) t + ikx) \varphi$ with wave number $k \in \R$ yields the dispersion relations or eigenvalue curves
	\begin{equation*}
		\lt(k) = -\left(\kt^2-k^2\right)^2 + \varepsilon^2 \alpha_u, \quad \lth(k) = -\left(\kth^2-k^2\right)^2 - i c_d k^3 + \varepsilon^2 \alpha_v,
	\end{equation*}
	see Figure \ref{fig:dispersionRel}, with associated eigenvectors $\varphi_\mathrm{T} = (1,0)^2$ and $\varphi_\mathrm{TH} = (0,1)^T$, respectively. Therefore, the system \eqref{eq:toy-model} undergoes a Turing instability with wave number $k = \kt$ at $\alpha_u = 0$ and a Turing-Hopf instability with wave number $k = \kth$ at $\alpha_v = 0$.
	
	\begin{figure}
		\centering
		\includegraphics[width=0.6\textwidth]{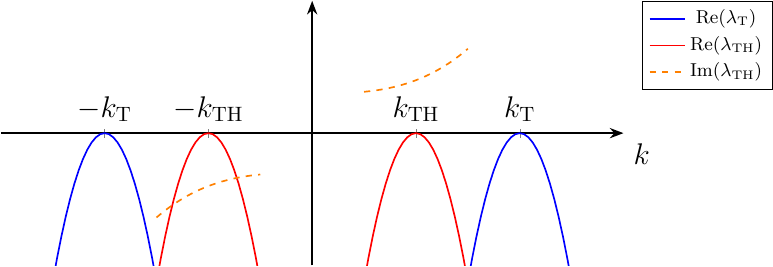}
		\caption{Plot of the eigenvalue curves $\lt$ and $\lth$ for $\kt \neq \kth$ at $\alpha_u = \alpha_v = 0$.}
		\label{fig:dispersionRel}
	\end{figure}
	
	Close to the onset of instability one typically can expect that any non-decaying solution to \eqref{eq:toy-model} will, to leading order, be given by the critical Fourier modes $\ee^{\ii\kt x} \varphi_\mathrm{T}$ and $\ee^{\ii \kth (x-c_p t)} \varphi_\mathrm{TH}$ with the phase velocity $c_p = \Im(\lth(\kth))/\kth$. If the wave numbers $\kt$ and $\kth$ are \emph{non-resonant}, that is, neither $\kt/\kth \in \N$ nor $\kth/\kt \in \N$, any cross-interaction of the critical Fourier modes will be exponentially damped. Therefore, the case of \emph{resonant} wave numbers, that is,
	\begin{equation*}
		\dfrac{\kt}{\kth} = N \in \N \text{, or } \dfrac{\kth}{\kt} = N \in \N
	\end{equation*}
	is of special interest.	In the following, we restrict to the case that $\kt = \kth = k^* = 1$. Therefore, both instabilities occur at the same wave number, which motivates the notation of a \emph{1:1 resonant Turing and Turing–Hopf instability}. Note that the choice of $k^* = 1$ is done for pure notational convenience and does not change the following analysis. In fact, the case of $k^* \neq 1$ can be recovered by a spatial rescaling.
	
	\begin{remark}\label{rem:different-resonances}
		We expect that the approach pursued in this paper can be used to other pairs of resonant wave numbers. The main difference compared to the present paper lies in the different nonlinearities occurring in the amplitude equations. For example, a 1:2 resonance also allows for a critical quadratic interaction, cf.~also \cite{gauss2021} for the case of a 1:2-resonant Turing and Turing instability. We leave the detailed discussion of these other resonances to future research.
	\end{remark}
	
	\subsection{Main results of the paper}
	
	The goal of this paper is to understand the dynamics of \eqref{eq:toy-model} close to the onset of a 1:1 resonant Turing and Turing–Hopf instability. For this, we use a similar strategy as in \cite{gauss2021}, where the dynamics close to two Turing instabilities with a spatial 1:2 resonance was studied. That is, we use modulation theory and amplitude equations to study the dynamics close to the onset of instability. Next, we summarise the main results and techniques used in this paper.
	
	\begin{itemize}
		\item We derive the amplitude equations \eqref{eq:final-amplitude-eq} close to the onset of instability and give a rigorous justification, see Theorem \ref{thm:full-approx-result}.
		\item We prove the existence of space-time periodic solutions to the amplitude equations \eqref{eq:final-amplitude-eq} as well as the existence of fast-moving front solutions connecting these space-time periodic solutions, see Theorem \ref{thm:persistence-heteroclinic-orbits}. Additionally, we obtain solutions to the full pattern-forming system \eqref{eq:toy-model}, which are close to spatio-temporal patterns and pattern interfaces on a long time interval, see Theorems \ref{thm:pure-patterns}, \ref{thm:patterns-space-time} and \ref{thm:pattern-interface} and Figures \ref{fig:T-to-trivial}--\ref{fig:TH-to-superpos}.
		\item We prove the existence of globally bounded, spatially periodic solutions to the pattern-forming system \eqref{eq:toy-model}, which are close to pure Turing and Turing–Hopf patterns, see Theorem \ref{thm:pure-patterns-global}, or resemble a superposition of Turing and Turing–Hopf patterns, see Theorem \ref{thm:superposition-pattern-global}.
	\end{itemize}
	
	\paragraph{Justification of amplitude equations}
	
	We formally derive a system of amplitude equation close to the onset of instability by making the ansatz
	\begin{equation}\label{eq:ansatz-amplitude-intro}
		u(t,x) = \varepsilon A(\varepsilon^2 t, \varepsilon x) \ee^{\ii x} + c.c. + h.o.t., \quad v(t,x) = \varepsilon B(\varepsilon^2 t, \varepsilon(x - c_g t)) \ee^{\ii (x-c_p t)} + c.c. + h.o.t.,
	\end{equation}
	where  $c.c.$ denote complex conjugated terms of the first summand, $A(T,X_1), B(T,X_2) \in \C$ are slow amplitude modulations of the critical Fourier modes as $\varepsilon$ is a small positive parameter, $X_1 = \varepsilon x$, $X_2 = \varepsilon (x - c_g t) = X_1 - \tfrac{c_g}{\varepsilon}T$ with $c_g \neq 0$, and $h.o.t.$ denote higher-order terms. By inserting this ansatz into the pattern-forming system \eqref{eq:toy-model} and equating different powers of $\varepsilon$ to zero, we find the linear phase velocity $c_p = c_d$ and the linear group velocity $c_g = 3 c_d$. In addition, we find a system of amplitude equations for $A$ and $B$, see \eqref{eq:full-amplitude-eq}. 
	
	Although these formal calculations suggest that the dynamics of the pattern-forming system \eqref{eq:toy-model} can be described by the set of amplitude equations obtained from this multiscale approach, this is not obvious and can fail. Indeed, there is a number of counter-examples, where an amplitude equation can be derived formally, but makes wrong predictions, see e.g.~\cite{schneider2015,haas2020,baumstark2020}. Therefore, the rigorous justification is crucial. In the case at hand, the formal derivation can indeed be made rigorous using a standard approach as in \cite{schneider1997}, see also \cite{schneider2017a}. For this, we prove error estimates between solutions of the full pattern-forming system \eqref{eq:toy-model} and the ansatz \eqref{eq:ansatz-amplitude-intro}, where the amplitude modulations are given as solutions to the amplitude equations \eqref{eq:full-amplitude-eq}. 
	
	However, the amplitude system \eqref{eq:full-amplitude-eq} not only depends on the slow time $T$, but also on the fast time $t$. This is reflected in two ways. First, fast oscillating terms of the form $\ee^{\ii n c_p t}$ with $n \in \Z$ arise through the interaction of $A$ and $B$ since the critical Fourier modes belonging to both amplitudes have different phase velocities. Second, since $A$ and $B$ are given in different spatial coordinate frames $X_1 = \varepsilon x$ and $X_2 = \varepsilon (x - c_g t) = X_1 - \tfrac{c_g}{\varepsilon}T$ with $c_g \neq 0$, their interaction implicitly contains a fast shift by $\pm c_g t$. 
	
	We deal with this in the following way. The fast-oscillating terms can be moved to higher order terms by spatial averaging following the strategy in \cite{schneider1997}, see also Lemma \ref{lem:averaging-amplitude-equations}. In addition, we make the fast shift due to the different spatial coordinate frames more explicit by shifting $B$ into the $X_1$-coordinate frame, which leads to the addition of a singular advection term $\tfrac{c_g}{\varepsilon} \partial_{X_1} B$ in the $B$-equation. This yields the amplitudes equations
	\begin{equation*}
		\begin{split}
			\partial_T \Aav &= 4\partial_{X_1}^2 \Aav + \alpha_u \Aav + \gamma_1 \Aav |\Aav|^2 + \gamma_2 \Aav |\Bav|^2, \\
			\partial_T \Bav &= (4+3ic_v) \partial_{X_1}^2 \Bav - \frac{c_g}{\eps} \partial_{X_1} \Bav + \alpha_v \Bav + \gamma_7 \Bav |\Aav|^2 + \gamma_8 \Bav |\Bav|^2,
		\end{split}
	\end{equation*}
	see also \eqref{eq:final-amplitude-eq}. Notice that, since we do not prescribe any localisation or periodicity in $A$ and $B$, the amplitude equations are fully coupled. This is in contrast to, e.g.~\cite{schneider1997}, where additionally assuming periodicity yields amplitude equations which are only coupled through the mean values of $A$ and $B$. Finally, assuming sufficient localisation for $A$ and $B$ would yield to a decoupled system, since nonlinear interactions can be moved into higher order terms exploiting that the difference velocity between $X_1$ and $X_2$ is singular in $\varepsilon$, see e.g.~\cite[Remark 10.7.6]{schneider2017a}.
	
	Finally, using the approximation result Theorem \ref{thm:approximation-full-amplitude-eq}, we also obtain that for any sufficiently regular solution to the amplitude equations exists a solution to the pattern-forming system \eqref{eq:toy-model} of the form
	\begin{equation*}
		\begin{split}
			u(t,x) = \varepsilon \Aav(T,X_1) \ee^{\ii x} + c.c. + \Ocal{(\varepsilon^2)}, \qquad v(t,x) = \varepsilon \Bav(T,X_1) \ee^{\ii (x-c_p t)} + c.c. + \Ocal{(\varepsilon^2)},
		\end{split}
	\end{equation*}
	where the error terms are small in $(H_{l,u}^1)^2$ asymptotically as $\varepsilon\rightarrow 0$, see Theorem \ref{thm:full-approx-result}. Note that since $H^1_{l,u}$ embeds into the space of continuous functions, the error is also small uniformly in space.
	
	\paragraph{Modulating dynamics}
	
	After justifying the amplitude system \eqref{eq:final-amplitude-eq}, we first study the existence of time-periodic solutions 
	\begin{equation*}
		\Aav(T) = r_A \ee^{\ii\omega_A T}, \quad \Bav(T) = r_B \ee^{\ii \omega_B T},
	\end{equation*}
	with temporal wave numbers $\omega_A$, $\omega_B$ and radii $r_A$, $r_B$. Under suitable parameter conditions, we obtain three types of solutions: (i) the trivial solutions with $r_A = r_B = 0$, (ii) the semi-trivial solutions with either $r_A = 0$ or $r_B = 0$, but not both, and (iii) fully nontrivial solutions with $r_A \neq 0$ and $r_B \neq 0$. By an implicit function theorem argument, we can extend the existence result to solutions with a small, but non-zero spatial wave number, see Proposition \ref{prop:space-time-periodic-waves}. Using the rigorous approximation result Theorem \ref{thm:full-approx-result}, we can relate these solutions to the amplitude equations to solutions in the pattern-forming system \eqref{eq:toy-model}. The semi-trivial solutions correspond to a pure Turing pattern if $r_B = 0$, that is, a spatially periodic solution, which is stationary to leading order, and a pure Turing–Hopf pattern if $r_A = 0$, that is, a spatially periodic solution with phase velocity close to $c_p$, see Theorem \ref{thm:pure-patterns}. Additionally, the fully nontrivial solutions correspond, to leading order, to a superposition of a stationary, periodic solution and a spatially periodic wavetrain with non-zero phase velocity, see Theorem \ref{thm:pure-patterns}.
	
	Next, we consider fast-moving fronts between the space-time periodic solutions, that is, we show the existence of solutions of the form
	\begin{equation*}
		\Aav(T,X_1) = r_A(\tilde{\xi}) \ee^{\ii(\phi_A(\tilde{\xi}) + \omega_A T)}, \quad \Bav(T,X_1) = r_B(\tilde{\xi}) \ee^{\ii(\phi_B(\tilde{\xi}) + \omega_B T)}
	\end{equation*}
	with a fast co-moving frame $\tilde{\xi} = X_1 - \tfrac{c_0}{\varepsilon}T$, $c_0 \neq 0$. The radii of the solutions are given by $r_A$, $r_B$, while the phases are given by $\phi_A$, $\phi_B$. In addition, we prescribe a given pair of temporal wave numbers $\omega_A$, $\omega_B$. While we are, in principle, free to choose these temporal wave numbers, we restrict to the temporal wave numbers of the semi-trivial time-periodic solutions. For other choices, the temporal wave number of the asymptotic states connected by the front does not change, however, there is an additional spatial wave number, see Remark \ref{rem:different-omegas}.
	
	Inserting the ansatz into the amplitude equations \eqref{eq:final-amplitude-eq}, we obtain a singularly perturbed travelling wave ODE, see \eqref{eq:travelling-fronts-ode}, due to the fast speed $c = \tfrac{c_0}{\varepsilon}$ of the front. To leading order, the equations for the radii $r_A$, $r_B$ are decoupled from the phases $\phi_A$, $\phi_B$. After rescaling, the radii satisfy the system
	\begin{equation*}
		\begin{split}
			\dot{r}_A &= -r_A + r_A^3 - \tilde{\gamma}_A r_A r_B^2, \\
			\dot{r}_B &= \dfrac{1}{\tilde{c}}\left(r_B - r_B^3 + \tilde{\gamma}_B r_B r_A^2\right),
		\end{split}
	\end{equation*}
	see also \eqref{eq:radii-dynamics-rescaled}, which we analyse using phase plane analysis. We identify parameter conditions for the existence of four equilibrium points corresponding to the time-periodic solutions in the amplitude equations: the trivial equilibrium point $T = (0,0)$, the semi-trivial equilibrium points $ST_A$ and $ST_B$ with $r_B = 0$ and $r_A = 0$, respectively, as well as the nontrivial equilibrium point $NT$. We then analyse the existence of heteroclinic orbits between the equilibrium points using stability properties of the equilibrium points to identify different parameter regimes with qualitatively different phase plane dynamics, which we study numerically, see Section \ref{sec:fronts} and analytically, see Section \ref{sec:proof-of-het-orbits}.
	
	To lift the existence from the $(r_A,r_B)$-system to the singularly perturbed travelling wave ODE, we use geometric singular perturbation theory. We obtain the persistence of heteroclinic orbits, which are structurally stable in the $(r_A,r_B)$-system. That is, after potentially restricting to an appropriate invariant subspace, specifically $\{r_A = 0\}$ or $\{r_B = 0\}$, the dimension of the unstable manifold of the left equilibrium point $n_u$ and the dimension of the stable manifold of the right equilibrium point $n_s$ satisfy $n_u + n_s > 2$. In this case, we prove that the corresponding centre-unstable and centre-stable manifolds in the full $(r_A, r_B, \phi_A, \phi_B)$-phase plane intersect transversally on the critical manifold, which persists under perturbation, see Theorem \ref{thm:persistence-heteroclinic-orbits}. Here, we also exploit that centre directions correspond to the invariance of solutions to the amplitude equations \eqref{eq:final-amplitude-eq} under phase shift. Together with the rigorous approximation results for the amplitude system, we thus obtain the existence of solutions to the pattern-forming system \eqref{eq:toy-model}, which describe pattern interfaces, that is, a moving front connecting two different patterns (including the trivial state), see Figures \ref{fig:T-to-trivial}--\ref{fig:TH-to-superpos}.
	
	\paragraph{Spatially periodic solutions}
	
	Since we can only guarantee that the amplitude equations give a good approximation to the full pattern-forming system \eqref{eq:toy-model} on a large, but finite, time interval, we consider the existence of global-in-time, spatially periodic solutions. Using centre manifold theory and a normal form transformation, we recover to leading order the amplitude equations \eqref{eq:final-amplitude-eq} for spatially constant $\Aav$, $\Bav$ as the reduced system on the centre manifold. These possess nontrivial, time-periodic solutions in appropriate parameter regimes, see Propositions \ref{prop:semi-trivial-solutions} and \ref{prop:fully-nontrivial-solutions}, which correspond to spatially periodic, slowly travelling wavetrains in the pattern-forming system \eqref{eq:toy-model}.
	
	The main challenge to obtain a persistence result for these time-periodic solutions on the centre manifold are fast oscillating terms $\ee^{\ii \varepsilon^{-2} n c_d T}$, which again arise from interactions of the amplitude modulations $A$ and $B$ since the phase velocities are different. Using a variation of singular perturbation theory due to Hale \cite[Chap.~VII]{hale1969}, see also Appendix \ref{app:persistence}, we still obtain invariant manifolds parametrised by the phase of the amplitude modulations and the fast angle appearing in the fast oscillating terms, see Theorems \ref{thm:pure-patterns-global} and \ref{thm:superposition-pattern-global}. This gives the existence of global, spatially periodic solutions to the pattern-forming system \eqref{eq:toy-model}. However, we cannot, in general, make statements about the temporal behaviour of the solutions. In particular, we cannot guarantee that the solutions are time-periodic, see Remark \ref{rem:complicated-dynamics}.
	
	\subsection{Related results}
	
	 The main topic of this paper is the unfolding of an instability with higher codimension in a PDE system using modulation theory and invariant manifolds. We point out that similar results have been obtained in the case of two Turing instabilities with a spatial 1:2 resonance in \cite{gauss2021}, where the system of amplitude equations consists of two coupled real Ginzburg–Landau equations. Additionally, similar results have been obtained in the case of a simultaneous Turing and long-wave Hopf instability in a general reaction-diffusion system, see \cite{schneider2022}. Here, a long-wave Hopf instability is characterised by an eigenvalue curve $\lambda(k,\mu)$, for which the real part vanishes quadratically at $k = 0$ and a critical value $\mu_c$, but the imaginary part is non-zero. Note that in real-valued reaction-diffusion systems, these curves appear as complex conjugated pairs. The authors then derive a system of amplitude equations consisting of a real and a complex Ginzburg–Landau equation and obtain a rigorous approximation result.
	
	In addition to the recent approaches using amplitude equations to describe nonlinear waves in PDEs on spatially extended domains, there is an extensive list of results on solutions on bounded domains with periodic boundary conditions arising from interacting instabilities. The case where both instabilities have the same wave number is discussed in \cite{guckenheimer1986}, whereas the case of a 1:2 resonance is considered in \cite{dangelmayr1986,armbruster1988,porter2001,porter2005}, see also \cite{porter2000} for a 1:3 resonance. The typical approach here is to consider the dynamics of the leading order equation on the centre manifold. Then, one can study the \emph{temporal} dynamics of the amplitude modulations of the critical Fourier modes, similar to Section \ref{sec:periodic-solutions}. In contrast, Section \ref{sec:fronts} considers the \emph{spatio-temporal} dynamics in a co-moving frame. We also point out that the instabilities considered are typically steady-state, i.e. Turing, instabilities. In particular, the temporal dynamics of spatially periodic solutions close to a Turing and Turing–Hopf instability seems to be open and, as Section \ref{sec:periodic-solutions} shows, is indeed nontrivial due to the presence of highly oscillatory higher-order terms.
	
	Finally, the front solutions connecting different patterns, which are constructed in this paper, are similar to modulating fronts. These are solutions of the form $u(t,x) = U(x-ct,x-c_p t)$, which are periodic with respect to its second argument and satisfy $\lim_{\xi \rightarrow \pm \infty} U(\xi,p) = u_\pm(p)$, where $u_\pm(p)$ are periodic solutions. Therefore, modulating fronts model a spatial transition between two periodic states. They have been constructed for different pattern-forming systems which destabilise through a single instability such as the cubic Swift–Hohenberg equation \cite{eckmann1991}, the Taylor–Couette problem \cite{haragus-courcelle1999}, a nonlocal reaction-diffusion equation close to a Turing instability \cite{faye2015}, as well as Swift–Hohenberg-type equations with an additional conservation law close to a Turing \cite{hilder2020} and a Turing–Hopf instability \cite{hilder2022}. The construction of modulating fronts is typically based on a spatial dynamics and centre manifold approach. Notably, this yields global solutions to the pattern-forming systems as opposed to solutions which only exist for a large but finite interval, which are constructed in this paper. However, we point out that there are substantial difficulties in applying this approach to the situation of a resonant Turing and Turing–Hopf instability as discussed in Section \ref{sec:discussion}.
	
	\subsection{Outline}
	
	The paper is structured as follows: In Section \ref{sec:amplitude-equations} we prove the validity of a formal derivation of the amplitude equations close to a 1:1 resonant Turing and Turing–Hopf instability. Since the justification proof follows the standard strategy, we only provide an overview of the proof in Appendix \ref{app:justification}.  Following this, we discuss the dynamics of the amplitude equations with a focus on space-time periodic solutions in Section \ref{sec:time-periodic-solutions} and fast-moving fronts connecting different space-time periodic solutions in Section \ref{sec:fronts}. This section contains a collection of numerically observed heteroclinic orbits, large classes of these orbits are then established rigorously in Section \ref{sec:proof-of-het-orbits}. Afterwards, we then translate the solutions constructed for the amplitude equations back to the full pattern-forming system \eqref{eq:toy-model} in Section \ref{sec:modulated-waves-in-full-system}. Finally, Section \ref{sec:periodic-solutions} contains the construction of global, spatially periodic solutions using centre manifold theory. Here, we use a variant of singular perturbation theory, which is outlined in Appendix \ref{app:persistence}. Concluding the paper, we discuss related open questions in Section \ref{sec:discussion} and give the explicit expressions for the coefficients in the amplitude equation in Appendix \ref{app:coefficients}.
	
	\section{Amplitude equations and their justification}\label{sec:amplitude-equations}
	
	We first start our analysis by a formal derivation of the amplitude equations governing the dynamics close to the onset of instability. Following the linear stability analysis, we make the ansatz
	\begin{equation}\label{eq:ansatz-amplitude-eq}
		\begin{pmatrix} u \\ v \end{pmatrix}(t,x) = \varepsilon A(T,X_1) \ee^{\ii x} \begin{pmatrix}
		    1 \\ 0
		\end{pmatrix} + \varepsilon B(T,X_2) \ee^{\ii (x-c_p t)} \begin{pmatrix}
		    0 \\ 1
		\end{pmatrix} + c.c + \Psi_\mathrm{hot} =: \Psi_\mathrm{GL}(A,B) + \Psi_\mathrm{hot},
	\end{equation}
	where $c.c.$ denotes the complex conjugated terms, $T = \varepsilon^2 t$ is the slow time scale, $X_1 = \varepsilon x$ is the slow spatial scale, and $X_2 = \varepsilon(x-c_g t)$ is the slow co-moving frame variable moving with the group velocity $c_g$ to be determined later. Additionally, $\Psi_\mathrm{hot}$ denotes the higher order residual terms, which are located at Fourier modes, which are not in the kernel of $L_0$, the linearisation of \eqref{eq:toy-model} about the trivial state. The residual terms are used to equilibrate lower-order terms in $\varepsilon$ to zero and we make the ansatz
	\begin{equation*}
        \begin{split}
    		\Psi_\mathrm{hot} &= \varepsilon^2 \left(\dfrac{1}{2}A_0(T,X_1,t) + A_2(T,X_1,t) \ee^{2\ii x} + c.c.\right) \begin{pmatrix}
    		    1 \\ 0
    		\end{pmatrix} \\
            &\qquad+ \varepsilon^2 \left(\dfrac{1}{2}B_0(T,X_2,t) + B_2(T,X_2,t) \ee^{\ii(x-c_p t)} + c.c.\right) \begin{pmatrix}
		    0 \\ 1
		\end{pmatrix}.
        \end{split}
	\end{equation*}
	We point out that compared to the usual ansatz for a pure Turing or Turing-Hopf instability, the amplitude modulations $A_0$, $A_2$, $B_0$ and $B_2$ not only depend on the slow temporal and spatial scales $T$, $X_1$ and $X_2$ but also on the fast time scale $t$. This is done to compensate for fast oscillating terms, which are induced by the fact that Turing and Turing-Hopf modes have different phase velocities, see also \cite{schneider1997}.
	
	Inserting the ansatz \eqref{eq:ansatz-amplitude-eq} into the system \eqref{eq:toy-model} and equating different powers of $\varepsilon$ to zero, we find at $\varepsilon^1$-balance that $c_p = c_d$, which determines the linear phase velocity. Calculating the balance at $\varepsilon^2$, we additionally need to separate the different Fourier modes. At $\varepsilon^2 \ee^{\ii x}$ we find that $c_g = 3c_d$, which determines the linear group velocity. At $\varepsilon^2 \ee^{0\ii x}$ we find that $A_0$ and $B_0$ satisfy the differential equations
	\begin{equation*}
		\begin{split}
			\partial_t A_0 &= - A_0 + \ee^{\ii c_p t} f_{11} A \bar{B} + 2(f_{20} |A|^2 + f_{02} |B|^2) + \ee^{-\ii c_p t} f_{11} \bar{A} B, \\
			\partial_t B_0 &= - B_0 + \ee^{\ii c_p t} g_{11} A \bar{B} + 2(g_{20} |A|^2 + g_{02} |B|^2) + \ee^{-\ii c_p t} g_{11} \bar{A} B.
		\end{split}
	\end{equation*}
	Solving this explicitly yields that 
	\begin{equation}\label{eq:correction-terms-0}
		\begin{split}
			A_0 &= \dfrac{\ee^{\ii c_p t}}{1 + ic_p} f_{11}  A \bar{B} + 2(f_{20} |A|^2 + f_{02} |B|^2) + \dfrac{\ee^{-\ii c_p t}}{1 - ic_p} f_{11} \bar{A} B, \\
			B_0 &= \dfrac{\ee^{\ii c_p t}}{1 + ic_p} g_{11}  A \bar{B} + 2(g_{20} |A|^2 + g_{02} |B|^2) + \dfrac{\ee^{-\ii c_p t}}{1 - ic_p} g_{11} \bar{A} B
		\end{split}
	\end{equation}
	Similarly, balancing at $\varepsilon^2 \ee^{2\ii x}$, we obtain a differential equation for $A_2$ and $B_2$, respectively. Solving this then leads to 
	\begin{equation}\label{eq:correction-terms-2}
		\begin{split}
			A_2 &= \dfrac{1}{9} f_{20} A^2 + \dfrac{\ee^{-\ii c_p t}}{9-i c_p} f_{11} AB + \dfrac{\ee^{-2\ii c_p t}}{9-2ic_p} f_{02} B^2, \\
			B_2 &= \dfrac{\ee^{2\ii c_p t}}{9 + 8ic_p} g_{20} A^2 + \dfrac{\ee^{\ii c_p t}}{9 + 7 ic_p} g_{11} AB + \dfrac{1}{9 + 6ic_p} g_{02} B^2.
		\end{split}
	\end{equation}
	
	Finally, equilibrating at $\varepsilon^3 \ee^{\ii x}$ we that find the formal amplitude equations for the system \eqref{eq:toy-model} are, to leading order, given by
	\begin{equation}\label{eq:full-amplitude-eq}
		\begin{split}
			\partial_T A &= 4\partial_{X_1}^2 A + \alpha_u A + \gamma_1 A \abs{A}^2 + \gamma_2 A \abs{B}^2 +  \gamma_3 A^2 \bar{B}\ee^{\ii c_p t} + (\gamma_4 B \abs{A}^2 + \gamma_5 B\abs{B}^2) \ee^{-\ii c_p t} \\
            &\qquad + \gamma_6 B^2 \bar{A} \ee^{-2\ii c_p t}, \\
			\partial_T B &= (4 + 3ic_v) \partial_{X_2}^2 B + \alpha_v B + \gamma_7 B \abs{A}^2 + \gamma_8 B \abs{B}^2 + (\gamma_9 A\abs{A}^2 + \gamma_{10} A \abs{B}^2) \ee^{\ii c_p t} \\
            &\qquad+ \gamma_{11} A^2 \bar{B} \ee^{2\ii c_p t} + \gamma_{12} B^2 \bar{A} \ee^{-\ii c_p t}
		\end{split}
	\end{equation}
	with coefficients $\gamma_{j} \in \C$ for $1 \leq j \leq 12$. The local existence and uniqueness of solutions to \eqref{eq:full-amplitude-eq} follows from \cite[Lemma 2.5]{schneider1997}. In particular, the solutions can be bounded uniformly in $\eps \in (0,1)$.
	
	We point out that, unlike typical amplitude equations appearing close to pure Turing- or Turing–Hopf instability, the amplitude equations \eqref{eq:full-amplitude-eq} still depend on the fast time $t$. This dependence is explicit via the appearance of the fast oscillatory exponential functions $\ee^{\beta \ii c_p t}$ with $\beta = \pm 1, \pm 2$. However, the equations also depends implicitly on $t$ through the interaction terms of $A$ and $B$, which leads to an interaction of different co-moving frames. This can for example be seen by writing 
	\begin{equation*}
		(A\abs{B}^2)(T,X_1,t) = A(T,X_1) \abs{B(T,X_1 + \eps c_g t)}^2.
	\end{equation*}
	In particular, shifting both equations to the same co-moving frame generates a singular advection term in one of the equations, see e.g.~\eqref{eq:final-amplitude-eq}.
	
	One should note that the fast oscillating terms appear due to the difference in phase velocity between the different instabilities, while the interaction of the two different co-moving frames $X_1$ and $X_2$ is due to different group velocities. As we will see, the fast oscillating terms can be moved to higher orders of $\eps$. However, this is not the case for the interactions of $A$ and $B$, unless both $A$ and $B$ are sufficiently localised \cite[Remark 10.7.6]{schneider2017a}.

    We now turn to the rigorous justification of the amplitude equations \eqref{eq:full-amplitude-eq}. For this, we introduce $H^\ell_{l,u}$, $\ell \geq 0$, the space of uniformly local Sobolev functions, equipped with the norm
    \begin{equation*}
        \|w\|_{H^{\ell}_{l,u}} := \sup_{x \in \R} \|w\|_{H^\ell((x,x+1))},
    \end{equation*}
    where $H^\ell((x,x+1))$ denotes the usual Sobolev space on the Interval $(x,x+1)$ with the usual Sobolev norm, see also \cite[Sec.~8.3.1]{schneider2017a}. In this setting, the amplitude equations \eqref{eq:full-amplitude-eq} can also be rigorously justified by following the strategy in \cite{schneider1994a,schneider1997}, see also \cite[Chap.~10]{schneider2017a}. For completeness, we give an outline of the proof in Appendix \ref{app:justification}. Therefore, the following result holds.
	
	\begin{theorem}\label{thm:approximation-full-amplitude-eq}
		Let $\eps \in (0,1) \mapsto (A,B) \in C([0,T_0], (H^5_{l,u})^2)$ be a family of solutions to \eqref{eq:full-amplitude-eq}, which satisfies $\sup_{\eps \in (0,1)}\sup_{T \in [0,T_0]} \norm{(A,B)}_{(H^5_{l,u})^2} < \infty$. Then there exist $\eps_0 > 0$ and $C > 0$ such that for all $\eps \in (0,\eps_0)$ there is a solution $(u,v)(t,x,\eps)$ to \eqref{eq:toy-model} such that
		\begin{equation*}
			\sup_{t \in [0,T_0/\eps^2]} \norm{(u,v) - \Psi_\mathrm{GL}(A,B)}_{(H^1_{l,u})^2} < C \eps^2.
		\end{equation*}
	\end{theorem}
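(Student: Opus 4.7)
The plan is to follow the standard Schneider framework for justifying Ginzburg--Landau type modulation equations, adapted to handle the explicit fast-time dependence present in \eqref{eq:full-amplitude-eq}. First, I would augment the formal approximation $\Psi_\mathrm{GL}(A,B) + \Psi_\mathrm{hot}$ by higher-order correction terms $\eps^3 \Psi_3 + \eps^4 \Psi_4$ chosen to cancel the non-resonant part of the formal residual at non-critical Fourier modes $\ee^{\ii n x}$ with $n \neq \pm 1$. The resonant contributions at $\eps^3 \ee^{\pm \ii x}$ and $\eps^3 \ee^{\pm \ii(x - c_p t)}$ are precisely absorbed by the amplitude equations \eqref{eq:full-amplitude-eq}, so that the resulting improved approximation $\Psi_\eps$ has residual
\begin{equation*}
\Res(\Psi_\eps) := -\partial_t \Psi_\eps + L_\eps \Psi_\eps + N(\Psi_\eps)
\end{equation*}
of order $\Ocal(\eps^4)$ in $(H^1_{l,u})^2$ uniformly on $[0, T_0/\eps^2]$. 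The hypothesis $(A,B) \in C([0,T_0],(H^5_{l,u})^2)$ supplies exactly the spatial regularity required for the derivatives of $A, B$ appearing in $\Psi_3, \Psi_4$ and in the computation of the residual.

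Next, I would set up the error equation by writing $(u,v) = \Psi_\eps + \eps^2 R$, so that $R$ satisfies
\begin{equation*}
\partial_t R = L_\eps R + \eps^{-2} \Res(\Psi_\eps) + \eps^{-2}\bigl[N(\Psi_\eps + \eps^2 R) - N(\Psi_\eps)\bigr].
\end{equation*}
The linearisation $L_\eps$ of \eqref{eq:toy-model} about the trivial state generates a bounded analytic semigroup on $(H^1_{l,u})^2$ with spectral bound $\Ocal(\eps^2)$, and the nonlinear difference expands as $\eps^2 \, DN(\Psi_\eps) R + \Ocal(\eps^4 R^2)$. Since $DN(\Psi_\eps) = \Ocal(\eps)$, division by $\eps^2$ produces a linear forcing of size $\Ocal(\eps R)$ and a quadratic contribution of size $\Ocal(\eps^2 R^2)$. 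A variation-of-constants formula together with a Gronwall argument then yields $\sup_{t \in [0, T_0/\eps^2]} \norm{R(t)}_{(H^1_{l,u})^2} \leq C$, and since $\norm{\Psi_\eps - \Psi_\mathrm{GL}(A,B)}_{(H^1_{l,u})^2} = \Ocal(\eps^2)$, the claimed estimate follows.

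The main obstacle is the presence of the fast oscillatory prefactors $\ee^{\beta \ii c_p t}$ and the mismatch of co-moving frames $X_1 = \eps x$ versus $X_2 = \eps(x - c_g t)$ that appears in every cross-interaction term between $A$ and $B$. The former is harmless once the amplitude equations themselves absorb the resonant contributions, since bounded exponential prefactors do not spoil residual bounds; the latter is handled by the translation invariance of the uniformly local Sobolev norm, so that expressions such as $\norm{A(T,X_1)\, B(T, X_1 - \eps^{-1} c_g T)}_{H^\ell_{l,u}}$ remain controlled uniformly in $\eps$. Without these two observations the $\Ocal(\eps^4)$ bound on $\Res(\Psi_\eps)$, which drives the whole Gronwall argument on the long timescale $[0, T_0/\eps^2]$, would fail to be uniform in $\eps$.
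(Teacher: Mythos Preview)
Your overall strategy follows Schneider's framework, but the Gronwall step contains a genuine gap that would make the argument fail for the quadratic part of the nonlinearities $f,g$ in \eqref{eq:toy-model}. You correctly identify the linear-in-$R$ forcing as $DN(\Psi_\eps)R = \Ocal(\eps)\cdot R$, coming from $2N_2(\Psi_\eps,R)$ with $\Psi_\eps = \Ocal(\eps)$. But Gronwall with an $\Ocal(\eps)$ coefficient over the long time interval $[0,T_0/\eps^2]$ yields growth of order $\exp(C\eps \cdot T_0/\eps^2) = \exp(CT_0/\eps)$, which is unbounded as $\eps \to 0$. The semigroup bound $\norm{\ee^{L_\eps t}} \leq C\ee^{C\eps^2 t}$ you invoke does not help, since it gives no decay. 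For purely cubic nonlinearities one has $DN(\Psi_\eps) = \Ocal(\eps^2)$ and your argument would succeed, but here the quadratic terms $f_{20}u^2, f_{11}uv,\dots$ are present and essential.

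The paper closes exactly this gap by introducing mode filters $E_c$, $E_s$ that separate Fourier modes near the critical wave numbers $k = \pm 1$ from the rest, and splitting $R = R_c + \eps R_s$. The key structural observation is that $E_c N_2(\Psi_c,R_c) = 0$: quadratic interactions of modes supported near $\pm 1$ land near $0$ or $\pm 2$ and are filtered out by $E_c$. This makes the effective linear forcing on $R_c$ of order $\eps^2$ rather than $\eps$, so Gronwall closes on the long timescale. On the stable side, the dangerous $\Ocal(1)$ term $\eps^{-1}E_s N_2(\Psi_c,R_c)$ survives, but is controlled by the genuine exponential decay $\norm{\ee^{\Lambda t}E_s^h} \lesssim \ee^{-\sigma_s t}$ on those modes. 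Your proposal needs precisely this critical/stable splitting (or an equivalent normal-form transform removing the $\Ocal(\eps)$ term) to go through; adding further correctors $\Psi_3,\Psi_4$ improves the residual but does nothing for the $N_2(\Psi,R)$ interaction.
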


    \begin{remark}
        Since $H^1_{l,u}$ embeds into the space of bounded and continuous functions, we also have that the error is small in the supremum norm, i.e.
        \begin{equation*}
			\sup_{t \in [0,T_0/\eps^2]} \norm{(u,v) - \Psi_\mathrm{GL}(A,B)}_{\infty} < C \eps^2.
		\end{equation*}
    \end{remark}
	
	Next, we show that the terms of \eqref{eq:full-amplitude-eq}, which contain the fast oscillatory term $\ee^{\ii c_p T/\eps^2}$ can be moved to higher orders of $\eps$. This is a consequence of the following result.
	
	\begin{lemma}\label{lem:averaging-amplitude-equations}
		Let $\eps \in (0,1) \mapsto (\Aav,\Bav) \in C([0,T_0],(H^7_{l,u})^2)$ be a family of solutions to the system
		\begin{equation}\label{eq:amplitude-eq-wohot}
			\begin{split}
				\partial_T \Aav &= 4\partial_{X_1}^2 \Aav + \alpha_u \Aav + \gamma_1 \Aav |\Aav|^2 + \gamma_2 \Aav |\Bav|^2, \\
				\partial_T \Bav &= (4+3ic_v) \partial_{X_2}^2 \Bav + \alpha_v \Bav + \gamma_7 \Bav |\Aav|^2 + \gamma_8 \Bav |\Bav|^2,
			\end{split}
		\end{equation}
		which satisfies $\sup_{\eps \in (0,1)}\sup_{T \in [0,T_0]} \norm{(\Aav,\Bav))}_{(H^8_{l,u})^2} \leq C_1 < \infty$. Then there exist $\eps_0 > 0$ and $C > 0$ such that for all $\eps \in (0,\eps_0)$ exist solutions $(A,B)$ to \eqref{eq:full-amplitude-eq} such that
		\begin{equation*}
			\sup_{T \in [0,T_0]} \norm{(\Aav,\Bav) - (A,B)}_{(H^5_{l,u})^2} \leq C \eps
		\end{equation*}
	\end{lemma}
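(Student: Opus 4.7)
The plan is to use a near-identity normal form transformation in the amplitude variables to absorb the fast oscillatory terms of \eqref{eq:full-amplitude-eq}. The key point is that the difference between the right-hand sides of \eqref{eq:full-amplitude-eq} and \eqref{eq:amplitude-eq-wohot} consists entirely of cubic polynomial interaction terms multiplied by $\ee^{\ii n c_p t}$ with $n \in \{\pm 1, \pm 2\}$, and since $t = T/\eps^2$, the identity $\eps^2 \partial_T \ee^{\ii n c_p t} = \ii n c_p \ee^{\ii n c_p t}$ exhibits each such oscillatory term as the leading part of the $T$-derivative of an explicit $\Ocal(\eps^2)$ correction with the same phase.

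More precisely, I write the oscillatory nonlinearities of \eqref{eq:full-amplitude-eq} as $\sum_{n \in \Lambda_A} \ee^{\ii n c_p t} P_n^A(\Aav,\Bav)$ and $\sum_{n \in \Lambda_B} \ee^{\ii n c_p t} P_n^B(\Aav,\Bav)$ with $\Lambda_A = \{1,-1,-2\}$, $\Lambda_B = \{1,2,-1\}$, and $P_n^A, P_n^B$ cubic polynomials in $(\Aav,\bar\Aav,\Bav,\bar\Bav)$, and set
\begin{equation*}
    \tilde A := \Aav + \eps^2 \sum_{n \in \Lambda_A} \frac{1}{\ii n c_p} \ee^{\ii n c_p t} P_n^A(\Aav,\Bav), \qquad \tilde B := \Bav + \eps^2 \sum_{n \in \Lambda_B} \frac{1}{\ii n c_p} \ee^{\ii n c_p t} P_n^B(\Aav,\Bav).
\end{equation*}
Differentiating in $T$ (with $X_1$ fixed for $\tilde A$ and $X_2$ fixed for $\tilde B$) and using \eqref{eq:amplitude-eq-wohot} to substitute $\partial_T \Aav, \partial_T \Bav$, the singular $\eps^{-2}$ contribution from the exponentials cancels the oscillatory nonlinearities of \eqref{eq:full-amplitude-eq} exactly, leaving the residual estimate
\begin{equation*}
    \|\partial_T \tilde A - \mathrm{RHS}_A(\tilde A,\tilde B)\|_{H^5_{l,u}} + \|\partial_T \tilde B - \mathrm{RHS}_B(\tilde A,\tilde B)\|_{H^5_{l,u}} \le C \eps
\end{equation*}
uniformly for $T \in [0,T_0]$. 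The loss of one power of $\eps$ (instead of $\eps^2$) originates from the frame mismatch: for example, applying $\partial_T$ with $X_1$ fixed to the factor $\Bav(T, X_1 - c_g T/\eps)$ inside $P_n^A$ produces an extra term $-(c_g/\eps)\partial_{X_2}\Bav$, which leaves an $\Ocal(\eps)$ oscillatory residue after multiplication by $\eps^2/(\ii n c_p)$; all remaining contributions are $\Ocal(\eps^2)$. The hypothesis $\sup \|(\Aav,\Bav)\|_{(H^8_{l,u})^2} < \infty$ absorbs the two spatial derivatives lost when rewriting $\partial_T \Aav, \partial_T \Bav$ through \eqref{eq:amplitude-eq-wohot}, as well as the one derivative spent on $\partial_{X_2}\Bav$.

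Next, I let $(A,B)$ be the solution of \eqref{eq:full-amplitude-eq} with initial datum $(\tilde A(0),\tilde B(0))$; existence on the whole interval $[0,T_0]$ for sufficiently small $\eps$ follows from local well-posedness in uniformly local Sobolev spaces (cf.\ \cite[Lemma 2.5]{schneider1997}) combined with a continuation argument based on the $\eps$-uniform a-priori bound on $(\tilde A,\tilde B)$ in $(H^5_{l,u})^2$. The difference $(A-\tilde A, B - \tilde B)$ then satisfies a semilinear equation with sectorial linear part on $(H^5_{l,u})^2$, with nonlinear perturbation that is Lipschitz on bounded balls (via Moser-type tame estimates and the embedding $H^5_{l,u} \hookrightarrow C^4_b$), and with forcing of size $\Ocal(\eps)$ in $(H^5_{l,u})^2$. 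A standard Gronwall argument applied to the Duhamel formula then yields $\sup_{T \in [0,T_0]}\|(A-\tilde A, B - \tilde B)\|_{(H^5_{l,u})^2} \le C \eps$, and combining this with $\|(\Aav,\Bav) - (\tilde A,\tilde B)\|_{(H^5_{l,u})^2} \le C \eps^2$ yields the claimed bound.

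The main obstacle I anticipate is the careful bookkeeping of the frame interaction: the two co-moving frames $X_1 = \eps x$ and $X_2 = \eps(x - c_g t)$ couple through the nonlinearities and their relative velocity is singular in $\eps$, so one has to track exactly how $\partial_T$ distributes over the normal form correction to avoid uncontrolled $\eps^{-1}$ losses. This frame-shift effect is precisely what limits the statement to $\Ocal(\eps)$ rather than $\Ocal(\eps^2)$; the other technical point, namely product and Lipschitz estimates in the non-reflexive uniformly local spaces $H^s_{l,u}$, is handled routinely by Moser-type tame estimates together with the embedding into $C^4_b$.
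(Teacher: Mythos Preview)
Your proof is correct but takes a different route from the paper's. The paper does not introduce a normal form correction $(\tilde A,\tilde B)$; instead it writes $(A,B)=(\Aav,\Bav)+\eps\Rcal$, derives an error equation for $\Rcal$ whose forcing is $\eps^{-1}$ times the oscillatory residual, and then handles the singular term $\int_0^T \eps^{-1}\ee^{(T-\tau)\Lcal}\mathrm{Res}(\Acalav)(\tau)\,\mathrm{d}\tau$ by a single integration by parts in $\tau$ (quoting \cite[Lemma~3.2]{schneider1997}), which gains back the factor $\eps^{2}$ from the fast phase $\ee^{\ii n c_p T/\eps^2}$ and loses the two spatial derivatives through $\Lcal$; Gronwall then bounds $\Rcal$ uniformly.

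Your normal form transformation is essentially the boundary term of that integration by parts, lifted to an explicit modification of the ansatz. The advantage of your route is that it makes the origin of the $\Ocal(\eps)$ (rather than $\Ocal(\eps^2)$) bound transparent: you see directly that the frame-mismatch derivative $-(c_g/\eps)\partial_{X_2}\Bav$ inside $\partial_T P_n^A$ is what costs the extra factor. The paper's route is shorter and avoids writing down $(\tilde A,\tilde B)$, at the price of hiding this mechanism inside the cited lemma. Both approaches use the same regularity budget ($H^8$ data, two derivatives lost to $\Lcal$ or $\partial_{X}^2$, one to the frame shift) and both finish with the same Gronwall closure in $(H^5_{l,u})^2$.
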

	\begin{proof}
		We follow the proof of \cite[Theorem 3.1]{schneider1997}. Let $Z = (H^5_{l,u})^2$ and note that all nonlinear terms of \eqref{eq:full-amplitude-eq} are cubic. Therefore, we can write the nonlinearity as a symmetric trilinear mapping $\Ncal$ which satisfies $\norm{\Ncal(u,v,w)}_Z \leq C \norm{u}_Z \norm{v}_Z \norm{w}_Z$ for a constant $C > 0$. Then, we make the ansatz $(A,B) = \Acalav + \eps \Rcal$ with $\Acalav = (\Aav,\Bav)$. Plugging this into the system \eqref{eq:full-amplitude-eq} yields that the error $\Rcal$ satisfies the system
		\begin{equation}\label{eq:error-eq}
			\partial_T\Rcal = \Lcal \Rcal + 3 \Ncal(\Acalav,\Acalav,\Rcal) + 3 \eps \Ncal(\Acalav,\Rcal,\Rcal) + \eps^2 \Ncal(\Rcal,\Rcal,\Rcal) + \eps^{-1} \mathrm{Res}(\Acalav)
		\end{equation}
		with the linear operator $\Lcal \Rcal = (4\partial_{X_1}^2 \Rcal_1 + \alpha_u \Rcal_1, (4+3ic_v) \partial_{X_2}^2 \Rcal_2 + \alpha_v \Rcal_2)^T$. Additionally, $\mathrm{Res}(\Acalav)$ is the residual given by
		\begin{equation*}
			\mathrm{Res}(\Acalav) = \begin{pmatrix*}
										\gamma_3 \Aav^2 \bar{\Bav}\ee^{\ii c_p T/\eps^2} + (\gamma_4 \Bav \abs{\Aav}^2 + \gamma_5 \Bav\abs{\Bav}^2) \ee^{-\ii c_p T/\eps^2} + \gamma_6 B^2 \bar{\Aav} \ee^{-2\ii c_p T/\eps^2} \\
										(\gamma_9 A\abs{\Aav}^2 + \gamma_{10} A \abs{\Bav}^2) \ee^{\ii c_p T/\eps^2} + \gamma_{11} \Aav^2 \bar{\Bav} \ee^{2\ii c_p T/\eps^2} + \gamma_{12} \Bav^2 \bar{\Aav} \ee^{-\ii c_p T/\eps^2}
									\end{pmatrix*}.
		\end{equation*}
		Since \eqref{eq:full-amplitude-eq} has a unique solution on $[0,T_0]$ due to \cite[Lemma 2.5]{schneider1997}, and $(\Aav,\Bav)$ exists by assumption, we find that \eqref{eq:error-eq} has a unique solution on $[0,T_0]$. Then, using that $\Lcal$ generates an analytic semigroup in $(L^2_{l,u})^2$, which satisfies $\norm{\ee^{T\Lcal}}_Z \leq C_2$ for all $T \in [0,T_0]$, we can write the equation for the error $\Rcal$ in integral form as
		\begin{equation}\label{eq:integral-eq-error}
			\begin{split}
				\Rcal(T) &= \int_0^T \ee^{(T-\tau)\Lcal} \left(3 \Ncal(\Acalav(\tau),\Acalav(\tau),\Rcal(\tau)) + 3 \eps \Ncal(\Acalav(\tau), \Rcal(\tau),\Rcal(\tau))\right) \dd \tau \\
                &\quad + \int_0^T \ee^{(T-\tau)\Lcal} \eps^2 \Ncal(\Rcal(\tau),\Rcal(\tau),\Rcal(\tau)) \,\mathrm{d}\tau + \int_0^T \eps^{-1} \ee^{(T-\tau)\Lcal} \mathrm{Res}(\Acalav)(\tau) \,\mathrm{d}\tau.
			\end{split}
		\end{equation}
		Integrating by parts shows that the last term is bounded in $Z$ uniformly for all $T \in [0,T_0]$ and $\eps \in (0,1)$, see \cite[Lemma 3.2]{schneider1997}. Here, we use that $\Acalav(T) \in (H^7_{l,u})^2$ for all $T \in [0,T_0]$. 
		
		We now show that there exist $\eps_0 > 0$ and $D > 0$ such that the solution to the error equation \eqref{eq:error-eq} satisfies $\sup_{T \in [0,T_0]} \norm{\Rcal}_Z \leq D$ for all $\eps \in (0,\eps_0)$ with a constant $D$ to independent of $\eps$. Since $\Rcal(0) = 0$, we can assume that $\norm{\Rcal(T)}_Z \leq D$ at least for $T$ sufficiently small. With this, we estimate
		\begin{equation*}
			\norm{\Tcal(\Rcal)}_Z \leq \left(\int_0^T 3 C_2 C C_1^2 \norm{\Rcal(\tau)}_Z \,\mathrm{d}\tau\right) + T_0 C_2 C (\eps C_1 D^2 + \eps^2 D^3) + C_\mathrm{Res}.
		\end{equation*}
		By choosing $\eps_0$ sufficiently small we can guarantee that $\eps C_1 D^2 + \eps^2 D^3 \leq 1$ holds for all $\eps \in (0,\eps_0)$. Then, using Gronwall's inequality gives that
		\begin{equation}
			\norm{\Rcal(T)}_Z \leq (T_0 C_2 C  + C_\mathrm{Res}) \ee^{T_0 C_2 C C_2^2} =: D,
		\end{equation}
		which defines the constant $D$.	In particular, this shows by a bootstrapping argument that $\sup_{T \in [0,T_0]}\norm{\Rcal(T)} \leq D$. This completes the proof.
	\end{proof}
	
	To make the fast shifting dynamics in the nonlinearities of \eqref{eq:amplitude-eq-wohot} more explicit we write $\Bav$ as a function of $T$ and $X_1$ instead of $X_2$. For this we recall that $X_2 = X_1 + c_g T/\eps$. This yields the system
	\begin{equation}\label{eq:final-amplitude-eq}
		\begin{split}
			\partial_T \Aav &= 4\partial_{X_1}^2 \Aav + \alpha_u \Aav + \gamma_1 \Aav |\Aav|^2 + \gamma_2 \Aav |\Bav|^2, \\
			\partial_T \Bav &= (4+3ic_v) \partial_{X_1}^2 \Bav - \frac{c_g}{\eps} \partial_{X_1} \Bav + \alpha_v \Bav + \gamma_7 \Bav |\Aav|^2 + \gamma_8 \Bav |\Bav|^2,
		\end{split}
	\end{equation}
	where now the nonlinearities have no fast shifting dynamics, but the equation for $\Bav$ has a singular advection term. By using that \eqref{eq:final-amplitude-eq} is still equivalent to \eqref{eq:amplitude-eq-wohot} and combining the results of Theorem \ref{thm:approximation-full-amplitude-eq} and Lemma \ref{lem:averaging-amplitude-equations} we obtain the following approximation result.
	
	\begin{theorem}\label{thm:full-approx-result}
		Let $\eps \in (0,1) \mapsto (\Aav,\Bav) \in C([0,T_0],(H^7_{l,u})^2)$ be a family of solutions to \eqref{eq:final-amplitude-eq}, which satisfies 
		\begin{equation*}
			\sup_{\eps \in (0,1)} \sup_{T \in [0,T_0]} \norm{(\Aav,\Bav)}_{(H^7_{l,u})^2} < \infty.
		\end{equation*} 
		Then there exist $\eps_0 > 0$ and $C > 0$ such that for all $\eps \in (0,\eps_0)$ there is a solution $(u,v)(t,x,\eps)$ to \eqref{eq:toy-model} such that
		\begin{equation*}
			\sup_{t \in [0,T_0/\eps^2]} \norm{(u,v) - \Psi_\mathrm{GL}(\Aav,\Bav)}_{(H^1_{l,u})^2} \leq C\eps^2.
		\end{equation*}
	\end{theorem}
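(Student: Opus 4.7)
The plan is to chain the two preceding approximation results. First, I view $(\Aav,\Bav)$ as a family of solutions to \eqref{eq:amplitude-eq-wohot} rather than \eqref{eq:final-amplitude-eq}: the two systems are related by the $T$-dependent spatial translation $X_2 = X_1 + c_g T / \eps$, which preserves the uniformly local Sobolev norms by translation invariance, so the hypothesis that $(\Aav, \Bav)$ is uniformly bounded in $C([0,T_0], (H^7_{l,u})^2)$ carries over without loss to the $(X_1,X_2)$-formulation needed for the averaging lemma.

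Applying Lemma \ref{lem:averaging-amplitude-equations} to $(\Aav, \Bav)$ then produces, for $\eps$ sufficiently small, a family of solutions $(A,B)$ to \eqref{eq:full-amplitude-eq} satisfying
\begin{equation*}
\sup_{T \in [0,T_0]} \norm{(\Aav, \Bav) - (A,B)}_{(H^5_{l,u})^2} \leq C_1 \eps.
\end{equation*}
In particular, $(A,B)$ is itself uniformly bounded in $(H^5_{l,u})^2$ with respect to $\eps$, so Theorem \ref{thm:approximation-full-amplitude-eq} applies and yields a solution $(u,v)$ of \eqref{eq:toy-model} with
\begin{equation*}
\sup_{t \in [0, T_0/\eps^2]} \norm{(u,v) - \Psi_\mathrm{GL}(A,B)}_{(H^1_{l,u})^2} \leq C_2 \eps^2.
\end{equation*}

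It then remains to control the difference $\Psi_\mathrm{GL}(A,B) - \Psi_\mathrm{GL}(\Aav, \Bav)$ and to combine the estimates via the triangle inequality. Since $\Psi_\mathrm{GL}$ is $\eps$ times a linear combination of the amplitudes multiplied by the bounded Fourier modes $\ee^{\pm \ii x}$ and $\ee^{\pm \ii(x - c_p t)}$, this difference has the form $\eps (A - \Aav)(T, X_1)\, \ee^{\ii x} (1,0)^T + c.c.$ plus the analogous $B$-term in the $X_2$-frame. A direct chain-rule computation together with the embedding $H^1_{l,u} \embed L^\infty$ shows that for any $w \in H^2_{l,u}$ and $\eps \in (0,1)$,
\begin{equation*}
\norm{w(\eps \cdot)\, \ee^{\ii \cdot}}_{H^1_{l,u}(\R)} \leq C \norm{w}_{H^2_{l,u}(\R)}
\end{equation*}
uniformly in $\eps$; the slow rescaling costs one derivative but no power of $\eps$ because integration over intervals of length $1$ in the fast variable $x$ corresponds to intervals of length $\eps$ in the slow variable $X_1$. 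Applied to $A - \Aav$ and $B - \Bav$, this yields
\begin{equation*}
\norm{\Psi_\mathrm{GL}(A,B) - \Psi_\mathrm{GL}(\Aav, \Bav)}_{(H^1_{l,u})^2} \leq C_3 \eps \norm{(A,B) - (\Aav, \Bav)}_{(H^2_{l,u})^2} \leq C_3 C_1 \eps^2.
\end{equation*}

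The main subtlety is bookkeeping rather than hard analysis: one must arrange the regularity chain so that $H^7_{l,u}$ on $(\Aav, \Bav)$ suffices both to feed Lemma \ref{lem:averaging-amplitude-equations} (producing $(A,B)$ uniformly bounded in $(H^5_{l,u})^2$ as required by Theorem \ref{thm:approximation-full-amplitude-eq}) and to spare one derivative for the slow-to-fast rescaling in the final $\Psi_\mathrm{GL}$-difference estimate. The two powers of $\eps$ in the final bound are generated independently: one by the averaging of fast oscillatory terms in Lemma \ref{lem:averaging-amplitude-equations}, and one by the prefactor $\eps$ already present in $\Psi_\mathrm{GL}$, which multiplies the $\Ocal(\eps)$-difference of amplitudes.
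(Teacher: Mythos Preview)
Your proposal is correct and follows exactly the approach the paper indicates: the paper's ``proof'' is merely the one-line remark that \eqref{eq:final-amplitude-eq} is equivalent to \eqref{eq:amplitude-eq-wohot} and that one combines Theorem~\ref{thm:approximation-full-amplitude-eq} with Lemma~\ref{lem:averaging-amplitude-equations}; you have simply filled in the triangle-inequality step and the $\Psi_\mathrm{GL}$-difference estimate that the paper leaves implicit. One minor comment: your justification of $\norm{w(\eps\cdot)\ee^{\ii\cdot}}_{H^1_{l,u}} \leq C\norm{w}_{H^2_{l,u}}$ is slightly imprecise in wording---the real reason no negative power of $\eps$ appears is that you pass through the Sobolev embedding $H^1_{l,u}\hookrightarrow L^\infty$ (and likewise for $w'$), not that interval lengths rescale---but the estimate itself and its application are correct.
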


	\section{Modulation dynamics -- time periodic solutions}\label{sec:time-periodic-solutions}
	
	After having shown that the dynamics of the model \eqref{eq:toy-model} is well-approximated on a long time interval by the dynamics of \eqref{eq:final-amplitude-eq}, we now discuss the dynamics of the amplitude system. In this section, we consider the existence of space-time periodic solutions to \eqref{eq:final-amplitude-eq}.
	
	\subsection{Time-periodic solutions}
	
	First, we look for time-periodic solutions to \eqref{eq:final-amplitude-eq}. For this we make the ansatz $(\Aav,\Bav)(T,X) = (r_A \ee^{\ii \omega_A T}, r_B \ee^{\ii \omega_B T})$ with $r_A, r_B > 0$ and $\omega_A, \omega_B \in \R$ to obtain the system
	\begin{equation*}
		\begin{split}
			0 &= (\alpha_u - i\omega_A) r_A + r_A (\gamma_1 r_A^2 + \gamma_2 r_B^2), \\
			0 &= (\alpha_v - i\omega_B) r_B + r_B (\gamma_7 r_A^2 + \gamma_8 r_B^2).
		\end{split}
	\end{equation*}
	We then obtain separate equations for $r_A, r_B$ and $\omega_A, \omega_B$ by equating the real and imaginary parts of the system separately to zero. This yields the system
	\begin{equation}\label{eq:system-time-periodic-sols-amplitude-equation}
		\begin{split}
			0 &= r_A (\alpha_u + \gamma_{1,r} r_A^2 + \gamma_{2,r} r_B^2), \\
			0 &= r_B (\alpha_v + \gamma_{7,r} r_A^2 + \gamma_{8,r} r_B^2), \\
			0 &= r_A (-\omega_A + \gamma_{1,i} r_A^2 + \gamma_{2,i} r_B^2), \\
			0 &= r_B (-\omega_B + \gamma_{7,i} r_A^2 + \gamma_{8,i} r_B^2),
		\end{split}
	\end{equation} 
	where $\gamma_j = \gamma_{j,r} + i\gamma_{j,i}$ with $\gamma_{j,r}, \gamma_{j,i} \in \R$ for $j = 1,2,7,8$. Note that since the nonlinear coefficients $\gamma_1, \gamma_2, \gamma_7, \gamma_8 \in \C$ are, in general, genuinely complex-valued, see also Appendix \ref{app:coefficients}, we cannot expect to find time-independent solutions with $\omega_A = \omega_B = 0$, unless additional structure is imposed on the nonlinearities $f$ and $g$ in \eqref{eq:toy-model}.
	
	\begin{remark}\label{rem:time-periodic-solutions-correspond-to-speed-correction}
		Recalling the ansatz $\Psi_\mathrm{GL}$ and the fact that $\ee^{\ii \omega T} = \ee^{\ii\omega \varepsilon^2 t}$, we point out that time-periodic solutions of the amplitude equations \eqref{eq:final-amplitude-eq} correspond to travelling wavetrains of the full system \eqref{eq:toy-model}, which have a higher-order corrected phase velocity compared to the linear phase velocity.
	\end{remark}
	
	The system \eqref{eq:system-time-periodic-sols-amplitude-equation} has three types of solutions. First, it has a trivial solution at $(r_A,r_B,\omega_A,\omega_B) = (0,0,0,0)$. Second, if either $r_A = 0$ or $r_B = 0$, but not both, we call a solution semi-trivial. Finally, if neither $r_A = 0$ nor $r_B = 0$, we call a solution nontrivial. We now derive conditions on the parameters, which guarantee the existence for semi-trivial and fully nontrivial solutions to \eqref{eq:system-time-periodic-sols-amplitude-equation}. For semi-trivial solutions, we obtain the following result after a straightforward computation.
	
	\begin{proposition}\label{prop:semi-trivial-solutions}
		The system \eqref{eq:system-time-periodic-sols-amplitude-equation} has a semi-trivial solution $(r_A, 0, \omega_A, 0)$ with $r_A > 0$ if and only if $\alpha_u \gamma_{1,r} < 0$. The solution is then given by
		\begin{equation*}
			r_A^2 = - \dfrac{\alpha_u}{\gamma_{1,r}}, \text{ and } \omega_A = \dfrac{\alpha_u \gamma_{1,i}}{\gamma_{1,r}}.
		\end{equation*} 
		Similarly, the system \eqref{eq:system-time-periodic-sols-amplitude-equation} has a semi-trivial solution $(0, r_B, 0, \omega_B)$ with $r_B > 0$ if and only if $\alpha_v \gamma_{8,r} < 0$.
	\end{proposition}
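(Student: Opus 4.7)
The plan is to observe that setting $r_B = 0$ in \eqref{eq:system-time-periodic-sols-amplitude-equation} makes the second and fourth equations identically satisfied, so that the entire system collapses to the two remaining scalar equations in the unknowns $(r_A, \omega_A)$. These can then be solved by direct algebra after dividing out the common factor $r_A > 0$.

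First I would substitute $r_B = 0$ into \eqref{eq:system-time-periodic-sols-amplitude-equation}. The second and fourth equations become $0 = 0$ trivially, and the remaining two reduce to
\begin{equation*}
    0 = r_A\bigl(\alpha_u + \gamma_{1,r} r_A^2\bigr), \qquad 0 = r_A\bigl(-\omega_A + \gamma_{1,i} r_A^2\bigr).
\end{equation*}
Since we seek a solution with $r_A > 0$, I can divide both equations by $r_A$ to obtain the decoupled pair $\alpha_u + \gamma_{1,r} r_A^2 = 0$ and $\omega_A = \gamma_{1,i} r_A^2$. The first equation forces $r_A^2 = -\alpha_u/\gamma_{1,r}$, and for the right-hand side to be strictly positive (which is necessary and sufficient for the existence of a real $r_A > 0$), the parameters $\alpha_u$ and $\gamma_{1,r}$ must have opposite signs, i.e.\ $\alpha_u \gamma_{1,r} < 0$. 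Conversely, under this sign condition the square root is well-defined and positive, so the first reduced equation has the unique positive solution $r_A = \sqrt{-\alpha_u/\gamma_{1,r}}$, and substitution into the second reduced equation determines $\omega_A$ uniquely, giving the claimed formula.

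Finally, the second half of the statement follows by an entirely symmetric argument: setting $r_A = 0$ makes the first and third equations of \eqref{eq:system-time-periodic-sols-amplitude-equation} vanish identically, and the same algebraic reasoning applied to the remaining pair — with $\alpha_v$, $\gamma_{8,r}$, $\gamma_{8,i}$ playing the roles of $\alpha_u$, $\gamma_{1,r}$, $\gamma_{1,i}$ — yields the existence condition $\alpha_v \gamma_{8,r} < 0$ together with the corresponding expressions for $r_B$ and $\omega_B$. I do not foresee any genuine obstacle in this proof; the statement is a direct algebraic consequence of the defining system, and the only point to verify carefully is that the real-part equation and the imaginary-part equation decouple in the sense that the former determines $r_A$ and the latter subsequently determines $\omega_A$ without imposing any further constraint on the parameters.
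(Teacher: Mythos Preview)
Your proposal is correct and is precisely the ``straightforward computation'' the paper alludes to in lieu of a proof: substitute $r_B=0$, divide through by $r_A>0$, and read off $r_A^2$ from the real-part equation and $\omega_A$ from the imaginary-part equation. There is nothing to add.
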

	
	We now turn to fully nontrivial solutions, where both $r_A \neq 0$ and $r_B \neq 0$. If such a solution exists, $r_A^2$ and $r_B^2$ must satisfy the linear system
	\begin{equation*}
		\begin{pmatrix}
			\gamma_{1,r} & \gamma_{2,r} \\
			\gamma_{7,r} & \gamma_{8,r}
		\end{pmatrix} \begin{pmatrix}
			r_A^2 \\ r_B^2
		\end{pmatrix} = \begin{pmatrix}
			-\alpha_u \\ -\alpha_v
		\end{pmatrix},
	\end{equation*}
	which has a nontrivial solution for every $\alpha_u, \alpha_v \in \R$ if and only if $d := \gamma_{1,r} \gamma_{8,r} - \gamma_{2,r} \gamma_{7,r} \neq 0$ and 
	\begin{equation*}
		\begin{split}
			r_A^2 = -\dfrac{\gamma_{8,r} \alpha_u - \gamma_{2,r}\alpha_v}{d} > 0, \\
			r_B^2 = -\dfrac{\gamma_{1,r} \alpha_v - \gamma_{7,r}\alpha_u}{d} > 0.
		\end{split}
	\end{equation*}
	Then, for given $r_A$ and $r_B$, the wave numbers $\omega_A$ and $\omega_B$ are then given by
	\begin{equation*}
		\begin{pmatrix}
			\omega_A \\ \omega_B
		\end{pmatrix} = \begin{pmatrix}
			\gamma_{1,i} & \gamma_{2,i} \\
			\gamma_{7,i} & \gamma_{8,i}
		\end{pmatrix}
		\begin{pmatrix}
			r_A^2 \\ r_B^2
		\end{pmatrix} = -\dfrac{1}{\gamma_{1,r} \gamma_{8,r} - \gamma_{2,r} \gamma_{7,r}} \begin{pmatrix}
			\gamma_{1,i} & \gamma_{2,i} \\
			\gamma_{7,i} & \gamma_{8,i}
		\end{pmatrix} \begin{pmatrix}
			\gamma_{8,r} & -\gamma_{2,r} \\
			-\gamma_{7,r} & \gamma_{1,r}
		\end{pmatrix} \begin{pmatrix}
			\alpha_u \\ \alpha_v
		\end{pmatrix}.
	\end{equation*}
	This yields the following proposition. 
	
	\begin{proposition}\label{prop:fully-nontrivial-solutions}
		The system \eqref{eq:system-time-periodic-sols-amplitude-equation} has a fully nontrivial solution if $d \neq 0$ and 
		\begin{equation*}
			\begin{split}
				\dfrac{\gamma_{8,r} \alpha_u - \gamma_{2,r}\alpha_v}{d} < 0 \text{ and } \dfrac{\gamma_{1,r} \alpha_v - \gamma_{7,r}\alpha_u}{d} < 0.
			\end{split}
		\end{equation*}
	\end{proposition}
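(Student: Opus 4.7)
The plan is to reduce the existence question for fully nontrivial solutions to a $2\times 2$ linear algebra problem. By the definition of a fully nontrivial solution, both $r_A \neq 0$ and $r_B \neq 0$, so I can divide the first and second equations of \eqref{eq:system-time-periodic-sols-amplitude-equation} by $r_A$ and $r_B$, respectively. This leaves the linear system
\begin{equation*}
    \begin{pmatrix} \gamma_{1,r} & \gamma_{2,r} \\ \gamma_{7,r} & \gamma_{8,r} \end{pmatrix} \begin{pmatrix} r_A^2 \\ r_B^2 \end{pmatrix} = \begin{pmatrix} -\alpha_u \\ -\alpha_v \end{pmatrix}
\end{equation*}
for the squared radii, whose coefficient matrix has determinant equal to $d$. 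Under the non-degeneracy assumption $d \neq 0$, Cramer's rule gives unique real values
\begin{equation*}
    r_A^2 = -\frac{\gamma_{8,r}\alpha_u - \gamma_{2,r}\alpha_v}{d}, \qquad r_B^2 = -\frac{\gamma_{1,r}\alpha_v - \gamma_{7,r}\alpha_u}{d}.
\end{equation*}

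The next step is to check that these candidates are admissible, i.e.\ that $r_A, r_B$ can be chosen as positive real numbers. This forces $r_A^2 > 0$ and $r_B^2 > 0$, which is precisely the pair of sign conditions stated in the proposition. Once these conditions are met, positive square roots $r_A$ and $r_B$ are uniquely determined.

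Finally, with $r_A^2$ and $r_B^2$ fixed, the third and fourth equations of \eqref{eq:system-time-periodic-sols-amplitude-equation} can be solved directly (again dividing by $r_A$ and $r_B$) to yield the uniquely determined temporal wave numbers
\begin{equation*}
    \omega_A = \gamma_{1,i} r_A^2 + \gamma_{2,i} r_B^2, \qquad \omega_B = \gamma_{7,i} r_A^2 + \gamma_{8,i} r_B^2.
\end{equation*}
Since this is just algebraic manipulation with no analytic obstacle, I do not expect any hard step; the proof is essentially a direct application of Cramer's rule, with the sign conditions translating admissibility of squared radii into admissibility of their positive square roots.
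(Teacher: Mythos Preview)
Your proposal is correct and follows essentially the same approach as the paper: dividing through by the nonzero radii to obtain a $2\times2$ linear system for $(r_A^2,r_B^2)$, solving via the inverse (Cramer's rule), imposing the positivity conditions, and then reading off $\omega_A,\omega_B$ from the remaining two equations. The paper presents this computation in the paragraph immediately preceding the proposition rather than as a formal proof, but the content is identical.
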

	
	\begin{remark}
		Note that the conditions for the existence of semi-trivial and fully nontrivial solutions of \eqref{eq:system-time-periodic-sols-amplitude-equation} only depend on the real parts of the nonlinear coefficient, but not their imaginary part.
	\end{remark}
	
	\begin{remark}
		Due to the invariance of \eqref{eq:final-amplitude-eq} under rigid rotations $(\Aav, \Bav) \mapsto (\Aav \ee^{\ii\phi_A}, \Bav \ee^{\ii\phi_B})$, time-periodic solutions always appear as a one-parameter family $(r_A \ee^{\ii(\omega_A T + \phi_A)}, r_B \ee^{\ii(\omega_B T + \phi_B)})$ with $\phi_A, \phi_B \in [0,2\pi)$.
	\end{remark}

	\subsection{Space-time periodic waves}
	
	Next, we consider space-time periodic solutions to the amplitude equations \eqref{eq:final-amplitude-eq}, that is, we make the ansatz
	\begin{equation}\label{eq:ansatz-space-time-periodic}
		(\Aav, \Bav)(T,X) = (r_A \ee^{\ii(k_A X + \omega_A T)}, r_B \ee^{\ii (k_B X + \omega_B T)})
	\end{equation}
	with amplitudes $r_A, r_B \geq 0$, spatial wave numbers $k_A, k_B \in \R$ and temporal wave numbers $\omega_A, \omega_B \in \R$. Notice that in the case $k_A = k_B = 0$, we obtain spatially constant and purely time-periodic solutions considered in the previous section.
	
	Inserting the ansatz \eqref{eq:ansatz-space-time-periodic} into the amplitude equations \eqref{eq:final-amplitude-eq} we find that
	\begin{equation*}
		\begin{split}
			0 &= r_A \left(i \omega_A - 4 k_A^2 + \alpha_u + \gamma_1 r_A^2 + \gamma_2 r_B^2\right), \\
			0 &= r_B \left(-i\omega_B - (4 + 3ic_v) k_B^2 - \dfrac{c_g}{\varepsilon} i k_B + \alpha_v + \gamma_7 r_A^2 + \gamma_8 r_B^2\right)
		\end{split}
	\end{equation*}
	To compensate for the singular term in the second equation, we restrict to solutions with a long spatial wave-length. That is, we assume that $k_j = \varepsilon \tilde{k}_j$ with $\tilde{k}_j \in \R$ for $j \in\{ A,B\}$. Additionally splitting into real and imaginary parts, this yields
	\begin{equation}\label{eq:system-spatio-temporal-solutions}
		\begin{split}
			0 &= r_A \left(\alpha_u + \gamma_{1,r} r_A^2 + \gamma_{2,r} r_B^2\right) - 4 \varepsilon^2  r_A \tilde{k}_A^2, \\
			0 &= r_A \left(- \omega_A+ \gamma_{1,i} r_A^2 + \gamma_{2,i} r_B^2\right), \\
			0 &= r_B \left(\alpha_v + \gamma_{7,r} r_A^2 + \gamma_{8,r} r_B^2\right) - 4 \varepsilon^2 r_B \tilde{k}_B^2, \\
			0 &= r_B \left(- \omega_B - c_g \tilde{k}_B + \gamma_{7,i} r_A^2 + \gamma_{8,i} r_B^2\right) - 3 \varepsilon^2 c_v r_B \tilde{k}_B^2.
		\end{split}
	\end{equation}
	Using that all equation are homogeneous in either $r_A$ or $r_B$ and recalling the calculations in the purely time-periodic case, we obtain the following result.
	
	\begin{proposition}\label{prop:space-time-periodic-waves}
		For every bounded set of spatial wave numbers $(\tilde{k}_A, \tilde{k}_B)$ exists an $\varepsilon_0 > 0$ such that for all $\varepsilon \in (0,\varepsilon_0)$ there are families of solutions $(r_A,\omega_A, r_B, \omega_B)$ parametrised by $(\tilde{k}_A, \tilde{k}_B)$. Specifically, we obtain
		\begin{itemize}
			\item the trivial solution $r_A = r_B = 0$;
			\item two families of semi-trivial solutions with either $r_A = 0$ or $r_B = 0$ provided that the conditions of Proposition \ref{prop:semi-trivial-solutions} are satisfied;
			\item a family of fully non-trivial solutions with $r_A, r_B \neq 0$ if the conditions of Proposition \ref{prop:fully-nontrivial-solutions} hold.
		\end{itemize}
	\end{proposition}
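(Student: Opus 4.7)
The plan is to regard the algebraic system \eqref{eq:system-spatio-temporal-solutions} as a smooth map
\begin{equation*}
  F(r_A,\omega_A,r_B,\omega_B;\varepsilon,\tilde{k}_A,\tilde{k}_B) = 0
\end{equation*}
and parametrise its zero set by the perturbation parameters $(\varepsilon,\tilde{k}_A,\tilde{k}_B)$ near each of the purely time-periodic solutions identified in Propositions \ref{prop:semi-trivial-solutions} and \ref{prop:fully-nontrivial-solutions}. Since every equation in \eqref{eq:system-spatio-temporal-solutions} is homogeneous in either $r_A$ or $r_B$, I expect to be able to solve the system essentially by hand on each of the three branches, rather than having to invoke the implicit function theorem in its full generality.

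First I would dispose of the trivial branch: setting $r_A = r_B = 0$ solves \eqref{eq:system-spatio-temporal-solutions} identically for any $\varepsilon, \tilde{k}_A, \tilde{k}_B$, so this case is immediate. Next, for the semi-trivial branch with $r_A > 0$, $r_B = 0$, I would divide equations 1 and 2 by $r_A$ and obtain the explicit expressions
\begin{equation*}
  r_A^2 = \frac{-\alpha_u + 4\varepsilon^2 \tilde{k}_A^2}{\gamma_{1,r}}, \qquad \omega_A = \gamma_{1,i} r_A^2,
\end{equation*}
while equations 3 and 4 are vacuous. The condition $\alpha_u \gamma_{1,r} < 0$ from Proposition \ref{prop:semi-trivial-solutions} keeps the right-hand side positive for all $\varepsilon$ small enough and $\tilde{k}_A$ in a bounded set. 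The analogous computation on the branch $r_A = 0$, $r_B > 0$ produces $r_B^2 = (-\alpha_v + 4\varepsilon^2 \tilde{k}_B^2)/\gamma_{8,r}$ and $\omega_B = -c_g\tilde{k}_B + \gamma_{8,i} r_B^2 - 3\varepsilon^2 c_v \tilde{k}_B^2$ under the condition $\alpha_v \gamma_{8,r} < 0$.

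For the fully nontrivial branch I would divide equations 1 and 3 by $r_A, r_B$ respectively to obtain a linear system
\begin{equation*}
  \begin{pmatrix} \gamma_{1,r} & \gamma_{2,r} \\ \gamma_{7,r} & \gamma_{8,r} \end{pmatrix}\begin{pmatrix} r_A^2 \\ r_B^2 \end{pmatrix} = \begin{pmatrix} -\alpha_u + 4\varepsilon^2 \tilde{k}_A^2 \\ -\alpha_v + 4\varepsilon^2 \tilde{k}_B^2 \end{pmatrix},
\end{equation*}
which is an $\varepsilon^2$-perturbation of the corresponding system in Section 3.1. The assumption $d = \gamma_{1,r}\gamma_{8,r} - \gamma_{2,r}\gamma_{7,r} \neq 0$ guarantees invertibility and yields $r_A^2, r_B^2$ as smooth functions of $(\varepsilon^2, \tilde{k}_A^2, \tilde{k}_B^2)$. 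By continuity the sign conditions from Proposition \ref{prop:fully-nontrivial-solutions} persist for all sufficiently small $\varepsilon$, so $r_A, r_B$ remain strictly positive; equations 2 and 4 then determine $\omega_A$ and $\omega_B$ explicitly as affine functions of $(r_A^2, r_B^2)$, $\tilde{k}_B$ and $\varepsilon^2 \tilde{k}_B^2$.

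The only point that demands a little care is uniformity in $(\tilde{k}_A,\tilde{k}_B)$, but since these parameters enter \eqref{eq:system-spatio-temporal-solutions} only multiplied by $\varepsilon^2$ (the single $c_g \tilde{k}_B$ term simply redefines $\omega_B$ and does not affect the solvability), restricting to a bounded set of spatial wave numbers produces a single threshold $\varepsilon_0 > 0$ below which the construction works uniformly. I do not anticipate any genuine analytical obstacle; the proposition is essentially a perturbative unfolding of the previous two results.
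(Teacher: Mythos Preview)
Your proposal is correct and follows precisely the approach the paper indicates: the paper merely states that the result follows ``using that all equations are homogeneous in either $r_A$ or $r_B$ and recalling the calculations in the purely time-periodic case,'' and your explicit computations on each branch carry this out in full. The only difference is that you supply more detail than the paper does, which is entirely appropriate.
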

	
	\begin{remark}
		To leading order, the radius $r_A$ and the corresponding temporal wave number $\omega_A$ as well as the radius $r_B$ are as in the purely time-periodic case in Section \ref{sec:time-periodic-solutions}. In contrast, the temporal wave number $\omega_B$ exhibits a leading order correction depending on the rescaled spatial wave number $\tilde{k}_B$.
	\end{remark}

	\section{Modulation dynamics -- fast-moving fronts connecting time-periodic solutions}\label{sec:fronts}
	
	We now consider front solutions to \eqref{eq:final-amplitude-eq}, which connects different space-time periodic solutions. For this, we fix a speed $c > 0$ and make the ansatz
	\begin{equation*}
		\Aav(T,X_1) = r_A(\xi) \ee^{\ii (\phi_A(\xi) + \omega_A T)}, \quad \Bav(T,X_1) = r_B(\xi) \ee^{\ii (\phi_B(\xi) + \omega_B T)},
	\end{equation*}
	where $\xi = X_1 - c T$ denotes the co-moving frame travelling with speed $c$, cf.~\cite{vansaarloos1992,doelman1993,doelman1996}. Note that these solutions are not pure travelling waves due to their explicit time dependence, but are a special case of defect solutions \cite{sandstede2004}. Inserting this into the amplitude equations \eqref{eq:final-amplitude-eq} and splitting into real and imaginary parts we find that
	\begin{equation}\label{eq:travelling-fronts-ode}
		\begin{split}
			-c r_A' &= 4 r_A'' + \alpha_u r_A - 4 r_A (\phi_A')^2 + \gamma_{1,r} r_A^3 + \gamma_{2,r} r_A r_B^2, \\
			\omega_A- c\phi_A' &= 4 \phi_A'' + 8 \dfrac{r_A'}{r_A} \phi_A' + \gamma_{1,i} r_A^2 + \gamma_{2,i} r_B^2, \\
			-c r_B' &= 4 r_B'' + \alpha_v r_B - \dfrac{c_g}{\varepsilon} r_B' - 4 r_B(\phi_B')^2 - 6 c_v r_B' \phi_B' - 3 c_v r_B \phi_B'' + \gamma_{7,r} r_B r_A^2 + \gamma_{8,r} r_B^3, \\
			\omega_B - c \phi_B' &= 4 \phi_B'' + 8 \dfrac{r_B'}{r_B} \phi_B' + 3 c_v \dfrac{r_B''}{r_B} - 3 c_v (\phi_B')^2 - \dfrac{c_g}{\varepsilon} \phi_B' + \gamma_{7,i} r_A^2 + \gamma_{8,i} r_B^2.
		\end{split}
	\end{equation}
	Here $(\cdot)' = \partial_{\xi}(\cdot)$ and $\gamma_{j,r}$ and $\gamma_{j,i}$ denote the real and imaginary part of $\gamma_{j}$, respectively.
	
	We restrict the analysis to the asymptotic regime of fast-travelling waves, that is, we set $c = \tfrac{c_0}{\varepsilon}$ with $c_0 > 0$, cf.~\cite{scheel1996,gauss2021}. Additionally, we introduce the rescaled time-variable $\tilde{\xi} = \varepsilon \xi$ to obtain
	\begin{equation}\label{eq:fast-travelling-fronts-ode-full}
		\begin{split}
			-c_0 \dot{r}_A &=  \alpha_u r_A + \gamma_{1,r} r_A^3 + \gamma_{2,r} r_A r_B^2 + \varepsilon^2 \left(4 \ddot{r}_A - 4 r_A \dot{\phi}_A^2\right), \\
			\omega_A - c_0 \dot{\phi}_A &= \gamma_{1,i} r_A^2 + \gamma_{2,i} r_B^2 + \varepsilon^2 \left(4 \ddot{\phi}_A + 8 \dfrac{\dot{r}_A}{r_A} \dot{\phi}_A\right), \\
			-c_0 \dot{r}_B &= \alpha_v r_B - c_g \dot{r}_B + \gamma_{7,r} r_B r_A^2 + \gamma_{8,r} r_B^3 + \varepsilon^2 \left(4 \ddot{r}_B - 4 r_B\dot{\phi}_B^2 - 6 c_v \dot{r}_B \dot{\phi}_B - 3 c_v r_B \ddot{\phi}_B\right), \\
			\omega_B - c_0 \dot{\phi}_B &= -c_g \dot{\phi}_B + \gamma_{7,i} r_A^2 + \gamma_{8,i} r_B^2 + \varepsilon^2 \left(4 \ddot{\phi}_B + 8 \dfrac{\dot{r}_B}{r_B} \dot{\phi}_B + 3 c_v  \dfrac{\ddot{r}_B}{r_B} - 3 c_v \dot{\phi}_B^2\right),
		\end{split}
	\end{equation}
	where $\dot{(\cdot)} = \partial_{\tilde{\xi}}$. To study the dynamics of \eqref{eq:fast-travelling-fronts-ode-full} we proceed as follows. First, we observe that $\phi_A$ and $\phi_B$ only appear as derivatives, which is a consequence of the $\mathbb{S}^1$-symmetry of the amplitude system \eqref{eq:final-amplitude-eq}. Therefore, we introduce the local wave numbers $\psi_A = \dot{\phi}_A$ and $\psi_B = \dot{\phi}_B$ and study the resulting $(r_A,r_B,\psi_A,\psi_B)$-system. Second, we neglect the terms of order $\varepsilon^2$ and consider the truncated system. Finally, we show that the heteroclinic orbits relevant for the present analysis persist using geometric singular perturbation theory, see e.g.~\cite{fenichel1979,kuehn2015}.
	
	\subsection{Analysis of the truncated system}
	
	Neglecting terms of order $\varepsilon^2$ and replacing $\psi_A = \dot{\phi}_A$ and $\psi_B = \dot{\phi}_B$, we find, after reordering, the system
	\begin{equation}\label{eq:slow-dynamics}
		\begin{split}
			-c_0 \dot{r}_A &=  \alpha_u r_A + \gamma_{1,r} r_A^3 + \gamma_{2,r} r_A r_B^2, \\
			(c_g - c_0) \dot{r}_B &= \alpha_v r_B + \gamma_{7,r} r_B r_A^2 + \gamma_{8,r} r_B^3 \\
			- c_0 \psi_A &= - \omega_A + \gamma_{1,i} r_A^2 + \gamma_{2,i} r_B^2, \\
			(c_g - c_0) \psi_B &= -\omega_B + \gamma_{7,i} r_A^2 + \gamma_{8,i} r_B^2.
		\end{split}
	\end{equation}
	Note that the equations for $r_A$ and $r_B$ do not depend on the local wave numbers $\psi_A$ and $\psi_B$. Therefore, we can study the $(r_A,r_B)$-dynamics in isolation and consider the system
	\begin{equation}\label{eq:radii-dynamics}
		\begin{split}
			-c_0 \dot{r}_A &=  \alpha_u r_A + \gamma_{1,r} r_A^3 + \gamma_{2,r} r_A r_B^2, \\
			(c_g - c_0) \dot{r}_B &= \alpha_v r_B + \gamma_{7,r} r_B r_A^2 + \gamma_{8,r} r_B^3.
		\end{split}
	\end{equation}
	This is a two-dimensional first order system of ordinary differential equations (ODEs) and can thus be studied using phase plane analysis. Yet, the system~\eqref{eq:radii-dynamics} is a quite general polynomial planar system, which makes this task already challenging.
	
	To simplify the following analysis, we reduce the number of free parameters through appropriate rescalings of "time" and the variables $r_A$ and $r_B$. For this, we assume that both semi-trivial equilibrium points exist and therefore, we have the parameter condition $\gamma_{1,r} < 0$ and $\gamma_{8,r} < 0$ since $\alpha_u, \alpha_v > 0$ by assumption. This results in the rescaled system
	\begin{equation}\label{eq:radii-dynamics-rescaled}
		\begin{split}
			\dot{r}_A &= -r_A + r_A^3 - \tilde{\gamma}_A r_A r_B^2 =: r_A F_A(r_A, r_B), \\
			\dot{r}_B &= \dfrac{1}{\tilde{c}}\left(r_B - r_B^3 + \tilde{\gamma}_B r_B r_A^2\right) =: r_B F_B(r_A,r_B)
		\end{split}
	\end{equation}
	with $\tilde{c}, \tilde{\gamma}_A, \tilde{\gamma}_B \in \R$ given by
	\begin{equation*}
		\begin{split}
			\tilde{c} &= \dfrac{(c_g - c_0)\alpha_u}{c_0 \alpha_v}, \qquad \tilde{\gamma}_A = \dfrac{\alpha_v \gamma_{2,r}}{\alpha_u |\gamma_{8,r}|}, \qquad \tilde{\gamma}_B = \dfrac{\alpha_u \gamma_{7,r}}{\alpha_v |\gamma_{1,r}|}.
		\end{split}
	\end{equation*}
	We point out that the sign of $\tilde{c}$ is still given by the sign of $c_g - c_0$. Additionally, note that, to keep the notation simple, we still use the same notation for the rescaled variables and the rescaled "time".
	
	\subsubsection{Equilibrium points and their stability}
	To start the analysis of \eqref{eq:radii-dynamics}, we consider its equilibrium points and their stability. The system \eqref{eq:radii-dynamics-rescaled} has the trivial equilibrium point $T = (0,0)$ and two semi-trivial equilibrium points
	\begin{equation*}
		ST_A = (1,0) \text{ and } ST_B = (0,1)
	\end{equation*}
	similar to Proposition \ref{prop:semi-trivial-solutions}. Additionally, under the conditions of Proposition \ref{prop:fully-nontrivial-solutions}, it also has a fully nontrivial equilibrium point $NT$ with $r_A > 0$ and $r_B > 0$.
	
	We now linearise about these equilibrium points. Starting with $T$, we find that the corresponding linearisation has eigenvalues $\lambda_{T,1} = -1$ and $\lambda_{T,2} = \tilde{c}^{-1}$ with eigenvectors $(1,0)$ and $(0,1)$, respectively. In particular, $\lambda_{T,1}$ is negative and $\mathrm{sign}(\lambda_{T,2}) = \mathrm{sign}(c_g - c_0)$. Therefore, if $c_g < c_0$ the trivial equilibrium point is stable and if $c_g > c_0$ it is a saddle. 
	
	Next, we linearise about the semi-trivial equilibrium point $ST_A$. The resulting linearisation is again a diagonal matrix with eigenvalues $\lambda_{A,1} = 2$ and $\lambda_{A,2} = \tilde{c}^{-1} (1+\tilde{\gamma}_B)$ with eigenvectors $(1,0)$ and $(0,1)$, respectively. Similarly, linearising about the other semi-trivial equilibrium point $ST_B$ we find that the linearisation has the eigenvalues $\lambda_{B,1} = -1-\tilde{\gamma}_A$ and $\lambda_{B,2} = -2 \tilde{c}^{-1}$ with eigenvectors $(1,0)$ and $(0,1)$, respectively. 
	
	Finally, we linearise about $NT$, denoted by $L_{NT}$. Instead of explicitly calculating the eigenvalues, we consider the determinant and trace of the linearisation given by
	\begin{equation}\label{eq:LNT-det-tr}
		\det(L_{NT}) = \dfrac{4 (1+\tilde{\gamma}_A)(1+\tilde{\gamma}_B)}{-\tilde{c} (1-\tilde{\gamma}_A \tilde{\gamma}_B)} \qquad \operatorname{tr}(L_{NT}) = \dfrac{2 (1 + \tilde{\gamma}_B - \tilde{c} (1+ \tilde{\gamma}_A))}{-\tilde{c} (1-\tilde{\gamma}_A \tilde{\gamma}_B)}.
	\end{equation}
	Given the sign of the determinant and trace, we obtain the sign of the real parts of the eigenvalues of $L_{NT}$, since $L_{NT}$ is a real $2\times 2$-matrix, and thus the stability of $NT$.
	
	\begin{proposition}\label{prop:stability-NT}
		Assume that the parameter assumptions \eqref{eq:non-triv-fp-invertibility}--\eqref{eq:non-triv-fp-pos2} hold. If $\tilde{d} < 0$ then $NT$ is a
		\begin{itemize}
			\item saddle if $\tilde{c} < 0$,
			\item stable equilibrium point if $0 < \tilde{c} < \tilde{c}^\ast := \tfrac{1+\tilde{\gamma}_B}{1+\tilde{\gamma}_A}$,
			\item unstable equilibrium point if $\tilde{c}^\ast < \tilde{c}$.
		\end{itemize}
		If $\tilde{d} > 0$, then $NT$ is
		\begin{itemize}
			\item an unstable equilibrium point if $\tilde{c} < 0$, 
			\item a saddle if $\tilde{c} > 0$.
		\end{itemize}
		
		Additionally, for every $\tilde{\gamma}_A, \tilde{\gamma}_B$ such that $\tilde{d} < 0$ exists a pair $0 < \tilde{c}_{-,1} < \tilde{c}_{-,2}$ such that $\tilde{c}^* \in (\tilde{c}_{-,1},\tilde{c}_{-,2})$ and the linearisation about $NT$ has a pair of complex conjugated eigenvalues if $\tilde{c} \in (\tilde{c}_{-,1},\tilde{c}_{-,2})$ and two real eigenvalues otherwise. In particular, $NT$ is a centre at $\tilde{c} = \tilde{c}^*$.
		
		Furthermore, for every $\tilde{\gamma}_A, \tilde{\gamma}_B$ such that $\tilde{d} > 1$ exists a pair $\tilde{c}_{+,1} < \tilde{c}_{+,2} < 0$ such that the linearisation about $NT$ has a pair of complex conjugated eigenvalues if $\tilde{c} \in (\tilde{c}_{+,1},\tilde{c}_{+,2})$ and two real eigenvalues otherwise.
	\end{proposition}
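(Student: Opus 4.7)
The plan is to exploit the explicit formulas \eqref{eq:LNT-det-tr} for $\det(L_{NT})$ and $\operatorname{tr}(L_{NT})$ and carry out a careful sign analysis. Since $L_{NT}$ is a real $2\times 2$ matrix, its eigenvalues $\lambda_\pm$ satisfy $\lambda_+\lambda_- = \det(L_{NT})$ and $\lambda_+ + \lambda_- = \operatorname{tr}(L_{NT})$, and the nature of the equilibrium (saddle vs.\ node/focus, stable vs.\ unstable, real vs.\ complex conjugate) is entirely determined by the signs of $\det(L_{NT})$, $\operatorname{tr}(L_{NT})$, and the discriminant $\operatorname{tr}(L_{NT})^2 - 4\det(L_{NT})$.

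First, I would translate the positivity conditions guaranteeing existence of $NT$ into sign information on $a := 1+\tilde{\gamma}_A$, $b := 1+\tilde{\gamma}_B$ and $\tilde{d}$. Solving the linear system for $r_A^2, r_B^2$ at $NT$ shows $r_A^2 \propto a/\tilde{d}$ and $r_B^2 \propto b/\tilde{d}$, so positivity forces either $\tilde{d}>0$ with $a,b>0$, or $\tilde{d}<0$ with $a,b<0$. In both regimes $ab>0$, hence the numerator of $\det(L_{NT})$ is positive and the sign of $\det(L_{NT})$ is controlled entirely by the sign of $-\tilde{c}\,\tilde{d}$. This immediately classifies the saddle cases ($\det<0$) listed in the proposition: $\tilde{d}<0$ with $\tilde{c}<0$, and $\tilde{d}>0$ with $\tilde{c}>0$.

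Next, in the complementary sub-cases where $\det(L_{NT})>0$ (so $NT$ is a node or focus), I would read off stability from the trace. The numerator of $\operatorname{tr}(L_{NT})$ is $2(b-\tilde{c} a)$, which vanishes precisely at $\tilde{c}^* = b/a$. Keeping track of the fixed sign of the denominator $-\tilde{c}\,\tilde{d}$ and testing representative values above and below $\tilde{c}^*$ then yields the stable/unstable alternative in the regime $\tilde{d}<0$, $\tilde{c}>0$, and the \emph{unstable} conclusion in the regime $\tilde{d}>0$, $\tilde{c}<0$, matching the statement.

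The main work lies in the claim about complex conjugate eigenvalues. Here I would put $D(\tilde{c}):=\operatorname{tr}(L_{NT})^2-4\det(L_{NT})$ over the common denominator $\tilde{c}^2\tilde{d}^2$; the numerator is the quadratic
\begin{equation*}
Q(\tilde{c}) = a^2\tilde{c}^2 + 2ab(2\tilde{d}-1)\tilde{c} + b^2.
\end{equation*}
Its own discriminant evaluates to $16a^2b^2\,\tilde{d}(\tilde{d}-1)$, so $Q$ has two real roots exactly when $\tilde{d}<0$ or $\tilde{d}>1$, precisely the two cases highlighted in the proposition. Using $ab>0$, the product of the roots of $Q$ equals $b^2/a^2>0$ and the sum equals $-2b(2\tilde{d}-1)/a$; tracking signs in the two existence regimes produces two positive roots $\tilde{c}_{-,1}<\tilde{c}_{-,2}$ when $\tilde{d}<0$ (where $a,b<0$) and two negative roots $\tilde{c}_{+,1}<\tilde{c}_{+,2}$ when $\tilde{d}>1$ (where $a,b>0$). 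Finally, the direct computation $Q(\tilde{c}^*) = 4b^2\tilde{d}$ shows $Q(\tilde{c}^*)<0$ when $\tilde{d}<0$, which places $\tilde{c}^*$ strictly between $\tilde{c}_{-,1}$ and $\tilde{c}_{-,2}$; combined with $\operatorname{tr}(L_{NT})|_{\tilde{c}=\tilde{c}^*}=0$ and $\det(L_{NT})>0$ this identifies $NT$ as a centre at $\tilde{c}=\tilde{c}^*$. The only real obstacle is the careful bookkeeping of signs across the two existence regimes and the sign of $\tilde{d}$ when reading off signs of the quotient expressions; everything else is elementary.
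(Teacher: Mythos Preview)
Your proposal is correct and follows essentially the same route as the paper: both arguments analyse the signs of $\det(L_{NT})$, $\operatorname{tr}(L_{NT})$, and the discriminant $\operatorname{tr}(L_{NT})^2-4\det(L_{NT})$, using the positivity conditions to pin down the signs of $1+\tilde{\gamma}_A$ and $1+\tilde{\gamma}_B$ in each regime. Your execution is marginally cleaner in two places---you use Vieta's formulas (product $b^2/a^2$ and sum $-2b(2\tilde{d}-1)/a$) to determine the signs of the roots of $Q$ rather than computing them explicitly, and your direct evaluation $Q(\tilde{c}^*)=4b^2\tilde{d}$ is a crisper way to place $\tilde{c}^*$ between the roots than the paper's inequality chain---but these are minor polishings of the same argument.
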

	\begin{proof}
		The statements can be obtained from a discussion of the signs of $\det(L_{NT})$ and $\operatorname{tr}(L_{NT})$, which are given in \eqref{eq:LNT-det-tr}. First, in the case $\tilde{d} < 0$, we find that $1+\tilde{\gamma}_A < 0$ and $1+ \tilde{\gamma}_B < 0$ due to \eqref{eq:non-triv-fp-pos1} and \eqref{eq:non-triv-fp-pos2}, respectively. Therefore, the sign of $\det(L_{NT})$ is given by the sign of $\tilde{c}$. Hence, for $\tilde{c} < 0$, it follows that $NT$ is a saddle. For $\tilde{c} > 0$, it is either a stable or unstable equilibrium point and we need to consider the sign of $\operatorname{tr}(L_{NT})$. Writing
		\begin{equation*}
			\operatorname{tr}(L_{NT}) = 2 (NT_A)^2 - \dfrac{2 (NT_B)^2}{\tilde{c}}
		\end{equation*}
		with $NT = (NT_A, NT_B)$, we find that $\operatorname{tr}(L_{NT})$ is strictly increasing in $\tilde{c}$ and that $\operatorname{tr}(L_{NT}) = 0$ for $\tilde{c}^\ast$. Together with $\operatorname{tr}(L_{NT}) < 0$ for $0 < \tilde{c} \ll 1$ and $\operatorname{tr}(L_{NT}) > 0$ for $\tilde{c} \gg 1$, the statement for $\tilde{d}< 0$ follows.
		For $\tilde{d} > 0$ we find that $1+\tilde{\gamma}_A > 0$ and $1+\tilde{\gamma}_B > 0$. A similar discussion as in the case $d < 0$ then gives the stability properties of $NT$ for $\tilde{d} > 0$.
		
		Since $L_{NT}$ is a real-valued $2\times 2$ matrix, two eigenvalues can either be real or a pair of complex conjugated eigenvalues. In particular, they are real if 
		\begin{equation*}
			\operatorname{tr}(L_{NT})^2 - 4 \det(L_{NT}) = \dfrac{4}{\tilde{c}^2} \left((NT_B)^4 - 2 \tilde{c} (1-2 \tilde{d}) (NT_A)^2 (NT_B)^2 + \tilde{c}^2 (NT_A)^4\right) \geq 0
		\end{equation*}
		and have a non-trivial imaginary part otherwise. To prove the remainder of the statement, we thus consider the roots of the polynomial
		\begin{equation*}
			p(\tilde{c}) = (NT_B)^4 - 2 \tilde{c} (1-2\tilde{d}) (NT_A)^2 (NT_B)^2 + \tilde{c}^2 (NT_A)^4.
		\end{equation*}
		First, we note that $p$ is a  polynomial of degree $2$ and satisfies $\lim_{\tilde{c} \rightarrow \pm \infty} p(\tilde{c}) = \infty$ and $p(0) = (NT_B)^4 > 0$. Therefore, if $p$ has two real roots, they are either both negative or both positive. Additionally, we point out that for $\tilde{c} < 0$ and $\tilde{d} < 0$ or $\tilde{c} > 0$ and $\tilde{d} \in (0,1/2]$, the linear coefficient of $p$ is non-negative and therefore, $p$ cannot have two real roots. The roots of $p$ are explicitly given by
		\begin{equation*}
			\tilde{c}_\pm = \dfrac{(NT_B)^2}{(NT_A)^2}\left((1-2\tilde{d})\pm 2 \sqrt{(\tilde{d}-1)\tilde{d}}\right)
		\end{equation*}
		They are real if and only if $(\tilde{d}-1)\tilde{d} \geq 0$ that is $d \notin (0,1)$.
		
		Let now $\tilde{d} < 0$. Then $1-2\tilde{d} > 0$ and thus $\tilde{c}_+ > 0$. Since the roots have the same sign this also shows that $\tilde{c}_- > 0$. Therefore, we find that $L_{NT}$ has a pair of complex conjugated eigenvalues if and only if $\tilde{c} \in (c_-,c_+)$. It remains to show that $\tilde{c}^* \in (c_-,c_+)$. For this we note that
		\begin{equation*}
			\tilde{c}^* = \dfrac{1+\tilde{\gamma}_B}{1+\tilde{\gamma}_A} = \dfrac{(NT_B)^2}{(NT_A)^2}.
		\end{equation*}
		Then, $\tilde{c} \in (c_-,c_+)$ follows from the inequality
		\begin{equation*}
			1 - 2 \tilde{d} - 2 \sqrt{(\tilde{d} - 1) \tilde{d}} \leq 1 + 2 |\tilde{d}| - 2 \sqrt{\tilde{d}^2} = 1 \leq 1 - 2 \tilde{d} + 2 \sqrt{\tilde{d}^2} \leq 1 - 2 \tilde{d} + 2 \sqrt{(\tilde{d} - 1) \tilde{d}},
		\end{equation*}
		where we use that $\tilde{d} < 0$ and thus $\tilde{d}^2 - \tilde{d} \geq \tilde{d}^2$.
		
		Finally, let $\tilde{d} > 1$. Then $(1-2\tilde{d}) < 0$ and thus, both roots are negative and $L_{NT}$ has a pair of complex conjugated eigenvalues if and only if $\tilde{c}_- < \tilde{c} < \tilde{c}_+ < 0$. This completes the proof.
	\end{proof}

	\subsubsection{Dynamics on the invariant coordinate axes}
	
	We first analyse the dynamics of the rescaled system \eqref{eq:radii-dynamics-rescaled} on the invariant coordinate axis $\{r_B = 0\}$ and $\{r_A = 0\}$. On $\{r_B = 0\}$, we have two equilibrium points $T$ and $ST_A$, which, restricted to the invariant coordinate axis, are stable and unstable respectively. Therefore, we find a heteroclinic orbit from $ST_A$ to $T$.
	
	To understand the dynamics on $\{r_A = 0\}$, we have to distinguish two cases, namely $\tilde{c} > 0$ and $\tilde{c} < 0$. In the first case, we find that the trivial equilibrium point $T$ restricted to $\{r_A = 0\}$ is unstable, whereas $ST_B$ is stable. Therefore, there is a heteroclinic orbit from $T$ to $ST_B$. In the second case, $T$ is stable and $ST_B$ is unstable and there is a heteroclinic orbit from $ST_B$ to $T$.
	
	\begin{remark}
		Heteroclinic orbits in the ODE system \eqref{eq:radii-dynamics-rescaled} connecting $\mathrm{ST}_B$ and the origin corresponds to a solution to the pattern-forming system \eqref{eq:toy-model}, which, to leading order, connects a travelling wave with group velocity $c_g$ with the trivial steady state through a spatial front with speed $c_0$. Therefore, recalling that $\mathrm{sign}(\tilde{c}) = \mathrm{sign}(c_g - c_0)$, the distinction between $\tilde{c} > 0$ and $\tilde{c} < 0$ ensures that the group velocity cannot point towards the front interface in a co-moving frame with speed $c_0$. This condition also appeared in the construction of modulating fronts close to a Turing–Hopf instability, see \cite{hilder2022}, and seems necessary to obtain nonlinear stability of pattern-forming fronts, cf.~e.g.~\cite{avery2025a}.
	\end{remark}

	\subsubsection{Dynamics outside of invariant subsets}
	
	We now study the dynamics outside of the invariant subsets $\{r_A = 0\}$ and $\{r_B = 0\}$. First, we consider the case when the fully nontrivial equilibrium point $\mathrm{NT}$ exists. From Proposition \ref{prop:fully-nontrivial-solutions} we know that the parameter-assumptions
	\begin{subequations}
		\begin{align}
			\tilde{d} := 1 - \tilde{\gamma}_A \tilde{\gamma}_B &\neq 0, \label{eq:non-triv-fp-invertibility}\\
			\dfrac{1 + \tilde{\gamma}_A}{ 1 - \tilde{\gamma}_A \tilde{\gamma}_B} &>  0, \label{eq:non-triv-fp-pos1}\\
			\dfrac{1 + \tilde{\gamma}_B}{ 1 - \tilde{\gamma}_A \tilde{\gamma}_B} &> 0 \label{eq:non-triv-fp-pos2}
		\end{align}
	\end{subequations}
	hold. From these conditions, we can determine the stability of the semi-trivial equilibrium points $ST_A$ and $ST_B$ from the signs of $\tilde{c}$ and $\tilde{d}$, which is summarised in Table \ref{tab:d-c-diagram}.
	
	\begin{table}
		\begin{tabular}{|c|c|c|}
			\hline
			& $\tilde{d} > 0$ & $\tilde{d} < 0$ \\
			\hline
			$\tilde{c} > 0$ &
			\begin{tikzpicture}
				\begin{axis}[
					axis lines=middle,
					xlabel={$r_A$},
					ylabel={$r_B$},
					xmin=0, xmax=1.5,
					ymin=0, ymax=1.5,
					xtick=\empty,
					ytick=\empty,
					width=5cm,
					height=5cm,
					axis line style={->},
					clip=false,
					every axis x label/.style={at={(ticklabel* cs:1)}, anchor=north west},
					every axis y label/.style={at={(ticklabel* cs:1)}, anchor=south west},
					]
					
					\addplot[only marks, mark=*, mark size=1.5pt] coordinates {(1, 0)};
					\addplot[only marks, mark=*, mark size=1.5pt] coordinates {(0, 1)};
					\node[below] at (axis cs:1, 0) {$ST_A$};
					\node[left] at (axis cs:0, 1) {$ST_B$};
					
					\addplot[only marks, mark=*, mark size=1.5pt] coordinates {(0,0)};
					\node[below left] at (axis cs:0,0) {$T$};
					
					\draw[very thick,->] (axis cs:1,0) -- (axis cs: 0.7,0);
					\draw[very thick,->] (axis cs:1,0) -- (axis cs: 1.3,0);
					\draw[very thick,->] (axis cs:1,0) -- (axis cs: 1,0.3);
					
					\draw[very thick,->] (axis cs:0,1.3) -- (axis cs: 0,1.03);
					\draw[very thick,->] (axis cs:0,0.7) -- (axis cs: 0,0.97);
					\draw[very thick,->] (axis cs:0.3,1) -- (axis cs: 0.03,1);
					
				\end{axis}
			\end{tikzpicture}
			&
			\begin{tikzpicture}
				\begin{axis}[
					axis lines=middle,
					xlabel={$r_A$},
					ylabel={$r_B$},
					xmin=0, xmax=1.5,
					ymin=0, ymax=1.5,
					xtick=\empty,
					ytick=\empty,
					width=5cm,
					height=5cm,
					axis line style={->},
					clip=false,
					every axis x label/.style={at={(ticklabel* cs:1)}, anchor=north west},
					every axis y label/.style={at={(ticklabel* cs:1)}, anchor=south west},
					]
					
					\addplot[only marks, mark=*, mark size=1.5pt] coordinates {(1, 0)};
					\addplot[only marks, mark=*, mark size=1.5pt] coordinates {(0, 1)};
					\node[below] at (axis cs:1, 0) {$ST_A$};
					\node[left] at (axis cs:0, 1) {$ST_B$};
					
					\addplot[only marks, mark=*, mark size=1.5pt] coordinates {(0,0)};
					\node[below left] at (axis cs:0,0) {$T$};
					
					\draw[very thick,->] (axis cs:1,0) -- (axis cs: 0.7,0);
					\draw[very thick,->] (axis cs:1,0) -- (axis cs: 1.3,0);
					\draw[very thick,->] (axis cs:1,0.3) -- (axis cs: 1,0.03);
					
					\draw[very thick,->] (axis cs:0,1.3) -- (axis cs: 0,1.03);
					\draw[very thick,->] (axis cs:0,0.7) -- (axis cs: 0,0.97);
					\draw[very thick,->] (axis cs:0,1) -- (axis cs: 0.3,1);
					
				\end{axis}
			\end{tikzpicture}
			\\
			\hline
			$\tilde{c} <0$ &
			\begin{tikzpicture}
				\begin{axis}[
					axis lines=middle,
					xlabel={$r_A$},
					ylabel={$r_B$},
					xmin=0, xmax=1.5,
					ymin=0, ymax=1.5,
					xtick=\empty,
					ytick=\empty,
					width=5cm,
					height=5cm,
					axis line style={->},
					clip=false,
					every axis x label/.style={at={(ticklabel* cs:1)}, anchor=north west},
					every axis y label/.style={at={(ticklabel* cs:1)}, anchor=south west},
					]
					
					\addplot[only marks, mark=*, mark size=1.5pt] coordinates {(1, 0)};
					\addplot[only marks, mark=*, mark size=1.5pt] coordinates {(0, 1)};
					\node[below] at (axis cs:1, 0) {$ST_A$};
					\node[left] at (axis cs:0, 1) {$ST_B$};
					
					\addplot[only marks, mark=*, mark size=1.5pt] coordinates {(0,0)};
					\node[below left] at (axis cs:0,0) {$T$};
					
					\draw[very thick,->] (axis cs:1,0) -- (axis cs: 0.7,0);
					\draw[very thick,->] (axis cs:1,0) -- (axis cs: 1.3,0);
					\draw[very thick,->] (axis cs:1,0.3) -- (axis cs: 1,0.03);
					
					\draw[very thick,->] (axis cs:0,1) -- (axis cs: 0,1.3);
					\draw[very thick,->] (axis cs:0,1) -- (axis cs: 0,0.7);
					\draw[very thick,->] (axis cs:0.3,1) -- (axis cs: 0.03,1);
					
				\end{axis}
			\end{tikzpicture}
			&
			\begin{tikzpicture}
				\begin{axis}[
					axis lines=middle,
					xlabel={$r_A$},
					ylabel={$r_B$},
					xmin=0, xmax=1.5,
					ymin=0, ymax=1.5,
					xtick=\empty,
					ytick=\empty,
					width=5cm,
					height=5cm,
					axis line style={->},
					clip=false,
					every axis x label/.style={at={(ticklabel* cs:1)}, anchor=north west},
					every axis y label/.style={at={(ticklabel* cs:1)}, anchor=south west},
					]
					
					\addplot[only marks, mark=*, mark size=1.5pt] coordinates {(1, 0)};
					\addplot[only marks, mark=*, mark size=1.5pt] coordinates {(0, 1)};
					\node[below] at (axis cs:1, 0) {$ST_A$};
					\node[left] at (axis cs:0, 1) {$ST_B$};
					
					\addplot[only marks, mark=*, mark size=1.5pt] coordinates {(0,0)};
					\node[below left] at (axis cs:0,0) {$T$};
					
					\draw[very thick,->] (axis cs:1,0) -- (axis cs: 0.7,0);
					\draw[very thick,->] (axis cs:1,0) -- (axis cs: 1.3,0);
					\draw[very thick,->] (axis cs:1,0) -- (axis cs: 1,0.3);
					
					\draw[very thick,->] (axis cs:0,1) -- (axis cs: 0,1.3);
					\draw[very thick,->] (axis cs:0,1) -- (axis cs: 0,0.7);
					\draw[very thick,->] (axis cs:0,1) -- (axis cs: 0.3,1);
					
				\end{axis}
			\end{tikzpicture}
			\\
			\hline
		\end{tabular}
		\caption{Stability of the semi-trivial equilibrium points in \eqref{eq:radii-dynamics-rescaled} depending on the signs of $\tilde{c}$ and $\tilde{d}$ assuming the the existence conditions \eqref{eq:non-triv-fp-invertibility}--\eqref{eq:non-triv-fp-pos2} for the fully nontrivial equilibrium point $NT$ are satisfied.}
		\label{tab:d-c-diagram}
	\end{table}
	
	\paragraph{The case $\tilde{d} < 0 $}
	
	We now discuss the dynamics of \eqref{eq:radii-dynamics-rescaled} depending on the sign of $\tilde{d}$, starting with the case that $\tilde{d} < 0$. This implies that $\tilde{\gamma}_A \tilde{\gamma}_B > 1$. Additionally, the positivity conditions \eqref{eq:non-triv-fp-pos1} and \eqref{eq:non-triv-fp-pos2} imply that $1 + \tilde{\gamma}_A < 0$ and $1 + \tilde{\gamma}_B < 0$. Combining these, we find that $\tilde{\gamma}_A < -1$ and $\tilde{\gamma}_B < -1$.
	
	Numerical calculations indicate the following: if $\tilde{c} < 0$, the equilibrium point $NT$ is a saddle, see Proposition \ref{prop:stability-NT}, and there are heteroclinic orbits $ST_A \longrightarrow NT$, $ST_B \longrightarrow NT$ and $NT \longrightarrow T$. If $\tilde{c} >0$, the situation seems to be more subtle. At $\tilde{c} = 0$ (which is a singularity), $NT$ transitions from a saddle to a stable equilibrium point. For $0 < \tilde{c} \ll 1$, the system \eqref{eq:radii-dynamics-rescaled} has a fast-slow structure, which can be used to prove the existence of heteroclinic orbit $ST_B \longrightarrow NT$, see Lemma \ref{lem:het-orbit-d<0-c<<1}, but the orbit $ST_A \longrightarrow NT$ ceases to exist since $ST_A$ is now a saddle point with a one-dimensional unstable manifold explicitly given by $\{r_B = 0\}$. Increasing $\tilde{c} > 0$ beyond a small neighbourhood of $0$, we find that $NT$ becomes a stable centre and the orbits $ST_B \longrightarrow NT$ behave like an inward spiral close to $NT$. Further increasing $\tilde{c}$ there is a critical value $\tilde{c}^* > 0$, see Proposition \ref{prop:stability-NT}, at which $NT$ transitions from a stable to an unstable centre and the orbit $ST_A \longrightarrow NT$ breaks up. At this critical value, $NT$ is a centre, and there is a heteroclinic connection $ST_B \longrightarrow ST_A$, which encloses a neighbourhood of $T$ filled with periodic orbits around $NT$. For $\tilde{c} > \tilde{c}^\ast$, there is only a heteroclinic orbit $NT \longrightarrow ST_A$. If $\tilde{c} - \tilde{c}^\ast > 0$ is sufficiently small, $NT$ is an unstable centre and therefore the heteroclinic orbit behaves like an outward spiral close to $NT$. For $\tilde{c} > 0$ sufficiently large, the eigenvalues of $NT$ become real again, see Proposition \ref{prop:stability-NT}, and the heteroclinic $NT \longrightarrow ST_A$ seems to be monotone again. The different scenarios are depicted in Figure \ref{fig:phase-planes-d<0}.
	
	\begin{figure}
        \begin{overpic}[width=0.33\textwidth]
		      {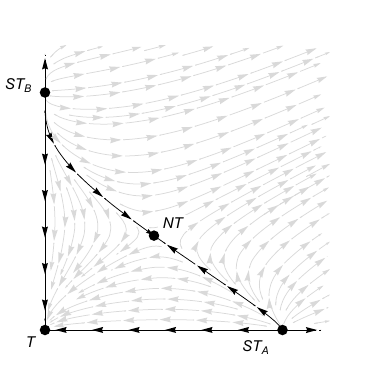}
              \put(50,-2){\makebox[0pt]{\small$\tilde{c}=-2$}}
        \end{overpic}
        \begin{overpic}[width=0.33\textwidth]
		      {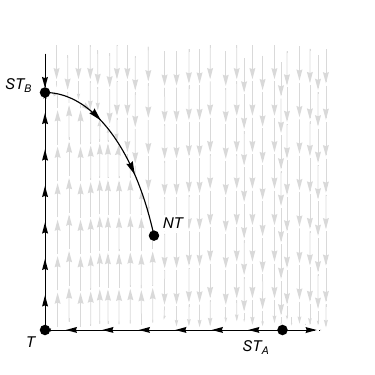}
              \put(50,-2){\makebox[0pt]{\small$\tilde{c}=0.0001$}}
        \end{overpic}
        \begin{overpic}[width=0.33\textwidth]
		      {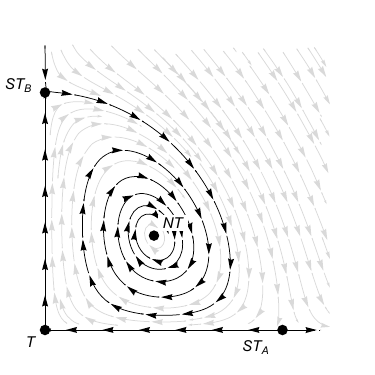}
              \put(50,-2){\makebox[0pt]{\small$\tilde{c}=0.5$}}
        \end{overpic}

        \begin{overpic}[width=0.33\textwidth]
		      {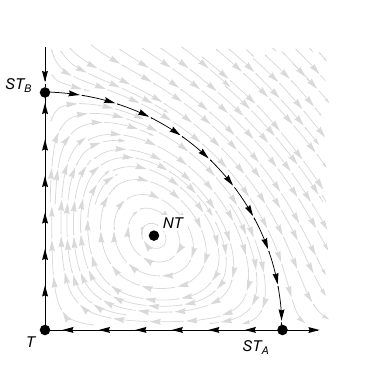}
              \put(50,-2){\makebox[0pt]{\small$\tilde{c}= \tilde{c}^\ast = 0.75$}}
        \end{overpic}
        \begin{overpic}[width=0.33\textwidth]
		      {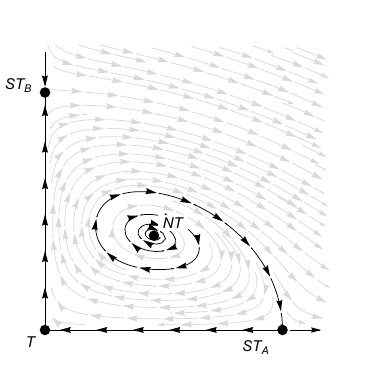}
              \put(50,-2){\makebox[0pt]{\small$\tilde{c}=2$}}
        \end{overpic}
        \begin{overpic}[width=0.33\textwidth]
		      {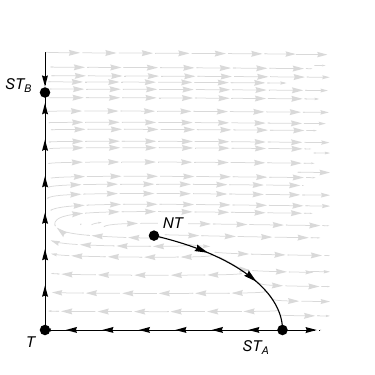}
              \put(50,-2){\makebox[0pt]{\small$\tilde{c}=100$}}
        \end{overpic}
		\caption{Phase planes for the case $\tilde{d} < 0$ and increasing $\tilde{c}$. In these plots $\tilde{\gamma}_A = -5$ and $\tilde{\gamma}_B = -4$ and therefore $\tilde{d} = -19$.}
		\label{fig:phase-planes-d<0}
	\end{figure}
	
	\begin{remark}
		If $\tilde{\gamma}_A = \tilde{\gamma}_B$, we obtain $\tilde{c}^\ast = 1$ and the dynamics and symmetries resemble the heteroclinic cycles studied in \cite{guckenheimer1988}.
	\end{remark}
	
	\paragraph{The case $\tilde{d} > 0$}
	
	Next, we consider the case $\tilde{d} > 0$, which implies that $\tilde{\gamma}_A \tilde{\gamma}_B < 1$. In addition, the positivity conditions \eqref{eq:non-triv-fp-pos1} and \eqref{eq:non-triv-fp-pos2} yield that $\tilde{\gamma}_A > -1$ and $\tilde{\gamma}_B > -1$.
	
	It turns out that the main decider for the dynamics of \eqref{eq:radii-dynamics-rescaled} is the sign of $\tilde{c}$. As Proposition \ref{prop:stability-NT} shows, if $d > 0$ then $NT$ is unstable for $\tilde{c} < 0$ and a saddle for $\tilde{c} > 0$. We now discuss both regimes separately. For $\tilde{c} < 0$, both $ST_A$ and $ST_B$ are stable in the directions off the invariant coordinate axes and unstable on the coordinate axes, see Table \ref{tab:d-c-diagram}. In particular, the trivial equilibrium point $T$ is stable in this regime. A numerical investigation then indicates that there are heteroclinic orbits $NT \longrightarrow T$, $NT \longrightarrow ST_A$ as well as $NT \longrightarrow ST_B$. For $\tilde{c} > 0$, $ST_B$ becomes stable whereas $ST_A$ becomes unstable, see Table \ref{tab:d-c-diagram}. A numerical phase plane analysis then suggests that there are heteroclinic orbits $ST_A \longrightarrow NT$ and $NT \longrightarrow ST_B$. These orbits form the boundary of an invariant, compact subset of the phase plane, which is filled with heteroclinic orbits $ST_A \longrightarrow ST_B$. The numerical findings in the different regimes are also depicted in Figure \ref{fig:phase-plane-d>0}.
	
	\begin{figure}
		\centering
		\begin{overpic}[width=0.33\textwidth]
		      {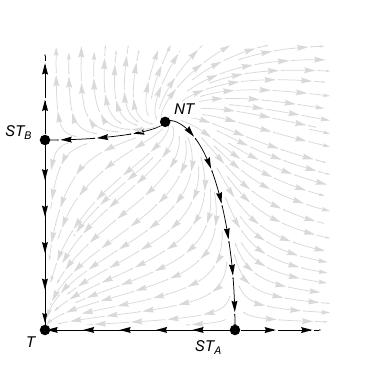}
              \put(50,-2){\makebox[0pt]{\small$\tilde{c}=-2$}}
        \end{overpic}
        \hspace{0.5cm}
        \begin{overpic}[width=0.33\textwidth]
		      {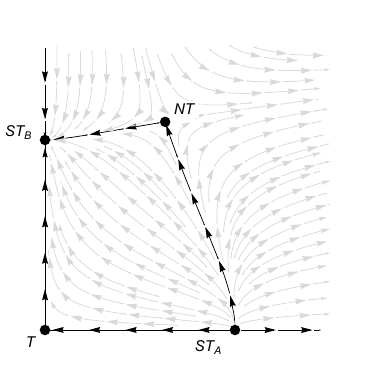}
              \put(50,-2){\makebox[0pt]{\small$\tilde{c}=2$}}
        \end{overpic}
		\caption{Phase planes for the case $\tilde{d} > 0$ for different values of $\tilde{c}$. In these plots $\tilde{\gamma}_A = -0.5$ and $\tilde{\gamma}_B = 0.5$ and therefore, $\tilde{d} = 1.25$.}
		\label{fig:phase-plane-d>0}
	\end{figure}

	\paragraph{Dynamics if $NT$ does not exist}
	
	Finally, we discuss the dynamics in the case that the fully nontrivial equilibrium point $NT$ does not exist, that is, if the assumptions \eqref{eq:non-triv-fp-invertibility}, \eqref{eq:non-triv-fp-pos1} or \eqref{eq:non-triv-fp-pos2} fail. Since the case that $\tilde{d} = 0$ is non-generic, we restrict the discussion to parameter regimes, where $\tilde{d} \neq 0$, but the positivity conditions \eqref{eq:non-triv-fp-pos1} or \eqref{eq:non-triv-fp-pos2} are violated. Since \eqref{eq:non-triv-fp-pos1} and \eqref{eq:non-triv-fp-pos2} can no longer be used to determine the stability of the equilibrium points $ST_A$ and $ST_B$, there is no direct relation between the eigenvalues and the sign of $\tilde{d}$ as in Table \ref{tab:d-c-diagram}. Instead, the stability of $ST_A$ is determined by the sign of $\tilde{c}$ and the sign of $1+\tilde{\gamma}_B$. Similarly, the stability of $ST_B$ is determined by the sign of $\tilde{c}$ and $1+\tilde{\gamma}_A$.
	
	We first discuss the case that $\tilde{c} < 0$. In this setting, $T$ is a stable equilibrium point. Recall that the eigenvalues of the semi-trivial equilibrium points are given by
	\begin{equation*}
		\lambda_{A,1} = 2, \quad \lambda_{A,2} = \dfrac{1+\tilde{\gamma}_B}{\tilde{c}}, \quad \lambda_{B,1} = - (1 + \tilde{\gamma}_A), \quad \lambda_{B,2} = -\dfrac{2}{\tilde{c}}.
	\end{equation*}
	If $\tilde{\gamma}_A < -1$ then $\tilde{\gamma}_B > -1$ since $NT$ does not exist. Therefore, $ST_A$ is a saddle and $ST_B$ is unstable. Numerical phase plane analysis then suggests that there are heteroclinic connections $ST_B \longrightarrow T$ and $ST_B \longrightarrow ST_A$. If $\tilde{\gamma}_B < -1$, then $\tilde{\gamma}_A > -1$ and $ST_A$ is unstable whereas $ST_B$ is a saddle. In this case, numerical phase plane analysis suggests that there are heteroclinic orbits $ST_A \longrightarrow T$ and $ST_A \longrightarrow ST_B$. Finally, we consider the case that both $\tilde{\gamma}_A > -1$ and $\tilde{\gamma}_B > -1$. In this situation, both $ST_A$ and $ST_B$ are saddles, however, they are stable in the direction transverse to the invariant coordinate axes. Since $T$ is also stable, there are no heteroclinic orbits outside of the invariant coordinate axes.
	
	Next, we consider the case $\tilde{c} > 0$. If $\tilde{\gamma}_A < -1$ and thus $\tilde{\gamma}_B > -1$, both $ST_A$ and $ST_B$ are unstable in the direction transverse to the respective coordinate axis. Since $T$ is a saddle with stable manifold given by $\{r_B = 0\}$, we again find that there are no heteroclinic orbits outside of the invariant coordinate axes. If $\tilde{\gamma}_A > -1$ and $\tilde{\gamma}_B < -1$, both $ST_A$ and $ST_B$ are stable in the transverse direction and thus, again no heteroclinic orbits outside of the invariant coordinate axes exist. Finally, we consider that both $\tilde{\gamma}_A > -1$ and $\tilde{\gamma}_B > -1$. Then, $ST_A$ is unstable whereas $ST_B$ is a stable equilibrium point. Numerical phase plane analysis then suggest a heteroclinic orbit $ST_A \longrightarrow ST_B$.
	
	The phase planes for the different situations are displayed in Figure \ref{fig:phase-plane-NT-non-ex}.
	
	\begin{figure}
		\begin{overpic}[width=0.33\textwidth]
		      {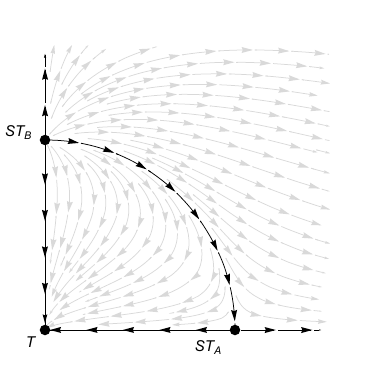}
              \put(50,-2){\makebox[0pt]{\small$\tilde{c}=-2$,\; $\tilde{\gamma}_A=-2$,\; $\tilde{\gamma}_B = 1$}}
        \end{overpic}
        \begin{overpic}[width=0.33\textwidth]
		      {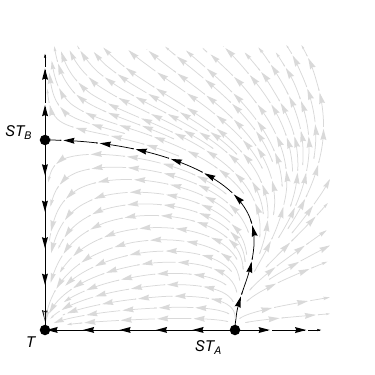}
              \put(50,-2){\makebox[0pt]{\small$\tilde{c}=-2$,\; $\tilde{\gamma}_A=1$,\; $\tilde{\gamma}_B = -2$}}
        \end{overpic}
        \begin{overpic}[width=0.33\textwidth]
		      {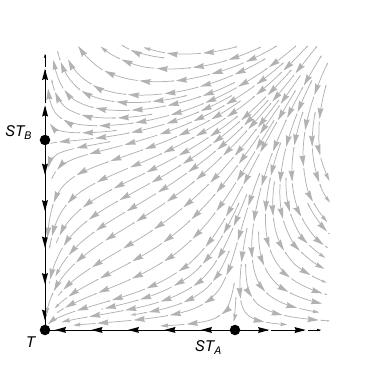}
              \put(50,-2){\makebox[0pt]{\small$\tilde{c}=-2$,\; $\tilde{\gamma}_A=1$,\; $\tilde{\gamma}_B = 2$}}
        \end{overpic}

        \begin{overpic}[width=0.33\textwidth]
		      {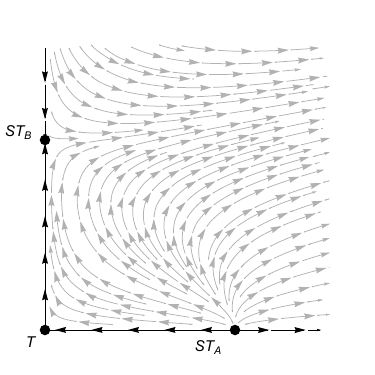}
              \put(50,-2){\makebox[0pt]{\small$\tilde{c}=2$,\; $\tilde{\gamma}_A=-2$,\; $\tilde{\gamma}_B = 1$}}
        \end{overpic}
        \begin{overpic}[width=0.33\textwidth]
		      {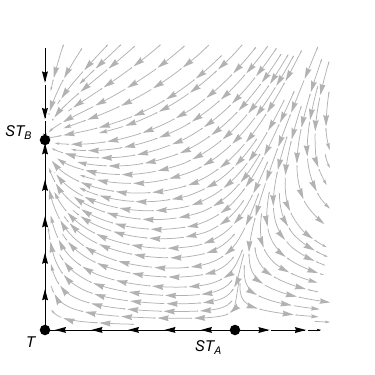}
              \put(50,-2){\makebox[0pt]{\small$\tilde{c}=2$,\; $\tilde{\gamma}_A=1$,\; $\tilde{\gamma}_B = -2$}}
        \end{overpic}
        \begin{overpic}[width=0.33\textwidth]
		      {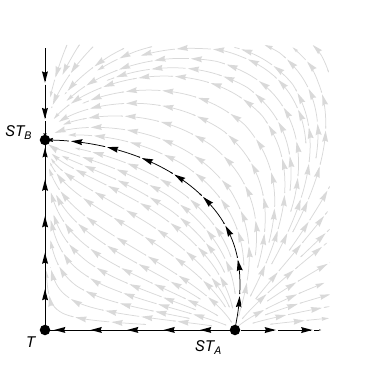}
              \put(50,-2){\makebox[0pt]{\small$\tilde{c}=2$,\; $\tilde{\gamma}_A=1$,\; $\tilde{\gamma}_B = 2$}}
        \end{overpic}
		\caption{Phase planes for the case that $NT$ does not exist due to a violation of the positive conditions \eqref{eq:non-triv-fp-pos1} or \eqref{eq:non-triv-fp-pos2}.}
		\label{fig:phase-plane-NT-non-ex}
	\end{figure}

	\subsection{Persistence of heteroclinic orbits} 
	
	It remains to prove that the heteroclinic orbits persist under the higher-order terms in \eqref{eq:fast-travelling-fronts-ode-full}. However, since using polar coordinates yields singularities in the radii at higher orders, we revert back to euclidean coordinates and look for travelling wave solutions to \eqref{eq:final-amplitude-eq} of the form
	\begin{equation}\label{eq:front-ansatz}
		\Aav(T,X) = A(\xi) \ee^{\ii\omega_A T}, \quad \Bav(T,X) = B(\xi) \ee^{\ii\omega_B T},
	\end{equation}
	with $\xi = X - c T$ and frequencies $\omega_A, \omega_B \in \R$ given by Proposition \ref{prop:semi-trivial-solutions}.
	
	\begin{remark}\label{rem:different-omegas}
		Notice that the specific choice of $\omega_A$ and $\omega_B$ is not strictly necessary. In fact, the following proofs can naturally be adapted to other choices. However, the constructed solutions will be slightly different as the time-periodic asymptotic states will then in general also involve spatial oscillations. To illustrate this, we consider $\Bav = 0$ and write $A(\xi) = r_A(\xi) \ee^{\ii\phi_A(\xi)}$. Then, after moving to $\tilde{\xi} = \varepsilon \xi$ and $c = \tfrac{c_0}{\varepsilon}$, we obtain to leading order
		\begin{equation*}
			\begin{split}
				-c_0 \dot{r}_A &= \alpha_u + \gamma_{1,r} r_A^3, \\
				-c_0 \dot{\phi}_A &= -\omega_A + \gamma_{1,i} r_A^2.
			\end{split}
		\end{equation*}
		If $\omega_A$ is chosen as in Proposition \ref{prop:semi-trivial-solutions}, the equation has a nontrivial equilibrium point. If not, the $r_A$-equation still has the same equilibrium point, but the solution will be $\tilde{\xi}$-periodic with a $\phi_A = \psi_A \tilde{\xi}$, where $\psi_A$ is the constant local wave number. Therefore, the phase of $\Aav$ is given by
		\begin{equation*}
			\phi_A(\tilde{\xi}) + \omega_A T = \psi_A (\varepsilon X - c_0 T) + \omega_A T = \varepsilon \psi_A X + \gamma_{1,i} r_A^2 T.
		\end{equation*}
		Recalling that $r_A^2 = \tfrac{\alpha_u}{\gamma_{1,r}}$, we thus obtain a space-time periodic solution with temporal wave number as in Proposition \ref{prop:semi-trivial-solutions}, but with an additional small spatial wave number.
	\end{remark}
	
	Inserting the ansatz \eqref{eq:front-ansatz} into \eqref{eq:final-amplitude-eq} yields the system
	\begin{equation*}
		\begin{split}
			- c A' + i\omega_A A &= 4 A'' + \alpha_u A + \gamma_1 A |A|^2 + \gamma_2 A |B|^2, \\
			- c B' + i\omega_B B &= (4 + 3ic_v) B'' - \dfrac{c_g}{\varepsilon} B' + \alpha_v B + \gamma_7 B|A|^2 + \gamma_8 B|B|^2.
		\end{split}
	\end{equation*}
	As above, we now set $c = \tfrac{c_0}{\varepsilon}$ with $c_0 \neq 0$. Then, in the frame $\tilde{\xi} = \varepsilon \xi$ we have
	\begin{equation}\label{eq:fast-fronts-eucledean-coords}
		\begin{split}
			-c_0 \dot{A} + i\omega_A A &= 4 \varepsilon^2 \ddot{A} + \alpha_u A + \gamma_1 A |A|^2 + \gamma_2 A |B|^2, \\
			- c_0 \dot{B} + i\omega_B B &= \varepsilon^2 (4 + 3 ic_v) \ddot{B} - c_g B + \alpha_v B + \gamma_7 B|A|^2 + \gamma_8 B|B|^2.
		\end{split}
	\end{equation}
	Finally, splitting $A = A_r + i A_i$ and $B = B_r + i B_i$ into real and imaginary parts, and rewriting the resulting equations as a first-order system in $\tilde{\xi}$, we obtain
	\begin{equation}\label{eq:full-fast-slow-system}
		\begin{split}
			\dot{A}_r &= \tilde{A}_r, \\
			\dot{A}_i &= \tilde{A}_i, \\
			\dot{B}_r &= \tilde{B}_r, \\
			\dot{B}_i &= \tilde{B}_i, \\
			4 \varepsilon^2 \dot{\tilde{A}}_r &= - c_0 \tilde{A}_r - \alpha_u A_r - \omega_A A_i - \gamma_{1,r} A_r |A|^2 + \gamma_{1,i} A_i |A|^2 - \gamma_{2,r} A_r |B|^2 + \gamma_{2,i} A_i |B|^2, \\
			4 \varepsilon^2 \dot{\tilde{A}}_i &= - c_0 \tilde{A}_i + \omega_A A_i - \alpha_u A_i - \gamma_{1,r} A_i |A|^2 - \gamma_{1,i} A_r |A|^2 - \gamma_{2,r} A_i |B|^2 - \gamma_{2,i} A_r |B|^2, \\
			\varepsilon^2 (4 \dot{\tilde{B}}_r - 3c_v \dot{\tilde{B}}_i) &= (c_g - c_0) \tilde{B}_r - \alpha_r B_r - \omega_B B_i - \gamma_{7,r} B_r |A|^2 + \gamma_{7,i} B_i |A|^2 - \gamma_{8,r} B_r |B|^2 + \gamma_{8,i} B_i |B|^2, \\
			\varepsilon^2 (4 \dot{\tilde{B}}_i + 3 c_v \dot{\tilde{B}}_r) &= (c_g - c_0) \tilde{B}_i + \omega_B B_r - \alpha_v B_i - \gamma_{7,r} B_i |A|^2 - \gamma_{7,i} B_r |A|^2 - \gamma_{8,r} B_i |B|^2 - \gamma_{8,i} B_r |B|^2.
		\end{split}
	\end{equation}
	Note that for notational convenience, we write $|A|^2 = A_r^2 + A_i^2$ and $|B|^2 = B_r^2 + B_i^2$.
	Finally, we can obtain equations for $\dot{\tilde{B}}_r$ and $\dot{\tilde{B}}_i$ since
	\begin{equation*}
		\begin{pmatrix}
			4 & -3c_v \\ 3 c_v & 4
		\end{pmatrix}
	\end{equation*}
	with determinant $16 + 9 c_v^2 \neq 0$ is invertible. Therefore, we obtain a fast-slow system of the form
	\begin{equation}\label{eq:full-fast-slow-system-abbrev}
		\begin{split}
			\dot{Y} &= X, \\
			\varepsilon^2 \dot{X} &= H_0(X,Y)
		\end{split}
	\end{equation}
	with $Y = (A_r, A_i, B_r, B_i)$ and $X = (\tilde{A}_r, \tilde{A}_i, \tilde{B}_r, \tilde{B}_i)$.
	
	The critical manifold $C_0$ of the fast-slow system \eqref{eq:full-fast-slow-system-abbrev} is then given by
	\begin{equation*}
		C_0 := \{(X,Y) \in \R^8 \,:\, H_0(X,Y) = 0\}.
	\end{equation*}
	Next, we calculate
	\begin{equation*}
		D_X H_0 (X,Y) \vert_{(X,Y) \in C_0} = \begin{pmatrix}
			\Acal & 0 \\ 0 & \Bcal
		\end{pmatrix}
	\end{equation*}
	with matrices
	\begin{equation*}
		\Acal = \begin{pmatrix}
			-\tfrac{c_0}{4} & 0 \\
			0 & - \tfrac{c_0}{4}
		\end{pmatrix}, \qquad \Bcal = \begin{pmatrix}
			4 & -3 c_v \\ 3c_v & 4
		\end{pmatrix}^{-1} \begin{pmatrix}
			c_g - c_0 & 0 \\ 0 & c_g - c_0
		\end{pmatrix}.
	\end{equation*}
	The eigenvalues of $\Bcal$ are given by $\tfrac{i (c_g - c_0)}{4 i \pm 3 c_v}$ with real part $-\tfrac{4 (c_g - c_0)}{16 + 9c_v^2}$. Hence, if $c_0 \neq 0$ and $c_g - c_0 \neq 0$ the critical manifold $C_0$ is normally hyperbolic. Therefore, any compact submanifold $S_0$ of the critical manifold $C_0$ perturbs to a slow manifold $S_\varepsilon$ for $0 <  \varepsilon \ll 1$, see \cite{fenichel1979}.  In addition, the flow on the critical manifold is given by
	\begin{equation}\label{eq:slow-flow}
		\begin{split}
			c_0 \dot{A}_r &= - \alpha_u A_r - \omega_A A_i - \gamma_{1,r} A_r |A|^2 + \gamma_{1,i} A_i |A|^2 - \gamma_{2,r} A_r |B|^2 + \gamma_{2,i} A_i |B|^2, \\
			c_0 \dot{A}_i &= \omega_A A_r - \alpha_u A_i - \gamma_{1,r} A_i |A|^2 - \gamma_{1,i} A_r |A|^2 - \gamma_{2,r} A_i |B|^2 - \gamma_{2,i} A_r |B|^2, \\
			-(c_g - c_0) \dot{B}_r &= - \alpha_v B_r - \omega_B B_i - \gamma_{7,r} B_r |A|^2 + \gamma_{7,i} B_i |A|^2 - \gamma_{8,r} B_r |B|^2 + \gamma_{8,i} B_i |B|^2, \\
			- (c_g - c_0) \dot{B}_i &= \omega_B B_r - \alpha_v B_i - \gamma_{7,r} B_r |A|^2 - \gamma_{7,i} B_i |A|^2 - \gamma_{8,r} B_r |B|^2 - \gamma_{8,i} B_i |B|^2,
		\end{split}.
	\end{equation}
	Note that any solution to \eqref{eq:slow-flow} gives a solution to \eqref{eq:radii-dynamics} with $(r_A,r_B) = (|A|,|B|)$. In particular, we have that
	\begin{itemize}
		\item the trivial equilibrium point $T$ in \eqref{eq:radii-dynamics} corresponds to the trivial equilibrium point $(A_r, A_i, B_r, B_i) = (0,0,0,0)$;
		\item the equilibrium points $ST_A$ and $ST_B$ in \eqref{eq:radii-dynamics} correspond to the circles of steady states
		\begin{equation*}
			\begin{split}
				(A_r, A_i, B_r, B_i) &= \left(\sqrt{-\dfrac{\alpha_u}{\gamma_{1,r}}} \cos(\theta_A), \sqrt{-\dfrac{\alpha_u}{\gamma_{1,r}}} \sin(\theta_A), 0,0\right), \\
				(A_r, A_i, B_r, B_i) &= \left(0,0, \sqrt{-\dfrac{\alpha_v}{\gamma_{8,r}}} \cos(\theta_B), \sqrt{-\dfrac{\alpha_v}{\gamma_{8,r}}} \sin(\theta_B)\right)
			\end{split}
		\end{equation*}
		with fixed phases $\theta_A, \theta_B \in [0,2\pi)$;
		\item the equilibrium point $NT$ in \eqref{eq:radii-dynamics} corresponds to the family of time-periodic orbits
		\begin{equation*}
			(A_r, A_i, B_r, B_i) = (NT_A \cos(\psi_A T + \theta_A), NT_A \sin(\psi_A T + \theta_A), NT_B \cos(\psi_B T + \theta_B), NT_B \sin(\psi_B T + \theta_B))
		\end{equation*}
		with fixed phase shifts $\theta_A, \theta_B \in [0,2\pi)$ and local wave numbers $\psi_A, \psi_B$ given as solutions to \eqref{eq:slow-dynamics}.
	\end{itemize}
	These structures also persist in the full fast-slow system \eqref{eq:full-fast-slow-system-abbrev} and, in particular, are contained in all slow manifolds as the following result shows.
	
	\begin{lemma}\label{lem:fixed-points-fast-slow-system}
		The fast-slow system \eqref{eq:full-fast-slow-system-abbrev} has a trivial steady state $\Ical_T = 0 \in \R^8$. Assuming that the radii dynamics \eqref{eq:radii-dynamics} has equilibrium points $ST_A$ and $ST_B$, the fast-slow system has two one-dimensional, invariant circles of equilibrium points
		\begin{equation*}
			\begin{split}
				\Ical_{ST_A} &= \left\{(0,Y) \in \R^8 \,:\, |A| = \sqrt{-\tfrac{\alpha_u}{\gamma_{1,r}}}, |B| = 0,\right\}, \\
				\Ical_{ST_B} &= \left\{(0,Y) \in \R^8 \,:\, |A| = 0, |B| = \sqrt{-\tfrac{\alpha_v}{\gamma_{8,r}}}\right\}.
			\end{split}
		\end{equation*}
		Finally, assume that $c_0 \neq 0$, $c_g - c_0 \neq 0$ and the fully nontrivial equilibrium point $NT$ exists in \eqref{eq:radii-dynamics} and is hyperbolic. Then there exists an $\varepsilon_0 > 0$ such that for every $\varepsilon \in (0,\varepsilon_0)$ the fast-slows system has a two-dimensional invariant set $\Ical_{NT,\varepsilon}$ filled with periodic orbits.
		
		The invariant sets $\Ical_0$, $\Ical_{ST_A}$, $\Ical_{ST_B}$ and $\Ical_{NT,\varepsilon}$ lie on all slow manifolds $S_\varepsilon$ for $0 < \varepsilon < \varepsilon_0$.
	\end{lemma}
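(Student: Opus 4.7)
The proof naturally splits into identifying the $\varepsilon$-independent fixed-point components and persisting the $NT$ torus. For the former, any fixed point of \eqref{eq:full-fast-slow-system-abbrev} must satisfy $\dot Y = X = 0$ and $\varepsilon^2 \dot X = H_0(X,Y) = 0$, so it lies on the $\varepsilon$-independent set $\{X = 0,\ H_0(0,Y) = 0\}$. Substituting $X = 0$ in \eqref{eq:full-fast-slow-system} and passing to complex notation, these relations collapse to
\begin{equation*}
(\ii\omega_A - \alpha_u - \gamma_1 |A|^2 - \gamma_2 |B|^2) A = 0, \qquad (\ii\omega_B - \alpha_v - \gamma_7 |A|^2 - \gamma_8 |B|^2) B = 0.
\end{equation*}
The trivial solution yields $\Ical_T$; the branch $A = 0$, $B \neq 0$ forces, by separating real and imaginary parts, $|B|^2 = -\alpha_v/\gamma_{8,r}$ together with the resonance relation on $\omega_B$ prescribed by Proposition \ref{prop:semi-trivial-solutions}, and letting the phase of $B$ vary sweeps out the invariant circle $\Ical_{ST_B}$. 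The case $B = 0$, $A \neq 0$ is symmetric and yields $\Ical_{ST_A}$. Being fixed-point sets of the full dynamics for every $\varepsilon$, these circles belong to every slow manifold.

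For $\Ical_{NT, \varepsilon}$, the plan is to exploit the $S^1 \times S^1$ phase symmetry $(A, B) \mapsto (\ee^{\ii\theta_A} A, \ee^{\ii\theta_B} B)$ of \eqref{eq:fast-fronts-eucledean-coords} and look for relative equilibria of the form $A(\tilde\xi) = A_0 \ee^{\ii\Omega_A\tilde\xi}$, $B(\tilde\xi) = B_0 \ee^{\ii\Omega_B\tilde\xi}$ with $A_0, B_0 \in \C$ and $\Omega_A, \Omega_B \in \R$. Inserting this ansatz into \eqref{eq:fast-fronts-eucledean-coords} and splitting into real and imaginary parts produces four real algebraic equations for the four real unknowns $(r_A^2, r_B^2, \Omega_A, \Omega_B) = (|A_0|^2, |B_0|^2, \Omega_A, \Omega_B)$, in which every genuinely $\varepsilon$-dependent term carries at least a factor of $\varepsilon^2$. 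At $\varepsilon = 0$ these equations reduce exactly to the defining relations of the $NT$ equilibrium in the radii dynamics \eqref{eq:radii-dynamics} together with the local wave-number equations from the slow flow \eqref{eq:slow-dynamics}, so they admit the solution $(NT_A^2, NT_B^2, \psi_A, \psi_B)$. The Jacobian at $\varepsilon = 0$ is, after reordering the rows, block lower-triangular with diagonal blocks
\begin{equation*}
\begin{pmatrix} \gamma_{1,r} & \gamma_{2,r} \\ \gamma_{7,r} & \gamma_{8,r} \end{pmatrix} \quad \text{and} \quad \begin{pmatrix} -c_0 & 0 \\ 0 & c_g - c_0 \end{pmatrix}.
\end{equation*}
Their determinants are $d = \gamma_{1,r}\gamma_{8,r} - \gamma_{2,r}\gamma_{7,r}$, which is non-zero because the hyperbolicity of $NT$ forces $\det(L_{NT}) \neq 0$ and hence $\tilde d \neq 0$ via \eqref{eq:LNT-det-tr}, and $-c_0(c_g - c_0)$, which is non-zero by hypothesis. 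The implicit function theorem then produces a unique smooth family $(r_A^2(\varepsilon), r_B^2(\varepsilon), \Omega_A(\varepsilon), \Omega_B(\varepsilon))$ for $\varepsilon \in (0, \varepsilon_0)$, and varying the two remaining free phases $(\arg A_0, \arg B_0)$ sweeps out a two-dimensional torus $\Ical_{NT, \varepsilon}$ on which each trajectory rotates uniformly in each of the two phases.

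The final assertion that $\Ical_{NT, \varepsilon} \subset S_\varepsilon$ follows from a standard fact in geometric singular perturbation theory: a compact, invariant set of the full system that stays within an $O(\varepsilon)$-tubular neighbourhood of a normally hyperbolic portion of the critical manifold is automatically contained in the corresponding slow manifold. By construction the torus is an $O(\varepsilon^2)$-perturbation of a subset of $C_0$, so this applies. The main obstacle throughout is the invertibility of the Jacobian at the $NT$ relative equilibrium: this is precisely the step where the three hypotheses $c_0 \neq 0$, $c_g - c_0 \neq 0$, and hyperbolicity of $NT$ all enter simultaneously, and dropping any one of them breaks the implicit function theorem argument.
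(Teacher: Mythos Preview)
Your proof is correct and follows essentially the same approach as the paper: the fixed-point circles are identified by direct computation of the $\varepsilon$-independent equilibrium conditions, the $NT$ torus is obtained via a relative-equilibrium ansatz plus the implicit function theorem, and inclusion in all slow manifolds is deduced from the centre-manifold characterisation of slow manifolds. The only cosmetic difference is that you take $(r_A^2, r_B^2, \Omega_A, \Omega_B)$ as IFT variables (giving the $\gamma$-coefficient block with determinant $d$), whereas the paper uses $(r_A, r_B, \psi_A, \psi_B)$ in polar form (giving the linearisation $L_{NT}$ as the corresponding block); the two Jacobians differ only by rescaling and yield the same invertibility condition.
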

	\begin{proof}
		We prove that the structures persist in the full fast-slow system \eqref{eq:full-fast-slow-system-abbrev} and remain close to the critical manifold. Since the structures are bounded, they lie in a small neighbourhood of a compact submanifold of the critical manifold. Additionally, this compact submanifold is uniformly hyperbolic, that is, $D_X H_0(X,Y)$ has a uniform spectral gap about the imaginary axis for $(X,Y)$ in the compact submanifold. Therefore, using that slow manifolds are centre manifolds of the fast system extended with $\varepsilon' = 0$, we can lift the typical statement in centre manifold theory that solutions, which remain in a small neighbourhood for all times, lie on all centre manifolds, see e.g.~\cite{sijbrand1985}, by using a finite covering of the compact submanifold. Hence, if the structures perturb in the full system and remain bounded, they lie on all slow manifolds. In particular, this applies to equilibrium points and periodic orbits bifurcating from the critical manifold.
		
		For the persistence, we note that the steady states of the slow flow \eqref{eq:slow-flow} and the full fast-slow system \eqref{eq:full-fast-slow-system-abbrev} are identical. Therefore, the only remaining non-trival part of the lemma is the existence of $\Ical_{NT,\varepsilon}$. Since at $NT$ both $r_A \neq 0$ and $r_B \neq 0$ we can rewrite \eqref{eq:fast-fronts-eucledean-coords} into polar coordinates to obtain \eqref{eq:fast-travelling-fronts-ode-full}. We now look for solutions with constant radii $r_A, r_B$ and constant local wave numbers $\psi_A, \psi_B$. These satisfy
		\begin{equation*}
			\begin{split}
				0 &= \alpha_u r_A + \gamma_{1,r} r_A^3 + \gamma_{2,r} r_A r_B^2 - 4 \varepsilon^2 r_A \psi_A^2, \\
				0 &= c_0 \psi_A - \omega_A + \gamma_{1,i} r_A^2 + \gamma_{2,i} r_B^2, \\
				0 &= \alpha_v r_B + \gamma_{7,r} r_B r_A^2 + \gamma_{8,r} r_B^3 - 4 \varepsilon^2 r_B \psi_B^2, \\
				0 &= (c_g - c_0) \psi_B - \omega_B + \gamma_{7,i} r_A^2 + \gamma_{8,i} r_B^2 - 3 \varepsilon^2 c_v \psi_B^2.
			\end{split}
		\end{equation*}
		For $\varepsilon = 0$, there is a solution since we assume that the fully non-trivial equilibrium point $NT$ in the radii dynamics \eqref{eq:radii-dynamics} exists. Linearising about this point and setting $\varepsilon = 0$ yields a lower triangular block matrix, where the upper diagonal block is a $2\times2$ matrix corresponding to the linearisation about $NT$ in the radii dynamics \eqref{eq:radii-dynamics} and the lower diagonal block is a diagonal $2\times 2$ matrix with $c_0$ and $c_g - c_0$ on the diagonal. By assumption $NT$ is a hyperbolic equilibrium point in \eqref{eq:radii-dynamics} and $c_0 \neq 0$ and $c_g - c_0 \neq 0$. Therefore, the persistence of the periodic orbit follows by the implicit function theorem. Furthermore, since the phases of $A$ and $B$ in the periodic orbit are only given up to additive constants, the periodic orbit appear as a two-parameter family, which all lie on a two-dimensional sphere in $\R^4$.
	\end{proof}
	
	Given the persistence of the equilibrium points in the radii dynamics \eqref{eq:radii-dynamics} to the fast-slow system \eqref{eq:full-fast-slow-system-abbrev}, we can now discuss the persistence of the heteroclinic orbits in the radii dynamics. For this, we observe that the heteroclinic orbits obtained in \eqref{eq:radii-dynamics} or equivalently \eqref{eq:radii-dynamics-rescaled} can be classified into three disjoint groups:
	\begin{enumerate}[label=(\alph*)]
		\item Heteroclinic orbits on the invariant coordinate axes $\{r_A = 0\}$ and $\{r_B = 0\}$;\label{het-orbit-case1}
		\item Heteroclinic orbits connecting an unstable equilibrium point to a stable equilibrium point, a saddle to a stable equilibrium point or an unstable equilibrium point to a saddle;\label{het-orbit-case2}
		\item The heteroclinic orbit for $\tilde{d} < 0$ and $\tilde{c} = c^*$, see Figure \ref{fig:phase-planes-d<0}, connecting two saddles. \label{het-orbit-case3}
	\end{enumerate}
	We now show that the orbits in cases \ref{het-orbit-case1} and \ref{het-orbit-case2} generally persist in the fast-slow system \eqref{eq:full-fast-slow-system-abbrev} and thus result in fast-moving front solutions to the amplitude equations \eqref{eq:final-amplitude-eq}, which connect space-time periodic solutions, see Figure \ref{fig:amplitude-front}.
	
	\begin{figure}
		\includegraphics[width=0.45\textwidth]{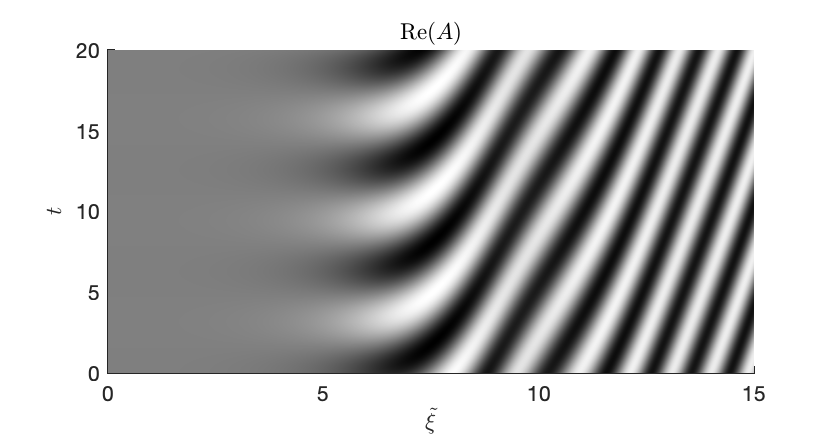} 
		\hspace{1cm}
		\includegraphics[width=0.45\textwidth]{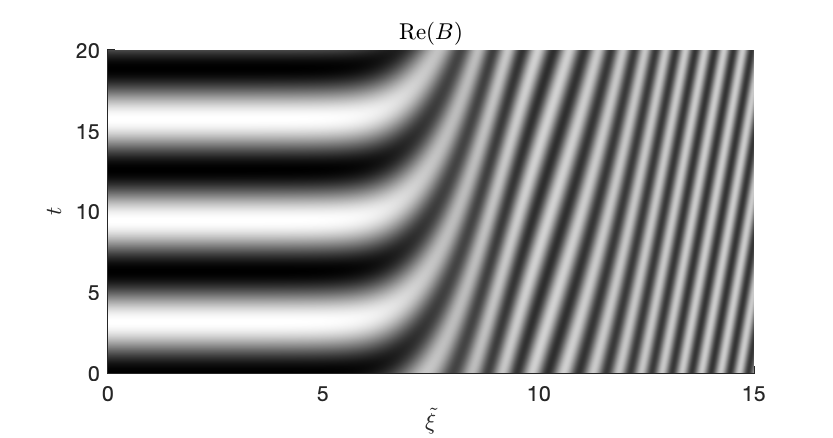} 
		\caption{Numerical simulation of a fast-moving front in the amplitude equations \eqref{eq:final-amplitude-eq}. Here, the front connects a semi-trivial, time-periodic solution with $A = 0$ to a space-time periodic solution. Since the solutions are plotted in the co-moving frame $\tilde{\xi}$, the front interface appears stationary.}
        \label{fig:amplitude-front}
	\end{figure}
	
	\begin{theorem}\label{thm:persistence-heteroclinic-orbits}
		Let $c_0 \neq 0$ and $c_g - c_0 \neq 0$. Then, there exists an $\varepsilon_0 > 0$, depending on the coefficients in \eqref{eq:final-amplitude-eq}, such that for any $\varepsilon \in (0,\varepsilon_0)$ all heteroclinic orbits in the classes \ref{het-orbit-case1} and \ref{het-orbit-case2} persist in the fast-slow system \eqref{eq:full-fast-slow-system-abbrev}.
	\end{theorem}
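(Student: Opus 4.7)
The plan is to apply geometric singular perturbation theory to the fast–slow system \eqref{eq:full-fast-slow-system-abbrev}, exploiting the $S^1 \times S^1$ phase invariance of the amplitude equations \eqref{eq:final-amplitude-eq}. Since $c_0 \neq 0$ and $c_g - c_0 \neq 0$, the critical manifold $C_0$ is normally hyperbolic on any compact subset, so Fenichel's theorem \cite{fenichel1979,kuehn2015} yields, for each compact $K \subset C_0$, a four-dimensional invariant slow manifold $S_\varepsilon^K$ that is $O(\varepsilon^2)$-$C^1$-close to $K$. The reduced flow on $S_\varepsilon^K$ is an $O(\varepsilon^2)$-perturbation of the slow flow \eqref{eq:slow-flow}, and Lemma \ref{lem:fixed-points-fast-slow-system} already places the persistent invariant sets $\Ical_T$, $\Ical_{ST_A}$, $\Ical_{ST_B}$ and $\Ical_{NT,\varepsilon}$ on $S_\varepsilon^K$.

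For case \ref{het-orbit-case1} I would restrict to the invariant subspace $\{B = \tilde{B} = 0\}$ (and symmetrically $\{A = \tilde{A} = 0\}$), which is preserved by \eqref{eq:full-fast-slow-system-abbrev} because the right-hand sides of \eqref{eq:full-fast-slow-system} vanish identically there. Fenichel's theorem applied to this restricted four-dimensional fast–slow system produces a two-dimensional slow manifold, and in polar coordinates $A = r_A \ee^{\ii \phi_A}$ the restricted slow flow decouples into a scalar $r_A$-equation coinciding with the $r_A$-component of \eqref{eq:radii-dynamics} at $r_B = 0$, together with a slaved phase equation. The radial heteroclinic from $r_A = \sqrt{-\alpha_u/\gamma_{1,r}}$ to $r_A = 0$ connects a hyperbolic source on the half-line to a hyperbolic sink and is therefore transverse and structurally stable; by the residual $S^1$-symmetry, which is preserved by the full restricted fast–slow flow, it lifts to a transverse heteroclinic manifold from $\Ical_{ST_A}$ to $\Ical_T$. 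The corresponding connections in $\{A = \tilde{A} = 0\}$ for both signs of $\tilde{c}$ are handled analogously.

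For case \ref{het-orbit-case2} I would argue on the full four-dimensional slow flow \eqref{eq:slow-flow}. Writing it in polar coordinates $(r_A,\phi_A,r_B,\phi_B)$, the $S^1 \times S^1$-equivariance makes the $(\phi_A,\phi_B)$-dynamics slaved to the $(r_A,r_B)$-dynamics, so every hyperbolic equilibrium of \eqref{eq:radii-dynamics} lifts to an element of $\{\Ical_T,\Ical_{ST_A},\Ical_{ST_B},\Ical_{NT,\varepsilon}\}$ whose center directions are precisely the phase directions. Consequently, a planar heteroclinic orbit whose source has unstable index $n_u$ and whose sink has stable index $n_s$ with $n_u + n_s \geq 2$ lifts to a smooth heteroclinic family parametrised by the phase symmetry group, and the center-unstable manifold of the lifted source together with the center-stable manifold of the lifted sink span the tangent space of the slow flow transversally along the connection inside $C_0$. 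Such a transverse heteroclinic connection is structurally stable, and therefore persists under the $O(\varepsilon^2)$-perturbation of the reduced flow on $S_\varepsilon$ for all sufficiently small $\varepsilon > 0$.

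The main technical difficulty lies in the verification of transversality for the connections with $\Ical_{NT,\varepsilon}$ as an endpoint, since two neutral phase directions must be matched correctly at the nontrivial endpoint and the two-torus is filled with a (potentially irrational) linear flow. This is resolved by exploiting the $S^1 \times S^1$-equivariance of the whole construction: the center directions at every invariant set are precisely the infinitesimal generators of the phase symmetry, so the center–(un)stable manifolds are smoothly foliated by translates of the radial connection and transversality in the radii reduction automatically promotes to transversality in the lifted $(A_r,A_i,B_r,B_i)$ slow flow. A standard persistence argument for transverse heteroclinic connections under smooth perturbations \cite{kuehn2015}, applied to the reduced flow on $S_\varepsilon$, then concludes the proof.
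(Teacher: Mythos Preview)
Your proposal is correct and follows essentially the same route as the paper: Fenichel's theorem for the normally hyperbolic critical manifold, Lemma~\ref{lem:fixed-points-fast-slow-system} to locate the persistent invariant sets on the slow manifold, restriction to the invariant subspaces $\{B=\tilde B=0\}$ and $\{A=\tilde A=0\}$ for class~\ref{het-orbit-case1}, and a dimension count of centre-(un)stable manifolds exploiting the phase directions as centre directions for class~\ref{het-orbit-case2}. The paper carries out the dimension count explicitly for the representative case $ST_A\longrightarrow NT$ with $ST_A$ unstable and $NT$ a saddle (four-dimensional centre-unstable versus three-dimensional centre-stable in the four-dimensional slow flow), which matches your transversality argument. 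One small slip: you write $n_u+n_s\ge 2$, but robustness of a planar heteroclinic requires $n_u+n_s>2$; this is harmless here since the definition of class~\ref{het-orbit-case2} already forces $n_u+n_s\in\{3,4\}$.
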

	\begin{proof}
		We start by showing the persistence of orbits from class \ref{het-orbit-case2}. Take any heteroclinic orbit in \eqref{eq:radii-dynamics-rescaled} from this class. Since the corresponding orbit in \eqref{eq:slow-flow} is bounded in $\R^4$, there is a compact submanifold $S_0$ of $C_0$, which contains this orbit. As $C_0$ is normally hyperbolic, we can apply Fenichel's theorem \cite{fenichel1979}, see also \cite[Thm.~3.1.4]{kuehn2015}, to obtain that this compact submanifold of $C_0$ perturbs to a manifold $S_\varepsilon$, which is diffeomorphic to $S_0$ and locally invariant under the flow of the fast-slow system \eqref{eq:full-fast-slow-system-abbrev}. Additionally, the flow on $S_\varepsilon$ converges to the slow flow \eqref{eq:slow-flow} as $\varepsilon \rightarrow 0$. Therefore, the vector field \eqref{eq:slow-flow} on the critical manifold perturbs to a vector field on $S_\varepsilon$.
		
		It remains to show that heteroclinic orbits in class \ref{het-orbit-case2} are structurally stable, that is, they persist under higher-order perturbations of the vector field. By assumption, heteroclinic orbits in class \ref{het-orbit-case2} arise from an intersection of a unstable and a stable manifold, where at least one manifold is two-dimensional. Since the phase space of the radii dynamics \eqref{eq:radii-dynamics} is two-dimensional, the intersection is transversal and therefore structurally stable. We now show that a similar argument also applies if the heteroclinic orbit is lifted to a corresponding orbit in the slow flow \eqref{eq:slow-flow}. For notational convenience, we restrict the argument to a specific type of heteroclinic orbit in \eqref{eq:radii-dynamics}, which connects $ST_A$ to $NT$ and that $ST_A$ is an unstable equilibrium point while $NT$ is a saddle. The persistence of other heteroclinic orbits can be obtained with the same arguments, the key property is that the intersection of stable and unstable manifolds in the radii dynamics is robust under perturbation.
		
		As discussed above, on the critical manifold $C_0$, the equilibrium point $ST_A$ becomes a one-dimensional circle of equilibrium points, where $B = 0$ and $A$ has constant modulus and the circle is parameterised through the phase of $A$. Therefore, the linearisation about any equilibrium point on the circle has a zero eigenvalue, which can be obtained by differentiating the family of steady states respect to the phase parameter. With the neutral direction tangential to the circle now accounted for, the remaining eigenvalues are given by the radial behaviour in the normal directions of the invariant set. Specifically, since $ST_A$ is unstable in the radii dynamics, we obtain that the remaining three eigenvalues must have positive real part. Therefore, any equilibrium point on the circle on the critical manifold corresponding to $ST_A$ has a four-dimensional centre-unstable manifold. 
		
		As shown in Lemma \ref{lem:fixed-points-fast-slow-system}, the circle of equilibrium points on the critical manifold perturbs to a one-dimensional circle of equilibrium points $\Ical_{ST_A}$, which lies on the slow manifold $\Scal_\varepsilon$. Since the flow on $\Scal_\varepsilon$ is $\varepsilon$ close to the flow on the critical manifold linearisation of the perturbed slow flow on $\Scal_\varepsilon$ about any equilibrium point on $\Ical_{ST_A}$ still has at least a three-dimensional unstable manifold. In addition, the solutions on $\Ical_{ST_A}$ are still parametrised by a phase shift of $A$. Therefore, we also obtain that the zero eigenvalue persists. Hence, the linearisation still has a four-dimensional centre-unstable manifold.
		
		We now apply a similar argument to show that the two-dimensional invariant manifold $\Ical_{NT,\varepsilon}$ on $\Scal_\varepsilon$ corresponding to $NT$ in the fast-slow system \eqref{eq:full-fast-slow-system-abbrev} has a three-dimensional centre-stable manifold with a two-dimensional centre direction. The argument relies on the same ideas. In particular, the two-dimensional centre direction follows from the fact that $\Ical_{NT,\varepsilon}$ is parametrised by two independent parameters for the phase shift of $A$ and $B$, respectively. In addition, the fact that the stable direction is one-dimensional follows from the assumption that $NT$ is a saddle in the radii dynamics and that the eigenvalue with negative real part persists under perturbation.
		
		Since there is a heteroclinic orbit $ST_A \longrightarrow NT$ in the radii dynamics, we obtain a heteroclinic orbit from $\Ical_{ST_A}$ to $\Ical_{NT,0}$. Hence, the four-dimensional centre-unstable manifold of $\Ical_{ST_A}$ and the three-dimensional centre-stable manifold of $\Ical_{NT,\varepsilon}$ intersect for $\varepsilon = 0$. Since the critical manifold is four-dimensional, this intersection must be transversal and therefore, persists under perturbation, using also that the centre directions persist. This establishes a heteroclinic orbit in $\Scal_\varepsilon$ connecting $\Ical_{ST_A}$ to $\Ical_{NT,\varepsilon}$.
		
		As mentioned above, the persistence of other orbits in class \ref{het-orbit-case2} follows with the same arguments. Using that $\{(A_r, A_i, \tilde{A}_r, \tilde{A}_i) = 0\}$ and $\{(B_r, B_i, \tilde{B}_r, \tilde{B}_i) = 0\}$ are invariant in the fast-slow system \eqref{eq:full-fast-slow-system-abbrev}, the strategy can be adapted in a straightforward manner to also show the persistence of heteroclinic orbits in class \ref{het-orbit-case1}, which lie on the invariant sets $\{r_A = 0\}$ or $\{r_B = 0\}$ in \eqref{eq:radii-dynamics}. This concludes the proof.
	\end{proof}	
	
	\begin{figure}
		\includegraphics[width=0.5\textwidth]{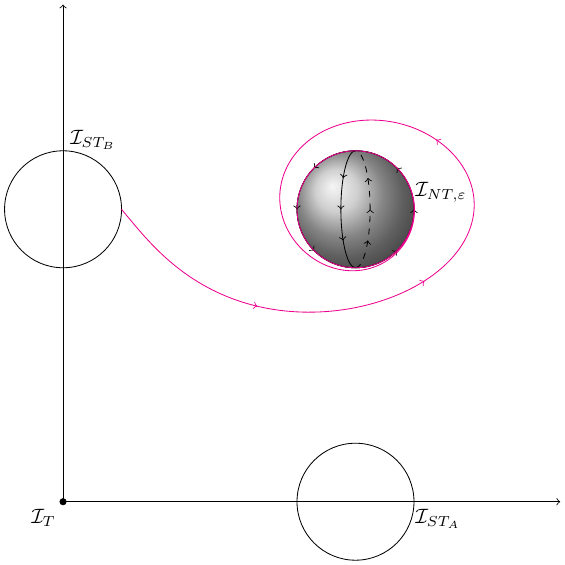}
		\caption{Schematic depiction of the phase space dynamics in the fast-slow system \eqref{eq:full-fast-slow-system-abbrev} displaying the equilibrium point $\Tcal$, the circles of equilibrium points $\Ical_{ST_A}$ and $\Ical_{ST_B}$, as well as the two-dimensional ball of periodic orbits $\Ical_{NT,\varepsilon}$. In addition, the heteroclinic orbit from $\Ical_{ST_A}$ to $\Ical_{NT,\varepsilon}$ constructed in the proof of Theorem \ref{thm:persistence-heteroclinic-orbits} is shown.}
	\end{figure}
	
	\section{Proofs of the existence of heteroclinic orbits}\label{sec:proof-of-het-orbits}
	
	We now rigorously prove that the existence of some numerically observed orbits in the system \eqref{eq:radii-dynamics-rescaled}. Although some orbits cannot be constructed through standard analytical methods, we expect that they can be obtained for example through rigorous numerical methods, see e.g.~\cite{berg2018}.
	
	\subsection{Heteroclinic orbits for $d < 0$}
	We now discuss the dynamics of \eqref{eq:radii-dynamics-rescaled} rigorously in the case $d < 0$, starting with the following result for $\tilde{c} < 0$.
	
	\begin{lemma}\label{lem:het-orbits-d<0-c<0}
		Let $\tilde{\gamma}_A < -1$, $\tilde{\gamma}_B < -1$ and $\tilde{c} < 0$. Then, there are heteroclinic orbits $ST_A \longrightarrow NT$, $ST_B \longrightarrow NT$ and $NT \longrightarrow T$.
	\end{lemma}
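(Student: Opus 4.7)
The plan is to combine phase-plane analysis with a time reversal identifying \eqref{eq:radii-dynamics-rescaled} with the classical bistable Lotka--Volterra competition system. The sign assumptions and the eigenvalue formulas preceding Proposition~\ref{prop:stability-NT} imply that in forward time $T$ is a stable node, $ST_A$ and $ST_B$ are unstable nodes, and, by Proposition~\ref{prop:stability-NT}, $NT$ is a saddle. The coordinate axes, and hence the open first quadrant $\Omega = \{r_A, r_B > 0\}$, are invariant. Dulac's criterion with $B(r_A, r_B) = 1/(r_A r_B)$ yields
\begin{equation*}
\partial_{r_A}(B \dot{r}_A) + \partial_{r_B}(B \dot{r}_B) = \frac{2 r_A}{r_B} + \frac{2 r_B}{|\tilde{c}|\, r_A} > 0 \quad \text{on } \Omega,
\end{equation*}
ruling out periodic and homoclinic orbits in $\Omega$; this is the workhorse for the subsequent Poincar\'e--Bendixson arguments.

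To obtain $NT \to T$, let $R_0 \subset \Omega$ be the region bounded by the coordinate axes and the nullcline arcs $\Ncal_A = \{r_A^2 + |\tilde{\gamma}_A| r_B^2 = 1\}$, $\Ncal_B = \{r_B^2 + |\tilde{\gamma}_B| r_A^2 = 1\}$, containing $T$ in its closure. A direct check shows $\dot r_A < 0$ and $\dot r_B < 0$ throughout $R_0$, and on the nullcline parts of $\partial R_0$ the flow is transverse and inward, so $R_0$ is positively invariant with $T$ and $NT$ the only equilibria in $\overline{R_0}$. The forward-time Jacobian at $NT$ has all positive entries; hence by Perron--Frobenius its unstable eigenvector has the form $(+,+)$, and the branch of the unstable manifold tangent to the opposite ray $(-,-)$ enters $R_0$. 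Along this branch both coordinates are monotone decreasing and bounded below, so by Poincar\'e--Bendixson (with periodic and homoclinic orbits excluded by Dulac) it converges to $T$.

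For $ST_A \to NT$ and $ST_B \to NT$, I reverse time via $\tau = -t$. Because $\tilde c < 0$, the resulting system is the classical bistable Lotka--Volterra competition model (with coefficients $|\tilde{\gamma}_A|,|\tilde{\gamma}_B| > 1$) in which $T$ is a source, $ST_A$ and $ST_B$ are sinks, and $NT$ remains a saddle. The stable manifold of $NT$ in original forward time equals the unstable manifold of $NT$ in reversed time. The nullclines $\Ncal_A, \Ncal_B$ divide $\Omega$ into four regions; a sign analysis of the reversed field shows that the wedge between $\Ncal_A$ and $\Ncal_B$ adjacent to $ST_A$ is positively invariant under the reversed flow and its trajectories are attracted to $ST_A$, and analogously for the wedge adjacent to $ST_B$. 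I then verify that the two unstable eigendirections at $NT$ in reversed time have opposite-sign components and that their slopes lie strictly between the slopes of $\Ncal_A$ and $\Ncal_B$ at $NT$; this reduces to a quadratic inequality on the unstable eigenvalue equivalent to $|\tilde{\gamma}_A||\tilde{\gamma}_B| > 1$, which is guaranteed by $\tilde{d} < 0$. Hence one branch of the unstable manifold enters the $ST_A$-wedge and the other the $ST_B$-wedge; Poincar\'e--Bendixson together with Dulac and the fact that $T$ is a source in reversed time forces each branch to converge to the corresponding sink, yielding, after undoing the time reversal, the heteroclinic orbits $ST_A \to NT$ and $ST_B \to NT$.

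The step I expect to be the main obstacle is precisely this separation of the two branches of the unstable manifold of $NT$ in the reversed flow into distinct nullcline-wedges, since ruling out the scenario that both branches accumulate on a single sink requires the explicit slope comparison sketched above. This is where the bistability hypothesis $\tilde{d} < 0$, equivalently $\tilde\gamma_A \tilde\gamma_B > 1$, enters essentially.
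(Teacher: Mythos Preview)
Your proof is correct and follows essentially the same strategy as the paper: both decompose the first quadrant via the nullclines $\mathcal{N}_A,\mathcal{N}_B$, use monotonicity in the resulting sectors to force $NT\to T$, and reverse time to obtain $ST_A\to NT$ and $ST_B\to NT$ via Poincar\'e--Bendixson. The differences are in the auxiliary tools rather than the architecture. The paper rules out periodic orbits only locally, using that one coordinate is monotone in each sector $\Omega_{+-}^j$; your global Dulac argument with weight $1/(r_A r_B)$ is cleaner and disposes of the issue once and for all. Likewise, the paper asserts without detail that the unstable manifold of $NT$ lies in $\Omega_{++}\cup\Omega_{--}$ and the stable one in $\Omega_{+-}$; your Perron--Frobenius observation (the forward Jacobian at $NT$ has all positive entries, so the unstable eigenvector is componentwise positive and the stable one has opposite signs) makes this explicit and also justifies the separation of the two reversed-time unstable branches into the $ST_A$- and $ST_B$-wedges. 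The Lotka--Volterra identification is a nice framing that the paper does not make; it lets you import the classical bistable-competition picture directly. Overall your argument is slightly more self-contained, while the paper's is shorter because it leaves the eigenvector geometry implicit.
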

	\begin{proof}
		We first recall from Proposition \ref{prop:stability-NT} that $NT$ is a saddle for $\tilde{\gamma}_A < -1$, $\tilde{\gamma}_B < -1$ and $\tilde{c} < 0$. Next, we note that since $\tilde{\gamma}_A < 0$, $\tilde{\gamma}_B < 0$ and $\tilde{c} < 0$, the functions $F_A(r_A,r_B)$ and $F_B(r_A,r_B)$, see \eqref{eq:radii-dynamics-rescaled}, are monotonically increasing in $r_A$ and $r_B$, respectively. Additionally, we explicitly obtain the nullclines $\ell_A := \{F_A = 0\} = \{r_A = \sqrt{1+\tilde{\gamma}_A r_B^2}\}$ and $\ell_B := \{F_B = 0\} = \{r_B = \sqrt{1+\tilde{\gamma}_B r_A^2}\}$. These one-dimensional curves can be written as the graph of concave functions over $r_B$ and $r_A$, respectively. Moreover, they intersect at $NT$. From this, we can decompose the phase space $\{(r_A,r_B) \in [0,\infty)^2\}$ into 
		\begin{equation*}
			\begin{split}
				\Omega_{--} &= \{(r_A, r_B) \,:\, F_A(r_A,r_B) < 0 \text{ and } F_B(r_A,r_B) < 0\}, \\
				\Omega_{+-} &= \{(r_A,r_B) \,:\, F_A(r_A,r_B) F_B(r_A,r_B) < 0\}, \\
				\Omega_{++} &= \{(r_A, r_B) \,:\, F_A(r_A,r_B) > 0 \text{ and } F_B(r_A,r_B) > 0\},
			\end{split}
		\end{equation*}
		see Figure \ref{fig:phase-plane-decomp}. We note that the intersection of the boundaries of these sets contains only the equilibrium point $NT$. Additionally, the set $\Omega_{+-}$ decomposes into two connected components
		\begin{equation*}
			\Omega_{+-}^1 := \Omega_{+-} \cap \{r_A < NT_1 \text{ and } r_B > NT_2\}, \quad \Omega_{+-}^2 := \Omega_{+-} \cap \{r_A > NT_1 \text{ and } r_B < NT_2\},
		\end{equation*}
		see Figure \ref{fig:phase-plane-decomp}. Since $\tilde{\gamma}_A < 0$ and $\tilde{\gamma}_B < 0$ and due to the monotonicity of the vector field in \eqref{eq:radii-dynamics-rescaled}, these sets are non-empty and arranged as in Figure \ref{fig:phase-plane-decomp}.
		
		We now show that $\Omega_{--}$ and $\Omega_{++}$ are inflowing invariant manifolds and $\Omega_{+-}^1$ and $\Omega_{+-}^2$ are overflowing ones. First, consider the interface of $\Omega_{--}$ and $\Omega_{+-}^2$, which is a subset of $\{F_B = 0\}$ and thus a graph of a concave function in $r_A$. Additionally, since $F_A(r_A,r_B) < 0$ for all $(r_A, r_B)$ on the interface, the vector field flows from $\Omega_{+-}^2$ to $\Omega_{--}$. Using the same arguments for the other interfaces establishes the invariance properties of the sets.
		
		Since $NT$ is a saddle, it has one-dimensional stable and unstable manifolds. From the flow of the vector field in the different regions we can conclude that the unstable manifold lies in $\Omega_{++} \cup \Omega_{--}$ and the stable manifold is contained in $\Omega_{+-}$. Since $\Omega_{--}$ is a compact, inflowing invariant manifold, where both $r_A$ and $r_B$ are monotonically decreasing along every orbit, it is easy to see that any orbit starting in $\Omega_{--}$ tends to $T$ and is therefore contained in the two-dimensional stable manifold of $T$. Hence, the unstable manifold of $NT$ and the unstable manifold of $T$ intersect and there is a heteroclinic orbit $NT \longrightarrow T$. Note that in particular, this intersection is transversal since the phase space is two-dimensional.
		
		To establish the heteroclinic orbits $ST_A \longrightarrow NT$ and $ST_B \longrightarrow NT$, we consider the time-reversed system, where $\Omega_{+-}^1$ and $\Omega_{+-}^2$ are inflowing invariant manifolds. Due to the sign condition on the components of the vector field, there is no periodic orbit in either $\Omega_{+-}^1$ or $\Omega_{+-}^2$. Therefore, since both $ST_A$ and $ST_B$ are stable in the time-reversed system, the existence of heteroclinic orbits from $NT$ to $ST_A$ and $ST_B$, respectively, follows from \cite[Thm.~6.41]{meiss2017}. Returning to the original flow of time then concludes the proof.
	\end{proof}
	
	\begin{figure}
        \begin{minipage}[t]{0.47\textwidth}
            \includegraphics[width=\textwidth]{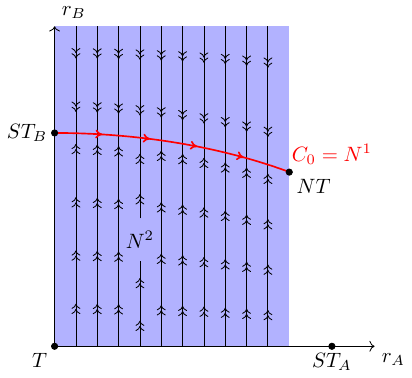}
            \caption{Fast-slow structure of the vector field for $\tilde{c} \ll 1$, as well as the sets $N^1$ and $N^2$ used in the proof of Lemma \ref{lem:het-orbit-d<0-c<<1}.}
            \label{fig:fast-slow-radii-c<<1}
        \end{minipage}
        \qquad
        \begin{minipage}[t]{0.47\textwidth}
    		\includegraphics[width=\textwidth]{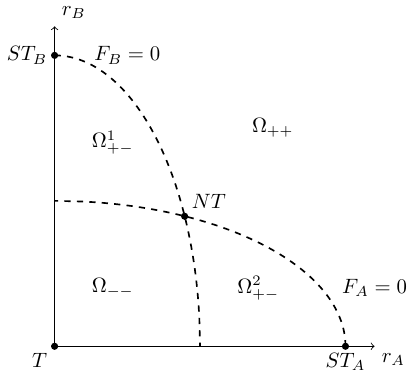}
    		\caption{Decomposition of phase plane in the case $d < 0$ and $\tilde{c} < 0$ used in the proof of Lemma \ref{lem:het-orbits-d<0-c<0}. The dashed lines mark the nullclines of the vector field \eqref{eq:radii-dynamics-rescaled}.}
    		\label{fig:phase-plane-decomp}
        \end{minipage}
	\end{figure}

	As $\tilde{c}$ transitions through zero, the equilibrium point $NT$ becomes stable, see Proposition \ref{prop:stability-NT}. In addition, $T$, $ST_A$ and $ST_B$ become saddles. This results in a break-up of the heteroclinic connection $ST_A \longrightarrow NT$ and $NT \longrightarrow T$. In contrast, the connection $ST_B \longrightarrow NT$ still exists as the following lemma shows.
	
	\begin{lemma}\label{lem:het-orbit-d<0-c<<1}
		Let $\tilde{\gamma}_A < -1$, $\tilde{\gamma}_B < -1$. Then there exists a $\tilde{c}_0 > 0$ such that for all $0 < \tilde{c} < \tilde{c}_0$, the system \eqref{eq:radii-dynamics-rescaled} has a heteroclinic orbit $ST_B \longrightarrow NT$.
	\end{lemma}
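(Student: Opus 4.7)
The plan is to recognize that for $0 < \tilde c \ll 1$ the system \eqref{eq:radii-dynamics-rescaled} is a standard singularly perturbed planar system with slow variable $r_A$ and fast variable $r_B$: multiplying the $r_B$-equation by $\tilde c$ brings it into the form
\begin{equation*}
\dot r_A = r_A\bigl(-1 + r_A^2 - \tilde\gamma_A r_B^2\bigr), \qquad \tilde c\,\dot r_B = r_B\bigl(1 - r_B^2 + \tilde\gamma_B r_A^2\bigr).
\end{equation*}
The associated critical set splits into the trivial branch $\{r_B = 0\}$ and the nontrivial branch
\begin{equation*}
M_0 = \bigl\{(r_A, r_B) \in [0,\infty)^2 \,:\, r_B = \sqrt{1 + \tilde\gamma_B r_A^2},\ r_A \in [0,\sqrt{-1/\tilde\gamma_B}]\bigr\}.
\end{equation*}
A direct calculation shows that the normal eigenvalue along $M_0$ equals $-2 r_B^2$, so $M_0$ is normally hyperbolic and attracting away from the fold at $r_A = \sqrt{-1/\tilde\gamma_B}$. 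Both $ST_B = (0,1)$ and $NT$ lie on $M_0$, and I would first check that $NT_A^2 < -1/\tilde\gamma_B$ strictly; using the explicit expression $NT_A^2 = (1+\tilde\gamma_A)/(1-\tilde\gamma_A\tilde\gamma_B)$ this inequality reduces, after clearing denominators, to $\tilde\gamma_B < -1$, which is part of our hypotheses. Hence both $ST_B$ and $NT$ sit strictly inside the normally hyperbolic part of $M_0$.

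Next I would compute the reduced slow flow on $M_0$ by inserting $r_B^2 = 1 + \tilde\gamma_B r_A^2$ into the $r_A$-equation, obtaining
\begin{equation*}
\dot r_A = r_A\bigl[-(1 + \tilde\gamma_A) + (1 - \tilde\gamma_A\tilde\gamma_B)\,r_A^2\bigr].
\end{equation*}
Under the hypotheses $\tilde\gamma_A, \tilde\gamma_B < -1$ the coefficient $-(1+\tilde\gamma_A)$ is positive while $1 - \tilde\gamma_A\tilde\gamma_B$ is negative, so on the one-dimensional $M_0$ the point $ST_B$ is a source, $NT$ is a sink, and $\dot r_A > 0$ on the entire interval $(0, NT_A)$. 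This produces an explicit reduced heteroclinic orbit from $ST_B$ to $NT$ lying on $M_0$.

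To finish, I would pick a compact arc $K \subset M_0$ that contains this reduced heteroclinic together with its endpoints and stays uniformly bounded away from the fold, and invoke Fenichel's theorem, see \cite{fenichel1979} and \cite[Thm.~3.1.4]{kuehn2015}, to obtain $\tilde c_0 > 0$ and a locally invariant slow manifold $M_{\tilde c}$ for all $\tilde c \in (0, \tilde c_0)$ that is $\Ocal(\tilde c)$-close to $K$ and on which the induced flow is an $\Ocal(\tilde c)$-perturbation of the reduced flow. Since $ST_B$ and $NT$ are exact equilibria of the full system for every $\tilde c > 0$ and both lie on $M_0$, they are contained in $M_{\tilde c}$; their source/sink types on the one-dimensional $M_{\tilde c}$ are preserved by structural stability for $\tilde c$ small. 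Therefore the one-dimensional unstable manifold of $ST_B$ inside $M_{\tilde c}$ is forced to terminate at $NT$, giving the desired heteroclinic orbit of the full system.

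The main technical point is ensuring that the whole closure of the reduced orbit stays inside the normally hyperbolic part of $M_0$ uniformly in $\tilde c$; this amounts precisely to the strict fold estimate $NT_A < \sqrt{-1/\tilde\gamma_B}$ established above (and is where the hypothesis $\tilde\gamma_B < -1$ enters in an essential way, since the analogous statement with $\tilde\gamma_B \in (-1, 0)$ would force the orbit through the fold). Once this is in place, the rest of the argument is a textbook application of geometric singular perturbation theory to a planar fast–slow system.
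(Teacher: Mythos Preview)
Your proof is correct and follows essentially the same geometric singular perturbation approach as the paper: both identify the attracting critical manifold $\{r_B^2 = 1 + \tilde\gamma_B r_A^2\}$, compute the same reduced slow flow with source $ST_B$ and sink $NT$, and then persist the heteroclinic for small $\tilde c$. The only cosmetic difference is in the final persistence step---the paper invokes the Szmolyan-type transversality criterion \cite[Thm.~6.1.1]{kuehn2015} via the manifolds $N^1$, $N^2$, whereas you argue directly that the exact equilibria $ST_B$ and $NT$ lie on the one-dimensional slow manifold $M_{\tilde c}$ and retain their source/sink types there; your explicit verification of the fold estimate $NT_A^2 < -1/\tilde\gamma_B$ (equivalent to $\tilde\gamma_B < -1$) is a nice addition that the paper leaves implicit.
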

	\begin{proof}
		For $0 < \tilde{c} \ll 1$, the ODE system \eqref{eq:radii-dynamics-rescaled} has a fast-slow structure, which allows us to establish the existence of a heteroclinic connection $ST_B \longrightarrow NT$ by using geometric singular perturbation theory, specifically \cite[Thm.~6.1.1]{kuehn2015}, see also \cite{szmolyan1991}. The geometric situation is displayed in Figure \ref{fig:fast-slow-radii-c<<1}.
		
		First, we consider the slow subsystem given by 
		\begin{equation*}
			\begin{split}
				\dot{r}_A &= -r_A (1 - r_A^2 + \tilde{\gamma}_A r_B^2), \\
				0 &= r_B (1 - r_B^2 + \tilde{\gamma}_B r_A^2).
			\end{split}
		\end{equation*}
		Therefore, the critical manifold is given by $\{r_B = 0\} \cup \{r_B^2 = 1 + \tilde{\gamma}_B r_A^2\}$. For this proof, we consider the subset
		\begin{equation*}
			C_0 := \left\{(r_A,r_B) \in [0,\infty)^2 \,:\, r_B^2 = 1 + \tilde{\gamma}_B r_A^2 \text{ and } r_A^2 \in \left[0, \frac{1+\tilde{\gamma}_A}{1 - \tilde{\gamma}_A \tilde{\gamma}_B}\right]\right\},
		\end{equation*}
		which is a one-dimensional curve with endpoints $ST_B$ and $NT$. The dynamics on $C_0$ are given by
		\begin{equation*}
			\dot{r}_A = -(1+\tilde{\gamma}_A) r_A + (1-\tilde{\gamma}_A\tilde{\gamma}_B) r_A^3.
		\end{equation*}
		Using that $1+\tilde{\gamma}_A < 0$ and $1 - \tilde{\gamma}_A \tilde{\gamma}_B < 0$, this equation has a heteroclinic orbit from $r_A  = 0$ to $r_A = \sqrt{\tfrac{1+\tilde{\gamma}_A}{1 - \tilde{\gamma}_A \tilde{\gamma}_B}}$. This corresponds to a heteroclinic orbit $ST_B \longrightarrow NT$ in the slow subsystem. Moreover, we find that $W^u(ST_B) = W^s(NT) = C_0$, where $W^u(ST_B)$ denotes the unstable manifold of $ST_B$ and $W^s(NT)$ denotes the stable manifold of $NT$ both with respect to the slow subsystem on $C_0$.
		
		Next, we consider the fast subsystem
		\begin{equation*}
			\begin{split}
				r_A' &= 0 \\
				r_B' &= r_B (1 - r_B^2 + \tilde{\gamma}_B r_A^2).
			\end{split}
		\end{equation*}
		Linearising the $r_B$-equation about any point on $C_0$, we find the linear equation $R_B' = -2 R_B$ and therefore, $C_0$ is attractive with respect to the fast flow. In particular, this yields that the manifolds
		\begin{equation*}
			\begin{split}
				N^1 &:= \bigcup_{p \in C_0} \left\{q \in (0,\infty)^2 \,:\, \phi^\text{fast}_t(q) \rightarrow p, \text{ for } t \rightarrow -\infty\right\} = C_0 \\
				N^2 &:= \bigcup_{p \in C_0} \left\{q \in (0,\infty)^2 \,:\, \phi^\text{fast}_t(q) \rightarrow p, \text{ for } t \rightarrow -\infty\right\} = \left(0, \sqrt{\frac{1+\tilde{\gamma}_A}{1 - \tilde{\gamma}_A \tilde{\gamma}_B}}\right) \times (0,\infty)
			\end{split}
		\end{equation*}
		intersect transversally and therefore, the conditions of \cite[Thm.~6.1.1]{kuehn2015} are satisfied. Hence, the heteroclinic orbit $ST_B \longrightarrow NT$ in the slow subsystem persists for $0 < \tilde{c} \ll 1$, which proves the desired statement.
	\end{proof}
	
	\begin{remark}
		The heteroclinic orbit $NT \longrightarrow ST_A$ observed for $\tilde{c} \ll 1$ can also be established using geometric singular perturbation theory similar to Lemma \ref{lem:het-orbit-d<0-c<<1}.
	\end{remark}

	\subsection{Heteroclinic orbits for $d > 0$}
	
	Similar to the previous case $d < 0$, we now rigorously establish the heteroclinic orbits, which were numerically conjectured, see also Figure \ref{fig:phase-plane-d>0}. We first consider the case that $\tilde{c} > 0$, which we can understand fully analytically.
	
	\begin{figure}
		\includegraphics[width=0.4\textwidth]{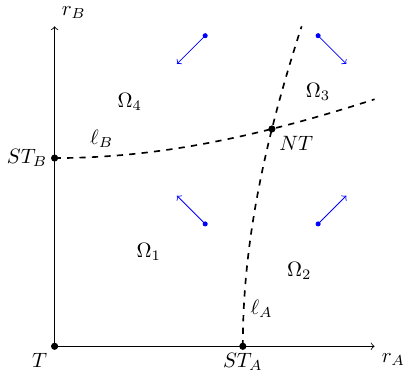}
		\hspace{1cm}
		\includegraphics[width=0.4\textwidth]{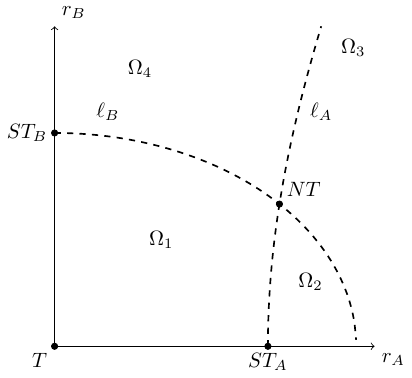}
		
		\includegraphics[width=0.4\textwidth]{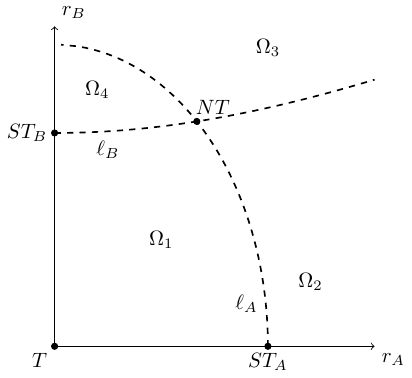}
		\hspace{1cm}
		\includegraphics[width=0.4\textwidth]{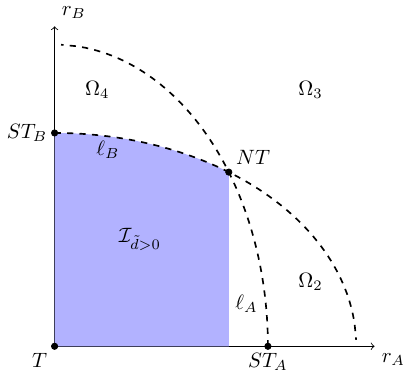}
		\caption{Qualitatively different decompositions of the phase space by the nullclines, depicted as dashed lines, in the case $\tilde{d} > 0$. From top left to bottom right: case $\tilde{\gamma}_A > 0$, $\tilde{\gamma}_B > 0$; case $\tilde{\gamma}_A > 0$, $\tilde{\gamma}_B < 0$; case $\tilde{\gamma}_A < 0$, $\tilde{\gamma}_B > 0$; and case $\tilde{\gamma}_A < 0$, $\tilde{\gamma}_B < 0$. The qualitative behaviour of the flow for $\tilde{c} > 0$ in the different subsets is indicated by blue arrows in the first figure. Note that, the qualitative flow is similar in all other cases. The last plot of the case $\tilde{\gamma}_A < 0$ and $\tilde{\gamma}_B < 0$ also shows the inflowing invariant set $\Ical_{\tilde{d} > 0}$ constructed in the proof of Lemma \ref{lem:het-orbits-d>0}.}
		\label{fig:phase-plane-decomp-d>0-c>0}
	\end{figure}

	\begin{lemma}\label{lem:het-orbits-d>0}
		Let $\tilde{c} > 0$ and $\tilde{\gamma}_A$, $\tilde{\gamma}_B$ such that \eqref{eq:non-triv-fp-invertibility}--\eqref{eq:non-triv-fp-pos2} are satisfied with $\tilde{d} > 0$. Then, the system \eqref{eq:radii-dynamics-rescaled} has heteroclinic orbits $ST_A \longrightarrow NT$ and $NT \longrightarrow ST_B$. Together with the invariant coordinate axes $\{r_A = 0\}$ and $\{r_B = 0\}$, these orbits form the boundary of a set, which is filled with heteroclinic orbits $ST_A \longrightarrow ST_B$.
	\end{lemma}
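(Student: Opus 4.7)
The plan is to carry out a phase plane analysis of \eqref{eq:radii-dynamics-rescaled} in the invariant first quadrant and to combine it with Poincaré--Bendixson. First I would classify all equilibria: under $\tilde{c} > 0$, $\tilde{d} > 0$ and \eqref{eq:non-triv-fp-pos1}--\eqref{eq:non-triv-fp-pos2} one has $1+\tilde{\gamma}_A, 1+\tilde{\gamma}_B > 0$, so the eigenvalues computed in Section~\ref{sec:fronts} give $T$ a saddle with the coordinate axes as its manifolds, $ST_A$ an unstable node, $ST_B$ a stable node, and Proposition~\ref{prop:stability-NT} identifies $NT$ as a hyperbolic saddle. A short direct computation of the Jacobian at $NT$ (using $F_A = F_B = 0$ there) shows that each of its eigenvectors lies in one of the four sign sectors of $(F_A, F_B)$ determined by the nullclines $\ell_A = \{F_A = 0\}$ and $\ell_B = \{F_B = 0\}$.

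Second, I would construct a compact, forward-invariant set $\Ical \subset [0,\infty)^2$ containing all four equilibria, whose boundary consists of segments of the coordinate axes (themselves invariant) together with an outer arc on which the vector field points strictly inwards. This arc I would build from pieces of the nullclines $\ell_A, \ell_B$, closed off, if needed, by a line $r_A + r_B = R$ with $R$ sufficiently large so that the cubic terms $r_A^3$ and $r_B^3$ dominate and drag orbits back; the argument splits into the four cases of Figure~\ref{fig:phase-plane-decomp-d>0-c>0} according to the signs of $\tilde{\gamma}_A, \tilde{\gamma}_B$. Having $\Ical$ in hand, closed orbits in $\interior(\Ical)$ are excluded by Poincaré--Hopf index theory: the only interior equilibrium is the saddle $NT$ of index $-1$, incompatible with the index $+1$ that any closed orbit would have to enclose.

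Third, I would trace $W^u(NT)$ and $W^s(NT)$. The branch of $W^u(NT)$ entering the sign sector $\{F_A < 0, F_B < 0\}$ (the one whose closure contains $ST_B$) cannot cross either nullcline, on which $\dot r_A$ or $\dot r_B$ would have the wrong sign, so both $r_A$ and $r_B$ are monotone along the orbit; combined with $\Ical$-invariance and the absence of periodic orbits, this forces the $\omega$-limit to be $ST_B$, i.e.\ $NT \longrightarrow ST_B$. Reversing time and applying the same argument to the branch of $W^s(NT)$ lying in the sign sector $\{F_A > 0, F_B > 0\}$ yields $ST_A \longrightarrow NT$. Finally, the open region $\Omega \subset \interior(\Ical)$ bounded by these two orbits and the axes is forward- and backward-invariant and contains no equilibrium, so every orbit in $\Omega$ has $\omega$-limit $\{ST_B\}$ and $\alpha$-limit $\{ST_A\}$, proving that $\Omega$ is foliated by heteroclinic orbits $ST_A \longrightarrow ST_B$.

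The main obstacle is step two in the subcases where $\tilde{\gamma}_A$ or $\tilde{\gamma}_B$ is positive: then the nullclines are unbounded and the clean ellipse-based bound used in Lemma~\ref{lem:het-orbits-d<0-c<0} is not available, so the outer arc of $\Ical$ must be patched together from a nullcline branch and an auxiliary line, with the sign of the transverse component of the vector field verified piecewise. A secondary, more technical point is determining on which side of $NT$ each branch of $W^s$ and $W^u$ leaves, which requires explicit inspection of the eigenvectors of $L_{NT}$ computed in the first step.
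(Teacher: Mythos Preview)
Your overall strategy---nullcline decomposition, trapping region, Poincar\'e--Bendixson---is exactly the paper's, and the proposal is on the right track. Two places where the paper's execution differs and is cleaner are worth noting.

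First, the paper never builds a global trapping region. For $NT \longrightarrow ST_B$ it uses only the targeted inflowing set $\Ical_{\tilde d > 0} = \{r_A < NT_A\} \cap \{F_A < 0\}$ (the union of the two sign sectors with $\dot r_A < 0$, restricted to the left of $NT$); this is automatically bounded on the right by $\ell_A$ itself and sidesteps precisely the unbounded-nullcline obstacle you flag for positive $\tilde\gamma_A$ or $\tilde\gamma_B$. The orbit $ST_A \longrightarrow NT$ then follows from a symmetric construction in reversed time.

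Second, your step three has a gap as stated: a single sign sector such as $\{F_A < 0,\, F_B < 0\}$ is \emph{not} in general forward invariant. On $\ell_A \cap \{F_B < 0\}$ the flow is purely vertical downward, and when $\tilde\gamma_A > 0$ the curve $\ell_A$ has positive slope near $NT$, so a straight downward step from $\ell_A$ lands in $\{F_A > 0\}$---the orbit can leak out. The paper's remedy is to work in the two-sector union $\{F_A < 0\}$ (intersected with $\{r_A < NT_A\}$): there $\dot r_A < 0$ throughout, which already excludes periodic orbits (making the index argument unnecessary) and, since $T$'s stable manifold is $\{r_B = 0\}$, forces the $\omega$-limit to be $ST_B$. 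That $W^u(NT)$ actually enters this region is handled by a short contradiction on the sign of $\dot r_B$ in the complementary sectors, rather than by explicit eigenvector computation.
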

	\begin{proof}
		The proof works similar to the proof of Lemma \ref{lem:het-orbits-d<0-c<0}, that is, we decompose the phase plane into four subset $\Omega_j$, $j = 1,2,3,4$ separated by the nullclines $\ell_A$ and $\ell_B$ of the vector field \eqref{eq:radii-dynamics-rescaled}. Since for $\tilde{d} > 0$, the cross-coefficients $\tilde{\gamma}_A$ and $\tilde{\gamma}_B$ do not have a determined sign, four scenarios can occur, see Figure \ref{fig:phase-plane-decomp-d>0-c>0}. In all cases, the strategy is the same: we first establish the heteroclinic orbits $NT \longrightarrow ST_B$ and $ST_A \longrightarrow NT$. That these define the borders of a subset filled with heteroclinic orbits $ST_A \longrightarrow ST_B$ then follows from the observation that $NT$ and $T$ are both saddles and $ST_A$ is unstable, while $ST_B$ is stable. Furthermore, from the monotonicity of the vector field, we find that there are no periodic orbits in the compact invariant set bordered by $NT \rightarrow ST_B$, $ST_A \longrightarrow NT$ and the coordinate axes.
		
		Since the construction of the heteroclinic orbits to and from $NT$ works similarly in all cases, we only give the construction of the orbit $NT \longrightarrow ST_B$ in the case $\tilde{\gamma}_A, \tilde{\gamma}_B < 0$ in detail. We define the set $\Ical_{\tilde{d} > 0} := \{(r_A, r_B) : r_A < NT_A\} \cap (\Omega_1 \cup \Omega_4)$, where $NT = (NT_A, NT_B)$, see Figure \ref{fig:phase-plane-decomp-d>0-c>0}. Since $\dot{r}_A < 0$, $\dot{r}_B > 0$ in $\Omega_1$ and $\dot{r}_A < 0$, $\dot{r}_B < 0$ in $\Omega_4$, this set is inflowing invariant. Since $T$ is a saddle with a one-dimensional stable manifold given explicitly given by $\{r_B = 0\}$, and $\dot{r}_A < 0$ in $\Ical_{\tilde{d} > 0}$, the only attracting equilibrium point in $\Ical_{\tilde{d} > 0}$ is $ST_B$. In addition, since any orbit in $\Ical_{\tilde{d}> 0}$ satisfies $\dot{r}_A < 0$, $\Ical_{\tilde{d}> 0}$ cannot contain periodic orbits.
		
		It remains to show that $\Wcal^u(NT)$, the one-dimensional the unstable manifold of $NT$, intersects with $\Ical_{\tilde{d} > 0}$. Assume that this is not the case. Since $\Wcal^u(NT)$ is tangential to the unstable eigenspace at $NT$, it must either intersect with $\Omega_2$ or $\Omega_1 \cap \{r_A \geq NT_A\}$. However, any orbit in these sets satisfies $r_B < NT_B$ and $\dot{r}_B > 0$. Therefore, no orbit can converge to $NT$ as $\tilde{\xi} \rightarrow -\infty$ and therefore, $\Wcal^u(NT)$ cannot intersect $\Omega_2$ or $\Omega_1 \cap \{r_A \geq NT_A\}$. This yields a contradiction and therefore, $\Wcal^u(NT) \cap \Ical_{\tilde{d} > 0} \neq \emptyset$. Thus, take any point on $\Wcal^u(NT) \cap \Ical_{\tilde{d} > 0}$. Then, the associated orbit will converge to $NT$ as $\tilde{\xi} \rightarrow -\infty$. Additionally, since $\Ical_{\tilde{d} > 0}$ is inflowing invariant and does not contain a periodic orbit, the orbit from any point on $\Wcal^u(NT) \cap \Ical_{\tilde{d} > 0}$ is contained in a compact subset of the phase space and its $\omega$-limit set is either an equilibrium or a separatrix cycle by applying \cite[Thm. 6.41]{meiss2017}. However, the latter cannot occur since $ST_B$ is a stable equilibrium point. Since this is also the only equilibrium point in $\Ical_{\tilde{d} > 0}$ which stable manifold intersects with $\Ical_{\tilde{d} > 0}$, the $\omega$-limit set must be $\{ST_B\}$. Therefore, we obtain a heteroclinic orbit $NT \longrightarrow ST_B$ which completes the proof.
	\end{proof}
	
	In the case $\tilde{c} < 0$, we can only obtain partial results since we are not able to exclude the presence of periodic orbits analytically in the cases, where $NT$ can have complex conjugated eigenvalues, see Proposition \ref{prop:stability-NT}. However, if $\tilde{\gamma}_A \tilde{\gamma}_B > 0$, we obtain that $\tilde{d} < 1$ and therefore, the eigenvalues of $NT$ are real. In this case, we can prove the following.
	
	\begin{lemma}
		Let $\tilde{c} < 0$ and $\tilde{\gamma}_A, \tilde{\gamma}_B$ such that \eqref{eq:non-triv-fp-invertibility}--\eqref{eq:non-triv-fp-pos2} are satisfied with $\tilde{d} > 0$. Additionally, assume that $\tilde{\gamma}_A \tilde{\gamma}_B > 0$. Then, the system \eqref{eq:radii-dynamics-rescaled} has two heteroclinic orbits $NT \longrightarrow ST_A$ and $NT \longrightarrow ST_B$. Together with the coordinate axes, these orbits form the boundary of a neighbourhood of $T$, which is filled with heterolinic orbits $NT \longrightarrow T$.
	\end{lemma}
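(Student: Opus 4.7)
The plan is to combine the local unstable-node structure at $NT$ with Dulac's criterion and a monotonicity argument inside a suitable sign-defined region to produce the two heteroclinic orbits $NT \longrightarrow ST_A$ and $NT \longrightarrow ST_B$, and then to deduce the family of $NT \longrightarrow T$ connections by a direct Poincar\'e--Bendixson argument inside the region they enclose. As a preliminary, I verify that $NT$ is an unstable node with two real positive eigenvalues: from Proposition \ref{prop:stability-NT} together with $\tilde{c} < 0$ and $\tilde{d} > 0$ one has $\det(L_{NT}) > 0$ and $\operatorname{tr}(L_{NT}) > 0$, while the hypothesis $\tilde{\gamma}_A \tilde{\gamma}_B > 0$ forces $\tilde{d} \in (0,1)$, so the polynomial $p(\tilde{c})$ in that proof has no real roots and the discriminant $\operatorname{tr}(L_{NT})^2 - 4 \det(L_{NT})$ is strictly positive. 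In addition, periodic orbits in the open first quadrant are ruled out by Dulac's criterion with $B(r_A, r_B) = (r_A r_B)^{-1}$: a direct calculation yields $\operatorname{div}(B\dot{r}_A, B\dot{r}_B) = 2 r_A/r_B - 2 r_B/(\tilde{c}\, r_A) > 0$ strictly for $\tilde{c} < 0$.

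To construct $NT \longrightarrow ST_A$, I trace the one-dimensional stable manifold $W^s(ST_A)$ backward in time. A leading-order expansion of the invariance equation shows that $W^s(ST_A)$ is tangent to $(0,1)$ at $ST_A$ and enters one of the sign-regions of $(F_A, F_B)$ adjacent to $ST_A$; concretely, the relevant region $\mathcal{R}$ is bounded by the arcs of $\ell_A, \ell_B$ joining $ST_A, ST_B$ to $NT$ together with the axis segments $\overline{T\, ST_A}, \overline{T\, ST_B}$. A sign analysis of the vector field on the nullcline boundary arcs shows that $\mathcal{R}$ is backward-invariant and that along any backward trajectory in $\mathcal{R}$ both coordinates $r_A, r_B$ are strictly monotone. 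Hence $W^s(ST_A)$ remains in $\mathcal{R}$, is bounded, and its coordinates converge. By Poincar\'e--Bendixson combined with the absence of periodic orbits, the $\alpha$-limit is an equilibrium in $\overline{\mathcal{R}}$; it cannot be $T$ (which is stable, hence not an $\alpha$-limit of a non-trivial orbit), $ST_A$ (a homoclinic is excluded since $W^u(ST_A)$ lies on the invariant $r_A$-axis and is disjoint from the interior), nor $ST_B$ (by the symmetric argument with the $r_B$-axis), so it must be $NT$. The symmetric argument yields $NT \longrightarrow ST_B$.

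Finally, the two heteroclinic orbits together with the axis segments $\overline{T\, ST_A}, \overline{T\, ST_B}$ form a piecewise-smooth Jordan curve enclosing a bounded region $\Omega_0$, which has $T$ as one of its corners and is the neighbourhood of $T$ asserted in the statement. Since all four boundary arcs are invariant, $\Omega_0$ is invariant in both directions of time. For any orbit in the interior of $\Omega_0$, Poincar\'e--Bendixson together with Dulac's criterion produces an equilibrium $\omega$- and $\alpha$-limit in $\overline{\Omega_0}$; the only possible $\omega$-limit is $T$ (the only stable equilibrium in $\overline{\Omega_0}$, since the stable manifolds of $ST_A, ST_B$ coincide with the boundary heteroclinics) and the only possible $\alpha$-limit is $NT$. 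This produces the family of $NT \longrightarrow T$ orbits filling $\Omega_0$. The main technical obstacle will be the sign-case analysis on the nullcline arcs needed to establish the backward invariance of $\mathcal{R}$ and the monotonicity of $(r_A, r_B)$ inside it; the argument described is valid for $\tilde{\gamma}_A, \tilde{\gamma}_B > 0$, and the complementary subcase $\tilde{\gamma}_A, \tilde{\gamma}_B \in (-1, 0)$ is handled by a geometrically mirrored sign-analysis.
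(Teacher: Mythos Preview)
Your argument is correct and tracks the paper's strategy closely: both proofs use the nullcline decomposition into sign regions, invariance of those regions, and a Poincar\'e--Bendixson-type conclusion. The paper simply points back to the method of Lemma~\ref{lem:het-orbits-d>0}, constructing inflowing sets about $ST_A$ and $ST_B$ into which $W^u(NT)$ must fall; this is the time-reversed dual of your backward tracing of $W^s(ST_A)$ and $W^s(ST_B)$ into a backward-invariant region containing $NT$.

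The one genuine addition you make is Dulac's criterion with weight $(r_Ar_B)^{-1}$, which the paper does not use: the paper excludes periodic orbits only via monotonicity of one coordinate inside the relevant sign sub-region. For the construction of the two heteroclinics this buys nothing extra, but for the final claim that the interior of $\Omega_0$ is foliated by $NT\to T$ orbits it is actually sharper than the paper's sketch, since in the sub-case $\tilde\gamma_A,\tilde\gamma_B\in(-1,0)$ the signs of $(F_A,F_B)$ are not constant across $\Omega_0$ and a monotonicity argument alone would not immediately rule out limit cycles there.

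One caution on your ``geometrically mirrored sign-analysis'' for the sub-case $\tilde\gamma_A,\tilde\gamma_B\in(-1,0)$: in that regime $W^s(ST_A)$ does \emph{not} enter your region $\mathcal R=\{F_A<0,\,F_B<0\}$. A second-order expansion of the invariance equation gives $r_A-1\approx a r_B^2$ with $a=\tilde\gamma_A\tilde c/(2(\tilde c-(1+\tilde\gamma_B)))$, and comparison with $\ell_A$ shows $W^s(ST_A)$ lies on the $\{F_A>0\}$ side of $\ell_A$ near $ST_A$. The backward-invariant region you actually need there is $\{F_A>0,\,F_B<0\}$, bounded by the arc of $\ell_A$ from $ST_A$ to $NT$, the arc of $\ell_B$ from $NT$ to $(|\tilde\gamma_B|^{-1/2},0)$, and a segment of the $r_A$-axis; inside it $r_A$ decreases and $r_B$ increases in backward time, and the only possible backward limit is $NT$. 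With that adjustment your argument goes through unchanged.
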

	\begin{proof}
		We again decompose the phase plane into four subsets $\Omega_j$, $j = 1,2,3,4$ separated by the nullclines $\ell_A$ and $\ell_B$, see Figure \ref{fig:phase-plane-d>0}. We note that the qualitative flow in the different subsets is similar to the depiction in Figure \ref{fig:phase-plane-d>0}, but since $\tilde{c} < 0$, the vertical direction of the flow is reversed.
		
		The proof of the orbits $NT \longrightarrow ST_A$ and $NT \longrightarrow ST_B$ follows similar to the proof of Lemma \ref{lem:het-orbits-d>0} by constructing appropriate inflowing and overflowing invariant subsets. After constructing the orbits connecting to $ST_A$ and $ST_B$, we obtain an invariant set with corners $T$, $ST_A$, $ST_B$ and $NT$. Since $NT$ is unstable and the stable manifolds of $ST_A$ and $ST_B$ are one-dimensional, any orbit in the interior of this invariant set must flow from $NT$ to $T$, which completes the proof.
	\end{proof}
	
	\subsection{Heteroclinic orbtis in case that $NT$ does not exist}
	
	We now discuss the existence of heteroclinic orbits in the case that $\tilde{d} \neq 0$ but the positivity conditions \eqref{eq:non-triv-fp-pos1} or \eqref{eq:non-triv-fp-pos2} are violated and thus, $NT$ does not exist.
	
	\begin{lemma}\label{lem:het-orbit-non-NT-1}
		Let $\tilde{c} < 0$, $\tilde{\gamma}_A < -1$ and $\tilde{\gamma}_B > -1$. Then, the system \eqref{eq:radii-dynamics-rescaled} has a heteroclinic orbit $ST_B \longrightarrow ST_A$. Additionally, there exists a family of heteroclinic orbits $ST_B \longrightarrow T$, which do not lie on the invariant coordinate axis $\{r_A = 0\}$. Similarly, if $\tilde{\gamma}_A > -1$ and $\tilde{\gamma}_B < -1$, the system \eqref{eq:radii-dynamics-rescaled} has a heteroclinic orbit $ST_A \longrightarrow ST_B$ and a family of heteroclinic orbits $ST_A \longrightarrow T$, which do no lie on the invariant coordinate axis $\{r_B = 0\}$.
	\end{lemma}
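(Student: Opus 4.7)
The plan is to adapt the strategy of the preceding lemmas in this section: combine the local linear analysis at the three equilibria with a Dulac argument to exclude recurrence, and close the construction with Poincaré--Bendixson in suitable trapping regions. Under the hypotheses, linearising \eqref{eq:radii-dynamics-rescaled} shows that $T$ is a stable node, $ST_A$ is a saddle whose one-dimensional stable manifold $\mathcal{S}$ is tangent to the $r_B$-direction at $ST_A$, and $ST_B$ is an unstable node. Applying Dulac's criterion in the open positive quadrant with the weight $B(r_A,r_B) = 1/(r_A r_B)$ produces a divergence of strict sign (the two contributions $2 r_A/r_B$ and $2 r_B/(|\tilde{c}| r_A)$ are both strictly positive using $\tilde{c} < 0$), ruling out periodic orbits and homoclinic loops.

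For the heteroclinic $ST_B \longrightarrow ST_A$ I would trace $\mathcal{S}$ backward from $ST_A$. A local expansion gives $\mathcal{S} = \{r_A = 1 + \alpha r_B^2 + O(r_B^3)\}$ with $\alpha < 0$, so $\mathcal{S}$ initially enters the region outside the $\ell_A$-pocket and below the nullcline $\ell_B$. Since $\dot{r}_B$ has a strict sign on either side of $\ell_B$, the backward orbit cannot cross $\ell_B$ from below, and a region-by-region check shows $r_A \leq 1$ throughout (it decreases backward in the outer region and is bounded by $\ell_A \leq 1$ inside the pocket). Hence the backward orbit is contained in a compact subset of the first quadrant. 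Poincaré--Bendixson combined with the Dulac exclusion then force the $\alpha$-limit to be an equilibrium. Uniqueness rules out $ST_A$ (the stable manifold of a saddle cannot loop back to itself), and $T$ is excluded as a source in backward time, leaving only $ST_B$.

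For the off-axis family $ST_B \longrightarrow T$ I would shadow the axial heteroclinic on $\{r_A = 0\}$. Pick a point $(0, r^\star)$ on this axial orbit with $r^\star < 1/\sqrt{|\tilde{\gamma}_A|}$, so that it lies on the boundary of the $\ell_A$-pocket. The perturbed point $(\delta, r^\star)$ with $\delta > 0$ small then lies strictly inside the pocket, where both $\dot{r}_A < 0$ and $\dot{r}_B < 0$, and hence belongs to the basin of attraction of $T$. Its backward orbit is likewise trapped in the compact region described above, so Poincaré--Bendixson plus Dulac again gives an equilibrium $\alpha$-limit. For every such $\delta$ that does not sit on the one-dimensional curve $\mathcal{S}$, the $\alpha$-limit cannot be $ST_A$; it cannot be $T$ either, so it is $ST_B$. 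Varying $\delta$ yields the desired one-parameter family of orbits $ST_B \longrightarrow T$ with $r_A > 0$ throughout. The symmetric statement follows by exchanging the two semi-trivial equilibria and the two coordinate axes.

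The main obstacle is the sub-case $\tilde{\gamma}_B \geq 0$, in which the region between $\ell_A$ and $\ell_B$ extends to $r_A = \infty$ and $\dot{r}_A \sim r_A^3$ can drive orbits to blow up in finite time. Both constructions avoid this: along $\mathcal{S}$ the bound $r_A \leq 1$ is automatic from the region-by-region monotonicity described above, and the family $ST_B \longrightarrow T$ is built close to the invariant axis $\{r_A = 0\}$, where $r_A$ is kept small by the smallness of $\delta$ and the compactness of the trapping region.
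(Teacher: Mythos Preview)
Your approach is correct and genuinely different from the paper's. The paper builds an annular region in the first quadrant, bounded by a small quarter-circle near $T$ and a large rectangle, verifies that the flow is overflowing on both of these boundaries, and then invokes a continuous-deformation argument to produce an orbit connecting the two invariant axis segments; the family $ST_B \to T$ is left essentially implicit. Your route---Dulac's criterion with weight $1/(r_A r_B)$, then backward-tracing the stable manifold $\mathcal{S}$ of $ST_A$ and closing with Poincar\'e--Bendixson---uses only standard planar tools and is arguably more transparent. It also delivers the off-axis family $ST_B \to T$ for free as the interior of the region bounded by $\mathcal{S}$ and the coordinate axes.

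One slip to fix: the assertion that the backward orbit ``cannot cross $\ell_B$ from below'' is false when $\tilde{\gamma}_B > 0$. At a tangency on $\ell_B$ outside the pocket the backward flow moves left, while $\ell_B$ (increasing in $r_A$) drops, so the orbit ends up above $\ell_B$. This does not damage your argument, because compactness is cheaper than you make it: once $r_A \le 1$ is established (your region-by-region check for this is fine), the segment $\{r_B = M,\; 0 \le r_A \le 1\}$ has $\dot r_B > 0$ for any $M > \sqrt{1 + \max(\tilde{\gamma}_B,0)}$, so the rectangle $[0,1]\times[0,M]$ is backward-invariant and traps both $\mathcal{S}$ and the perturbed orbits. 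Replace the $\ell_B$-crossing claim with this and everything goes through. As a bonus, your caveat ``for $\delta$ not on $\mathcal{S}$'' is unnecessary: any point inside the pocket flows forward to $T$, so $\mathcal{S}$ never enters the pocket and $(\delta, r^\star)$ is automatically off $\mathcal{S}$.
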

	\begin{proof}
		We restrict the proof to the case $\tilde{\gamma}_A < -1$ and $\tilde{\gamma}_B > -1$ since the second case follows with the same arguments. The main idea of the proof is to show that there is an overflowing invariant set in the phase plane $[0,\infty)$, which is bounded by a subset of both coordinate axis containing $ST_A$ and $ST_B$, respectively, see Figure \ref{fig:overflowing-set-non-NT}. Since the vector field is smooth, continuously deforming the inner boundary close to $T$ into the outer boundary, then shows that there is a curve from a point on $\{r_A = 0\}$ to a point on $\{r_B = 0\}$, where the normal direction of the vector field vanishes at every point on the curve. Therefore, the curve is an orbit of \eqref{eq:radii-dynamics-rescaled} and the endpoints must be $ST_A$ and $ST_B$ respectively. Finally, using that $ST_A$ stable in the transverse direction off the coordinate axis and $ST_B$ is unstable in the assumed parameter regime then shows that there is a heteroclinic orbit $ST_B \longrightarrow ST_A$.
		
		It remains to show that there is such an overflowing invariant subset, see Figure \ref{fig:overflowing-set-non-NT}. First, we consider the normal flow on the circle section
		\begin{equation*}
			\Ccal_i(s) := \{(r_A, r_B) \in [0,\infty)^2 \,:\, r_A^2 + r_B^2 = s^2\}
		\end{equation*}
		for any radius $s > 0$. The normal vector pointing in the direction of $T$ is then given by $n_i = (-r_A, -r_B)$ for every $(r_A,r_B) \in \Ccal_i(s)$ and the vector field in direction $n_i$ is
		\begin{equation*}
			n_i \cdot (r_A F_A(r_A,r_B), r_B F_B(r_A,r_B)) = r_A^2 (1-r_A^2) + \dfrac{1}{|\tilde{c}|} r_B^2 (1 - r_B^2) + r_A^2 r_B^2 \left(\tilde{\gamma}_A + \dfrac{\tilde{\gamma}_B}{|\tilde{c}|}\right),
		\end{equation*}
		where we use that $\tilde{c} <0$. By choosing $s > 0$ sufficiently small, we can guarantee that this is positive. Hence, taking the direction of $n_i$ into account, the vector field flows from $\{r_A^2 + r_B^2 > s^2\}$ into $\{r_A^2 + r_B^2 < s^2\}$.
		
		Next, we consider
		\begin{equation*}
			\begin{split}
				\Ccal_o(S_A,S_B) &:= \Ccal_{o,1}(S_A,S_B) \cup \Ccal_{o,2}(S_A,S_B), \\
				\Ccal_{o,1}(S_A,S_B) &:= \{r_A = S_A \text{ and } r_B \leq S_B\}, \\
				\Ccal_{o,2}(S_A,S_B) &:= \{r_A \leq S_A \text{ and } r_B = S_B\}
			\end{split}
		\end{equation*}
		with $S_A, S_B > 1.$
		First, we consider the case that $\tilde{\gamma}_A, \tilde{\gamma}_B \leq 0$. Then, for every point $(r_A,r_B) \in \Ccal_{o,1}(S_A,S_B)$ holds
		\begin{equation*}
			(1,0) \cdot (r_A F_A(r_A,r_B), r_B F_B(r_A,r_B)) = S_A (- 1 + S_A^2  - \tilde{\gamma}_A r_B^2) > 0
		\end{equation*}
		and similarly for every point $(r_A, r_B) \in \Ccal_{o,2}(S_A,S_B)$ holds
		\begin{equation*}
			(0,1) \cdot (r_A F_A(r_A,r_B), r_B F_B(r_A,r_B)) = \dfrac{1}{|\tilde{c}|} S_B (-1 + S_B^2 - \tilde{\gamma}_B r_A^2) > 0
		\end{equation*}
		using again that $\tilde{c}< 0$. Therefore, the rectangle $\{r_A \leq S_A \text{ and } r_B \leq S_B\}$ is an overflowing invariant set.
		Next, we consider the case that $\tilde{\gamma}_B > 0$. Since $\tilde{\gamma}_A < 0$ we still have that
		\begin{equation*}
			(1,0) \cdot (r_A F_A(r_A,r_B), r_B F_B(r_A,r_B)) = S_A (- 1 + S_A^2  - \tilde{\gamma}_A r_B^2) \geq S_A (- 1 + S_A^2) > 0
		\end{equation*}
		for all $r_B \leq S_B$ by choosing $S_A  > 1$. Note that this choice does not depend on $S_B$. The normal flow over on $\Ccal_{o,2}(S_A,S_B)$ is then given by
		\begin{equation*}
			\dfrac{1}{|\tilde{c}|} S_B (-1 + S_B^2 - \tilde{\gamma}_B r_A^2) \geq \dfrac{1}{|\tilde{c}|} S_B (-1 + S_B^2 - \tilde{\gamma}_B S_A^2)
		\end{equation*}
		using that $\tilde{\gamma}_B > 0$. Choosing $S_B^2 > 1 + \tilde{\gamma}_A S_A^2 > 1$ then ensures that the normal flow is positive. Therefore, the rectangle $\{r_A \leq S_A \text{ and } r_B \leq S_B\}$ is again overflowing.
		
		Combining both parts, there exist $s < 1$, $S_A > 1$ and $S_B > 0$ such that the set $\{r_A^2 + r_B^2 \geq s^2\}  \cap \{r_A \leq S_A \text{ and } r_B \leq S_B\}$ is non-empty and overflowing. Additionally, the boundary contains sections of both coordinate axes and in particular the points $ST_A$ and $ST_B$. This completes the proof.
	\end{proof}
	
	\begin{figure}
		\centering
		\includegraphics[width=0.45\textwidth]{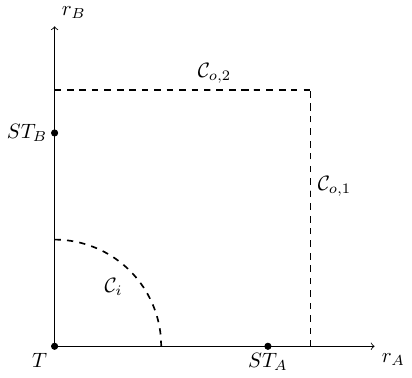}
		\hfill
		\includegraphics[width=0.45\textwidth]{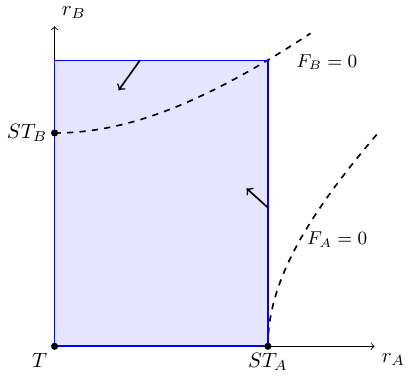}
		\caption{Schematic depiction of the overflowing set constructed in Lemma \ref{lem:het-orbit-non-NT-1} and the inflowing rectangle in the proof of Lemma \ref{lem:het-orbit-non-NT-2}.}
		\label{fig:overflowing-set-non-NT}
	\end{figure}
	
	Next, we discuss the case that $\tilde{c} > 0$ and both $\tilde{\gamma}_A > -1$ and $\tilde{\gamma}_B > -1$. Note that this also implies $\tilde{d} < 0$ since otherwise, the positivity conditions \eqref{eq:non-triv-fp-pos1} and \eqref{eq:non-triv-fp-pos2} would hold.
	
	\begin{lemma}\label{lem:het-orbit-non-NT-2}
		Let $\tilde{c} > 0$ and $\tilde{\gamma}_A > -1$, $\tilde{\gamma}_B > -1$ such that $\tilde{d} < 0$. Then, there is a family of heteroclinic orbits $ST_A \longrightarrow ST_B$.
	\end{lemma}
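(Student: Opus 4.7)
The strategy is to build a forward-invariant region $R_A^{-}:=\{F_A\le 0\}\cap[0,\infty)^2$ on which Poincaré–Bendixson forces every interior orbit to converge to $ST_B$, and then to exploit the fact that $ST_A$ is an unstable node (so a punctured neighbourhood of $ST_A$ is fibred by its outgoing orbits) in order to release a one-parameter family of heteroclinic orbits directly into $R_A^{-}$. Linearising \eqref{eq:radii-dynamics-rescaled} one computes eigenvalues $\{-1,\tilde c^{-1}\}$ at $T$, $\{2,\tilde c^{-1}(1+\tilde\gamma_B)\}$ at $ST_A$ and $\{-(1+\tilde\gamma_A),-2/\tilde c\}$ at $ST_B$, so $T$ is a saddle with stable manifold contained in $\{r_B=0\}$, $ST_A$ is an unstable node and $ST_B$ is a stable node. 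The parameter assumptions $\tilde\gamma_A,\tilde\gamma_B>-1$ together with $\tilde d<0$ force $\tilde\gamma_A,\tilde\gamma_B>0$ with $\tilde\gamma_A\tilde\gamma_B>1$, which, exactly as in the proof of Lemma \ref{lem:het-orbits-d>0}, implies that $\ell_A$ and $\ell_B$ do not intersect in the open first quadrant and decompose it into three regions R1, R2, R3 with respective flow signs $(-,+)$, $(+,+)$ and $(-,-)$.

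To show $R_A^{-}$ is forward-invariant I would inspect the boundary: the segments $\{0\}\times[0,\infty)$ and $[0,1]\times\{0\}$ are invariant, while along $\ell_A\setminus\{ST_A\}$ we have $\dot r_A=0$ and $\dot r_B=\tilde c^{-1}r_B F_B$, where the identity $F_B=1+\tilde\gamma_B+r_B^2(\tilde\gamma_A\tilde\gamma_B-1)>0$ on $\ell_A$ follows from $\tilde\gamma_A\tilde\gamma_B>1$, so the flow pushes $\ell_A$ vertically into $R_A^{-}$. Inside $R_A^{-}$ one has $\dot r_A<0$, hence $r_A(t)$ is strictly decreasing and bounded above by $r_A(0)$; combined with the R1-constraint $r_B^2\le 1+\tilde\gamma_B r_A^2$ and the monotone decrease of $r_B$ in R3, every interior orbit in $R_A^{-}$ remains in a compact set. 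Strict monotonicity of $r_A$ excludes periodic orbits in $R_A^{-}$, and the only possible separatrix cycles would involve $T$ whose stable manifold lies on $\{r_B=0\}$, outside the interior. Poincaré–Bendixson in the form of \cite[Thm.~6.41]{meiss2017} then identifies the $\omega$-limit of every interior orbit with an equilibrium, and $T$ is excluded because it would require $r_B\to 0$, which contradicts $\dot r_B>0$ in R1 and $r_B>1$ throughout R3, leaving $ST_B$ as the only admissible candidate.

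Finally, I would construct the family. Since $ST_A$ is an unstable node, a punctured neighbourhood $U\setminus\{ST_A\}$ is foliated by orbits whose $\alpha$-limit equals $\{ST_A\}$, and the smooth curve $\ell_A$ passes through $ST_A$ and separates $U$ into two open sectors, one of which lies in R1$\,\subset R_A^{-}$. By the previous step every orbit through a point of this sector stays in $R_A^{-}$ and converges to $ST_B$, producing a one-parameter family of heteroclinic orbits $ST_A\longrightarrow ST_B$ parametrised by the transverse position inside the sector. The delicate point in the plan is the second step: $R_A^{-}$ is itself unbounded and a naive trapping rectangle does not exist in this regime (trajectories in R2 can escape along the diagonal), so the a priori boundedness of orbits in $R_A^{-}$ has to be read off from the monotonicity of $r_A$ together with the sign constraints in R1 and R3 rather than from a classical Lyapunov estimate; once this is secured, the node structure at $ST_A$ yields the family essentially for free.
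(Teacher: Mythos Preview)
Your argument is correct, and the overall architecture (forward-invariant region, $\dot r_A<0$ to rule out periodic orbits, Poincar\'e--Bendixson via \cite[Thm.~6.41]{meiss2017}, then the source structure at $ST_A$ to release a family) matches the paper's. The genuine difference is in the choice of trapping region: you work with the unbounded set $R_A^{-}=\{F_A\le 0\}$ and recover compactness of orbits a posteriori from the monotonicity of $r_A$ together with the sign pattern in R1 and R3, whereas the paper simply takes a compact rectangle $[0,1]\times[0,S_B]$ with $S_B>\sqrt{1+\tilde\gamma_B}$. On the right edge $r_A=1$ one has $F_A(1,r_B)=-\tilde\gamma_A r_B^2<0$ for $r_B>0$ (since $\tilde\gamma_A>0$), and on the top edge $F_B<0$ by the choice of $S_B$, so this rectangle is inflowing with $ST_A$ sitting on its boundary corner; the rest of the argument is then identical to yours but shorter.

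In other words, your remark that ``a naive trapping rectangle does not exist in this regime'' is not quite right: what fails is a rectangle with $S_A>1$, because it would contain part of your region R2 where $\dot r_A>0$. Taking $S_A=1$ avoids R2 entirely and gives exactly the compact inflowing box the paper uses. Your route through $R_A^{-}$ buys nothing extra here, but it is a legitimate alternative and all the steps (forward-invariance across $\ell_A$, one-way crossing of $\ell_B$, exclusion of $T$ and $ST_A$ as $\omega$-limits) are sound.
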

	\begin{proof}
		We first note that $\tilde{\gamma}_A > -1$ and $\tilde{\gamma}_B > -1$ together with $\tilde{d} < 0$ implies that $\tilde{\gamma}_A \tilde{\gamma}_B > 1$ and thus $\tilde{\gamma}_A > 0$ and $\tilde{\gamma}_B > 0$. We choose a rectangle as depicted in Figure \ref{fig:overflowing-set-non-NT}, which is an inflowing invariant set under the dynamics of \eqref{eq:radii-dynamics-rescaled}. Additionally, it does not contain any periodic orbits since any orbit in the rectangle satisfies $\dot{r}_A < 0$. Using that $T$ is a saddle, $ST_A$ is stable equilibrium point and $ST_B$ is an unstable equilibrium point, as well as the fact that all equilibrium points are contained in the rectangle, an application of \cite[Thm.~6.41]{meiss2017} then shows that the rectangle is filled with heteroclinic orbits $ST_A \longrightarrow ST_B$.
	\end{proof}
	
	\section{Approximate modulated waves in the pattern-forming system \eqref{eq:toy-model}}\label{sec:modulated-waves-in-full-system}
	
	Recalling the approximation result Theorem \ref{thm:full-approx-result}, any sufficiently regular solution $(\Aav, \Bav)$ to the amplitude system \eqref{eq:final-amplitude-eq} gives rise to a solution $(u,v)$ of the full pattern-formation model \eqref{eq:toy-model} on a large time interval $[0,T_0/\varepsilon^2]$ for some $T_0 > 0$ and $0 < \varepsilon \ll 1$ satisfying the error estimate
	\begin{equation*}
		\sup_{t \in [0,T_0/\varepsilon^2]} \|(u,v) - \Psi_\mathrm{GL}(\Aav, \Bav)\| \leq C \varepsilon^2
	\end{equation*}
	for some constant $C < \infty$. Here, $\Psi_\mathrm{GL}$ is given by
	\begin{equation*}
		\Psi_\mathrm{GL}(A,B) = \varepsilon \begin{pmatrix}
			A \ee^{\ii x} \\
			B \ee^{\ii(x-c_p t)}
		\end{pmatrix} + c.c.,
	\end{equation*}
	see \eqref{eq:ansatz-amplitude-eq}. Specifically, writing $\Aav = r_A \ee^{\ii\phi_A}$ and $\Bav = r_B \ee^{\ii\phi_B}$ we find that
	\begin{equation}\label{eq:approx-form-solutions}
		\begin{split}
			u(t,x) &= \varepsilon r_A(T, X) \cos(x + \phi_A(T,X)) + \Ocal(\varepsilon^2), \\
			v(t,x) &= \varepsilon r_B(T,X) \cos(x + c_p t + \phi_B(T,X)) + \Ocal(\varepsilon^2)
		\end{split}
	\end{equation}
	for $t \in [0,T_0/\varepsilon^2]$ and $x \in \R$.
	
	In the previous Sections \ref{sec:time-periodic-solutions} and \ref{sec:fronts}, we discussed three types of solutions to the amplitude equations \eqref{eq:full-amplitude-eq}.
	\begin{enumerate}
		\item Time-periodic solutions;
		\item Space-time periodic solutions with long spatial wave-length;
		\item Fast-moving front solutions connecting space-time-periodic solutions.
	\end{enumerate}
	We now discuss the corresponding solutions in the full pattern-formation model \eqref{eq:toy-model}.
	
	\subsection{Modulated waves corresponding to time-periodic solutions}
	
	Time-periodic solutions established in Section \ref{sec:time-periodic-solutions}, see Propositions \ref{prop:semi-trivial-solutions} and \ref{prop:fully-nontrivial-solutions}, are of the form
	\begin{equation}
		(\Aav, \Bav)(T,X) = (r_A \ee^{\ii\omega_A T}, r_B \ee^{\ii\omega_B T})
	\end{equation}
	with radii $r_A, r_B \geq 0$ and temporal wave numbers $\omega_A, \omega_B \in \R$. As already discussed in Remark \ref{rem:time-periodic-solutions-correspond-to-speed-correction}, the corresponding solutions in \eqref{eq:toy-model} are spatially periodic wavetrains. We summarise the results in the following theorem.
	
	\begin{theorem}\label{thm:pure-patterns}
		Let $T_0 > 0$. Then, there exists a $\varepsilon_0 > 0$ such that for every $\varepsilon \in (0,\varepsilon_0)$, the pattern-forming system \eqref{eq:toy-model} has solutions $(u,v) : [0,T_0/\varepsilon^2] \rightarrow \R^2$ of the form
		\begin{equation}\label{eq:pure-turing-pattern}
			u(t,x) = \varepsilon \sqrt{-\dfrac{\alpha_u}{\gamma_{1,r}}} \cos\left(x + \varepsilon^2 \dfrac{\alpha_u \gamma_{1,i}}{\gamma_{1,r}} t\right) + \Ocal(\varepsilon^2), \qquad v(t,x) = \Ocal(\varepsilon^2)
		\end{equation}
		if $\alpha_u \gamma_{1,r} < 0$, and 
		\begin{equation}\label{eq:pure-turing-hopf-pattern}
			u(t,x) = \Ocal(\varepsilon^2), \qquad v(t,x) = \varepsilon \sqrt{-\dfrac{\alpha_v}{\gamma_{8,r}}} \cos\left(x - c_g t + \varepsilon^2 \dfrac{\alpha_v \gamma_{8,i}}{\gamma_{8,r}} t\right) + \Ocal(\varepsilon^2)
		\end{equation}
		if $\alpha_v \gamma_{8,r} < 0$.
		
		If the parameter assumptions in Proposition \ref{prop:fully-nontrivial-solutions} are satisfied, the pattern-forming system also has solutions of the form
		\begin{equation}\label{eq:pattern-superposition}
			u(t,x) = \varepsilon r_A \cos(x + \varepsilon^2 \omega_A t) + \Ocal(\varepsilon^2), \qquad v(t,x) = \varepsilon r_B \cos(x - c_g t + \varepsilon^2 \omega_B t) + \Ocal(\varepsilon^2)
		\end{equation}
		with $r_A, r_B, \omega_A, \omega_B$ given in Proposition \ref{prop:fully-nontrivial-solutions}.
	\end{theorem}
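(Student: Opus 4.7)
The plan is to deduce the statement essentially as a corollary of the rigorous approximation Theorem \ref{thm:full-approx-result}, applied to the explicit spatially constant, time-periodic solutions of the amplitude system \eqref{eq:final-amplitude-eq} which were constructed in Propositions \ref{prop:semi-trivial-solutions} and \ref{prop:fully-nontrivial-solutions}. For \eqref{eq:pure-turing-pattern}, I take $\Bav \equiv 0$ and $\Aav(T,X_1) = r_A \ee^{\ii \omega_A T}$ with $r_A = \sqrt{-\alpha_u/\gamma_{1,r}}$ and $\omega_A = \alpha_u \gamma_{1,i}/\gamma_{1,r}$ as given by Proposition \ref{prop:semi-trivial-solutions}, which requires exactly the assumption $\alpha_u \gamma_{1,r} < 0$. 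Analogously, \eqref{eq:pure-turing-hopf-pattern} corresponds to the second semi-trivial branch, and \eqref{eq:pattern-superposition} corresponds to the fully nontrivial branch from Proposition \ref{prop:fully-nontrivial-solutions}.

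Since these candidate amplitudes are spatially constant, they lie in $(H^7_{l,u})^2$ with norm bounded independently of both $T \in [0, T_0]$ and $\varepsilon \in (0,1)$. Therefore the regularity hypothesis of Theorem \ref{thm:full-approx-result} is trivially satisfied, and the theorem supplies an $\varepsilon_0 > 0$ together with, for every $\varepsilon \in (0,\varepsilon_0)$, a solution $(u,v)$ to \eqref{eq:toy-model} on $[0, T_0/\varepsilon^2]$ with
\begin{equation*}
\sup_{t \in [0, T_0/\varepsilon^2]} \norm{(u,v) - \Psi_\mathrm{GL}(\Aav,\Bav)}_{(H^1_{l,u})^2} \leq C \varepsilon^2 .
\end{equation*}
Using the embedding $H^1_{l,u} \embed L^\infty$, the same bound holds in the supremum norm.

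It then remains to unpack $\Psi_\mathrm{GL}(\Aav,\Bav)$ explicitly, cf.~\eqref{eq:approx-form-solutions}: substituting $\Aav = r_A \ee^{\ii \omega_A T}$ into the first component yields $\varepsilon r_A \ee^{\ii(x + \omega_A T)} + c.c.$, which becomes a cosine with argument $x + \varepsilon^2 \omega_A t$ after recalling $T = \varepsilon^2 t$. An analogous computation for the second component produces the cosine with argument $x - c_p t + \varepsilon^2 \omega_B t$ (an overall factor of $2$ from $z + \bar z = 2\Re z$ is absorbed into the amplitude constant in the statement). The fully nontrivial case is obtained by carrying out both computations simultaneously.

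I do not anticipate a genuine obstacle: the regularity check is immediate because the amplitudes are constant in space, the explicit form of the solutions to \eqref{eq:system-time-periodic-sols-amplitude-equation} is available from Propositions \ref{prop:semi-trivial-solutions}--\ref{prop:fully-nontrivial-solutions}, and the remainder of the argument is bookkeeping in $\Psi_\mathrm{GL}$. The only point that requires a little attention is matching phase velocities and the powers of $\varepsilon$ inside the trigonometric arguments (i.e.\ tracking $T = \varepsilon^2 t$ and the linear phase $c_p t$ that enters through the ansatz \eqref{eq:ansatz-amplitude-eq}), but this is a direct substitution rather than a substantive difficulty.
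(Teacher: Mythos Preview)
Your proposal is correct and matches the paper's approach exactly: the paper presents Theorem \ref{thm:pure-patterns} as a summary obtained by inserting the explicit time-periodic solutions of Propositions \ref{prop:semi-trivial-solutions} and \ref{prop:fully-nontrivial-solutions} into $\Psi_\mathrm{GL}$ and invoking the approximation result Theorem \ref{thm:full-approx-result}, without giving a separate proof. Your observation that spatially constant amplitudes trivially satisfy the $(H^7_{l,u})^2$ regularity hypothesis is precisely the reason this reduces to bookkeeping.
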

	
	Theorem \ref{thm:pure-patterns} establishes three types of solutions to the pattern-forming system \eqref{eq:toy-model}. First, \eqref{eq:pure-turing-pattern} corresponds, to leading order, to a pattern bifurcating from a pure Turing bifurcation. Considering that the cubic coefficient $\gamma_1$ in the amplitude system is explicitly given by
	\begin{equation*}
		\gamma_1 = \dfrac{f_{11} g_{20} (16 c_p - 19 i)}{8c_p - 9i} + \dfrac{38 f_{20}^2}{9} + 3 f_{30},
	\end{equation*}
	see \eqref{eq:taylor-expansion-nonlinearities} and Appendix \ref{app:coefficients}, we find that the pattern \eqref{eq:pure-turing-pattern} is stationary to leading order if $f_{11} g_{20} = 0$. Therefore, the small phase velocity originates from the quadratic coupling of the Turing and Turing–Hopf modes in \eqref{eq:toy-model}.
	Similarly, \eqref{eq:pure-turing-hopf-pattern} resembles a travelling wavetrain bifurcating from a pure Turing–Hopf instability with a small perturbation of the linearly determined phase velocity.
	Finally, the solution \eqref{eq:pattern-superposition} describes a superposition of a pure Turing pattern and a pure Turing–Hopf pattern.
	
	\subsection{Modulated waves corresponding to space-time periodic solutions with long spatial wave-length}
	
	Next, we consider the approximated modulated waves in the pattern-forming system \eqref{eq:toy-model}, which correspond to the space-time periodic solutions obtained in Proposition \ref{prop:space-time-periodic-waves}. These are of the form
	\begin{equation*}
		(\Aav, \Bav)(T,X) = (r_A \ee^{\ii\varepsilon \tilde{k}_A X + \omega_AT}, r_B \ee^{\ii\varepsilon \tilde{k}_B X + \omega_B T})
	\end{equation*}
	with amplitudes $r_A, r_B \geq 0$ and spatial and temporal wave numbers $\tilde{k}_j, \omega_j \in \R$ for $j = A, B$. Using the approximation result Theorem \ref{thm:full-approx-result}, we find that the corresponding solutions in the pattern-forming system \eqref{eq:toy-model} are still travelling waves with constant amplitude, albeit with a perturbed spatial and temporal frequency.
	
	\begin{theorem}\label{thm:patterns-space-time}
		Let $T_0  > 0$ and take a bounded set $K \in \R^2$. Then, there exists an $\varepsilon_0 > 0$ such that for every $\varepsilon \in (0,\varepsilon_0)$ and $(\tilde{k}_A, \tilde{k}_B) \in K$, the pattern-forming system \eqref{eq:toy-model} has solutions $(u,v) : [0,T_0/\varepsilon^2] \rightarrow \R^2$ of the form
		\begin{equation}\label{eq:pure-turing-pattern-2}
			u(t,x) = \varepsilon \sqrt{-\dfrac{\alpha_u}{\gamma_{1,r}}} \cos\left(\left(1 + \varepsilon^2 \tilde{k}_A\right) x + \varepsilon^2 \dfrac{\alpha_u \gamma_{1,i}}{\gamma_{1,r}} t\right) + \Ocal(\varepsilon^2), \qquad v(t,x) = \Ocal(\varepsilon^2)
		\end{equation}
		if $\alpha_u \gamma_{1,r} < 0$, and
		\begin{equation}\label{eq:pure-turing-hopf-pattern-2}
			u(t,x) = \Ocal(\varepsilon^2), \qquad v(t,x) = \varepsilon \sqrt{-\dfrac{\alpha_v}{\gamma_{8,r}}} \cos\left((1+\varepsilon^2 \tilde{k}_B) x - c_g t + \varepsilon^2 \left(\dfrac{\alpha_v \gamma_{8,i}}{\gamma_{8,r}} - c_g \tilde{k}_B\right) t \right) + \Ocal(\varepsilon^2)
		\end{equation}
		if $\alpha_v \gamma_{8,r} < 0$.
		
		If the parameter assumptions in Proposition \ref{prop:fully-nontrivial-solutions} are satisfied, the pattern-forming system also has solutions of the form
		\begin{equation}\label{eq:pattern-superposition-2}
			\begin{split}
				u(t,x) &= \varepsilon r_A \cos((1+\varepsilon^2 \tilde{k}_A) x + \varepsilon^2 \omega_A t) + \Ocal(\varepsilon^2), \\
				v(t,x) &= \varepsilon r_B \cos((1+\varepsilon^2 \tilde{k}_B) x - c_g t + \varepsilon^2 (\omega_B - c_g \tilde{k}_B) t) + \Ocal(\varepsilon^2)
			\end{split}
		\end{equation}
		with $r_A, r_B, \omega_A, \omega_B$ given in Proposition \ref{prop:fully-nontrivial-solutions}.
	\end{theorem}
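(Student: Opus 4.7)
The plan is to combine Proposition \ref{prop:space-time-periodic-waves}, which yields families of space-time periodic solutions $(\Aav,\Bav)$ to the amplitude system \eqref{eq:final-amplitude-eq} parametrised by the rescaled spatial wave numbers $(\tilde{k}_A,\tilde{k}_B)\in K$, with the rigorous approximation result Theorem \ref{thm:full-approx-result}. Specifically, for each fixed $(\tilde{k}_A,\tilde{k}_B)\in K$ the proposition provides solutions of the form $\Aav(T,X_1)=r_A\ee^{\ii(\varepsilon\tilde{k}_A X_1+\omega_A T)}$ and $\Bav(T,X_1)=r_B\ee^{\ii(\varepsilon\tilde{k}_B X_1+\omega_B T)}$ for $\varepsilon$ less than some $\varepsilon_1(K)>0$, where the radii and temporal wave numbers satisfy \eqref{eq:system-spatio-temporal-solutions}. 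Plugging these into $\Psi_\mathrm{GL}$ and applying Theorem \ref{thm:full-approx-result} immediately produces solutions $(u,v)$ of the pattern-forming system \eqref{eq:toy-model} on $[0,T_0/\varepsilon^2]$ that agree with $\Psi_\mathrm{GL}(\Aav,\Bav)$ up to an $\Ocal(\varepsilon^2)$ error in $(H^1_{l,u})^2$, hence uniformly in $x$.

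The main technical point is to verify the hypothesis $\sup_{\varepsilon\in(0,\varepsilon_0)}\sup_{T\in[0,T_0]}\|(\Aav,\Bav)\|_{(H^7_{l,u})^2}<\infty$ required by Theorem \ref{thm:full-approx-result}, uniformly over the compact parameter set $K$. Since $(\Aav,\Bav)$ are pure complex exponentials with bounded moduli $r_A,r_B$ and since every spatial derivative brings down a factor $\ii\varepsilon\tilde{k}_j$, we have $\|\partial_{X_1}^k \Aav\|_{H^0_{l,u}}\leq r_A|\varepsilon\tilde{k}_A|^k$ and similarly for $\Bav$, which are bounded uniformly for $\varepsilon\in(0,1)$ and $(\tilde{k}_A,\tilde{k}_B)\in K$. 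Together with the boundedness of $r_A,r_B,\omega_A,\omega_B$ on $K$ provided by Proposition \ref{prop:space-time-periodic-waves}, this gives the required uniform bound, so Theorem \ref{thm:full-approx-result} applies.

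Finally, it remains to rewrite $\Psi_\mathrm{GL}(\Aav,\Bav)$ in the explicit trigonometric form stated. For the $u$-component one substitutes $X_1=\varepsilon x$ and $T=\varepsilon^2 t$ to obtain the carrier wave $\ee^{\ii((1+\varepsilon^2\tilde{k}_A)x+\varepsilon^2\omega_A t)}$ plus its conjugate, yielding \eqref{eq:pure-turing-pattern-2} and \eqref{eq:pattern-superposition-2} (first line) after combining the complex conjugate pair. For the $v$-component one uses $X_2=\varepsilon(x-c_g t)$ to obtain the desired cosine expression \eqref{eq:pure-turing-hopf-pattern-2} and \eqref{eq:pattern-superposition-2} (second line), where the contributions of the group-velocity shift are collected into the $\varepsilon^2(\omega_B-c_g\tilde{k}_B)t$ term. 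The values of $r_A$, $r_B$, $\omega_A$, $\omega_B$ for the semi-trivial cases are substituted from Proposition \ref{prop:semi-trivial-solutions}, while the fully nontrivial case invokes Proposition \ref{prop:fully-nontrivial-solutions}. There is no essential difficulty beyond careful bookkeeping of the multiple scales $(t,T,x,X_1,X_2)$; the uniformity of $\varepsilon_0$ over $K$ follows because all estimates in the derivation of Theorem \ref{thm:full-approx-result} depend only on the uniform $H^7_{l,u}$ bound established above.
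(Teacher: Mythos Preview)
Your proposal is correct and follows exactly the route the paper intends: the paper does not spell out a separate proof for this theorem, but presents it as an immediate consequence of inserting the space-time periodic solutions from Proposition~\ref{prop:space-time-periodic-waves} into $\Psi_\mathrm{GL}$ and invoking the approximation result Theorem~\ref{thm:full-approx-result}. Your explicit verification of the uniform $(H^7_{l,u})^2$ bound via the factor $(\varepsilon\tilde{k}_j)^k$ picked up by each spatial derivative is a useful detail that the paper leaves implicit.
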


	\begin{remark}
		The main difference of the solutions in Theorem \ref{thm:pure-patterns} and Theorem \ref{thm:patterns-space-time} is the small correction of the spatial frequency. In addition, the temporal frequency of $v$ is also adjusted by an additional perturbation of order $\varepsilon^2$ in \eqref{eq:pure-turing-hopf-pattern-2} and \eqref{eq:pattern-superposition-2}, which is due to the singular advection term in \eqref{eq:final-amplitude-eq}.
	\end{remark}

	\subsection{Modulated waves corresponding to fast-moving front solutions}
	
	Finally, we discuss the solution to the pattern-forming systems \eqref{eq:toy-model}, which arise from the fast-moving travelling front solutions constructed in Section \ref{sec:fronts}. Recall that the front solutions are of the form
	\begin{equation}\label{eq:fast-front-ansatz}
		(\Aav, \Bav)(T,X_1) = (r_A(\tilde{\xi}) \ee^{\ii\phi_A(\tilde{\xi}) + \omega_A T}, r_B(\tilde{\xi}) \ee^{\ii\phi_B(\tilde{\xi}) + \omega_B T)})
	\end{equation}
	with $\tilde{\xi} = \varepsilon X_1 - c_0 T$ for $c_0 \in \R$. Additionally, the temporal frequencies are chosen corresponding to the semi-trivial equilibrium point established in Proposition \ref{prop:semi-trivial-solutions}, that is $\omega_A = \tfrac{\alpha_u \gamma_{1,i}}{\gamma_{1,r}}$ and $\omega_B = \tfrac{\alpha_v \gamma_{8,i}}{\gamma_{8,r}}$. Note that the phases $\phi_A$ and $\phi_B$ are not necessarily bounded, but rather, the local wave numbers $\psi_A = \dot{\phi}_A$ and $\psi_B = \dot{\phi}_B$ approach limits as $\tilde{\xi} \rightarrow \pm\infty$. To discuss the properties of the solutions induced in the full pattern-forming system \eqref{eq:toy-model}, we restrict to $c_0 > 0$ since the case $c_0 < 0$ can be recovered by changing $x \mapsto -x$ and $c_d \mapsto -c_d$ in \eqref{eq:toy-model}.
	
	Recalling Theorems \ref{thm:pure-patterns} and \ref{thm:patterns-space-time}, we note that there is a direct correspondence of equilibrium points in the radii system \eqref{eq:radii-dynamics} and solutions in the pattern-forming system \eqref{eq:toy-model}. Specifically, $T$ corresponds to the trivial solution $(u,v) = (0,0)$ and the semi-trivial equilibrium points $ST_A$ and $ST_B$ correspond to a pure Turing pattern \eqref{eq:pure-turing-pattern} and a pure Turing–Hopf pattern \eqref{eq:pure-turing-hopf-pattern}, respectively. For the latter we note that since $\omega_A$ and $\omega_B$ as chosen as in Proposition \ref{prop:semi-trivial-solutions} in \eqref{eq:fast-front-ansatz}, the local wave numbers at $ST_A$ and $ST_B$ vanish, cf.~\eqref{eq:slow-dynamics}. Finally, the nontrivial equilibrium point $NT$ corresponds to a solution of the form \eqref{eq:pattern-superposition-2} with $\tilde{k}_j = \psi_{j,NT}$ for $j = A,B$, where $\psi_{j,NT}$ denotes the (non-zero) local wave numbers at $NT$ given by \eqref{eq:slow-dynamics}.
	
	\begin{theorem}\label{thm:pattern-interface}
		Let $T_0 > 0$ and a bounded set $\mathcal{S} \subset \C^2$ containing $0$. Then there exists an $\varepsilon_0 > 0$ such that for every $\varepsilon \in (0,\varepsilon_0)$ and for every fast-travelling front \eqref{eq:fast-front-ansatz} in the amplitude equations \eqref{eq:final-amplitude-eq}, which is contained in $\mathcal{S}$, the pattern-forming system \eqref{eq:toy-model} has a solution $(u,v) : [0,T_0/\varepsilon] \rightarrow \R^2$ of the form
		\begin{equation*}
			\begin{split}
				u(t,x) &= 2 \varepsilon r_A(\varepsilon^2(x - c_0 t)) \cos\left(x + \phi_A(\varepsilon^2 (x-c_0 t)) + \varepsilon^2 \omega_A t\right) + \Ocal(\varepsilon^2), \\
				v(t,x) &= 2 \varepsilon r_B(\varepsilon^2(x-c_0 t)) \cos\left(x - c_p t + \phi_B(\varepsilon^2 (x-c_0 t)) + \varepsilon^2 \omega_B t\right) + \Ocal(\varepsilon^2).
			\end{split}
		\end{equation*}
		In particular, the following correspondence between solutions to the radii system \eqref{eq:radii-dynamics-rescaled} and the pattern-forming system \eqref{eq:toy-model} holds:
		\begin{enumerate}
			\item an orbit $ST_A \longrightarrow T$ corresponds to an invasion of the trivial solution $(u,v) = (0,0)$ by a pure Turing pattern \eqref{eq:pure-turing-pattern}, see Figure \ref{fig:T-to-trivial};
			\item orbits $ST_B \longrightarrow T$ and $T \longrightarrow ST_B$ correspond to an invasion of the trivial solution $(u,v) = (0,0)$ by a pure Turing–Hopf pattern \eqref{eq:pure-turing-hopf-pattern}, see Figure \ref{fig:TH-to-trivial}, or an invasion of a pure Turing–Hopf pattern by the trivial solution;
			\item orbits $ST_A \longrightarrow ST_B$ and $ST_B \longrightarrow ST_A$ correspond to an invasion of a pure Turing–Hopf pattern \eqref{eq:pure-turing-hopf-pattern} by a pure Turing pattern \eqref{eq:pure-turing-pattern} or an invasion of a pure Turing by a pure Turing–Hopf pattern, see Figure \ref{fig:T-to-TH};
			\item orbits $NT \longrightarrow T$, $NT \longrightarrow ST_A$ and $NT \longrightarrow ST_B$ correspond to an invasion of the trivial solution $(u,v) = (0,0)$, a pure Turing pattern \eqref{eq:pure-turing-pattern} and a pure Turing–Hopf pattern \eqref{eq:pure-turing-hopf-pattern}, respectively, by a solution of the form \eqref{eq:pattern-superposition-2} modelling a superposition of a Turing and a Turing–Hopf pattern, see Figure \ref{fig:superpos-to-T};
			\item orbits $ST_A \longrightarrow NT$ and $ST_B \longrightarrow NT$ correspond to an invasion of a superposition pattern \eqref{eq:pattern-superposition-2} by a pure Turing pattern \eqref{eq:pure-turing-pattern} and a pure Turing–Hopf pattern \eqref{eq:pure-turing-hopf-pattern}, respectively, see Figure \ref{fig:TH-to-superpos}.
		\end{enumerate}
	\end{theorem}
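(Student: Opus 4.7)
The plan is to combine the existence of fast-travelling heteroclinic orbits in the amplitude system obtained from Theorem \ref{thm:persistence-heteroclinic-orbits} with the approximation result Theorem \ref{thm:full-approx-result}, and then to interpret the asymptotic endpoints of each orbit as one of the space-time periodic solutions identified in Theorems \ref{thm:pure-patterns} and \ref{thm:patterns-space-time}. First, for any heteroclinic orbit in the fast-slow system \eqref{eq:full-fast-slow-system-abbrev}, the ansatz \eqref{eq:fast-front-ansatz} yields smooth amplitude modulations
\[
\Aav(T,X_1) = r_A(\tilde\xi)\,\mathrm{e}^{\mathrm{i}(\phi_A(\tilde\xi)+\omega_A T)}, \qquad
\Bav(T,X_1) = r_B(\tilde\xi)\,\mathrm{e}^{\mathrm{i}(\phi_B(\tilde\xi)+\omega_B T)},
\]
with $\tilde\xi = \varepsilon X_1-c_0 T$, in which $r_A, r_B$ together with the local wave numbers $\dot\phi_A,\dot\phi_B$ are bounded and converge exponentially to their limits at the endpoints, because all connections in Theorem \ref{thm:persistence-heteroclinic-orbits} are between hyperbolic invariant sets on the slow manifold $\Scal_\varepsilon$ (after quotienting by the phase circles). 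Since each $X_1$-derivative of $r_A(\varepsilon X_1-c_0 T)$ gains a factor of $\varepsilon$, and $\mathrm{e}^{\mathrm{i}\phi_{A,B}}$ has bounded derivatives of all orders, one obtains $\sup_{\varepsilon\in(0,\varepsilon_0)}\sup_{T\in[0,T_0]}\|(\Aav,\Bav)\|_{(H^7_{l,u})^2}<\infty$, so Theorem \ref{thm:full-approx-result} produces a solution $(u,v)$ of \eqref{eq:toy-model} with $\|(u,v)-\Psi_\mathrm{GL}(\Aav,\Bav)\|_{(H^1_{l,u})^2}\le C\varepsilon^2$ on the stated time interval. Substituting the ansatz into $\Psi_\mathrm{GL}$ and using $T=\varepsilon^2 t$, $\tilde\xi=\varepsilon^2(x-c_0 t)$ reproduces the formula claimed for $(u,v)$.

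It then remains to identify each of the five cases with the correct asymptotic patterns in \eqref{eq:toy-model}. For an endpoint at the trivial equilibrium $T$, both amplitudes vanish and the associated half-line carries only the $\Ocal(\varepsilon^2)$ remainder, i.e.\ the trivial state. For the semi-trivial endpoints $ST_A, ST_B$, the particular choice $\omega_A=\alpha_u\gamma_{1,i}/\gamma_{1,r}$, $\omega_B=\alpha_v\gamma_{8,i}/\gamma_{8,r}$ was made so that the local wave numbers $\dot\phi_A,\dot\phi_B$ vanish there, cf.~\eqref{eq:slow-dynamics}; hence the front matches exactly the pure Turing \eqref{eq:pure-turing-pattern} or Turing--Hopf \eqref{eq:pure-turing-hopf-pattern} wavetrain of Theorem \ref{thm:pure-patterns}. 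For an endpoint at the nontrivial equilibrium $NT$, the local wave numbers converge to the non-zero limits $\psi_{A,NT},\psi_{B,NT}$ determined by \eqref{eq:slow-dynamics}; via $\phi_{A,B}(\tilde\xi)\sim\psi_{A,B,NT}\tilde\xi$ this produces an $\Ocal(\varepsilon^2)$ shift of the spatial wave number, while the identity $\omega_A-c_0\psi_{A,NT}=\gamma_{1,i}(NT_A)^2+\gamma_{2,i}(NT_B)^2$ (and its $B$-analogue) supplies exactly the temporal frequency of the fully nontrivial superposition pattern \eqref{eq:pattern-superposition-2} of Theorem \ref{thm:patterns-space-time}. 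The direction of invasion is read off from $c_0>0$: for fixed $x$, $\tilde\xi$ decreases as $t$ increases, so the state at $\tilde\xi=-\infty$ invades the state at $\tilde\xi=+\infty$, which exhausts the classification (1)--(5).

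The main technical obstacle is the uniform $H^7_{l,u}$-bound in the first step, since Theorem \ref{thm:full-approx-result} requires uniformity in $\varepsilon$. This is ultimately a consequence of the fact that the slow flow \eqref{eq:slow-flow} on $\Scal_\varepsilon$ is an $\Ocal(\varepsilon)$-perturbation of the unperturbed flow on $C_0$, so the exponential decay rate of the orbit toward its endpoints is bounded below uniformly in $\varepsilon$; combined with the $\varepsilon$-gain from each $X_1$-derivative and the boundedness of all derivatives of $\mathrm{e}^{\mathrm{i}\phi_{A,B}}$, this yields the required uniform control. Once this step is in place, the remaining arguments reduce to a direct substitution into $\Psi_\mathrm{GL}$ and an explicit analysis of the asymptotic behaviour in the limits $\tilde\xi\to\pm\infty$.
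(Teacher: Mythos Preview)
Your proposal is correct and follows essentially the same route as the paper: the theorem is obtained by feeding the fast-travelling fronts from Theorem~\ref{thm:persistence-heteroclinic-orbits} into the approximation result Theorem~\ref{thm:full-approx-result}, then reading off the asymptotic states via the correspondence with Theorems~\ref{thm:pure-patterns} and~\ref{thm:patterns-space-time}. The paper in fact presents Theorem~\ref{thm:pattern-interface} without a separate proof, treating it as a direct consequence of this combination together with the discussion of local wave numbers at $T$, $ST_A$, $ST_B$, and $NT$ immediately preceding the statement; your write-up supplies the verification of the uniform $H^7_{l,u}$-bound and the invasion-direction argument that the paper leaves implicit.
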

	
	\begin{figure}
		\begin{minipage}{0.48\textwidth}
			\includegraphics[width=\textwidth]{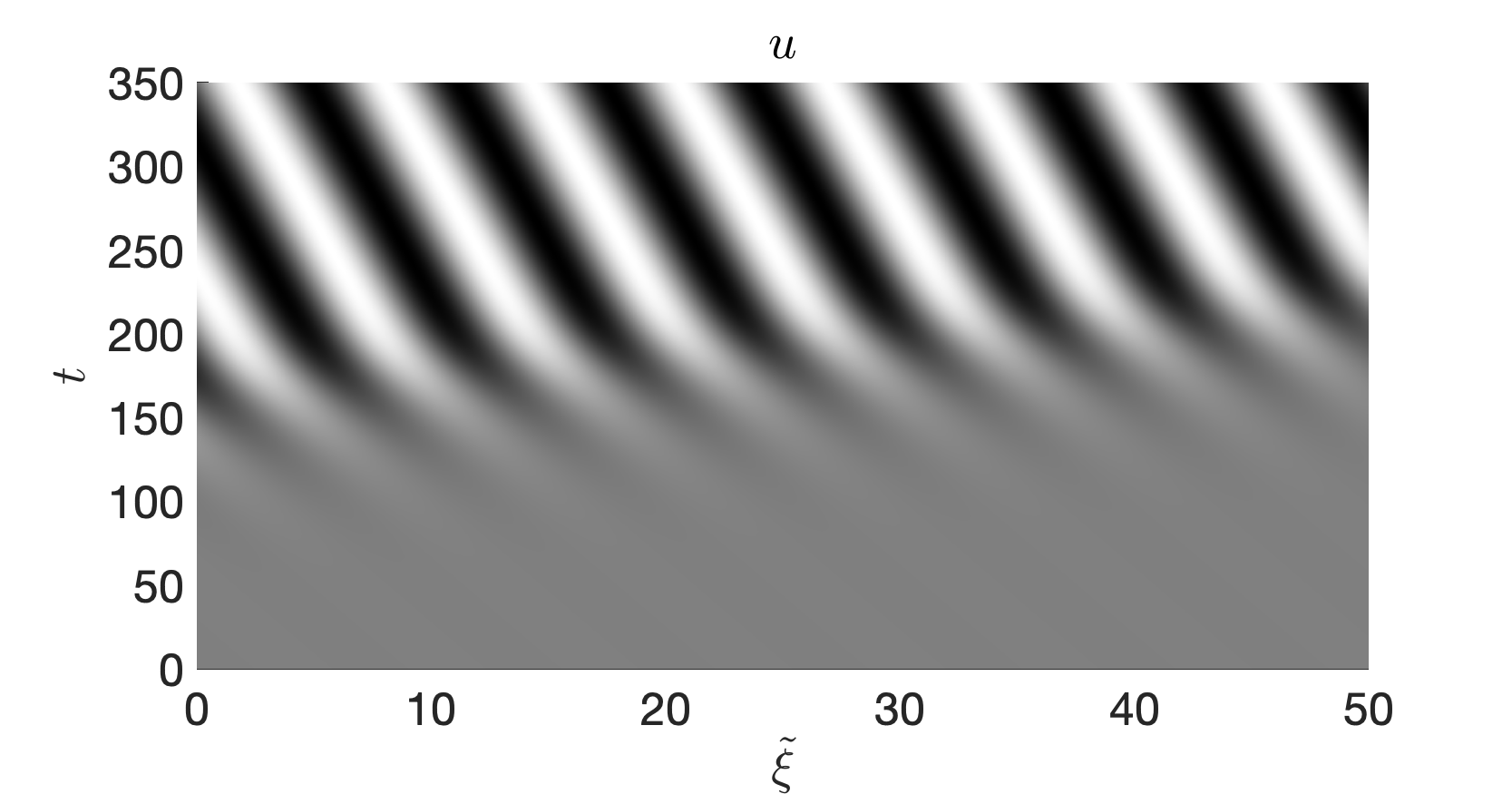}
			\caption{Leading order space-time plot of $u$ for an invasion of the trivial state by a pure Turing pattern \eqref{eq:pure-turing-pattern}. A corresponding video can be found in the supplementary material \cite{hilder2025-supplementary}.}
			\label{fig:T-to-trivial}
		\end{minipage}
		\begin{minipage}{0.48\textwidth}
			\includegraphics[width=\textwidth]{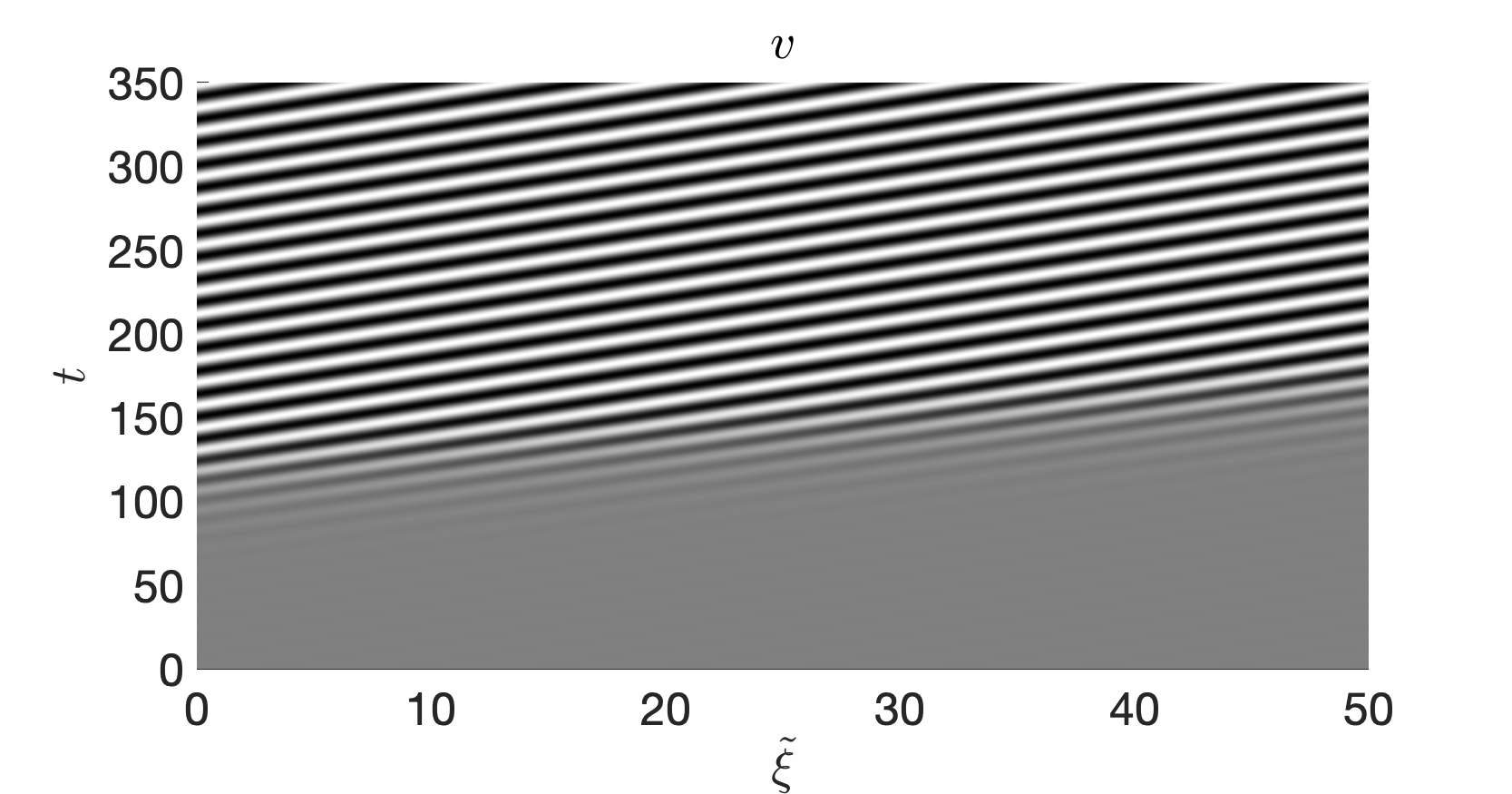}
			\caption{Leading order space-time plot of $v$ for an invasion of the trivial state by a pure Turing–Hopf pattern \eqref{eq:pure-turing-hopf-pattern}. A corresponding video can be found in the supplementary material \cite{hilder2025-supplementary}.}
			\label{fig:TH-to-trivial}
		\end{minipage}
	\end{figure}
	
	\begin{figure}
		\includegraphics[width=0.48\textwidth]{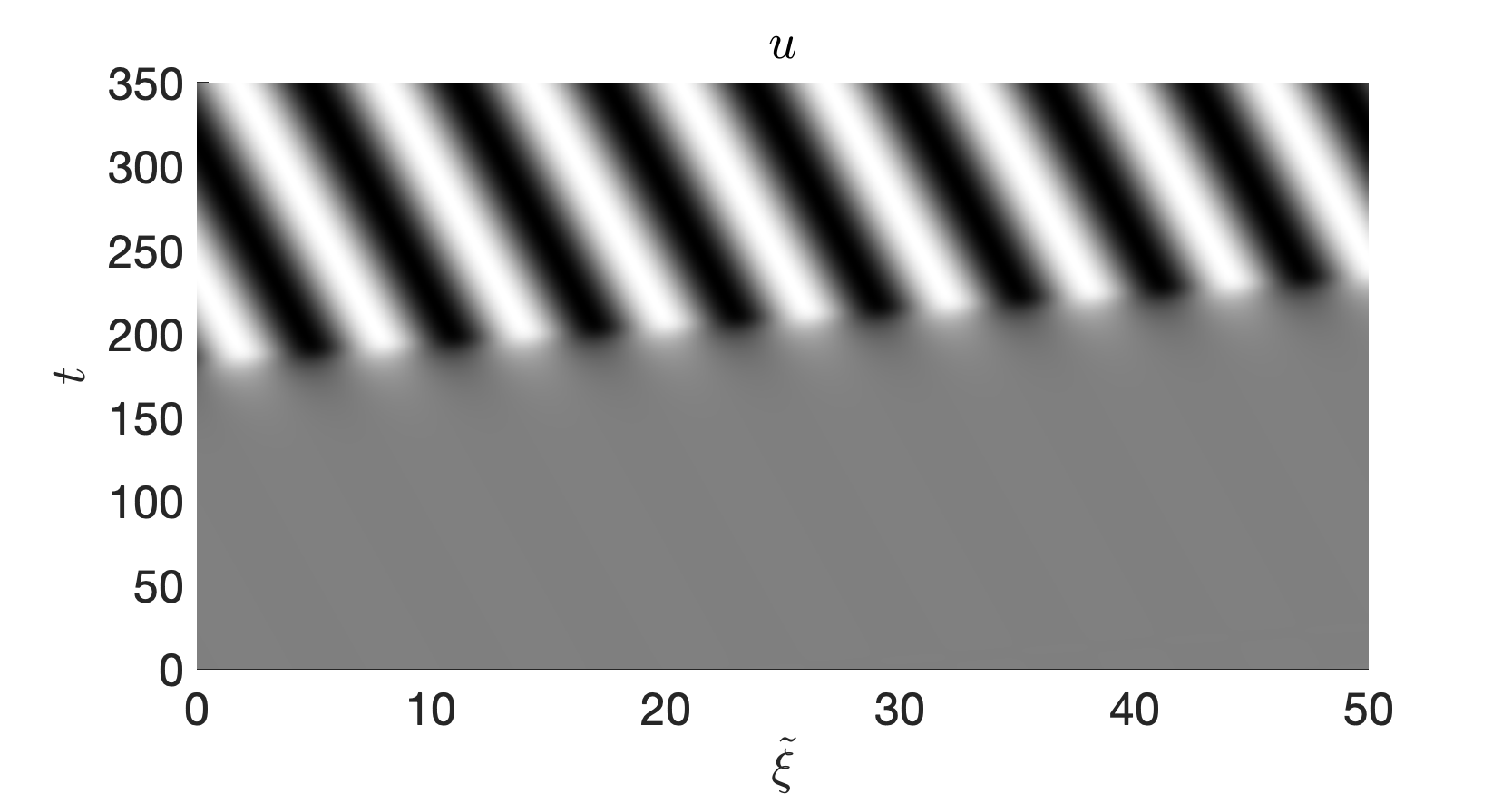}
		\includegraphics[width=0.48\textwidth]{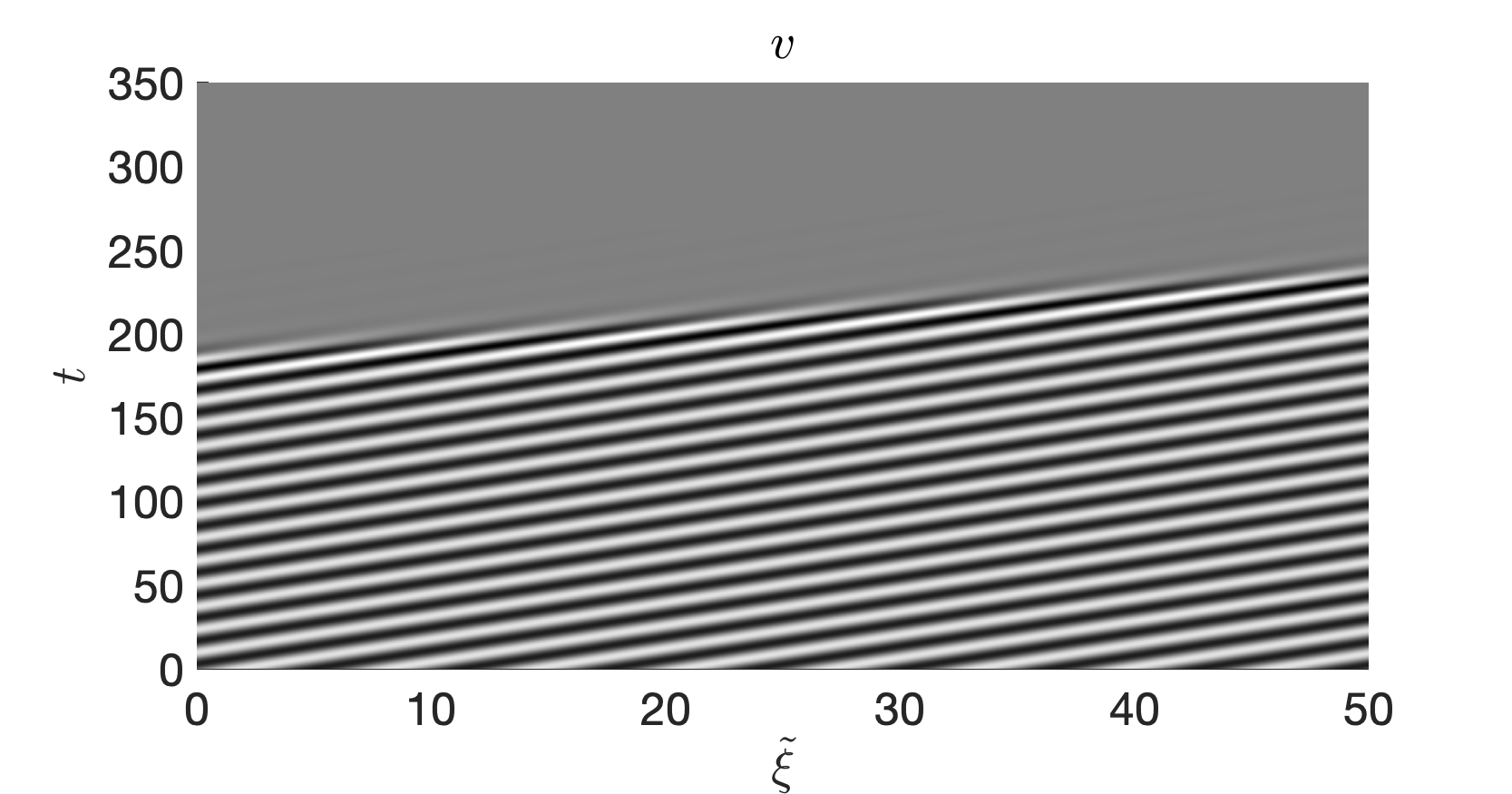}
		\caption{Leading order space-time plot of $u$ and $v$ for an invasion of a pure Turing–Hopf pattern \eqref{eq:pure-turing-hopf-pattern} by a pure Turing pattern \eqref{eq:pure-turing-pattern}. A corresponding video can be found in the supplementary material \cite{hilder2025-supplementary}.}
		\label{fig:T-to-TH}
	\end{figure}
	
	\begin{figure}
		\includegraphics[width=0.48\textwidth]{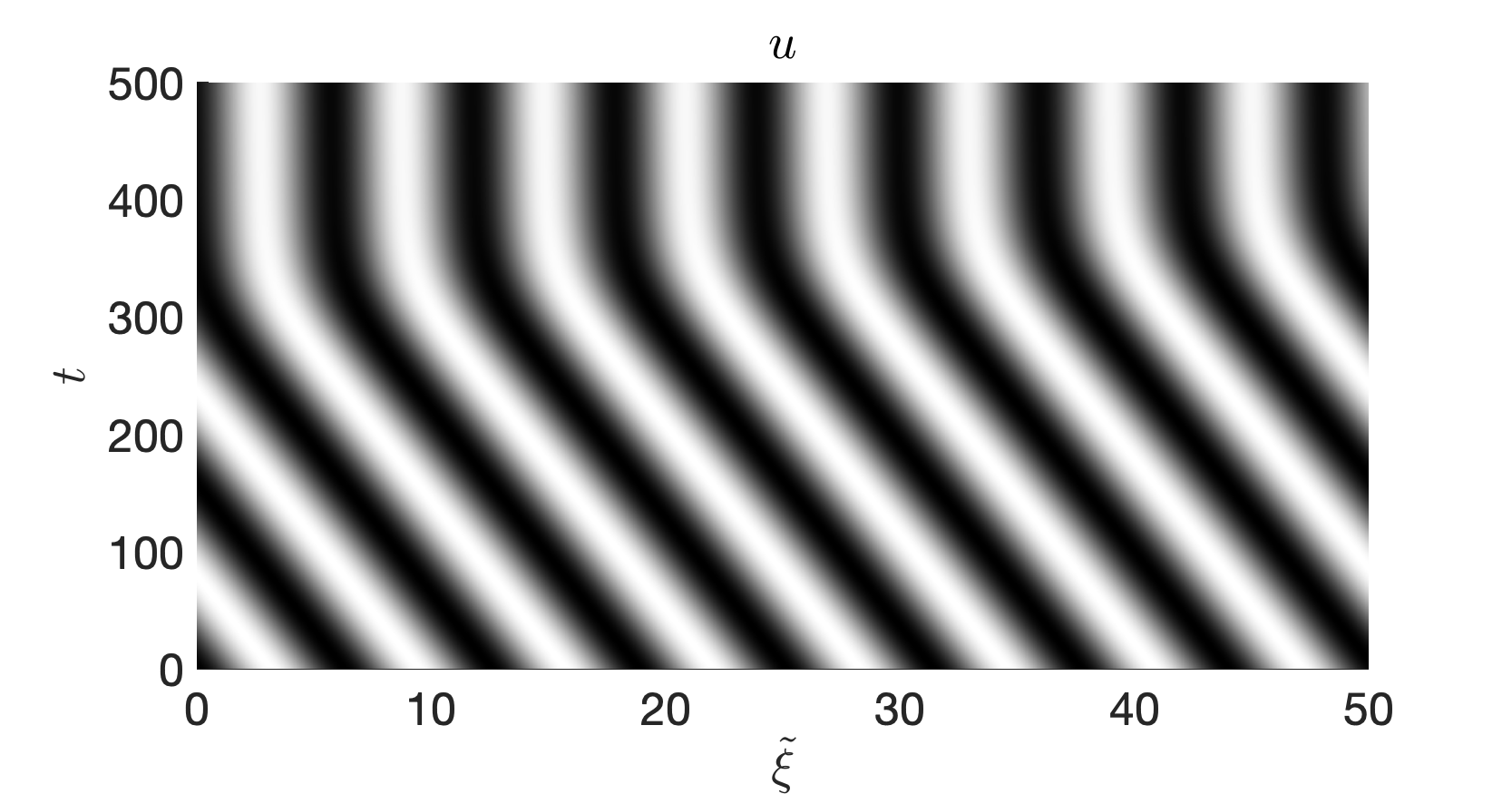}
		\includegraphics[width=0.48\textwidth]{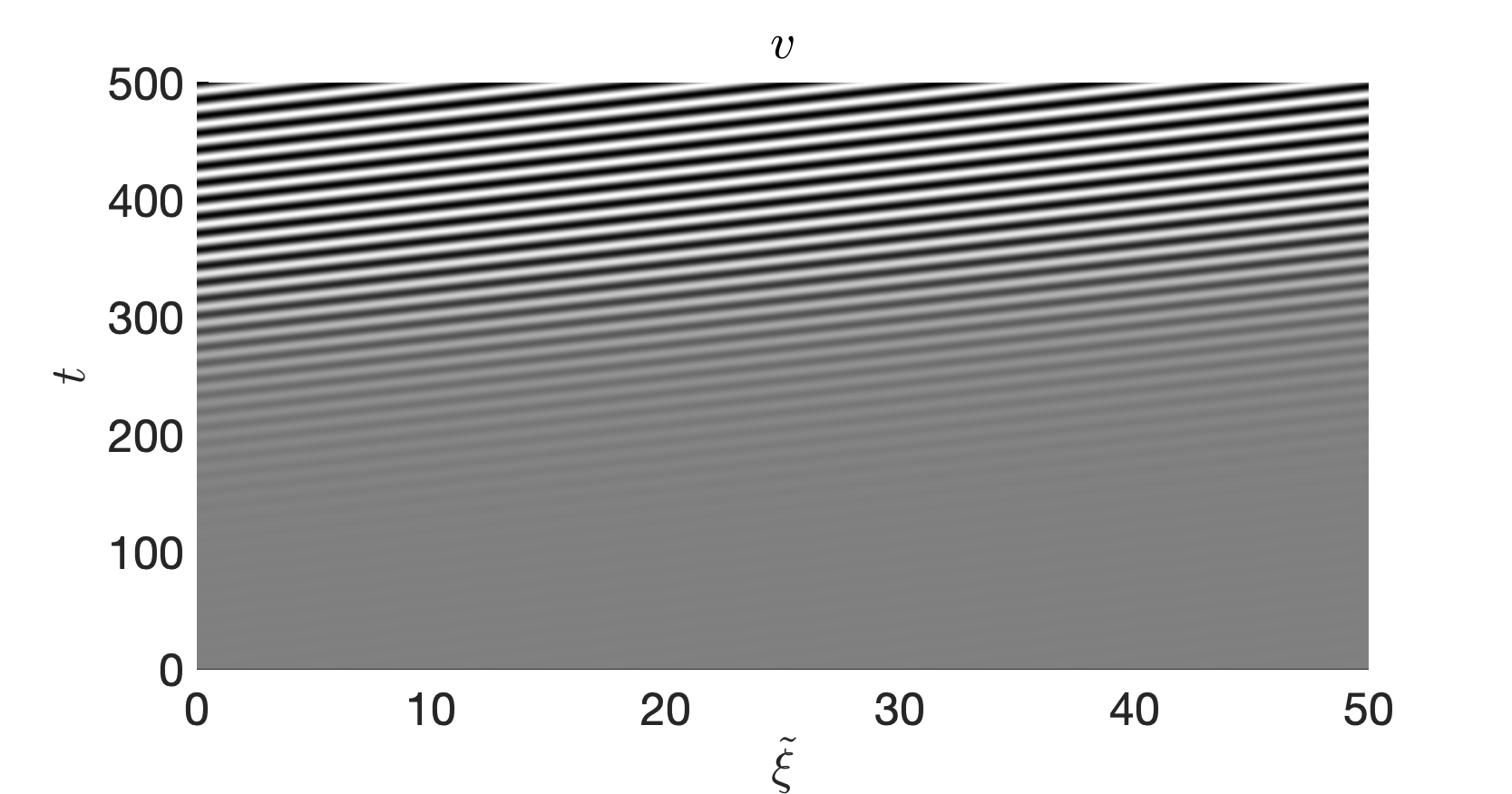}
		\caption{Leading order space-time plot of $u$ and $v$ for an invasion of a pure Turing pattern \eqref{eq:pure-turing-pattern} by a superposition pattern \eqref{eq:pattern-superposition}. A corresponding video can be found in the supplementary material \cite{hilder2025-supplementary}.}
		\label{fig:superpos-to-T}
	\end{figure}
	
	\begin{figure}
		\includegraphics[width=0.48\textwidth]{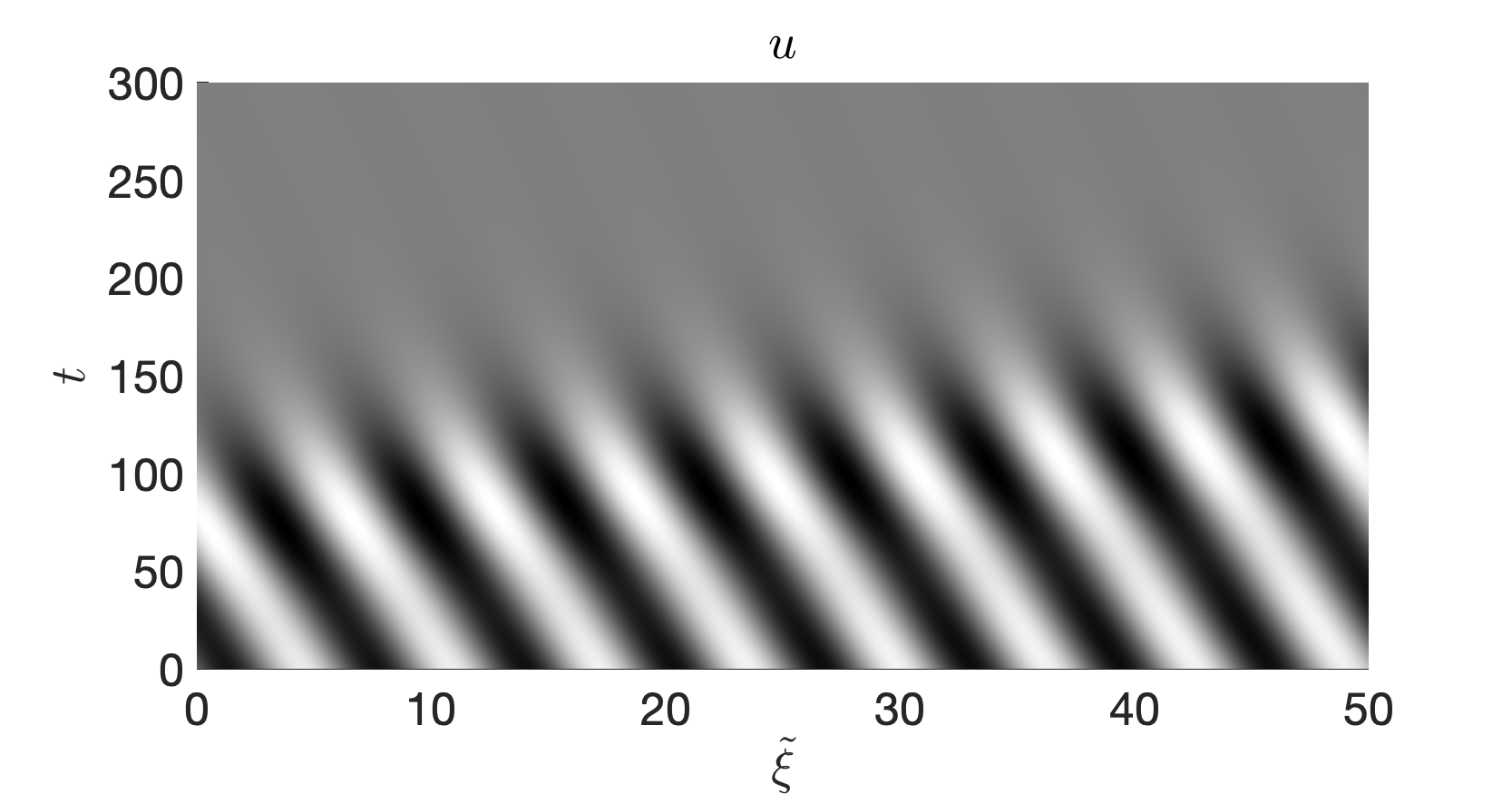}
		\includegraphics[width=0.48\textwidth]{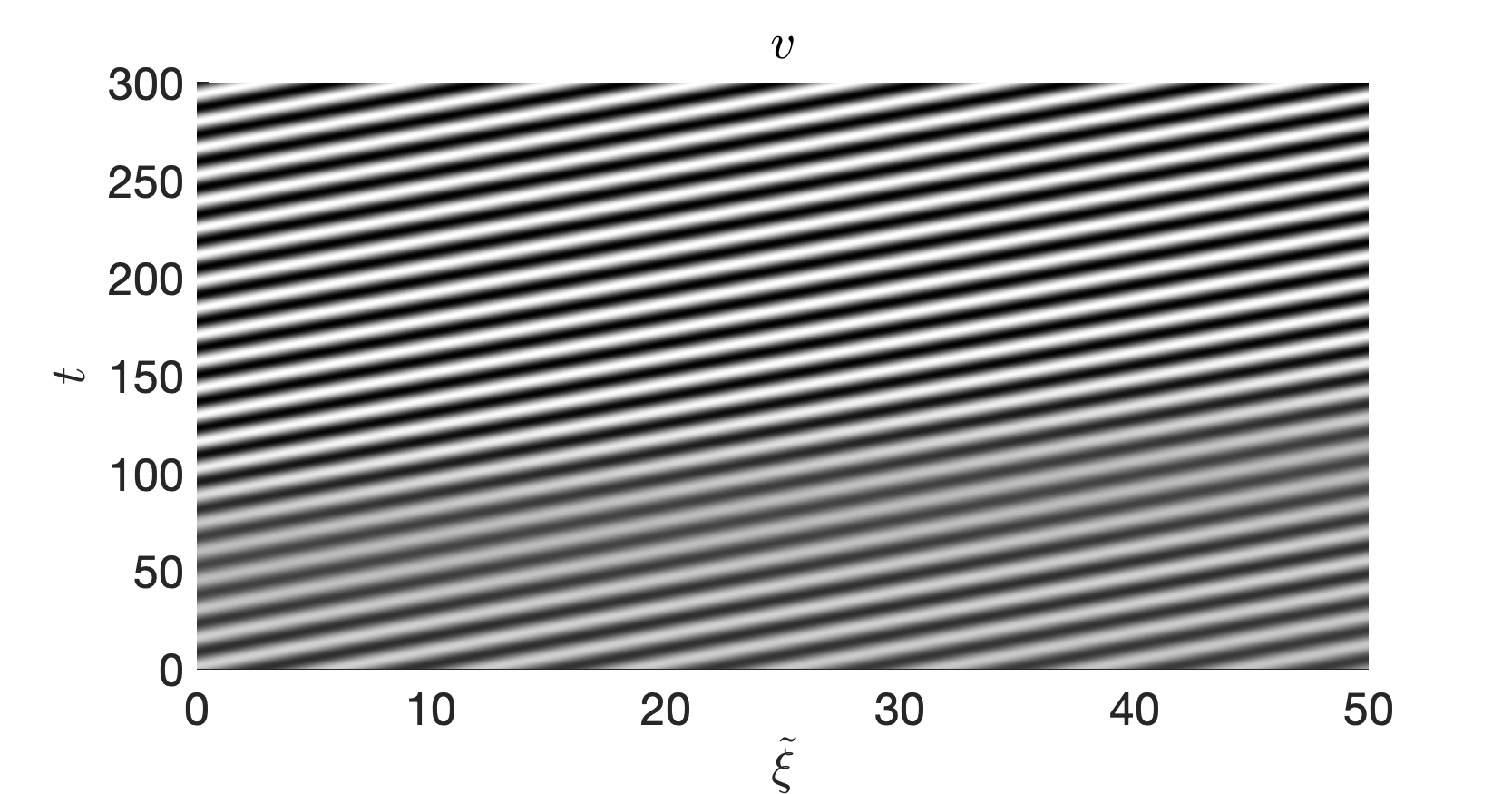}
		\caption{Leading order space-time plot of $u$ and $v$ for an invasion of a superposition pattern \eqref{eq:pattern-superposition} by a pure Turing pattern \eqref{eq:pure-turing-pattern}. A corresponding video can be found in the supplementary material \cite{hilder2025-supplementary}.}
		\label{fig:TH-to-superpos}
	\end{figure}
	
	\begin{remark}
		We point out that the pure Turing pattern in Figures \ref{fig:T-to-trivial}, \ref{fig:T-to-TH} and \ref{fig:superpos-to-T} is slowly moving. This is due to the higher-order corrections of the phase velocity in \eqref{eq:pure-turing-pattern}, which are visible here since the solutions are shown for $\varepsilon = 0.2$ on long time intervals. However, the Turing–Hopf pattern, which has a phase velocity of order one, is still visibly faster.
	\end{remark}

	\section{Bifurcation of spatially periodic solutions}\label{sec:periodic-solutions}
	
	In Theorem \ref{thm:pure-patterns}, we have obtained the existence of solutions to \eqref{eq:toy-model}, which stay close to a spatially periodic solution to the amplitude equations \eqref{eq:amplitude-eq-wohot} on a time-interval of length $\Ocal(\varepsilon^{-2})$. In this section we will see that it is in fact possible to construct global-in-time, spatially periodic solutions to \eqref{eq:toy-model}, which have a bounded amplitude. The proof of their existence is based on centre manifold theory. 
	
	The construction of spatially periodic solutions close to the onset of a finite-wavelength instability such as a Turing or Turing-Hopf instability is well known and can be obtained by various different methods such as bifurcation from a simple eigenvalue, Lyapunov-Schmidt reduction or centre manifold theory. The main challenge in our case is that spatially periodic solutions close to a simultaneous Turing and Turing-Hopf instability are not stationary solutions in any co-moving frame due to the different phase velocities of the two critical Fourier modes. This makes it very difficult to use a direct construction via bifurcation theoretical methods such as Crandall-Rabinowitz and Lyapunov-Schmidt reduction. Nevertheless, it is still possible to construct spatially periodic solutions, which are stationary up to small temporal oscillations, which are of higher order in $\varepsilon$. The idea for the construction is to show the existence of an invariant centre manifold. On this centre manifold, oscillatory modes can be moved to higher-order terms using normal form transformations. Then, up to higher-order perturbations, the normal form supports circles of non-trivial equilibrium points. Finally, using a variant of normally hyperbolic theory, see \cite[Chap.~VII]{hale1969}, we show that the reduced equations on the centre manifold have bounded invariant manifolds close these circles of equilibrium points under higher-order perturbations allowing for fast-oscillating terms. However, the dynamics on these invariant manifold is potentially very complex, see e.g.~Remark \ref{rem:complicated-dynamics}. Nevertheless, this establishes the existence of bounded, spatially periodic solutions to the pattern-forming system \eqref{eq:toy-model}.
	
	\subsection{Centre manifold theorem}
	
	We rewrite the full system \eqref{eq:toy-model} using the notation $U = (u,v)^T$ as
	\begin{equation}\label{eq:cm-formulation}
		\partial_t U = L U + N(U;\varepsilon)
	\end{equation}
	where $L$ is the linear part of \eqref{eq:toy-model} in $(U,\eps)$ and thus reads as
	\begin{equation*}
		L U = \begin{pmatrix*}
					-(1+\partial_x^2)^2 u \\
					-(1+\partial_x^2)^2 v + c_d \partial_x^3 v
				\end{pmatrix*}.
	\end{equation*}
	Here, recall the choice that $\kt = \kth = 1$. Furthermore, the nonlinear part of \eqref{eq:toy-model} is then given by
	\begin{equation*}
		N(U;\varepsilon) = \begin{pmatrix}
			\varepsilon^2 \alpha_u u + f(u,v) \\
			\varepsilon^2 \alpha_v v + g(u,v)
		\end{pmatrix},
	\end{equation*}
    where we interpret $\varepsilon$ as an additional variable by adding an equation $\partial_t \varepsilon =0$. We now focus on spatially periodic solutions to the system \eqref{eq:cm-formulation} and thus interpret the system as a dynamical system on the phase space
	\begin{equation*}
		\Xcal := (H^\ell_\mathrm{per}(\R))^2, 
	\end{equation*}
	for $\ell \geq 0$, where $H^\ell_\mathrm{per}(\R)$ denotes the Sobolev space of $2\pi$-spatially periodic functions from $\R$ to $\R$. Additionally, we also introduce the spaces
	\begin{equation*}
		\begin{split}
			\Zcal &:= (H^{\ell + 4}_\mathrm{per}(\R))^2, \\
			\Ycal &:= \Zcal.
		\end{split}
	\end{equation*}
	Finally, we note that the operator $L$ is a bounded operator from $(H^{\ell+4}_\mathrm{per}(\R))^2$ to $(H^{\ell}_\mathrm{per}(\R))^2$ and has the Fourier symbol
	\begin{equation*}
		\hat{L}(k) = \begin{pmatrix}
			-(1-k^2)^2 & 0 \\
			0 & -(1-k^2)^2 - i c_d k^3
		\end{pmatrix}, \qquad k \in \Z.
	\end{equation*}
	Thus, it is straightforward to see that the spectrum of $L$ splits into a stable part $\sigma_s$ with $\Re(\sigma_s) < 0$ and a set of four (counted with multiplicity) central eigenvalues eigenvalues on the imaginary axis $\sigma_0 = \{0, \pm i c_d\}$, which occur at $k = \pm 1$. In particular, we note that there is a spectral gap between the central and stable part of the spectrum. Finally, to formulate the centre manifold theorem, we introduce projections onto the central part of the spectrum by
	\begin{equation*}
		P_{0,\pm} U := \hat{U}_{\pm 1},
	\end{equation*}
	where $\hat{U}_k$ is the $k$-th Fourier mode of $U \in (H^\ell_\mathrm{per})^2$. Then, the central projection is given by $P_0 U := \ee^{\ii x} P_{0,+} U + \ee^{-\ii x} P_{0,-} U$ and the projection onto the stable part is given by $P_s := (I - P_0)$. Using this, we define $\Zcal_0 := P_0 \Zcal$ and $\Zcal_s := P_s \Zcal$.
	
	Using this notation, we are in the setting of a parameter-dependent centre manifold, see e.g.~\cite[Sec.~2.3.1.]{haragus2011} and the following theorem holds.
	
	\begin{lemma}\label{lem:centre-manifold}
		For all $\alpha_u, \alpha_u > 0$ and $c_d \in \R$ exists a neighbourhood $\Ocal_U \times \Ocal_\varepsilon$ of $(0,0)$ in $\Zcal \times \R$ and a map $\Psi : \Zcal_0 \times \R \rightarrow \Zcal_s$ satisfying
		\begin{equation*}
			\Psi(0;0) = 0 \text{ and } D_U\Psi(0;0) = 0
		\end{equation*}
		such that for all $\varepsilon \in \Ocal_\varepsilon$ the manifold
		\begin{equation*}
			\Mcal_0 := \{U_0 + \Psi(U_0;\varepsilon) \,:\, U_0 \in \Zcal_0\}
		\end{equation*}
		is a locally invariant manifold of the system \eqref{eq:cm-formulation} and $\Mcal_0$ contains all small, bounded solutions of \eqref{eq:cm-formulation}. That is, all solutions $U_0 : [0,T] \rightarrow \Zcal_0$ to the projected system
		\begin{equation}\label{eq:cm-equation-abstract}
			\partial_t U_0 = L_0 U_0 + P_0 N(U_0 + \Psi(U_0;\varepsilon))
		\end{equation}
		with $L_0 = P_0 L P_0$, which satisfy $U_0(t) \in \Ocal_U$ for all $t \in [0,T]$ yield solutions to the full system \eqref{eq:cm-formulation} via $U(t) = U_0(t) + \Psi(U(t);\varepsilon)$. Vice versa, if $U : [0,T] \rightarrow \Zcal$ is a solution to \eqref{eq:cm-formulation} satisfying $P_0 U(t) \in \Ocal_U$ for all $t \in [0,T]$ induce a solution $U_0 = P_0 U$ to \eqref{eq:cm-equation-abstract}. Finally, the reduction function $\Psi$ preserves the symmetries of the full system \eqref{eq:cm-formulation}.
	\end{lemma}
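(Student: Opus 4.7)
The plan is to apply a standard parameter-dependent centre manifold theorem for PDEs on a spatially periodic domain, as stated for instance in \cite[Thm.~2.9]{haragus2011}. Interpreting $\varepsilon$ as an additional (static) parameter, we will verify its hypotheses for the abstract formulation \eqref{eq:cm-formulation} in the functional analytic setting $(\mathcal{X},\mathcal{Y},\mathcal{Z})$. The three things to check are: (H1) $L: \mathcal{Z}\to\mathcal{X}$ is closed and densely defined, with a spectral decomposition $\sigma(L)=\sigma_0\cup\sigma_s$, where $\sigma_0\subset i\R$ is finite and $\sigma_s$ is uniformly bounded away from the imaginary axis; (H2) suitable resolvent estimates hold along $i\R\setminus\sigma_0$; (H3) the nonlinearity $N$ is smooth as a map $\mathcal{Z}\times\R\to\mathcal{Y}$ with $N(0;0)=0$ and $D_UN(0;0)=0$. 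Once all three are verified, the abstract theorem immediately yields $\Psi$ with the stated properties, the local invariance of $\mathcal{M}_0$, and the correspondence between solutions of \eqref{eq:cm-formulation} and \eqref{eq:cm-equation-abstract}.

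For (H1), the Fourier symbol $\hat L(k)$ is diagonal with entries $-(1-k^2)^2$ and $-(1-k^2)^2-ic_dk^3$, hence the point spectrum on $H^\ell_\mathrm{per}$ is exactly the union of these two values over $k\in\Z$. The equation $-(1-k^2)^2=0$ only has solutions $k=\pm 1$, and at those wave numbers the second entry evaluates to $\mp ic_d$. Therefore $\sigma_0=\{0,\pm ic_d\}$ with algebraic multiplicity two each (realised as complex conjugate pairs at $k=\pm 1$), and $\Re\sigma_s\leq -1$ since $(1-k^2)^2\geq 1$ for all $k\in\Z\setminus\{\pm 1\}$. The projections $P_{0,\pm}$ defined via Fourier truncation are clearly bounded on every $H^\ell_\mathrm{per}$, so the central subspace $\mathcal{Z}_0$ is four-dimensional and the decomposition $\mathcal{Z}=\mathcal{Z}_0\oplus\mathcal{Z}_s$ is topological.

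For (H2), since $\hat L(k)$ is diagonal, the pointwise resolvent satisfies
\begin{equation*}
\|(\lambda-\hat L(k))^{-1}\|\leq \max\left(\frac{1}{|\lambda+(1-k^2)^2|},\frac{1}{|\lambda+(1-k^2)^2+ic_dk^3|}\right),
\end{equation*}
and for $\lambda\in i\R$ with $\mathrm{dist}(\lambda,\sigma_0)\geq \delta>0$ this is bounded uniformly in $k$ by $C/(1+|\lambda|)$. Summing over Fourier modes gives the operator norm estimate $\|(\lambda-L)^{-1}\|_{\mathcal{X}\to\mathcal{X}}\leq C/(1+|\lambda|)$ required by the theorem, with the analogous bound in $\mathcal{Z}\to\mathcal{Y}$. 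For (H3), choose $\ell\geq 1$ so that $H^{\ell+4}_\mathrm{per}(\R)$ is a Banach algebra continuously embedded in $C^0$; then $f,g$ being smooth with vanishing zero- and first-order terms at the origin makes the associated Nemitski operators smooth from $\mathcal{Z}$ to itself, giving smoothness of $N$ in the required sense.

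Finally, symmetry preservation follows from equivariance of both $L$ and $N$ under the continuous translation action $U(\cdot)\mapsto U(\cdot+\phi)$ and the (discrete) reflection $U(x)\mapsto U(-x)$ (when $c_d=0$); since $P_0$ commutes with these symmetries, the symmetric version of the centre manifold theorem \cite[Thm.~2.13]{haragus2011} applies and $\Psi$ inherits the equivariance. The main technical point — not a real obstacle, but the only place where the dispersive nature of the problem enters — is verifying (H2) uniformly in $k$ in the presence of the non-self-adjoint $ic_dk^3$ term; this is handled transparently by the diagonal Fourier structure of $L$, so the argument essentially reduces to the scalar resolvent estimate above.
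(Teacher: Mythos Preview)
Your proposal is correct and follows essentially the same route as the paper: both verify the spectral, resolvent, and nonlinearity hypotheses of the Haragus--Iooss centre manifold theorem and then invoke the abstract result (the paper cites \cite[Thm.~3.3]{haragus2011} for the parameter-dependent version, while you cite Thm.~2.9 and the equivariant Thm.~2.13, but the substance is identical). One small slip: the eigenvalue $0$ has multiplicity two while $\pm ic_d$ each have multiplicity one (total four central eigenvalues), not ``multiplicity two each''; this does not affect the argument.
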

	\begin{proof}
		To obtain the result, we verify that the hypotheses 2.4, 2.7 and 3.1 of \cite{haragus2011} hold. We already established that the spectrum of the linear operator $L$ splits into a stable and central part and thus hypothesis 2.4 holds. Additionally, the mapping properties in hypothesis 3.1. can be checked in a straightforward manner since the nonlinearity $\Rcal$ is smooth. It remains to verify hypothesis 2.7 of \cite{haragus2011} and, since $\Xcal$, $\Ycal$ and $\Zcal$ are Hilbert spaces, it is sufficient to show that there is a constant $\omega_0 > 0$ such that for all $|\omega| > \omega_0$, we have that $i \omega$ belongs to the resolvent set of $L$ and the estimate
		\begin{equation*}
			\norm{(i\omega I - L)^{-1}}_{\Xcal \rightarrow \Xcal} \leq \dfrac{c}{|\omega|}
		\end{equation*}
		holds. This follows directly from the Fourier representation of $L$, in particular, the fact that the spectrum of $L$ is contained in a sector. Then, the desired result follows from an application of \cite[Thm.~3.3]{haragus2011}.
	\end{proof}

	\subsection{Normal form transform and reduced equations}
	
	We now derive the reduced equations on the centre manifold established in Lemma \ref{lem:centre-manifold} via the abstract reduced system \eqref{eq:cm-equation-abstract}. For this, we introduce following coordinates on the centre manifold by writing $U_0 \in \Zcal_0$ as
	\begin{equation*}
		U_0(t) = \tilde{A}(t) \ee^{\ii x} \begin{pmatrix}
			1 \\ 0
		\end{pmatrix} + \tilde{B}(t) \ee^{\ii x} \begin{pmatrix}
			0 \\ 1
		\end{pmatrix} + \bar{\tilde{A}}(t) \ee^{-\ii x} \begin{pmatrix}
		1 \\ 0
		\end{pmatrix} + \bar{\tilde{B}}(t) \ee^{-\ii x} \begin{pmatrix}
		0 \\ 1
		\end{pmatrix}
	\end{equation*} Using these coordinates, the reduction function $\Psi$ in Lemma \ref{lem:centre-manifold} can be written as a function of $(\tilde{A},\tilde{B},\bar{\tilde{A}},\bar{\tilde{B}};\varepsilon)$. Next, we use that $P_0 = P_{0,+} + P_{0,-}$ and thus, we obtain a system for $(\tilde{A},\tilde{B},\bar{\tilde{A}},\bar{\tilde{B}})$, which reads as
	\begin{equation*}
		\begin{split}
			 \partial_t \tilde{A} &= \tilde{\Ncal}_A(\tilde{A},\tilde{B},\bar{\tilde{A}},\bar{\tilde{B}};\varepsilon), \\
			\partial_t \tilde{B} &= -i c_d \tilde{B} +  \tilde{\Ncal}_B(\tilde{A},\tilde{B},\bar{\tilde{A}},\bar{\tilde{B}};\varepsilon), \\
			\partial_t \tilde{\bar{A}} &= \tilde{\Ncal}_{\bar{A}}(\tilde{A},\tilde{B},\bar{\tilde{A}},\bar{\tilde{B}};\varepsilon), \\
			\partial_t \tilde{\bar{B}} &= i c_d \tilde{\bar{B}} +  \tilde{\Ncal}_{\bar{B}}(\tilde{A},\tilde{B},\bar{\tilde{A}},\bar{\tilde{B}};\varepsilon).
		\end{split}
	\end{equation*}
	Here, the nonlinear terms are obtained by taking the components of $P_{0,\pm} N((\tilde{A},\tilde{B},\bar{\tilde{A}},\bar{\tilde{B}})^T+\Psi(\tilde{A},\tilde{B},\bar{\tilde{A}},\bar{\tilde{B}};\varepsilon);\varepsilon)$. Denoting $\tilde{X}_0 = (\tilde{A},\tilde{B},\bar{\tilde{A}},\bar{\tilde{B}})^T$, we abbreviate this four-dimensional system as
	\begin{equation}\label{eq:pre-NF}
		\varepsilon^2 \partial_T \tilde{X}_0 = \Lcal \tilde{X}_0 + \Ncal(\tilde{X}_0;\varepsilon), \qquad \Lcal_\varepsilon = \begin{pmatrix}
			0 & 0 & 0 & 0 \\
			0 & - i c_d & 0 & 0 \\
			0 & 0 & 0 & 0 \\
			0 & 0 & 0 & i c_d
		\end{pmatrix}
	\end{equation}
	Here, we point out that $\Ncal$ is a smooth nonlinear function in $(\tilde{X}_0;\varepsilon)$. We can now simplify this reduced system by bringing it into its normal form using a near identity change of variables. For this, we use the following parameter-dependent normal form theorem, which is a straightforward adaptation of \cite[Theorem 3.2.2]{haragus2011}.
	
	\begin{lemma}\label{lem:NF-result}
		For any integer $p \geq 2$ exist neighbourhoods $\Ocal_{X} \subset \R^4$ and $\Ocal_{\varepsilon,1} \subset \Ocal_\varepsilon \subset \R$ of $0$ such that for any $\varepsilon \in \Ocal_{\varepsilon,1}$ exists a polynomial $\Phi_\varepsilon : \R^4 \rightarrow \R^4$ of degree $p$ with the following properties. The coefficients of $\Phi_\varepsilon$ are $C^p$ in $\varepsilon$ and $\Phi_0(0) = 0$ and $D_{\tilde{X}_0} \Phi_0(0) = 0$. Furthermore, the change of variables
		\begin{equation}\label{eq:NF-trafo}
			\tilde{X}_0 = X_0 + \Phi_\varepsilon(X_0)
		\end{equation}
		transforms \eqref{eq:pre-NF} into
		\begin{equation}\label{eq:NF-abstract}
			\partial_t X_0 = \Lcal X_0 + \Ncal(X_0; \varepsilon) + \rho(X_0; \varepsilon),
		\end{equation}
		where $\Ncal : \R^4 \times \R \rightarrow \R^4$ is a polynomial of degree $p$ with $C^p$-coefficients in $\varepsilon$ satisfying
		\begin{equation}\label{eq:NF-properties}
			\Ncal(0;0) = 0, \qquad D_{X_0} \Ncal(0;0) = 0, \qquad \Ncal \left(\ee^{t \Lcal^*} X_0;\varepsilon\right) = \ee^{t \Lcal^*} \Ncal(X_0;\varepsilon),
		\end{equation}
		for all $T \in \R$ and $(X_0;\varepsilon) \in \Ocal_X \times \Ocal_{\varepsilon,1}$, where $\Lcal^*$ denotes the adjoint of $\Lcal$. Additionally, the remainder terms $\rho$ satisfy the estimate $\norm{\rho(Y_0;\varepsilon)} \leq C \norm{X_0}^{p+1}$.
		Finally, the normal form transform \eqref{eq:NF-trafo} preserves the symmetries and invariances of \eqref{eq:pre-NF}.
	\end{lemma}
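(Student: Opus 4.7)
The plan is to adapt the proof of Theorem 3.2.2 in \cite{haragus2011} to our parameter-dependent situation, proceeding inductively on the polynomial degree $q$ from $2$ up to $p$. At each stage we will eliminate ``non-resonant'' monomials of degree $q$ by a near-identity polynomial change of variables $\tilde X_0 = X_0 + \Phi_\varepsilon^{(q)}(X_0)$, while retaining only the terms that commute with the flow of $\Lcal^*$.

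The central algebraic object is the homological operator
\begin{equation*}
    \mathcal{L}_H : \Phi \longmapsto D\Phi(X_0)\, \Lcal X_0 - \Lcal \Phi(X_0),
\end{equation*}
acting on the finite-dimensional space $\mathcal{P}_q$ of $\R^4$-valued homogeneous polynomials of degree $q$. Substituting the ansatz into \eqref{eq:pre-NF} and collecting terms of degree $q$ gives the homological equation $\mathcal{L}_H \Phi_\varepsilon^{(q)} = \Ncal_q(\,\cdot\,;\varepsilon) - \Ncal_q^{\mathrm{nf}}(\,\cdot\,;\varepsilon)$. Endowing $\mathcal{P}_q$ with the natural Hermitian inner product, one checks that the adjoint of $\mathcal{L}_H$ is the homological operator associated to $\Lcal^*$, so that $\mathcal{P}_q = \mathrm{Im}(\mathcal{L}_H) \oplus \ker(\mathcal{L}_H^*)$. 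Projecting $\Ncal_q$ onto the second summand yields $\Ncal_q^{\mathrm{nf}}$, and the first projection is then uniquely (up to $\ker \mathcal{L}_H$) inverted to produce $\Phi_\varepsilon^{(q)}$. By standard Lie-theoretic reasoning, $\ker(\mathcal{L}_H^*)$ is precisely the set of polynomials satisfying the equivariance condition $\Ncal(\ee^{t\Lcal^*}X_0;\varepsilon) = \ee^{t\Lcal^*}\Ncal(X_0;\varepsilon)$ in \eqref{eq:NF-properties}, which is exactly what is claimed.

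I would then handle the parameter dependence by observing that $\Lcal$ is $\varepsilon$-independent, so $\mathcal{L}_H$ and its pseudoinverse on $\mathrm{Im}(\mathcal{L}_H)$ do not depend on $\varepsilon$. Since the coefficients of $\Ncal_q(\,\cdot\,;\varepsilon)$ inherit the $C^p$ regularity in $\varepsilon$ of the original nonlinearity (which itself is $C^\infty$ in $U$ and polynomial in $\varepsilon$, composed with the $C^p$ reduction function $\Psi$ from Lemma \ref{lem:centre-manifold}), the coefficients of $\Phi_\varepsilon^{(q)}$ are $C^p$ in $\varepsilon$ as well. Iterating this construction from $q=2$ to $q=p$ and composing the transformations yields the polynomial $\Phi_\varepsilon$ of degree $p$ in the statement, and the remainder $\rho(X_0;\varepsilon)$ collects everything of order $p{+}1$ or higher, satisfying the claimed bound on a neighbourhood $\Ocal_X \times \Ocal_{\varepsilon,1}$ of $(0,0)$. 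The symmetry preservation is automatic: at each step the splitting $\mathcal{P}_q = \mathrm{Im}(\mathcal{L}_H) \oplus \ker(\mathcal{L}_H^*)$ commutes with any linear symmetry of $\Lcal$ (in particular with complex conjugation and with the $\mathbb{S}^1$-phase action on $(\tilde A, \tilde B)$), so $\Phi_\varepsilon^{(q)}$ can be chosen equivariant, hence so can $\Phi_\varepsilon$.

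The only genuine technical point, rather than a serious obstacle, is checking that the eigenstructure of $\Lcal = \mathrm{diag}(0,-ic_d,0,ic_d)$ is semisimple on $\R^4$ (viewing the $\bar A, \bar B$ components as the complex-conjugate coordinates), which guarantees the direct-sum decomposition above and thereby the pseudoinverse of $\mathcal{L}_H$; this is immediate. Everything else in the lemma - the form of $\Lcal$ in \eqref{eq:NF-abstract}, the vanishing $\Ncal(0;0)=0$ and $D_{X_0}\Ncal(0;0)=0$, and the locality of the neighbourhood - follows from the structure of the construction and the fact that the original nonlinearity $\Ncal$ satisfies these same conditions.
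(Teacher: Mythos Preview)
Your proposal is correct and follows exactly the approach the paper intends: the paper does not give a detailed proof of this lemma but simply states that it is ``a straightforward adaptation of \cite[Theorem 3.2.2]{haragus2011}'', and your sketch is precisely that adaptation carried out in the parameter-dependent setting. If anything, you have supplied more detail than the paper itself.
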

	
	We now analyse the structure of the normal form of \eqref{eq:pre-NF} using the properties of the normal form \eqref{eq:NF-properties}. For this, we introduce the notation $X_0(t) = (\varepsilon A(T), \varepsilon B(T), \varepsilon \bar{A}(T), \varepsilon \bar{B}(T))$ with the slow temporal scale $T = \varepsilon^2 t$. Additionally, we note that the full system \eqref{eq:toy-model} is invariant under spatial translations $x \mapsto x + \xi$. Recalling that invariances are preserved both by the centre manifold reduction \ref{lem:centre-manifold} and the normal form transformation, we find that translation invariance is reflected in the normal form through the invariance under the transformation
	\begin{equation*}
		(A,B,\bar{A},\bar{B}) \mapsto \tau_\xi(A,B,\bar{A},\bar{B}) := (\ee^{\ii \xi}A, \ee^{\ii \xi} B, \ee^{-\ii \xi} \bar{A}, \ee^{-\ii \xi} \bar{B}).
	\end{equation*}
	In particular, both $\Ncal$ and $\rho$ in Lemma \ref{lem:NF-result} satisfy
	\begin{equation*}
		\tau_\xi \Ncal(X_0; \varepsilon) = \Ncal(\tau_\xi X_0; \varepsilon), \text{ and } \tau_\xi \rho(X_0;\varepsilon) = \rho(\tau_\xi X_0; \varepsilon).
	\end{equation*}
	
	We first use the invariances of the system and the property \eqref{eq:NF-properties} of the normal form to obtain the form of admissible terms in the polynomial nonlinearity $\Ncal$ in the normal form \eqref{eq:NF-abstract}.	Since the equations for $\bar{A}$ and $\bar{B}$ can be recovered by complex conjugation, we only discuss the nonlinear terms in the equations for $A$ and $B$, respectively. First, consider a monomial in $A^{j_1} \bar{A}^{j_2} B^{j_3} B^{j_4}$ with integers $j_1, j_2, j_3, j_4 \geq 0$. in the equation for $A$. Using invariance under $T_\xi$ coming from translation invariance, we find that
	\begin{equation*}
		\ee^{\ii\xi} A^{j_1} \bar{A}^{j_2} B^{j_3} \bar{B}^{j_4} = (\ee^{\ii \xi})^{j_1 - j_2 + j_3 - j_4} A^{j_1} \bar{A}^{j_2} B^{j_3} \bar{B}^{j_4}
	\end{equation*}
	from which we obtain the condition $j_1 - j_2 + j_3 - j_4 = 1$. Additionally, using the properties \eqref{eq:NF-properties} of the nonlinearity in normal form, we find, after recalling the form the linear operator $\Lcal$, that
	\begin{equation*}
		A^{j_1} \bar{A}^{j_2} B^{j_3} \bar{B}^{j_4} = (\ee^{\ii c_d t})^{j_3 - j_4} A^{j_1} \bar{A}^{j_2} B^{j_3} \bar{B}^{j_4},
	\end{equation*}
	which implies that $j_3 = j_4$. Together with the first condition on the powers, this yields that $j_1 - j_2 = 1$, or $j_1 = j_2 + 1$. Therefore, the monomial must be of the form $A |A|^{2q_1} |B|^{2q_2}$ for integers $q_1, q_2 \geq 0$.
	
	Applying the same arguments for a monomial in the $B$-equation we find that the conditions $j_1 - j_2 + j_3 - j_4 = 1$ and $j_3 - j_4 = 1$. This yields $j_1 = j_2$ and $j_3 = j_4 + 1$ and thus only monomials of the form $B |A|^{2q_3} |B|^{2q_4}$ for integers $q_3, q_4 \geq 0$ can occur in the $B$-equation. Therefore, applying the normal form theorem \ref{lem:NF-result} with $p = 3$, we find that the normal for in the new coordinates $(A,B,\bar{A},\bar{B})$ is given by
	\begin{equation}\label{eq:normal-form}
		\begin{split}
			\partial_T A &= \alpha_u A + A (a_1 |A|^2 + a_2 |B|^2) + \rho_A(A,B,\bar{A},\bar{B};\varepsilon) \\
			\partial_T B &= \alpha_v B - i\dfrac{c_d}{\varepsilon^2} B + B (b_1 |A|^2 + b_2 |B|^2) + \rho_B(A,B,\bar{A},\bar{B};\varepsilon)
		\end{split}
	\end{equation}
	with remainder terms satisfying $\rho_A, \rho_B = \Ocal(\varepsilon^2 |(A,B,\bar{A},\bar{B})|^4)$.
	
	\begin{remark}\label{rem:equivalence-nf-amplitude-eq}
		Note that the leading order part of the normal form \eqref{eq:normal-form} is equivalent to solutions of the amplitude system \eqref{eq:amplitude-eq-wohot}, which are space-independent. In particular, the additional $-i\varepsilon^{-2}c_d B$-term in the $B$-equation can be removed by replacing $B$ with $B \ee^{-\ii\varepsilon^{-2} c_d T} = B \ee^{-\ii c_d t}$, which reflects that typical spatially periodic solutions originating from the Turing–Hopf instability are travelling wavetrains, which move with phase velocity $c_d$.
	\end{remark}
	
	\begin{remark}
		The set $\{A = 0\}$ and $\{B = 0\}$ are not necessarily invariant in the normal form \eqref{eq:normal-form}. The monomials breaking the invariance can be removed up to an arbitrary order using normal form transformation up to higher order, but the remainder terms $\rho_A$ and $\rho_B$ might still contain them. However, in applications such as the Taylor–Couette problem, one sometimes has additional structure in the problem to guarantee the invarince. In the patter-forming system \eqref{eq:toy-model} this is for example the case, if we make the additional assumption $f(u,v) = u\Tilde{f}$ and $g(u,v) = v \Tilde{g}(u,v)$. Then $\{A = 0\}$ and $\{B = 0\}$ are invariant subspaces of \eqref{eq:normal-form} following \cite[Lemma IV.2]{chossat1994}.
	\end{remark}
	
	It remains to identify the coefficients $a_1,a_2,b_1,b_2 \in \C$ in the cubic nonlinearity of the normal form \eqref{eq:normal-form}. For this, we note that the normal form transform $\Phi_\varepsilon : \R^4 \rightarrow \R^4$ can be identified with a map $\tilde{\Phi}_\varepsilon : (H^\ell_\mathrm{per}(\R))^2 \rightarrow (H^\ell_\mathrm{per}(\R))^2$ through the mapping
	\begin{equation*}
		\begin{split}
			A \ee^{\ii x} \begin{pmatrix}
				1 \\ 0
			\end{pmatrix} + B \ee^{\ii x} \begin{pmatrix}
				0 \\ 1
			\end{pmatrix} + &\bar{A} \ee^{-\ii x} \begin{pmatrix}
				1 \\ 0
			\end{pmatrix} + \bar{B} \ee^{-\ii x} \begin{pmatrix}
				0 \\ 1
			\end{pmatrix} \\
			&\mapsto (\Phi_\varepsilon)_1 \ee^{\ii x} \begin{pmatrix}
				1 \\ 0
			\end{pmatrix} + (\Phi_\varepsilon)_2 \ee^{\ii x} \begin{pmatrix}
				0 \\ 1
			\end{pmatrix} + (\Phi_\varepsilon)_3 \ee^{-\ii x} \begin{pmatrix}
				1 \\ 0
			\end{pmatrix} + (\Phi_\varepsilon)_4 \ee^{-\ii x} \begin{pmatrix}
				0 \\ 1
			\end{pmatrix}
		\end{split}
	\end{equation*}
	with $\Phi_\varepsilon = \Phi_\varepsilon((A,B,\bar{A},\bar{B}))$. Thus, combining the normal form transform in Lemma \ref{lem:NF-result} and the centre manifold reduction in Lemma \ref{lem:centre-manifold}, we find that a solution $(A_\mathrm{nf},B_\mathrm{nf},\bar{A}_\mathrm{nf},\bar{B}_\mathrm{nf})$ to the normal form \eqref{eq:normal-form} induces a solution to the full system \eqref{eq:toy-model} through
	\begin{equation*}
		U = U_{0,\mathrm{nf}} + \tilde{\Phi}_\varepsilon(U_{0,\mathrm{nf}}) + \Psi(U_{0,\mathrm{nf}} + \tilde{\Phi}_\varepsilon(U_{0,\mathrm{nf}};\varepsilon) =: U_{0,\mathrm{nf}} + \tilde{\Psi}(U_{0,\mathrm{nf}};\varepsilon)
	\end{equation*}
	with $\Psi$ from Lemma \ref{lem:centre-manifold} and 
	\begin{equation*}
		U_{0,\mathrm{nf}}(t,x) = \varepsilon A_\mathrm{nf}(\varepsilon^2 t) \ee^{\ii x} \begin{pmatrix}
			1 \\ 0
		\end{pmatrix} + \varepsilon B_\mathrm{nf}(\varepsilon^2 t) \ee^{\ii x} \begin{pmatrix}
			0 \\ 1
		\end{pmatrix} + \varepsilon \bar{A}_\mathrm{nf}(\varepsilon^2 t) \ee^{-\ii x} \begin{pmatrix}
			1 \\ 0
		\end{pmatrix} + \varepsilon \bar{B}_\mathrm{nf}(\varepsilon^2 t) \ee^{-\ii x} \begin{pmatrix}
			0 \\ 1
		\end{pmatrix}
	\end{equation*}
	Here, we specifically point out that $(\varepsilon A_\mathrm{nf}, \varepsilon B_\mathrm{nf}, \varepsilon \bar{A}_\mathrm{nf}, \varepsilon \bar{B}_\mathrm{nf}) \in \Ocal_X$ and $U_{0,\mathrm{nf}} \in \Ocal_U$ for $\varepsilon$ sufficiently small and, therefore, both Lemma \ref{lem:NF-result} and Lemma \ref{lem:centre-manifold} indeed are applicable. In particular, we find that this is the same ansatz which has been made for the amplitude equations with the additional assumption that the amplitude modulations only depend on time; cf.~\eqref{eq:ansatz-amplitude-eq}. Hence, following the same calculations as in the formal derivation of the amplitude equations, we can recover the coefficients $a_1 = \gamma_1$, $a_2 = \gamma_2$, $b_1 = \gamma_7$ and $b_2 = \gamma_8$; see Appendix \ref{app:coefficients}.

	\subsection{Dynamics of the normal form} 
	
	We now discuss the dynamics of the normal form \eqref{eq:normal-form}. A particular emphasis lies on solutions, which are close to (time-)periodic solutions to the leading order equations without the higher order terms $\rho_A$ and $\rho_B$. We first follow Remark \ref{rem:equivalence-nf-amplitude-eq} to remove the singular term $-\varepsilon^{-2} c_d B$ in the $B$-equation in \eqref{eq:normal-form}. For this, we replace $B$ with $B_o := B \ee^{-\ii \varepsilon^{-2} c_d T}$, which yields the new system
	\begin{equation*}
		\begin{split}
			\partial_T A &= \alpha_u A + A (\gamma_1 |A|^2 + \gamma_2 |B_o|^2) + \varepsilon^2 \tilde{\rho}_A (A, B_o \ee^{\ii \varepsilon^{-2} c_d T}, \bar{A}, \bar{B}_o \ee^{-\ii \varepsilon^{-2} c_d T}; \varepsilon), \\
			\partial_T B_o &= \alpha_v B_o + B_o (\gamma_7 |A|^2 + \gamma_8 |B_o|^2) +  \varepsilon^2 \tilde{\rho}_B (A, B_o \ee^{\ii \varepsilon^{-2} c_d T}, \bar{A}, \bar{B}_o \ee^{-\ii \varepsilon^{-2} c_d T}; \varepsilon) \ee^{-\ii\varepsilon^{-2} c_d T},
		\end{split}
	\end{equation*}
	where $\rho_j = \varepsilon^2 \tilde{\rho}_j$ for $j = A, B$. Recalling the scaling of $\rho_j$ we find that $\tilde{\rho}_j =  \Ocal(|(A,B_o,\bar{A},\bar{B}_o)|^4)$ for $j = A,B$.
	This is a non-autonomous system. To recover an autonomous system, we introduce the additional variable $\theta(T) = \varepsilon^{-2} c_d T$ to obtain
	\begin{equation*}
		\begin{split}
			\partial_T A &= \alpha_u A + A (\gamma_1 |A|^2 + \gamma_2 |B_o|^2) + \varepsilon^2 \tilde{\rho}_A (A, B_o \ee^{\ii \theta}, \bar{A}, \bar{B}_o \ee^{-\ii \theta}; \varepsilon), \\
			\partial_T B_o &= \alpha_v B_o + B_o (\gamma_7 |A|^2 + \gamma_8 |B_o|^2) +  \varepsilon^2 \tilde{\rho}_B (A, B_o \ee^{\ii \theta}, \bar{A}, \bar{B}_o \ee^{-\ii \theta}; \varepsilon) \ee^{-\ii\theta}, \\
			\partial_T \theta &= \varepsilon^{-2} c_d.
		\end{split}
	\end{equation*}
	To remove the singularity in $\varepsilon$, we recall $T = \varepsilon^2 t$ and obtain the following system in the fast time $t$
	\begin{equation}\label{eq:A-B-theta-fast-time}
		\begin{split}
			\partial_t A &= \varepsilon^2 (\alpha_u A + A (\gamma_1 |A|^2 + \gamma_2 |B_o|^2)) + \varepsilon^4 \tilde{\rho}_A (A, B_o \ee^{\ii \theta}, \bar{A}, \bar{B}_o \ee^{-\ii \theta}; \varepsilon), \\
			\partial_t B_o &= \varepsilon^2 (\alpha_v B_o + B_o (\gamma_7 |A|^2 + \gamma_8 |B_o|^2)) +  \varepsilon^4 \tilde{\rho}_B (A, B_o \ee^{\ii \theta}, \bar{A}, \bar{B}_o \ee^{-\ii \theta}; \varepsilon) \ee^{-\ii\theta}, \\
			\partial_t \theta &= c_d.
		\end{split}
	\end{equation}
	Following the discussion in Section \ref{sec:time-periodic-solutions}, if the $\varepsilon^4$-terms are neglected, these equations have (component-wise) time-periodic solutions $(A,B,\theta) = (r_A \ee^{\ii \varepsilon^2 \omega_A t}, r_B \ee^{\ii \varepsilon^2 \omega_B t},\theta)$ with $r_A, r_B \geq 0$, $\omega_A, \omega_B \in \R$ and $\theta \in \mathbb{T}$. For any parameters $\gamma_1, \gamma_2, \gamma_7, \gamma_8 \in \C$, the system has a trivial solution $(r_A, r_B, \omega_A, \omega_B, \theta) = (0,0,0,0,c_d t)$. If $\alpha_u \gamma_{1,r} < 0$ and $\alpha_v \gamma_{8,r} < 0$ there are also semi-trivial solutions given by Proposition \ref{prop:semi-trivial-solutions} with $r_B = 0$ and $r_A  =0$, respectively. Finally, if the parameters satisfy the assumptions in Proposition \ref{prop:fully-nontrivial-solutions}, there are also fully non-trivial solutions with $r_A > 0$ and $r_B > 0$. The goal of this section is to show that these solutions persist if the higher-order terms $\tilde{\rho}_A$ and $\tilde{\rho}_B$ are added. 
	
	\paragraph{Case 1: Persistence of semi-trivial solutions} We first consider the persistence of the semi-trivial solutions provided by Proposition \ref{prop:semi-trivial-solutions}. Here, we restrict to the solution with $B = 0$ since persistence in the other case can be handled similarly. Note that, as the semi-trivial solutions point appears as a family parameterised by a phase shift, cf.~Lemma \ref{lem:fixed-points-fast-slow-system}. Therefore, there is a neutral direction similar to Theorem \ref{thm:persistence-heteroclinic-orbits}. Again, it turns out that using polar coordinates is a convenient choice of coordinates to isolate this addition neutral direction, which allows for the application of Theorem \ref{app-thm:hale-adapted}.
	
	Let $A^*(t) = r^*_A \ee^{\ii \omega^*_A \varepsilon^2 t}$ with $r_A^*$ and $\omega^*_A$ from Proposition \ref{prop:semi-trivial-solutions}. Then, let
	\begin{equation*}
		A(t) = (r_A^* + r_A(t)) \ee^{\ii (\omega_A^* \varepsilon^2 t + \phi_A(t))}, \quad B_o(t) = B_r(t) + i B_i(t)
	\end{equation*}
	be a solution to \eqref{eq:A-B-theta-fast-time} with $r_A(t) \geq 0$, $\phi_A(t) \in \R$ and $B_r(t), B_i(t) \in \R$ the real and imaginary part of $B_o(t)$. Using the notation $E(t) = \exp(i(\omega_A^* \varepsilon^2 t + \phi_A(t)))$, we find that $(r_A,\phi_A, B_r, B_i, \theta)$ satisfy the system
	\begin{equation*}
		\begin{split}
			\partial_t r_A &= \varepsilon^2 \left( - 2 \alpha_u r_A + \gamma_{1,r} (3r_A^* r_A^2 + r_A^3) + \gamma_{2,r} (r_A^* + r_A)(B_r^2 + B_i^2)\right) + \varepsilon^4 \Re(E^{-1} \tilde{\rho}_A), \\
			\partial_t \phi_A &= \varepsilon^2 \left(2\gamma_{1,i} r_A^* r_A + \gamma_{1,i} r_A^2 + \gamma_{2,i} (r_A^* + r_A) (B_r^2 + B_i^2)\right) + \dfrac{\varepsilon^4}{r_A^* + r_A} \Im(E^{-1} \tilde{\rho}_A), \\
			\partial_t B_r &= \varepsilon^2 \left(\alpha_v B_r + (r_A^* + r_A)^2 (B_r \gamma_{7,r} - B_i \gamma_{7,i}) + (B_r^2 + B_i^2) (B_r \gamma_{8,r} - B_i \gamma_{8,i})\right) + \varepsilon^4 \Re(\ee^{-\ii\theta}\tilde{\rho}_B), \\
			\partial_t B_i &= \varepsilon^2 \left(\alpha_v B_i + (r_A^* + r_A)^2 (B_i \gamma_{7,r} + B_r \gamma_{7,i}) + (B_r^2 + B_i^2) (B_i \gamma_{8,r} + B_r \gamma_{8,i})\right) + \varepsilon^4 \Im (\ee^{-\ii\theta} \tilde{\rho}_B), \\
			\partial_t \theta &= c_d,
		\end{split}
	\end{equation*}
	where $\tilde{\rho}_j = \tilde{\rho}_j((r_A^* + r_A) E, (B_r + i B_i) \ee^{\ii\theta}, (r_A^* + r_A) E^{-1}, (B_r - i B_i) \ee^{-\ii\theta})$ with $j = A, B$. We note in particular, that $\tilde{\rho}_j$ is $2\pi$-periodic in $\theta$ and $\phi_A$. Next, the linear part of the $(B_r,B_i)$-equations is given by
	\begin{equation*}
		\varepsilon^2 \Lcal_B = \varepsilon^2 \begin{pmatrix}
			\alpha_v + \gamma_{7,r} (r_A^*)^2 & - \gamma_{7,i} (r_A^*)^2 \\
			\gamma_{7,i} (r_A^*) & \alpha_v + \gamma_{7,r} (r_A^*)^2
		\end{pmatrix}.
	\end{equation*}
	We assume that $\Lcal_B$ has no eigenvalues on the imaginary axis, which is equivalent to $\alpha_v + \gamma_{7,r} (r_A^*)^2 \neq 0$. Then there are projections $P_s$ and $P_u$, which project onto the stable and unstable eigenspace of $\Lcal_B$, respectively. With this, we introduce $B_j := P_j(B_r,B_i)$ and $\Lcal_{B,j} = \Lcal_B P_j$ for $j = s,u$.
	Using this notation, we can rewrite the system as
	\begin{equation}\label{eq:hale-system-ST_A}
		\begin{split}
			\partial_t \begin{pmatrix}
				\theta \\ \phi_A
			\end{pmatrix} &= \begin{pmatrix}
				c_d \\ 0
			\end{pmatrix} + \varepsilon^2 \Theta(\theta, \phi_A, r_A, B_s, B_u; \varepsilon), \\
			\partial_t \begin{pmatrix}
				r_A \\ B_s
			\end{pmatrix} &= \varepsilon^2\begin{pmatrix}
				-2\alpha_u & 0 \\
				0 & \Lcal_{B,s}
			\end{pmatrix} + \varepsilon^2 F(\theta, \phi_A, r_A, B_s, B_u; \varepsilon), \\
			\partial_t B_u &= \varepsilon^2 \Lcal_{B,u} + \varepsilon^2 G(\theta, \phi_A, r_A, B_s, B_u; \varepsilon).
		\end{split}
	\end{equation}
	Here, the functions $\Theta_1$, $F$, and $G$ are given by
	\begin{equation*}
		\begin{split}
			\Theta(\theta, \phi_A, r_A, B_s, B_u; \varepsilon) &:= \begin{pmatrix}
				0 \\ 2\gamma_{1,i} r_A^* r_A + \gamma_{1,i} r_A^2 + \gamma_{2,i} (r_A^* + r_A) (B_r^2 + B_i^2) + \dfrac{\varepsilon^2}{r_A^* + r_A} \Im(E^{-1} \tilde{\rho}_A)
			\end{pmatrix}, \\
			F(\theta, \phi_A, r_A, B_s, B_u; \varepsilon) &:= \begin{pmatrix}
				\gamma_{1,r} (3r_A^* r_A^2 + r_A^3) + \gamma_{2,r} (r_A^* + r_A)(B_r^2 + B_i^2) + \varepsilon^2 \Re(E^{-1} \tilde{\rho}_A) \\
				\Ncal_{B,s}
			\end{pmatrix}, \\
			G(\theta, \phi_A, r_A, B_s, B_u; \varepsilon) &:= \Ncal_{B,u},
			\end{split}
		\end{equation*}
		where $\Ncal_{B,s}$ and $\Ncal_{B,u}$ are the stable and unstable parts of the nonlinear terms in the $(B_r,B_i)$-equations, respectively, given by
		\begin{equation*}
			\Ncal_{B,j} := P_j \begin{pmatrix}
				(2 r_A^* r_A + r_A^2) (B_r \gamma_{7,r} - B_i \gamma_{7,i}) + (B_r^2 + B_i^2) (B_r \gamma_{8,r} - B_i \gamma_{8,i}) + \varepsilon^2 \Re(\ee^{-\ii\theta}\tilde{\rho}_B) \\
				(2 r_A^* r_A + r_A^2) (B_i \gamma_{7,r} + B_r \gamma_{7,i}) + (B_r^2 + B_i^2) (B_i \gamma_{8,r} + B_r \gamma_{8,i}) + \varepsilon^2 \Im (\ee^{-\ii\theta}\tilde{\rho}_B)
			\end{pmatrix}
		\end{equation*}
		for $j = s,u$. Then, using Theorem \ref{app-thm:hale-adapted}, and reverting the centre manifold reduction Lemma \ref{lem:centre-manifold}, we obtain the following result.
		
		\begin{theorem}\label{thm:pure-patterns-global}
			Assume that $\alpha_v + \gamma_{7,r} (r_A^*)^2 \neq 0$. Then, there exist an $\varepsilon_0 > 0$ and functions $f_{ST_A} = f_{ST_A}(\theta,\phi_A;\varepsilon)$ and $g_{ST_A} = g_{ST_A}(\theta,\phi_A;\varepsilon)$, which are continuous in $\R^2 \times (0,\varepsilon_0)$, bounded and Lipschitz and $2\pi$-periodic in $\theta$ and $\phi_A$ such that
			\begin{equation*}
				\Scal_{\varepsilon}^{ST_A} := \{(\theta,\phi_A,r_A, B_s, B_u \,:\, (r_A, B_s) = f_{ST_A}(\theta, \phi_A; \varepsilon), B_u = g_{ST_A}(\theta, \phi_A;\varepsilon),\, (\theta,\phi_A) \in \R^2\} 
			\end{equation*}
			is an invariant manifold of \eqref{eq:hale-system-ST_A}. In particular, the pattern-forming system \eqref{eq:toy-model} has a bounded, spatially periodic solution of the form
			\begin{equation*}
				\begin{split}
					u(t,x) &= 2\varepsilon (r_A^* + f_{ST_A,1}(c_d t, \phi_A(t);\varepsilon)) \cos(x + \varepsilon^2 \omega_A^* t + \phi_A(t)) + \Ocal(\varepsilon^2), \\
					v(t,x) &= 2 \varepsilon \Re\left(f_{ST_A,2}(c_d t, \phi_A(t);\varepsilon) + g_{ST_A}(c_d t, \phi_A(t);\varepsilon)\right) \cos(x - c_d t) + \Ocal(\varepsilon^2),
				\end{split}
			\end{equation*}
			where $\phi_A$ is a solution to \eqref{eq:hale-system-ST_A} with $(r_A, B_s, B_u) = (f_{ST_A}, g_{ST_A})(\theta, \phi_A;\varepsilon)$.
		\end{theorem}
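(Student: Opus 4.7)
The plan is to apply Theorem \ref{app-thm:hale-adapted} directly to the system \eqref{eq:hale-system-ST_A}, which has been put into the canonical form handled by Hale's variant of singular perturbation theory: an angular block $(\theta,\phi_A)$ in which $\theta$ rotates at the fixed $\Ocal(1)$ rate $c_d$ and $\phi_A$ drifts only at rate $\Ocal(\varepsilon^2)$, coupled to a hyperbolic normal block $(r_A,B_s,B_u)$ whose linear part $\varepsilon^2\,\mathrm{diag}(-2\alpha_u,\Lcal_{B,s},\Lcal_{B,u})$ is already block-diagonalised into stable and unstable subspaces. The invariant manifold to construct is the perturbation of the trivial torus $\{(r_A,B_s,B_u)=0\}$, which at $\varepsilon=0$ is foliated by $2\pi$-periodic orbits of the $\theta$-rotation.

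The verification of the hypotheses of Theorem \ref{app-thm:hale-adapted} proceeds as follows. Uniform hyperbolicity in the $r_A$-direction is immediate: the existence of $ST_A$ via Proposition \ref{prop:semi-trivial-solutions} forces $\alpha_u\gamma_{1,r}<0$, and in the Turing regime $\gamma_{1,r}<0$ gives $\alpha_u>0$, so $-2\alpha_u<0$ with a uniform gap. The $2\times 2$ block $\Lcal_B$ has eigenvalues with common real part $\alpha_v+\gamma_{7,r}(r_A^*)^2$, whose non-vanishing by hypothesis supplies the spectral gap off the imaginary axis and renders the decomposition $B=B_s+B_u$ via the spectral projections $P_s,P_u$ meaningful; one of these subspaces may be trivial when both eigenvalues lie on the same side, but the argument below is insensitive to this. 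The smoothness and boundedness of $\Theta,F,G$ on a neighbourhood of the origin in $(r_A,B_s,B_u)$ follow from the polynomial structure of the normal form together with the $\Ocal(\varepsilon^2|(A,B,\bar A,\bar B)|^4)$ estimate on $\tilde\rho_A,\tilde\rho_B$; the double $2\pi$-periodicity in $\theta$ and $\phi_A$ is a combination of the explicit $\ee^{\pm \ii\theta}$ prefactors and the rotation symmetry $A\mapsto A\ee^{\ii\phi_0}$, which is preserved by both the centre manifold reduction in Lemma \ref{lem:centre-manifold} and the normal form transformation in Lemma \ref{lem:NF-result}.

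With the hypotheses in place, Theorem \ref{app-thm:hale-adapted} produces $\varepsilon_0>0$ together with bounded, Lipschitz, doubly $2\pi$-periodic maps $f_{ST_A}=(f_{ST_A,1},f_{ST_A,2})$ and $g_{ST_A}$ whose graph is the invariant manifold $\Scal_\varepsilon^{ST_A}$; the induced flow on it is governed by
\begin{equation*}
    \partial_t\theta=c_d,\qquad \partial_t\phi_A=\varepsilon^2\,\Theta_2\!\bigl(\theta,\phi_A,f_{ST_A},g_{ST_A};\varepsilon\bigr),
\end{equation*}
so every trajectory on $\Scal_\varepsilon^{ST_A}$ exists globally and is bounded. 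To recover the statement about \eqref{eq:toy-model}, I would unwind the sequence of reductions in reverse: on $\Scal_\varepsilon^{ST_A}$ reconstruct $A=(r_A^*+f_{ST_A,1})\ee^{\ii(\omega_A^*\varepsilon^2 t+\phi_A)}$ and $B_o=f_{ST_A,2}+g_{ST_A}$, restore the fast Hopf phase through $B=B_o\ee^{-\ii\theta}=B_o\ee^{-\ii c_d t}$, invert the near-identity normal form change of coordinates of Lemma \ref{lem:NF-result}, and feed the resulting $U_0$ into the centre manifold parametrisation $U=U_0+\Psi(U_0;\varepsilon)$ of Lemma \ref{lem:centre-manifold}. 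Since both $\Phi_\varepsilon$ and $\Psi$ are at least quadratic in their argument and the leading amplitudes are of order $\varepsilon$, all such corrections are absorbed into the stated $\Ocal(\varepsilon^2)$ remainder.

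The principal obstacle is precisely what distinguishes this setting from a standard Fenichel perturbation problem: the contraction and expansion rates in the hyperbolic normal directions are of order $\varepsilon^2$, while the fast angle $\theta$ continues to rotate at the $\Ocal(1)$ rate $c_d$, so the usual time-scale hierarchy required by classical slow-manifold theory is inverted. Hale's approach circumvents this by passing to the slow time $T=\varepsilon^2 t$, viewing $\theta$ as an external fast parameter that enters the equations only through bounded, $2\pi$-periodic nonlinearities, and then constructing the invariant manifold as a fixed point of a graph transform on the space of bounded Lipschitz functions of $(\theta,\phi_A)$. The non-degeneracy condition $\alpha_v+\gamma_{7,r}(r_A^*)^2\neq 0$ is exactly what guarantees that the spectral gap of the linear part survives this averaging and produces the contraction estimate needed to close the fixed-point argument; a deeper analysis of the dynamics on $\Scal_\varepsilon^{ST_A}$ (beyond its invariance and boundedness) is beyond the reach of this method, which is why Remark \ref{rem:complicated-dynamics} cautions that finer temporal features of the constructed solutions cannot in general be pinned down.
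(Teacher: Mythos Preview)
Your proposal is correct and follows essentially the same route as the paper: both verify the hypotheses \ref{app:assumpH1}--\ref{app:assumpH6} of Theorem \ref{app-thm:hale-adapted} for the system \eqref{eq:hale-system-ST_A} and then apply that theorem, with the non-degeneracy condition $\alpha_v+\gamma_{7,r}(r_A^*)^2\neq 0$ supplying the spectral gap for $\Lcal_B$ and the $2\pi$-periodicity in $(\theta,\phi_A)$ coming from the phase-shift structure of the remainder terms. Your discussion is in fact more explicit than the paper's about why a standard Fenichel argument fails (the inverted time-scale hierarchy between the $\Ocal(1)$ rotation of $\theta$ and the $\Ocal(\varepsilon^2)$ hyperbolic rates) and about how the solution in \eqref{eq:toy-model} is reconstructed by unwinding the normal-form and centre-manifold reductions.
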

		\begin{proof}
			We check that the system \eqref{eq:hale-system-ST_A} satisfies the assumptions \ref{app:assumpH1}--\ref{app:assumpH6}. Then, the result follows from an application of Theorem \ref{app-thm:hale-adapted}.
			
			Since $r_A^* > 0$, \ref{app:assumpH1} is satisfied for $r_A$ in a neighbourhood of zero. For \ref{app:assumpH6}, we note that the leading order terms of $\Theta$, $F$ and $G$ do not depend on $(\theta, \phi_A)$. Additionally, as pointed out above, the higher order terms are $2\pi$-periodic in $(\theta,\phi_A)$ and thus, \ref{app:assumpH6} holds. Using the same arguments, we also find that \ref{app:assumpH2} is satisfied.
			
			For \ref{app:assumpH3}, we note that $\Theta$, $F$ and $G$ are locally Lipschitz and at least quadratic in $(r_A, B_s, B_u)$. Hypothesis \ref{app:assumpH4} follows from the observation that $\theta$ and $\phi_A$ only appear as a phase-shift and thus, $F(\theta,\phi_A,0,0,0;\varepsilon) = G(\theta,\phi_A,0,0,0;\varepsilon) = 0$. Finally, for \ref{app:assumpH5}, we point out that the spectral assumptions are satisfied by construction, using the assumption that $\alpha_v + \gamma_{8,r}(r_A^*)^2 \neq 0$ and thus, that $\Lcal_B$ has no spectrum on the imaginary axis. Additionally, since only higher order terms in $\Theta$ depend on $(\theta,\phi_A)$, we obtain that the corresponding Lipschitz constant tends to zero as $\varepsilon \rightarrow 0$. Therefore, \ref{app:assumpH5} holds and the proof is complete.
		\end{proof}
		
	
	\begin{remark}\label{rem:complicated-dynamics}
		In general, we cannot make a definitive statement regarding the temporal dynamics of the amplitudes. Specifically, we consider two numerical simulations for the example system
		\begin{equation}\label{eq:example-system}
			\begin{split}
				\partial_t A &= \varepsilon^2 A(1 - (1+5i) |A|^2 + (1+5i) |B_o|^2) + \varepsilon^4 B_o |A|^4 \ee^{\ii\theta}, \\
				\partial_t B_o &= \varepsilon^2 B_o(1 - |B_o|^2 - (2-i) |A|^2) + \varepsilon^4 A |A|^4 \ee^{-\ii\theta}, \\
				\partial_t \theta &= 1
			\end{split}
		\end{equation} 
		with initial data $(A(0),B_o(0),\theta(0)) = (1,0,0)$ for $\varepsilon_1 = 0.1$ and $\varepsilon_2 = 0.09$. In both cases, the absolute value of $A$ and $B_o$ stays close to $|A| = 1$ and $B = 0$, which is an invariant set without the $\Ocal(\varepsilon^4)$-terms. However, the $(\phi_A,\theta)$-dynamics on the two-dimensional torus changes from periodic for $\varepsilon = \varepsilon_1$ to quasiperiodic for $\varepsilon = \varepsilon_2$, see Figure \ref{fig:periodic-vs-quasiperiodic}.
	\end{remark}
	
	\begin{figure}
		\includegraphics[width=0.48\textwidth]{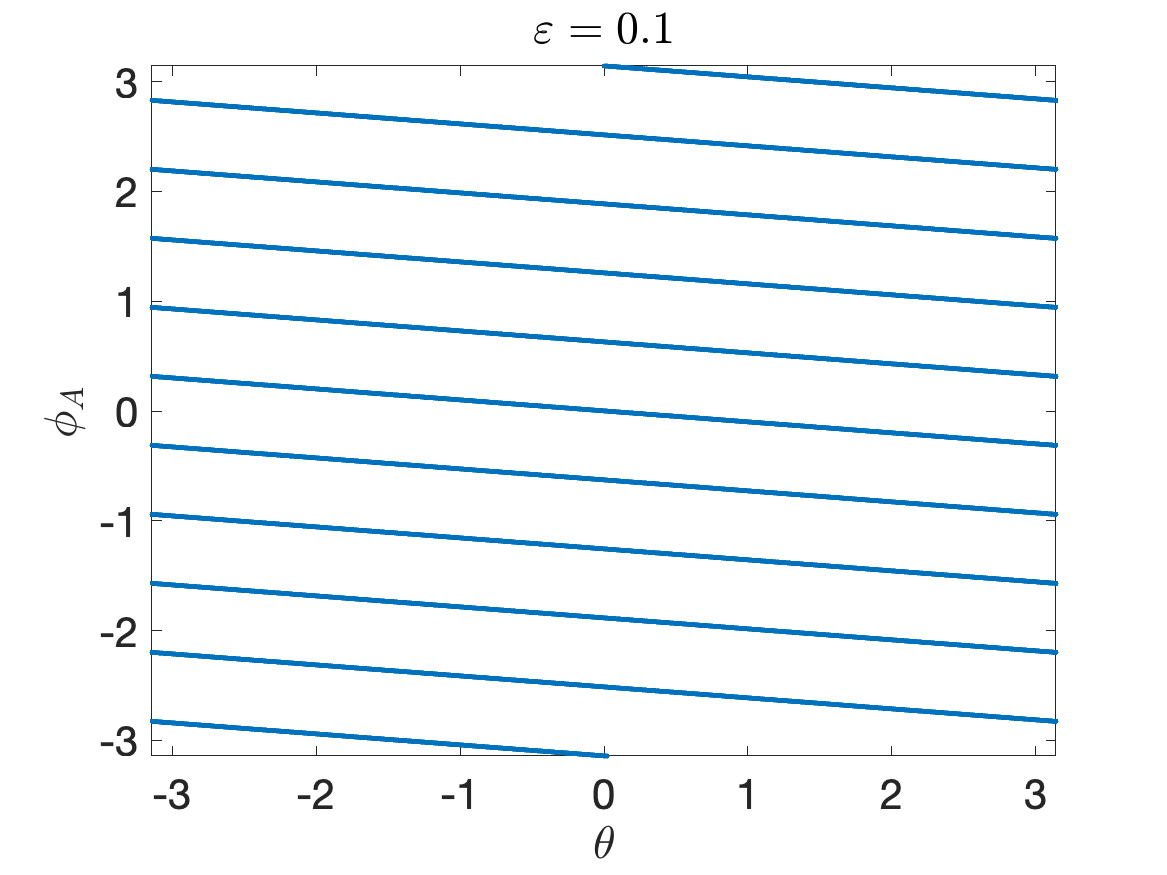}
		\includegraphics[width=0.48\textwidth]{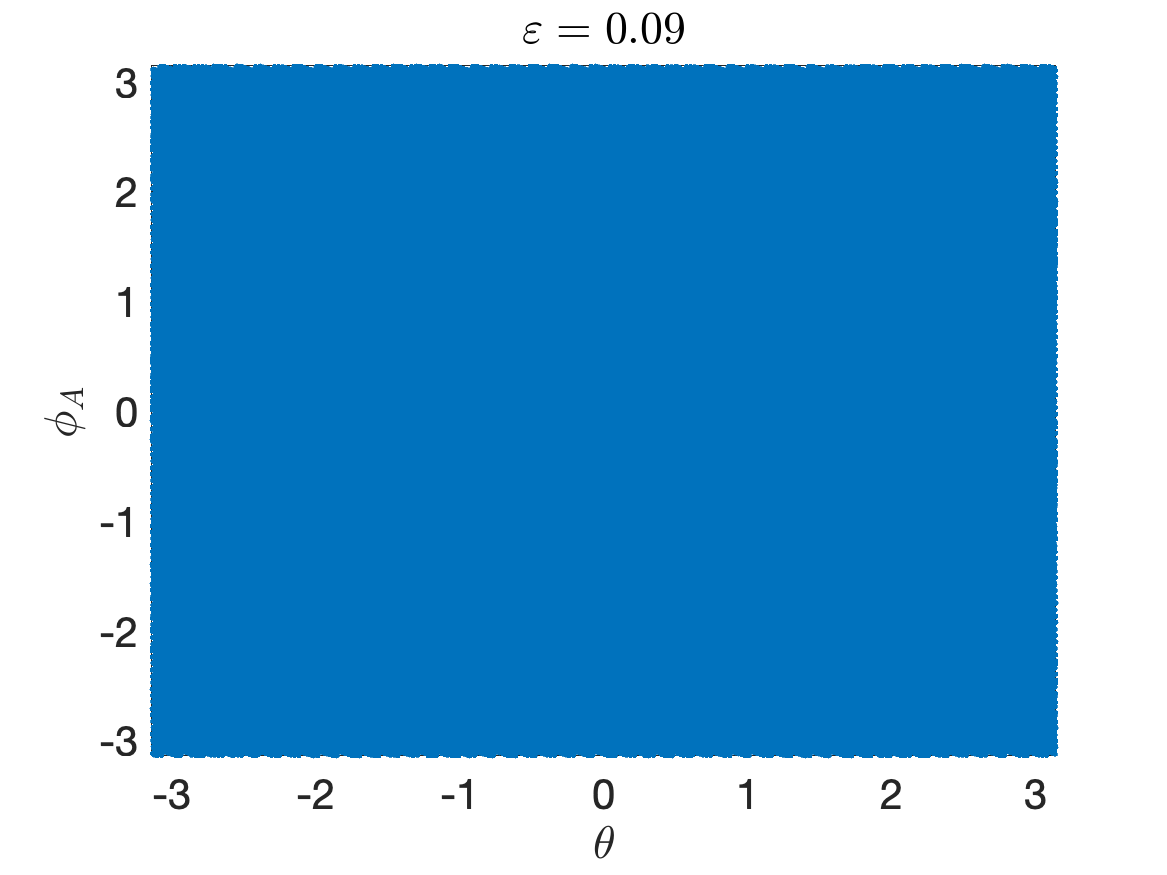}
		\caption{Angle dynamics $(\phi_A, \theta)$ on the two-dimensional torus in the example system \eqref{eq:example-system} for $\varepsilon = 0.1$ (left) and $\varepsilon = 0.09$ (right). Specifically, for $\varepsilon = 0.1$, the $(\phi_A,\theta)$-orbit is periodic on the two-dimensional torus, whereas it lies dense for $\varepsilon = 0.09$.}
		\label{fig:periodic-vs-quasiperiodic}
	\end{figure}
	
	\paragraph{Case 2: Persistence of fully non-trivial solutions}
	We now consider the persistence of the fully non-trivial solutions, that is both $A \neq 0$ and $B \neq 0$, which are obtained in Proposition \ref{prop:fully-nontrivial-solutions}. We proceed similar, however, we now use polar coordinates for both $A$ and $B$. Therefore, let the parameter conditions of Proposition \ref{prop:fully-nontrivial-solutions} be satisfied and let $A^*(t) = r_A^* \ee^{\ii \omega_A^* \varepsilon^2 t}$ and $B^*(t) = r_B^* \ee^{\ii\omega_B^* \varepsilon^2 t}$ be the non-trivial, time-periodic solution to the leading order terms of \eqref{eq:A-B-theta-fast-time}. Then, let
	\begin{equation*}
		A(t) = (r_A^* + r_A(t)) \ee^{\ii (\omega_A^* \varepsilon^2 t + \phi_A(t))}, \quad B(t) = (r_B^* + r_B(t)) \ee^{\ii (\omega_B^* \varepsilon^2 t + \phi_B(t))}
	\end{equation*}
	be a solutions to \eqref{eq:A-B-theta-fast-time} with $r_A(t), r_B(t) \geq 0$ and $\phi_A(t), \phi_B(t) \in \R$. Using the notation $E_A(t) = \ee^{\ii (\omega_A^* \varepsilon^2 t + \phi_A(t))}$ and $E_B(t) = \ee^{\ii (\omega_B^* \varepsilon^2 t + \phi_B(t))}$, we obtain the system for $(r_A,r_B,\phi_A,\phi_B,\theta)$, which reads as
	\begin{equation}\label{eq:hale-system-NT}
		\begin{split}
			\partial_t r_A &= \varepsilon^2 \left((\alpha_u + 3 \gamma_{1,r} (r_A^*)^2 + \gamma_{2,r}(r_B^*)^2) r_A + 2 \gamma_{2,r} r_A^* r_B^* r_B\right) \\
			&\qquad + \varepsilon^2 \left(\gamma_{1,r} \left(3 r_A^* r_A^2 + r_A^3\right) + \gamma_{2,r} \left(2 r_B^* r_A r_B + r_A^* r_B^2 + r_A r_B^2\right)\right) + \varepsilon^4 \Re(E_A^{-1} \tilde{\rho}_A), \\
			\partial_t r_B &= \varepsilon^2 \left(2 \gamma_{7,r}r_A^* r_B^* r_A + (\alpha_v + \gamma_{7,r} (r_A^*)^2 + 3 \gamma_{8,r} (r_B^*)^2) r_B\right) \\
			&\qquad + \varepsilon^2 \left(\gamma_{7,r} \left(r_B^* r_A^2 + 2 r_A^* r_A r_B + r_B r_A^2\right) + \gamma_{8,r} (3 r_B^* r_B^2 + r_B^3)\right) + \varepsilon^4 \Re(\ee^{-\ii\theta} E_B^{-1} \tilde{\rho}_B), \\
			\partial_t \phi_A &= \varepsilon^2\left(\gamma_{1,i} \left(2 r_A^* r_A + r_A^2\right) + \gamma_{2,i} \left(2 r_B^* r_B + r_B^2\right)\right) + \dfrac{\varepsilon^4}{r_A^* + r_A} \Im(E_A^{-1} \tilde{\rho}_A), \\
			\partial_t \phi_B &= \varepsilon^2\left(\gamma_{7,i} \left(2 r_A^* r_A + r_A^2\right) + \gamma_{8,i} \left(2 r_B^* r_B + r_B^2\right)\right) + \dfrac{\varepsilon^4}{r_B^* + r_B} \Im(\ee^{-\ii\theta} E_B^{-1} \tilde{\rho}_B), \\
			\partial_t \theta &= c_d.
		\end{split}
	\end{equation}
	To apply Theorem \ref{app-thm:hale-adapted} to the system \eqref{eq:hale-system-NT}, we have to assume that
	\begin{equation*}
		\Lcal_{NT} := \begin{pmatrix}
			\alpha_u + 3 \gamma_{1,r} (r_A^*)^2 + \gamma_{2,r} (r_B^*)^2 & 2 \gamma_{2,r} r_A^* r_B^* \\
			2 \gamma_{7,r} r_A^* r_B^* & \alpha_v + \gamma_{7,r} (r_A^*)^2 + 3 \gamma_{8,r} (r_B^*)^2
		\end{pmatrix}
	\end{equation*} 
	has no eigenvalues on the imaginary axis. In fact, this is equivalent to assuming that the non-trivial equilibrium point $NT$ in the radii system \eqref{eq:radii-dynamics} is hyperbolic. Then, proceeding as in the first case, we obtain the following result.
	
	\begin{theorem}\label{thm:superposition-pattern-global}
		Assume that $NT$ given by Proposition \ref{prop:fully-nontrivial-solutions} is a hyperbolic equilibrium point of \eqref{eq:radii-dynamics}. Then, there exists a $\varepsilon_0 > 0$ and a function $f_{NT} = f_{NT}(\theta, \phi_A, \phi_B; \varepsilon)$ which is continuous in $\R^3 \times (0,\varepsilon_0)$, bounded and Lipschitz and $2\pi$-periodic in $\theta$, $\phi_A$ and $\phi_B$ such that
		\begin{equation*}
			\Scal^{NT}_\varepsilon := \{(r_A, r_B, \phi_A, \phi_B, \theta) \,:\, (r_A, r_B) = f_{NT}(\theta, \phi_A,\phi_B;\varepsilon), \; (\theta,\phi_A,\phi_B) \in \R^3)\}
		\end{equation*}
		is an invariant manifold of \eqref{eq:hale-system-NT}. In particular, the pattern-forming system \eqref{eq:toy-model} has bounded, spatially periodic solutions of the form
		\begin{equation*}
			\begin{split}
				u(t,x) &= 2 \varepsilon (r_A^* + f_{NT,A}(c_d t,\phi_A(t), \phi_B(t))) \cos(x + \varepsilon^2 \omega_A^* t + \phi_A(t)) + \Ocal(\varepsilon^2), \\
				v(t,x) &= 2 \varepsilon  (r_B^* + f_{NT,B}(c_d t,\phi_A(t), \phi_B(t))) \cos(x - c_d t + \varepsilon^2 \omega_B^* t + \phi_B(t)) + \Ocal(\varepsilon^2),
			\end{split}
		\end{equation*}
		where $\theta_A, \theta_B$ satisfy \eqref{eq:hale-system-NT} with $(r_A,r_B) = f_{NT}(\theta, \phi_A,\phi_B;\varepsilon)$.
	\end{theorem}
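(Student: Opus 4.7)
The plan is to apply the persistence theorem from Appendix \ref{app:persistence}, i.e.\ Theorem \ref{app-thm:hale-adapted}, to the polar-coordinate system \eqref{eq:hale-system-NT}, in direct analogy with the proof of Theorem \ref{thm:pure-patterns-global}. The only additional difficulty is that near $NT$ both radii $r_A$ and $r_B$ are perturbed simultaneously, so the relevant normal linearisation is the full $2\times 2$ matrix $\Lcal_{NT}$ instead of the block-diagonal matrix used in Case 1.

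First, I would decompose the radial part $(r_A,r_B)\in\R^2$ by the spectral projections $P_s^{NT},P_u^{NT}$ associated with the hyperbolic splitting of $\Lcal_{NT}$; this is possible precisely because the hyperbolicity hypothesis on $NT$ rules out eigenvalues on the imaginary axis. Setting $R_s := P_s^{NT}(r_A,r_B)$ and $R_u := P_u^{NT}(r_A,r_B)$ brings \eqref{eq:hale-system-NT} into the three-block form
\begin{equation*}
\partial_t\begin{pmatrix}\theta\\ \phi_A\\ \phi_B\end{pmatrix}
= \begin{pmatrix}c_d\\ 0\\ 0\end{pmatrix}
+ \varepsilon^2 \Theta(\theta,\phi_A,\phi_B,R_s,R_u;\varepsilon),\qquad
\partial_t R_s = \varepsilon^2 \Lcal_{NT,s} R_s + \varepsilon^2 F(\cdots),\qquad
\partial_t R_u = \varepsilon^2 \Lcal_{NT,u} R_u + \varepsilon^2 G(\cdots),
\end{equation*}
where $\Lcal_{NT,s}$ has spectrum in the open left half-plane and $\Lcal_{NT,u}$ in the open right half-plane (after removing the trivial multiplication by $\varepsilon^2$, which amounts to rescaling time but does not affect hyperbolicity).

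Next, I would verify the hypotheses \ref{app:assumpH1}--\ref{app:assumpH6} essentially as in the proof of Theorem \ref{thm:pure-patterns-global}. Hypothesis \ref{app:assumpH1} holds since $r_A^*,r_B^*>0$, so the polar change of variables is regular in a neighbourhood of $(R_s,R_u)=0$. Hypotheses \ref{app:assumpH2} and \ref{app:assumpH6} follow from the observation that the angles $(\theta,\phi_A,\phi_B)$ enter the leading-order vector field only through $\partial_t\theta=c_d$ and through the normal-form structure (which produced only monomials of the form $A|A|^{2q_1}|B|^{2q_2}$ and $B|A|^{2q_3}|B|^{2q_4}$), so the leading-order parts of $F$, $G$ and of $\Theta$ in the $(\phi_A,\phi_B)$-component are independent of all three angles; the higher-order remainders $\tilde\rho_A,\tilde\rho_B$ are $2\pi$-periodic in $(\theta,\phi_A,\phi_B)$ by construction. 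Hypothesis \ref{app:assumpH3} holds since $F$, $G$ and $\Theta$ are at least quadratic in $(R_s,R_u)$ and smooth. Hypothesis \ref{app:assumpH4} follows from the fact that $\theta,\phi_A,\phi_B$ act purely as phase-shifts in $E_A,E_B$, so $F(\theta,\phi_A,\phi_B,0,0;\varepsilon)=G(\theta,\phi_A,\phi_B,0,0;\varepsilon)=0$. Hypothesis \ref{app:assumpH5} is the spectral-gap condition, which holds by the hyperbolicity assumption on $\Lcal_{NT}$, together with the fact that the angle-dependent contribution to $\Theta$ has Lipschitz constant $\Ocal(\varepsilon^2)$ and therefore can be made arbitrarily small by choosing $\varepsilon_0$ sufficiently small.

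Having verified \ref{app:assumpH1}--\ref{app:assumpH6}, Theorem \ref{app-thm:hale-adapted} yields bounded, Lipschitz functions $f_{NT,s}$ and $f_{NT,u}$, each $2\pi$-periodic in $(\theta,\phi_A,\phi_B)$, whose graph is an invariant manifold of \eqref{eq:hale-system-NT}; recombining $R_s+R_u$ gives the claimed function $f_{NT}=(f_{NT,A},f_{NT,B})$. Finally, I would translate back to the pattern-forming system: on $\Scal_\varepsilon^{NT}$ the dynamics of $(\theta,\phi_A,\phi_B)$ generates solutions $(A(t),B_o(t))$ of \eqref{eq:A-B-theta-fast-time} which, after inserting into the centre-manifold ansatz $U=U_{0,\mathrm{nf}}+\tilde\Psi(U_{0,\mathrm{nf}};\varepsilon)$ of Lemma \ref{lem:centre-manifold} and the normal-form change of variables of Lemma \ref{lem:NF-result}, give globally bounded, spatially $2\pi$-periodic solutions of \eqref{eq:toy-model} of the stated form, with the $\Ocal(\varepsilon^2)$ error coming from the quadratic contribution of $\tilde\Psi$ and the reconstruction of $B=B_o\ee^{\ii\varepsilon^{-2}c_d T}$ producing the factor $\ee^{-\ii c_d t}$ in the argument of $v$. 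The main obstacle is the spectral hypothesis \ref{app:assumpH5}, which is precisely why the hyperbolicity of $NT$ is assumed; once the hyperbolic splitting of $\Lcal_{NT}$ is in place, the rest of the argument parallels Case 1.
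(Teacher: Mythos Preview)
Your proposal is correct and mirrors the paper's approach: the paper simply states ``proceeding as in the first case'' (i.e.\ the proof of Theorem \ref{thm:pure-patterns-global}), and you have spelled out exactly what that means---splitting $(r_A,r_B)$ via the spectral projections of $\Lcal_{NT}$, verifying \ref{app:assumpH1}--\ref{app:assumpH6}, and invoking Theorem \ref{app-thm:hale-adapted}. One minor point: your justification of \ref{app:assumpH4} (and similarly the paper's, in Case~1) is slightly imprecise, since the $\varepsilon^2\tilde\rho_j$-terms do not vanish exactly at $(R_s,R_u)=0$; however \ref{app:assumpH4} only requires $|F|,|G|\le N(\varepsilon)$ with $N(0)=0$, and the bound $N(\varepsilon)=C\varepsilon^2$ suffices.
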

	
	\section{Discussion}\label{sec:discussion}
	
	In this paper, we have studied the dynamics of a pattern-forming system close to the onset of a 1:1 resonant Turing and Turing–Hopf instability. We have derived and justified a system of two coupled Ginzburg–Landau equations with a singular advection term as amplitude equations and established the existence of space-time periodic solutions and fast-moving travelling front solutions to the amplitude equations. The front solutions correspond to fast-moving pattern interfaces in the original pattern-forming systems, which exist on a long, but finite, time interval. Finally, we establish the existence of globally bounded, spatially periodic solutions to the pattern-forming system.
	
	We conclude by giving a brief discussion of open and related problems.
	
	\paragraph{Global existence of modulating front solutions.} Using the amplitude system, we found a variety of front solutions connecting different patterns in the full pattern-forming system. However, since the rigorous approximation result is only valid on a long, but finite time interval, it remains an open question whether these pattern interfaces can be established globally in time. A natural approach is to extend the spatial dynamics and centre manifold approach used to construct modulating front solutions, see e.g.~\cite{eckmann1991}. The main issue, however, is the presence of highly oscillatory higher-order terms due to the different phase velocity of the Turing and Turing–Hopf instability. In particular, the typical ansatz of the form $u(t,x) = U(x-ct, x-c_pt)$ does not seem sufficient to capture these fast oscillating terms. Furthermore, it also cannot capture the spatial connection between pattern with different phase speeds. Therefore, a more general ansatz is necessary and the construction of modulating fronts closet the resonant instability remains an open question.
	
	
	\paragraph{Extension to other models and related instabilities.} Another natural question is the extension to other models and related instabilities. We expect that our proof carries over to other model with the same 1:1 resonant Turing and Turing–Hopf instability since we consider a very general class of nonlinearities. Therefore, we discuss the extension to related instabilities here, namely, the extension to resonant Turing and Turing–Hopf instabilities with non-equal critical wave numbers and the extension to resonant Turing and Turing–Hopf instabilities in reflection symmetric systems such as the Taylor–Couette problem.
	
	In the first case, we do not expect any new technical challenges, see also Remark \ref{rem:different-resonances}. However, the resulting dynamics is potentially very different due to the different interaction terms in the amplitude system, see also \cite{porter2001, porter2000} for a discussion of spatially periodic solutions in a 1:2 and 1:3 resonant instability of two Turing modes.
	
	In the later case, the main difference in the instability is that the Turing–Hopf spectral curve appears as complex conjugated pair due to the reflection symmetry. In fact, this is the main motivation in \cite{schneider1997} to consider the interaction of two Turing–Hopf instabilities with opposite phase and group velocities. However, we do not expect any significant difficulties in the justification of the amplitude system, which now consists of three coupled Ginzburg–Landau equations (one for corresponding to the Turing instability, and two to the reflection symmetric Turing–Hopf instability). Nevertheless, constructing coherent structures in the higher-dimensional amplitude system is much more challenging, although the problem might become tractable if restricted to a suitable subspace, e.g. if both Turing–Hopf modes are the same, which is an invariant subspace of the amplitude system due to reflection symmetry.
	
	\paragraph{Temporal dynamics on the centre manifold for spatially periodic solutions.}
	
	Lastly, we discuss the extension of the centre manifold analysis in Section \ref{sec:periodic-solutions} to global temporal dynamics on the centre manifold as in the case of two resonant steady state instabilities, see e.g.~\cite{guckenheimer1986,armbruster1988}. Although a similar analysis can be carried out for the normal form \eqref{eq:normal-form} after neglecting the higher-order terms $\rho_A$ and $\rho_B$, the main challenge is to show the persistence of obtained solutions. Any persistence argument has to handle the highly oscillating higher-order terms arising from the different phase velocities. In fact, Remark \ref{rem:complicated-dynamics} demonstrates that this is a subtle task even for solutions close to simple orbits of the cut-off normal form. Therefore, the construction of global temporal dynamics on the centre manifold remains an open problem.
	
	
	\SkipTocEntry\section*{Acknowledgements}
	
	B.H. was partially supported the Deutsche Forschungsgemeinschaft (DFG, German Research Foundation) – Project-IDs 444753754 and 543917644.
	
	\SkipTocEntry\section*{Data Availability Statement}
	
	The explicit formulas for the coefficients, see Appendix \eqref{app:coefficients}, were computed in Mathematica \cite{Mathematica}. The plots of the phase planes, Figures \ref{fig:phase-planes-d<0}, \ref{fig:phase-plane-d>0} and \ref{fig:phase-plane-NT-non-ex}, were also generated using Mathematica \cite{Mathematica}. The plots of the fronts in Figures \ref{fig:T-to-trivial}, \ref{fig:TH-to-trivial}, \ref{fig:T-to-TH}, \ref{fig:superpos-to-T} and \ref{fig:TH-to-superpos} were generated by numerically calculating the heteroclinic orbits in the reduced system \eqref{eq:slow-dynamics} using Matlab \cite[R2024a]{themathworksinc.} and then inserting into the leading-order expressions \eqref{eq:approx-form-solutions}. Finally, the angle dynamics in Figure \ref{fig:periodic-vs-quasiperiodic} is generated by numerically integrating the system \eqref{eq:example-system} using Matlab \cite[R2024a]{themathworksinc.}. The code used to generate the corresponding data and videos is available at \href{https://github.com/Bastian-Hilder/Resonant-Turing-and-Turing-Hopf}{https://github.com/Bastian-Hilder/Resonant-Turing-and-Turing-Hopf}.
	
	\appendix

    \section{Outline of a proof of Theorem \ref{thm:approximation-full-amplitude-eq}}\label{app:justification}

    In this appendix, we outline the main steps of the justification result Theorem \ref{thm:approximation-full-amplitude-eq}. As discussed above, this follows the standard strategy, see e.g.~\cite{schneider1994a,schneider1997,gauss2021} as well as \cite[Chap.~10]{schneider2017a} for a general overview. For this, we recall $\Psi = \Psi_\mathrm{GL} + \Psi_\mathrm{hot}$ with
    \begin{equation*}
        \begin{split}
    		\Psi_\mathrm{GL}(T,X_1,X_2,t) &= \varepsilon A(T,X_1) \ee^{\ii x} \begin{pmatrix}
    		    1 \\ 0
    		\end{pmatrix} + \varepsilon B(T,X_2) \ee^{\ii (x-c_p t)} \begin{pmatrix}
    		    0 \\ 1
    		\end{pmatrix} + c.c \\
    		\Psi_\mathrm{hot}(T,X_1,X_2,t), &= \varepsilon^2 \left(\dfrac{1}{2}A_0(T,X_1,t) + A_2(T,X_1,t) \ee^{2\ii x} + c.c.\right) \begin{pmatrix}
    		    1 \\ 0
    		\end{pmatrix} \\
            &\qquad+ \varepsilon^2 \left(\dfrac{1}{2}B_0(T,X_2,t) + B_2(T,X_2,t) \ee^{\ii(x-c_p t)} + c.c.\right) \begin{pmatrix}
		    0 \\ 1
		\end{pmatrix}.
        \end{split}
	\end{equation*}
    Formally balancing different powers of $\varepsilon$ at different Fourier modes $e^{ikx}$ then yields that the quadratic correction terms $A_0, B_0, A_2, B_2$ are given by \eqref{eq:correction-terms-0} and $\eqref{eq:correction-terms-2}$, while $A$ and $B$ are solutions of the amplitude equations \eqref{eq:full-amplitude-eq}.

    We now write the pattern-forming system \eqref{eq:toy-model} in abstract form as
    \begin{equation}\label{eq:abstract-system}
        \partial_t w = \Lambda w + N_2(w,w) + N_3(w,w,w),
    \end{equation}
    where $w = (u,v)$, $\Gamma$ contains the linear terms, $N_2$ is a symmetric bilinear form containing the quadratic terms and $N_3$ is a trilinear form containing the cubic terms. To streamline the following sketch, we will neglect the cubic terms from now on. In fact, it is well-known that the main difficulty in justification results such as Theorem \ref{thm:approximation-full-amplitude-eq} comes from quadratic nonlinearities. Therefore, the addition of cubic (or higher-order) nonlinearities poses no additional difficulties.
    
    We write a solution $w = \Psi + \varepsilon^2 R$ with an error term $R$ and $\beta > 0$ to be determined later. Inserting this into \eqref{eq:abstract-system} yields the error equation
    \begin{equation*}
        \partial_t R = \Lambda R + 2 N_2(\Psi,R) + \varepsilon^2 N_2(R,R) + \varepsilon^{-2} \operatorname{Res}(\Psi),
    \end{equation*}
    where $\operatorname{Res}(\Psi)$ denotes the residual terms given by
    \begin{equation*}
        \operatorname{Res}(\Psi) := -\partial_t \Psi + \Lambda\Psi + N_2(\Psi,\Psi).
    \end{equation*}
    The remaining justification follows the following steps. First, one establishes an estimate for the residual using that the ansatz $\Psi$ is chosen such that formally, the lowest orders of $\varepsilon$ are equated to zero if the amplitudes $A$ and $B$ satisfy the amplitude equations \eqref{eq:full-amplitude-eq}. Second, we use the residual estimate to establish the existence of a solution to the error equation for a sufficiently long time interval and obtain uniform bounds on the solution with respect to $\varepsilon$.

    The key idea introduced in \cite{schneider1994a} to handle quadratic terms is to define so-called mode filters
    \begin{equation*}
        E_c w = \Fcal^{-1}(\chi_c \Fcal(w)), \quad E_s w = \Fcal^{-1}(\chi_s \Fcal(w)),
    \end{equation*}
    where $\Fcal$ denotes the Fourier transformation and smooth and compactly supported cut-off functions $\chi_j \in C^\infty_0(\R;[0,1])$ with
    \begin{equation*}
        \chi_c(k) = \begin{dcases}
            1, & \text{if } ||k| - 1| \leq \tfrac{1}{30}, \\
            0, & \text{if } ||k| - 1| \geq \tfrac{1}{15}
        \end{dcases}
    \end{equation*}
    and $\chi_s = 1-\chi_c$. These are bounded maps from $H^\ell_{l,u}$ to $H^\ell_{l,u}$, see e.g.~\cite[Lemma 5]{schneider1994}. The mode filters distinguish the Fourier modes close to the critical wave number pair $k_c = \pm 1$ and the exponentially damped modes. This separation is particularly relevant to handle quadratic nonlinearities since it allows us to exploit the fact that quadratic combinations of the critical Fourier modes $e^{\pm ix}$ are always located at exponentially damped modes. Here, we also exploit that the Turing and Turing–Hopf instabilities occur at the same wave number. However, a similar construction also works, for example, in the case of two Turing instabilities with a spatial 1:2 resonance, see \cite{gauss2021}.

    Under the assumptions of Theorem \ref{thm:approximation-full-amplitude-eq} we obtain that
    \begin{equation}\label{eq:residual-estimate}
        \sup_{t \in [0,T_0/\varepsilon^2]} \left(\varepsilon^{-4} \|E_c \Res(\Psi)\|_{(H^1_{l,u})^2} + \varepsilon^{-3}\|E_s \Res(\Psi)\|_{(H^1_{l,u})^2}\right) \leq C_{\Res}.
    \end{equation}
    This follows from the construction of the ansatz $\Psi = \Psi_{\mathrm{GL}} + \Psi_\mathrm{hot}$ and additionally exploiting that the contributions at $\varepsilon^3 e^{\pm ix}$ vanish since $A$ and $B$ satisfy the amplitude equations \eqref{eq:full-amplitude-eq}. Here, we also use that $A(T)$ and $B(T)$ are bounded in $H_\mathrm{l,u}^5$. This yields a bound of the residual in $H^1_\mathrm{l,u}$ since the residual contains at most four derivatives, see also \cite[Sec.~10.2.2]{schneider2017a}.

    To control the error $R$, we write $R = R_c + \varepsilon R_s$ with $R_j = E_j R$, $j = s,c$. We additionally write $\Psi = \Psi_c + \varepsilon\Psi_s$ with $\Psi_j = E_j \Psi_j$ for $j = s,c$, and we note that by construction of the ansatz and the mode filters, we obtain the error bound
    \begin{equation}\label{eq:bound-psi_c-psi_s}
        \sup_{t \in [0,T/\varepsilon^2]} \left(\|\Psi_c\|_{(C^1_b)^2} + \|\Psi_s\|_{(C^1_b)^2}\right) \leq C_\Psi \varepsilon
    \end{equation}
    for some constant $C_\Psi$. The error equation can then be split into equations for $R_c$ and $R_s$
    \begin{equation*}
        \begin{split}
            \partial_t R_c &= \Lambda R_c + 2 E_c(\varepsilon N_2(\Psi_c, R_s) + \varepsilon N_2(\Psi_s, R_c) + \varepsilon^2 N_2(\Psi_s, R_s)) \\
            &\qquad + E_c(\varepsilon^2 N_2(R,R)) + \varepsilon^{-2} E_c\Res(\Psi) \\
            \partial_t R_s &= \Lambda R_s + 2 E_s(\varepsilon^{-1} N_2(\Psi_c, R_c) + N_2(\Psi_s, R_c) + N_2(\Psi_c, R_s) + \varepsilon N_2(\Psi_s, R_s)) \\
            &\qquad + E_s(\varepsilon N_2(R,R)) + \varepsilon^{-3} E_s\Res(\Psi).
        \end{split}
    \end{equation*}
    As outlined above, here we used that quadratic interactions of critical modes are non-critical, that is $E_c(N_2(\Psi_c,R_c)) = 0$. Now, using \eqref{eq:bound-psi_c-psi_s}, we find that
    \begin{equation*}
        \begin{split}
            \|\varepsilon E_cN_2(\Psi_c,R_s)\|_{(H^1_{l,u})^2} &\lesssim \varepsilon \|\Psi_c\|_{(C^1_b)^2} \|R_s\|_{(H^1_{l,u})^2} \lesssim \varepsilon^2 \|R_s\|_{(H^1_{l,u})^2} \\
            \|\varepsilon E_cN_2(\Psi_s,R_c)\|_{(H^1_{l,u})^2} &\lesssim \varepsilon \|\Psi_s\|_{(C^1_b)^2} \|R_c\|_{(H^1_{l,u})^2} \lesssim \varepsilon^2 \|R_c\|_{(H^1_{l,u})^2} \\
            \|\varepsilon^{-1} E_s N_2(\Psi_c,R_c)\|_{(H^1_{l,u})^2} &\lesssim \varepsilon^{-1} \|\Psi_c\|_{(C^1_b)^2} \|R_c\|_{(H^1_{l,u})^2} \lesssim \|R_c\|_{(H^1_{l,u})^2} \\
            \|E_s N_2(\Psi_s,R_c)\|_{(H^1_{l,u})^2} &\lesssim \|\Psi_s\|_{(C^1_b)^2} \|R_c\|_{(H^1_{l,u})^2} \lesssim \varepsilon\|R_c\|_{(H^1_{l,u})^2} \\
            \|E_s N_2(\Psi_c,R_s)\|_{(H^1_{l,u})^2} &\lesssim \|\Psi_c\|_{(C^1_b)^2} \|R_s\|_{(H^1_{l,u})^2} \lesssim \varepsilon \|R_s\|_{(H^1_{l,u})^2} 
        \end{split}
    \end{equation*}
    Therefore, together with the residual estimate \eqref{eq:residual-estimate}, we obtain that the error equations qualitatively behave like
    \begin{equation*}
        \begin{split}
            \partial_t R_c &= \Lambda R_c + \Ocal\left(\varepsilon^2 \|R_c\|_{(H^1_{l,u})^2} + \varepsilon^2 \|R_s\|_{(H^1_{l,u})^2}\right) + \Ocal(\varepsilon^2) \\
            \partial_t R_s &= \Lambda R_s + \Ocal\left(\|R_c\|_{(H^1_{l,u})^2} + \varepsilon \|R_s\|_{(H^1_{l,u})^2}\right) + \Ocal(\varepsilon) \\
        \end{split}
    \end{equation*}
    as long as $\|R_c\|_{(H^1_{l,u})^2}$ and $\|R_s\|_{(H^1_{l,u})^2}$ remain bounded. As outlined in \cite[Sec.~10.4]{schneider2017a} we can now obtain an estimate for $R_s$ by using that $E_s$ maps onto modes which are exponentially damped by the semigroup generated by $\Lambda$. This yields an estimate of the form
    \begin{equation*}
        \sup_{\tau \in [0,t]} \|R_s(\tau)\|_{(H^1_{l,u})^2} \lesssim \sup_{\tau \in [0,t]} \|R_s(\tau)\|_{(H^1_{l,u})^2} + 1
    \end{equation*}
    for all $t \geq 0$ that $R_s$ and $R_c$ remain bounded. Then, we can use this bound to obtain an estimate of the form
    \begin{equation*}
        \sup_{\tau \in [0,t]} \|R_c(\tau)\|_{(H^1_{l,u})^2} \lesssim 1 + \varepsilon ^2\int_0^t \sup_{\tau \in [0,t]} \|R_c(\tau)\|_{(H^1_{l,u})^2} \dd t
    \end{equation*}
    as long as $R_s$ and $R_c$ remain bounded. An application of Gronwall's inequality then yields the justification result. For more details, we refer to \cite[Thm.~10.4.3]{schneider2017a}.

    \begin{remark}
        We point out that the mode filters $E_j$, $j = s,c$ do not define projections on $H^1_{l,u}$. Therefore, it is necessary to introduce auxiliary mode filters $E^h_j$ such that $E_j^h E_j = E_j$ for $j = s,c$ and then prove the semigroup estimates
        \begin{equation*}
            \begin{split}
                \|\ee^{\Lambda t} E_c^h\|_{(H^1_{l,u})^2 \rightarrow (H^1_{l,u})^2} \lesssim \ee^{\sigma_c \varepsilon^2 t} \\
                \|\ee^{\Lambda t} E_s^h\|_{(H^1_{l,u})^2 \rightarrow (H^1_{l,u})^2} \lesssim \ee^{-\sigma_s t}
            \end{split}
        \end{equation*}
        for constants $\sigma_c, \sigma_s > 0$, cf.~\cite[Thm.~10.4.3, Assumption (A2)]{schneider2017a}.
        It turns out that $E^h_j$ can be chosen similarly to the original mode filters $E_j$, see e.g.~\cite{schneider1994}.
    \end{remark}
	
	\section{Persistence of periodic solutions}\label{app:persistence}
	
	Here, we provide an adaptation of the result from \cite[Theorem VII.2.3]{hale1969} to our setting. For this, we consider the system
	\begin{equation}\label{app-eq:ode-hale}
		\begin{split}
			\dot{\theta} &= \omega + \varepsilon \Theta(\theta,x,y,\varepsilon), \\
			\dot{x} &= \varepsilon \Acal x + \varepsilon F(\theta,x,y,\varepsilon), \\
			\dot{y} &= \varepsilon \Bcal y + \varepsilon G(\theta, x,y,\varepsilon)
		\end{split}
	\end{equation}
	where $\theta = \theta(t) \in \R^k$, $\omega \in \R^k$, and $(x(t),y(t),\varepsilon) \in \Omega(\sigma, \varepsilon_0)$ with
	\begin{equation*}
		\Omega(\sigma,\varepsilon_0) := \{(x,y,\varepsilon) \in \R^{n_1} \times \R^{n_2} \times (0,\infty) \,:\, |x| < \sigma, |y| < \sigma, \varepsilon \in (0,\varepsilon_0)\}, \quad n_1, n_2 \in \N.
	\end{equation*}
	In addition, we assume the following hypotheses.
	\begin{enumerate}[label=(H\arabic*)]
		\item\label{app:assumpH1} The functions $\Theta, F, G$ are continuous and bounded in $\R^k \times \Omega(\sigma, \varepsilon_0)$.
		\item\label{app:assumpH2} The function $\Theta, F, G$ are Lipschitz in $\theta$ with constants $\eta(\rho,\varepsilon_1), \gamma(\rho,\varepsilon_1), \gamma(\rho,\varepsilon_1)$ in $\R^k \times \Omega(\rho,\varepsilon_1)$. Here, $\eta$ and $\gamma$ are continuous and nondecreasing for $0 \leq \rho \leq \sigma$ and $0 \leq \varepsilon_1 \leq \varepsilon_0$. Moreover, $\gamma(0,0) = 0$.
		\item\label{app:assumpH3} The functions $\Theta, F, G$ are Lipschitz in $(x,y)$ with constants $\mu(\rho, \varepsilon_1), \delta(\rho,\varepsilon_1), \delta(\rho,\varepsilon_1)$ in $\R^k \times \Omega(\rho, \varepsilon_1)$. Additionally, $\mu$ and $\delta$ are continuous and nondecreasing for $0 \leq \rho \leq \sigma$ and $0 \leq \varepsilon_1 \leq \varepsilon_0$. Moreover, $\delta(0,0) = 0$.
		\item\label{app:assumpH4} It holds that $|F(\theta,0,0,\varepsilon)|, |G(\theta,0,0,\varepsilon)| \leq N(\varepsilon)$ for all $\theta \in \R^k$ and $0 \leq \varepsilon \leq \varepsilon_0$. Here, $N$ is continuous and nondecreasing for $0 \leq \varepsilon \leq \varepsilon_0$ and $N(0) = 0$.
		\item\label{app:assumpH5} There exists a $\alpha_1 > 0$ such that the spectrum of $\Acal \in \R^{n_1 \times n_1}$ and $\Bcal \in \R^{n_2 \times n_2}$ satisfies
		\begin{equation*}
			\sigma(\Acal) < - \alpha_1 < 0 < \alpha_1 < \sigma(\Bcal).
		\end{equation*}
		Additionally, $\alpha_1 - \limsup_{\varepsilon \rightarrow 0} \eta(0,\varepsilon) > 0$.
		\item\label{app:assumpH6} The functions $\Theta,F,G$ are $\tilde{\omega}$-periodic in $\theta$ with $\tilde{\omega} > 0$.
	\end{enumerate}
	
	\begin{remark}
		The last condition in Assumption \ref{app:assumpH5} is also called normal hyperbolicity at higher order, cf.~\cite{verhulst2023}.
	\end{remark}
	
	\begin{theorem}\label{app-thm:hale-adapted}
		Assume that there exists a $\sigma > 0$ and $\varepsilon_0 > 0$ such that the hypotheses \ref{app:assumpH1}--\ref{app:assumpH6} are satisfied. Then, there exist functions $f = f(\theta, x,y) \in \R^{n_1}$ and $g = g(t,\theta,\varepsilon) \in \R^{n_2}$, which are continuous in $\R^k \times (0,\varepsilon_0)$, bounded, Lipschitz and $\tilde{\omega}$-periodic in $\theta$ such that
		\begin{equation}\label{app-eq:invariant-manifold}
			\Scal_\varepsilon := \{(\theta, x,y) \,:\,  x = f(\theta,\varepsilon), y = g(\theta,\varepsilon), \theta \in \R^k\}
		\end{equation}
		is an invariant manifold of \eqref{app-eq:ode-hale}.
	\end{theorem}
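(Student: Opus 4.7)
The plan is to reformulate invariance of $\Scal_\varepsilon$ as a fixed-point equation for the parametrising pair $(f,g)$ on a Banach space of bounded, Lipschitz, $\tilde\omega$-periodic functions, and then apply Banach's fixed-point theorem. Denoting by $\theta^*(s;\theta_0,f,g)$ the flow of the reduced equation $\dot\theta = \omega + \varepsilon\Theta(\theta,f(\theta,\varepsilon),g(\theta,\varepsilon),\varepsilon)$ with $\theta^*(0)=\theta_0$, invariance of the manifold is equivalent to $x(t)=f(\theta^*(t),\varepsilon)$ and $y(t)=g(\theta^*(t),\varepsilon)$ along the $(x,y)$-dynamics. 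By \ref{app:assumpH5}, $e^{u\varepsilon\Acal}$ decays as $u\to+\infty$ and $e^{-s\varepsilon\Bcal}$ decays as $s\to+\infty$, so variation of constants together with boundedness of $(f,g)$ forces
\begin{equation*}
    f(\theta_0,\varepsilon) = \int_0^\infty e^{u\varepsilon\Acal}\,\varepsilon F\bigl(\theta^*(-u),f(\theta^*(-u)),g(\theta^*(-u)),\varepsilon\bigr)\,du,
\end{equation*}
\begin{equation*}
    g(\theta_0,\varepsilon) = -\int_0^\infty e^{-s\varepsilon\Bcal}\,\varepsilon G\bigl(\theta^*(s),f(\theta^*(s)),g(\theta^*(s)),\varepsilon\bigr)\,ds.
\end{equation*}
This defines an operator $\Tcal(f,g)$ whose fixed points produce the desired invariant manifold, as invariance of $\Scal_\varepsilon$ is built into these integral identities by construction.

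I would carry out the fixed-point argument in the set
\begin{equation*}
    \Xcal_{K,L} := \{(f,g):\R^k \to \R^{n_1}\!\times\R^{n_2} \text{ continuous, } \tilde\omega\text{-periodic},\ \|(f,g)\|_\infty\leq K,\ \operatorname{Lip}_\theta(f,g)\leq L\}
\end{equation*}
equipped with the uniform norm, and show that $\Tcal:\Xcal_{K,L}\to\Xcal_{K,L}$ is a contraction for $K$, $L$, and $\varepsilon$ chosen appropriately small. The sup-norm bound uses $|F(\theta,0,0,\varepsilon)|\leq N(\varepsilon)$ from \ref{app:assumpH4}, Lipschitz continuity in $(x,y)$ via $\delta(K,\varepsilon)$, and $\|e^{u\varepsilon\Acal}\|\leq Ce^{-\varepsilon u\alpha_1}$, yielding $\|\Tcal_1(f,g)\|_\infty \lesssim (N(\varepsilon)+\delta(K,\varepsilon)K)/\alpha_1$, which is at most $K$ for $K$ and $\varepsilon$ small since $N(0)=\delta(0,0)=0$. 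The Lipschitz estimate is more delicate: differentiating the flow equation for $\theta^*$ gives $|\partial_{\theta_0}\theta^*(s)| \leq \exp(\varepsilon|s|(\eta(K,\varepsilon)+\mu(K,\varepsilon)L))$, and propagating Lipschitz information through the integrals produces
\begin{equation*}
    \operatorname{Lip}(\Tcal(f,g)) \leq \frac{C\,(\gamma(K,\varepsilon)+\delta(K,\varepsilon)L)}{\alpha_1-\eta(K,\varepsilon)-\mu(K,\varepsilon)L},
\end{equation*}
which is at most $L$ for the correct choice of parameters; the denominator is controlled from below by \ref{app:assumpH5}. An analogous estimate yields the contraction constant $q<1$, and $\tilde\omega$-periodicity is preserved by $\Tcal$ thanks to \ref{app:assumpH6}.

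The main obstacle is closing this Lipschitz estimate uniformly on the infinite integration interval. The shift derivative $|\partial_{\theta_0}\theta^*(s)|$ grows like $e^{\varepsilon|s|(\eta+\mu L)}$, while the stable and unstable semigroups contribute only an $e^{-\varepsilon|s|\alpha_1}$ factor, so the integrand carries an exponential $e^{-\varepsilon|s|(\alpha_1-\eta-\mu L)}$. Integrability, and thus the existence of a finite $L$-bound, hinges on $\alpha_1>\eta+\mu L$; this is precisely why the higher-order normal hyperbolicity condition $\alpha_1 - \limsup_{\varepsilon\to 0}\eta(0,\varepsilon)>0$ in \ref{app:assumpH5} is indispensable, ensuring that the $\theta$-expansion rate is strictly dominated by the transverse contraction rate. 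The smallness of $\mu(K,\varepsilon)$ and $\delta(K,\varepsilon)$ as $K,\varepsilon\to 0$ (by \ref{app:assumpH3}) is then exploited to absorb the $L$-dependence in the denominator and close the loop, giving a contraction on $\Xcal_{K,L}$ whose unique fixed point $(f,g)$ furnishes the asserted invariant manifold.
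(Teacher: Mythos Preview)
Your approach is correct in spirit and would lead to a valid proof, but it differs substantially from the paper's argument. The paper does not carry out any fixed-point construction at all: it simply invokes \cite[Theorem~VII.2.3]{hale1969}, which already yields (in the general non-autonomous setting) functions $f(t,\theta,\varepsilon)$ and $g(t,\theta,\varepsilon)$ parametrising an integral manifold, together with the inheritance of periodicity in $t$ and $\theta$. The one genuine step the paper adds is the observation that since \eqref{app-eq:ode-hale} is autonomous, the right-hand sides are $T$-periodic in $t$ for \emph{every} $T>0$, so by Hale's periodicity statement $f$ and $g$ are $T$-periodic in $t$ for every $T$, hence constant in $t$. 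This collapses the integral manifold $\tilde\Scal_\varepsilon = \R\times\Scal_\varepsilon$ to the time-independent invariant manifold $\Scal_\varepsilon$, and periodicity in $\theta$ then follows from \ref{app:assumpH6}.

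What you have written is essentially the proof of Hale's theorem itself, specialised to the autonomous case. This buys self-containedness and makes transparent exactly where the higher-order normal hyperbolicity condition in \ref{app:assumpH5} enters (the competition between $e^{-\varepsilon|s|\alpha_1}$ and $e^{\varepsilon|s|(\eta+\mu L)}$ in the Lipschitz estimate). The paper's route is much shorter but relies on the reader accepting Hale's result as a black box; its novelty lies only in the autonomy trick to eliminate the $t$-dependence. One small caution in your sketch: hypothesis \ref{app:assumpH3} does not force $\mu(0,0)=0$, so you cannot make $\mu$ small; you must instead choose $L$ small enough that $\mu L$ is dominated by the gap $\alpha_1-\eta$, and only then shrink $K,\varepsilon$ to control $\gamma$ and $\delta$.
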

	\begin{proof}
		It is straightforward to see that under the assumptions \ref{app:assumpH1}--\ref{app:assumpH5}, the result \cite[Theorem VII.2.3]{hale1969} applies and provides functions $f = f(t,\theta,\varepsilon)$ and $g = g(t,\theta,\varepsilon)$, which are continuous in $\R \times \R^k \times (0,\varepsilon_0)$, bounded and Lipschitz in $\theta$ such that
		\begin{equation*}
			\tilde{\Scal}_\varepsilon := \{(t,\theta,x,y) \,:\, x = f(t,\theta,\varepsilon), y = g(t,\theta,\varepsilon), (t,\theta) \in \R \times \R^k\}
		\end{equation*}
		is an integral manifold of \eqref{app-eq:ode-hale}, that is, if $P \in \tilde{\Scal}_\varepsilon$ is an arbitrary point and $(\theta(t), x(t), y(t))$ is a solution to \eqref{app-eq:ode-hale} which passes through $P$, then $(t,\theta(t),x(t),y(t)) \in \tilde{\Scal}_\varepsilon$ for all $t \in \R$. In addition, if all functions in \eqref{app-eq:ode-hale} are $\tilde{\omega}$-periodic in $\theta$, then so are $f$ and $g$. Similarly, if all functions in \eqref{app-eq:ode-hale} are $T$-periodic in $t$, then so are $f$ and $g$.
		
		In the case at hand, all functions in \eqref{app-eq:ode-hale} are independent of time $t$ and thus are periodic with any period. Therefore, $f$ and $g$ are periodic in $t$ with any period and hence must be independent of $t$ as well. Using this, we can write $\tilde{\Scal}_\varepsilon = \R \times \Scal_\varepsilon$ with $\Scal_\varepsilon$ in \eqref{app-eq:invariant-manifold} and $\Scal_\varepsilon$ must be invariant. Finally, the periodicity in $\theta$ follows from assumption \ref{app:assumpH6}.
	\end{proof}

	\section{Coefficients}\label{app:coefficients}
	The coefficients for the amplitude equations are explicitly given by
	{\allowdisplaybreaks\small
    \begin{align*}
        \gamma_1 &= \frac{f_{11} g_{20} \left(16 c_p-19 i\right)}{8 c_p-9 i}+\frac{38
            f_{20}^2}{9}+3 f_{30}, \\
        \gamma_2 &= \frac{2 \left(\left(7 c_p-9 i\right) \left(f_{11} \left(5 f_{11}+g_{02}
            \left(9+c_p \left(c_p+8 i\right)\right)\right)+\left(f_{12}+2 f_{02}
            f_{20}\right) \left(9+c_p \left(c_p+8 i\right)\right)\right)\right)}{\left(c_p-i\right)
            \left(c_p+9 i\right) \left(7 c_p-9 i\right)} \\
            &\qquad +\frac{4 f_{02} g_{11}
            \left(9-i c_p\right) \left(4 c_p-5 i\right)}{\left(c_p-i\right)
            \left(c_p+9 i\right) \left(7 c_p-9 i\right)}, \\
        \gamma_3 &= \frac{f_{11} \left(f_{20} \left(c_p-19 i\right)-9 i g_{11}\right)}{9
            \left(c_p-i\right)}+\frac{2 i f_{02} g_{20}}{-8 c_p+9 i}+f_{21}, \\
        \gamma_4 &= 6 \left(27+4 c_p \left(c_p+3 i\right)\right) \left(f_{11} g_{11} \left(5+3 i
        c_p\right) \left(c_p+9 i\right)+\left(c_p+i\right) \left(c_p+9 i\right)
        \left(7 c_p-9 i\right) \left(2 f_{02} g_{20}+f_{21}\right)\right) \\
        &\qquad +6 \left(27+4 c_p \left(c_p+3 i\right)\right)\left(f_{11} f_{20}
        \left(7 c_p-9 i\right) \left(-19+c_p \left(c_p+12 i\right)\right)\right), \\
        \gamma_5 &= \left(c_p+i\right) \left(c_p+9 i\right) \left(7 c_p-9 i\right) \Big(2 f_{02}
        g_{02} \left(24 c_p^2+70 i c_p+171\right) \\&\qquad+9 f_3 \left(27+4 c_p \left(c_p+3
        i\right)\right)+3 f_{02} f_{11} \left(8 c_p^2+26 i c_p+57\right)\Big), \\
        \gamma_6 &= \frac{f_{11} g_{02}}{9+6 i c_p}+\frac{2 i f_{02} g_{11}}{c_p+i}+\frac{i
            f_{11}^2}{c_p+i}+\frac{2 f_{02} f_{20}}{9-2 i c_p}+f_{12}, \\
        \gamma_7 &= 2 \left(\frac{2 i f_{11} g_{20} \left(c_p+5 i\right)}{\left(c_p+i\right)
            \left(c_p+9 i\right)}+\frac{g_{11}^2 \left(5+3 i c_p\right)}{9+c_p
            \left(7 c_p-2 i\right)}+f_{20} g_{11}+2 g_{02} g_{20}+g_{21}\right), \\
        \gamma_8 &= \frac{f_{02} g_{11} \left(4 c_p+19 i\right)}{2 c_p+9 i}+\frac{2 g_{02}^2 \left(12
            c_p-19 i\right)}{6 c_p-9 i}+3 g_{03}, \\
        \gamma_9 &= g_{11} g_{20} \left(2+\frac{1}{9+8 i c_p}\right)+\frac{38 f_{20}
            g_{20}}{9}+3 g_{30}, \\
        \gamma_{10} &= 2 \left(\frac{5 f_{11} g_{11}}{9+c_p \left(c_p+8 i\right)}+g_{02} g_{11}
        \left(-\frac{i}{c_p-i}+\frac{i}{-7 c_p+9 i}+1\right)+2 f_{02}
        g_{20}+g_{12}\right), \\
        \gamma_{11} &= -\frac{2 i f_{11} g_{20}}{c_p-i}-\frac{i g_{11}^2}{c_p-i}+\frac{2 g_{02}
            g_{20}}{9+8 i c_p}+\frac{f_{20} g_{11}}{9}+g_{21}, \\
        \gamma_{12} &= \frac{g_{11} \left(f_{11} \left(9+6 i c_p\right)+g_{02} \left(19+11 i
            c_p\right)\right)}{6 c_p^2-3 i c_p+9}+\frac{2 f_{02} g_{20}}{9-2 i
            c_p}+g_{12}.
		\end{align*}}
	
	\printbibliography
	
\end{document}